\pgfplotsset{compat=1.10}
\newtheorem{theorem}{Theorem}
\newtheorem{definition}[theorem]{Definition}
\newtheorem{proposition}[theorem]{Proposition}
\newtheorem{lemma}[theorem]{Lemma}
\newtheorem{corollary}[theorem]{Corollary}
\theoremstyle{remark}
\newtheorem{remark}[theorem]{Remark}
\newcommand{\ls}{\lesssim}
\newcommand{\gs}{\gtrsim}
\newcommand{\la}{\langle}
\newcommand{\ra}{\rangle}
\newcommand{\R}{\mathbb{R}}
\newcommand{\T}{\mathbb{T}}
\newcommand{\C}{\mathbb{C}}
\newcommand{\Z}{\mathbb{Z}}
\newcommand{\N}{\mathbb{N}}
\newcommand{\pa}{\partial}
\newcommand{\supp}{{\mbox{supp}}}
\def\norm#1{\left\|#1\right\|}
\def\bra#1{\langle#1\rangle}
\def\norm#1{\|#1\|}
\def\bra#1{\langle#1\rangle}
\def\wt#1{\widetilde{#1}}
\def\wh#1{\widehat{#1}}
\DeclareMathOperator*{\med}{med}
\definecolor{light-gray1}{gray}{0.90}
\definecolor{light-gray2}{gray}{0.80}
\definecolor{light-gray3}{gray}{0.60}
\numberwithin{equation}{section}
\numberwithin{theorem}{section}
\numberwithin{table}{section}
\numberwithin{figure}{section}
\date{\today}
\begin{document}

\title[Periodic FPU]{Periodic FPU System: Continuum Limit to KdV via Regularization and Fourier Analysis}

\author[C. Kwak]{Chulkwang Kwak}
\address{Department of Mathematics, Ewha Womans University, Seoul 03760, Korea}
\email{ckkwak@ewha.ac.kr}

\author[C. Yang]{Changhun Yang}
\address{Department of Mathematics, Chungbuk National University, Cheongju-si 28644, Korea}
\email{chyang@chungbuk.ac.kr}

\thanks{2020 {\it Mathematics Subject Classification.} 35Q53, 37J12, 35Q70.}
	\thanks{{\it Keywords and phrases.} Periodic FPU system, $X^{s,b}$-space, Normal form method, Continuum limit}
    \thanks{$*$It is confirmed that this work was contributed equally by all authors.}

%\tableofcontents

\begin{abstract}
The Fermi-Pasta-Ulam (FPU) system, initially introduced by Fermi for numerical simulations, models vibrating chains with fixed endpoints, where particles interact weakly, nonlinearly with their nearest neighbors. Contrary to the anticipated ergodic behavior, the simulation revealed nearly periodic (quasi-periodic) motion of the solutions, a phenomenon later referred to as the \emph{FPU paradox}. A partial but remarkable explanation was provided by Zabusky and Kruskal \cite{ZK-1965}, who formally derived the continuum limit of the FPU system, connecting it to the Korteweg–de Vries (KdV) equation. This formal derivation was later rigorously justified by Bambusi and Ponno \cite{BP-2006}.

\medskip

In this paper, we revisit the problem studied in \cite{BP-2006}, specifically focusing on the continuum limit of the periodic FPU system for a broader class of initial data, as the number of particles $N$ tends to infinity within a fixed domain. Unlike the non-periodic case discussed in \cite{HKY2021}, periodic FPU solutions lack a (local) smoothing effect, posing a significant challenge in controlling one derivative in the nonlinearity. This control is crucial not only for proving the (uniform in $N$) well-posedness for rough data but also for deriving the continuum limit. The main strategies to resolve this issue involve deriving $L^4$-Strichartz estimates for FPU solutions, analogous to those previously derived for KdV solutions in \cite{B-1993KdV}, and regularizing the system via the normal form method introduced in \cite{Babin2011}.
\end{abstract}

\maketitle
	
\tableofcontents	

\section{Introduction}

\subsection{A short history of FPU system}
The Fermi-Pasta-Ulam (FPU) system has a rich history that spans from its origin in the 1950s to its lasting impact on nonlinear dynamics and statistical mechanics. The FPU system was introduced by physicists Enrico Fermi, John Pasta, and Stanislaw Ulam, with contributions from Mary Tsingou \cite{FPUT-1955}. The system was a chain of particles connected by springs, but with nonlinear force laws between the particles, differing from the typical linear harmonic oscillator models. Fermi and his collaborators expected that the system would eventually reach thermal equilibrium, with energy evenly distributed across all particles, which would resemble the ergodic behavior predicted by classical statistical mechanics.

\medskip

However, the simulation did not show the expected behavior. Instead of reaching thermal equilibrium, the system exhibited a quasi-periodic motion, with energy remaining largely localized in certain modes, and the system failed to reach a uniform distribution of energy. This unexpected behavior became known as the \emph{FPU paradox}. This paradox was a catalyst for deeper studies into nonlinear dynamics, chaos theory, and soliton solutions. It revealed that nonlinear systems could exhibit complex behavior, and that equilibrium might not always be reached in the way traditional statistical mechanics suggested.

\medskip

A partial explanation for the FPU paradox was provided by Zabusky and Kruskal \cite{ZK-1965}. They discovered that, for a specific class of nonlinear systems, the continuum limit of the FPU system could be described by the Korteweg-de Vries (KdV) equation, which describes the evolution of waves in a shallow water surface or other physical systems with similar dynamics. The KdV equation is known to admit soliton solutions\footnote{Such traveling waves were first discovered by Russell \cite{Russell1844}.}, which are stable, localized wave packets that interact elastically without changing their shape or speed.

\medskip

While Zabusky and Kruskal provided a formal derivation of KdV from FPU, it was not until the work of Bambusi and Ponno \cite{BP-2006} that a rigorous mathematical justification for the continuum limit of the periodic FPU system was established. They demonstrated that, under certain conditions, the periodic FPU system indeed converged to the KdV equation as the number of particles tends to infinity. This rigorous result confirmed the connection between the discrete FPU system and the continuum KdV system.

\medskip 

The study of the FPU system has received extensive attention and has progressed in several directions over the years, see also \cite{BI2005, G-2007} and references therein:

\begin{itemize}
\item Nonlinear dynamics and Chaos: The FPU paradox contributed significantly to the development of chaos theory and nonlinear dynamics, particularly by exploring how energy can be trapped in nonlinear systems and how small perturbations can lead to large-scale changes in behavior, see, for instance, \cite{Rink2001, PB-2005, BCMM2015} and references therein.

\item Solitons and KdV equation: The connection between the FPU system and the KdV equation sparked interest in the study of solitons, which are stable, localized wave solutions that propagate without changing shape. Solitons have since become a central theme in the study of nonlinear wave equations, see, for instance, \cite{FP-1999, Iooss2000, M-2013} and references therein. 

\item Extensions and Variants: Over the years, the FPU system has been generalized in various ways, including the study of higher-dimensional systems, different forms of nonlinearity, and the effects of varying boundary conditions, see, for instance, \cite{SW-2000, BKP2015, CH2018, HKY2021} and references therein.

\item Quantum Analogues: More recently, the FPU system has inspired investigations into quantum versions, where quantum effects are incorporated, and new questions about quantum thermalization and solitonic behavior have arisen, see, for instance, \cite{LT2014, YR2024} and references therein.
\end{itemize}

\subsection{Notations and definitions}\label{sec:Intro-Not and Def}
We introduce the following notations and definitions before discussing our problem.
\begin{itemize}
\item The discrete Laplacian $\Delta_1$ on $\Z$
\[(\Delta_1 u)(j) = u(j+1) + u(j-1) - 2u(j), \quad j \in \Z.\]
\item $h\Z$ represents the infinite lattice with a mesh size of $h$ for $h > 0$.
\item The periodic lattice domain $\T_h$
\[\mathbb{T}_h= h\Z/(2\pi\Z) =\{ x=hn \; | \; n=-N,\cdots, -1,0,1,\cdots,N-1\},\] 
that is, additive group of $(2N)$ points for a given $N \in \N$. The parameter $h$ denotes the distance between adjacent points in $\T_h$. Note that $h = \frac{\pi}{N}$.
\item The discrete Laplacian $\Delta_h$ on $\T_h$
\[(\Delta_h u)(x) = \frac{1}{h^2}\left(u(x+h) + u(x-h) - 2u(x)\right), \quad x \in \T_h.\]
\item For $1 \leq p \leq \infty$, the Lebesgue space $L^p(\T_h)$ (resp. $L^p(\T)$) is defined by the collection of real-valued functions on $\T_h$ (resp. on $\T$) equipped with the $L^p$-norm
\[\begin{aligned}
&\|f_h\|_{L^p(\T_h)}:=  \left\{\begin{aligned}
  &\left(  h \sum_{x\in \mathbb{T}_h} |f_h(x)|^p \right)^\frac1p,&&\textup{if }1\leq p<\infty,\\
  &\sup_{x\in\mathbb{T}_h} |f_h(x)|,&&\textup{if }p=\infty.
  \end{aligned}\right.\\
\Bigg(\mbox{resp}. \;&\|f\|_{L^p(\T)}:=  \left\{\begin{aligned}
  &\left(\int_{\T}|f_h(x)|^p \; dx \right)^\frac1p,&&\textup{if }1\leq p<\infty,\\
  &\sup_{x\in\T} |f(x)|,&&\textup{if }p=\infty.
  \end{aligned}\right. \Bigg)
\end{aligned}\]
\item With a particular $h=\frac{\pi}{N}$, $(\T_h)^*$ denotes the Pontryagin dual space of $\T_h$ defined by
\[(\T_h)^*:= \Z/(2N\Z) = \Z_{2N} = \left\{ -N,\cdots,-1,0,1,\cdots,N-1\right\},\]
that is, additive group of $(2N)$ points.
\item For $f \in L^1(\T)$ and $f_h\in L^1(\T_h)$, we define the Fourier coefficients of $f$ and $f_h$ by
 \begin{equation}\label{Fourier transform}
  \begin{aligned}
\mathcal{F} (f)(k):=&~{}\frac{1}{\sqrt{2\pi}}\int_{\T} f(x)e^{-ixk} \; dx,\quad \forall k\in \Z,\\
\mathcal{F}_h (f_h)(k):=&~{}\frac{h}{\sqrt{2\pi}}\sum_{x\in \mathbb{T}_h}  f_h(x)e^{-ixk},\quad\forall k\in (\T_h)^*.
\end{aligned}
 \end{equation}
We simply use $\; \widehat{\cdot} \;$ for both $\mathcal F$ and $\mathcal F_h$, if no confusion. We note that $\widehat{f_h}$ is periodic with period $2N$ satisfying $\widehat{f_h}(k+2N) = \widehat{f_h}(k)$ for all $-N \le k < N$.
\item For $1\le q\le \infty$, the Lebesgue space $\ell^q((\T_h)^*)$ (resp. $\ell^q(\Z)$) is defined by the collection of real-valued functions on $(\T_h)^*$ (resp. on $\Z$) equipped with the $\ell^q$-norm
\[\begin{aligned}
&\|g_h\|_{\ell^p((\T_h)^*)}:=  \left\{\begin{aligned}
  &\left( \sum_{k\in (\T_h)^*} |g_h(k)|^p \right)^\frac1p,&&\textup{if }1\leq p<\infty,\\
  &\sup_{k\in (\T_h)^*} |g_h(k)|,&&\textup{if }p=\infty.
  \end{aligned}\right.\\
\Bigg(\mbox{resp}. \;&\|g\|_{\ell^p(\Z)}:=  \left\{\begin{aligned}
  &\left(  \sum_{k \in \Z}|g(k)|^p  \right)^\frac1p,&&\textup{if }1\leq p<\infty,\\
  &\sup_{x\in\Z} |g(x)|,&&\textup{if }p=\infty.
  \end{aligned}\right. \Bigg)
\end{aligned}\]
\item For  $\widehat{f} \in \ell^1(\Z)$ and $\widehat{f_h} \in \ell^1((\T_h)^*)$, the Fourier series (or Fourier inversion formula) is given by
  \begin{equation}\label{Inverse Fourier transform}
\begin{aligned}
\mathcal F^{-1}(\widehat{f}) = f(x):=&~{}\frac{1}{\sqrt{2\pi}}\sum_{k \in \Z} \widehat{f}(k) e^{ixk},\quad\forall x\in \T, \\
\mathcal F_h^{-1}(\widehat{f_h}) = f_h(x):=&~{}\frac{1}{\sqrt{2\pi}} \sum_{k \in (\T_h)^*} \widehat{f_h}(k) e^{ixk},\quad\forall x\in \T_h. 
\end{aligned}
\end{equation}
\end{itemize}
There are several ways to define differentials on $\T_h$. Throughout the paper, we use the following three different types of differentials, all of which are consistent with differentiation on $\T$ in the sense that, as the Fourier multiplier, the symbol of discrete differentials formally converges to $ik$, as $h\rightarrow0$.
  \begin{definition}[Differentials on $\mathbb{T}_h$]\label{def: discrete differentials}
$\;$
\begin{enumerate}
\item $\nabla_h$ (resp., $|\nabla_h|$, $\langle \nabla_h\rangle$) denotes the discrete Fourier multiplier of the symbol $\frac{2i}{h}\sin(\frac{hk}{2})$ (resp., $|\frac{2}{h}\sin(\frac{hk}{2})|$, $\langle\frac{2}{h}\sin(\frac{hk}{2})\rangle$), where $\langle \cdot \rangle=(1 + |\cdot|^2)^{\frac12}$.\footnote{These definitions are consistent with the discrete Laplacian $\Delta_h$, because $(-\Delta_h)$ is the Fourier multiplier of the symbol $\frac{4}{h^2}\sin^2(\frac{hk}{2})$, thus, $|\nabla_h|=\sqrt{-\Delta_h}$ and $\langle\nabla_h\rangle=\sqrt{1-\Delta_h}$.}
\item $\partial_h$ (resp., $|\partial_h|$, $\langle \partial_h\rangle$) denotes the discrete Fourier multiplier of the symbol $ik$ (resp., $|k|$, $\langle k\rangle$).
\item $\partial_h^+$ denotes the discrete right derivative naturally defined by
  \begin{equation}\label{right derivative}
    (\partial_h^+ f_h)(x) := \frac{f_h(x+h)-f_h(x)}{h}, \quad \forall x \in \T_h.
  \end{equation}
\end{enumerate}
  \end{definition}
  \begin{remark}
Formally, we have the following convergences as $h \to 0$:
\[\mathbb{T}_h\to\mathbb{T},\quad(\mathbb{T}_h)^*\to\mathbb{Z},\quad\mathcal{F}_h\to \mathcal{F},\quad\mathcal{F}_h^{-1}\to\mathcal{F}^{-1}.\]
    \end{remark}

\subsection{Main result}
The FPU system consists of $2N$ particles arranged in a one-dimensional chain, where neighboring particles interact weakly through nonlinear springs. This system was originally proposed to study the distribution of energy and the dynamical behavior of nonlinear classical particle chains. It can be described using Hamiltonian mechanics, where the motion of particles is governed by a specific Hamiltonian. The total energy of the FPU system is the sum of the kinetic and potential energies, and the Hamiltonian $\mathcal H$ is given by
\begin{equation}\label{eq:FPU Hamiltonian_0}
\mathcal H(q,p)=\sum_{j=-N}^{N-1}\left(\frac{(p(j))^2}{2}+ V\big(q(j+1)-q(j)\big) \right),
\end{equation}
where $(q,p) : \Z_{2N} \to \R \times \R$. Here, $(q(j),p(j))$ denotes the position-momentum pair of the $j$-th particle, and $V\big(q(j+1)-q(j)\big)$ represents the potential energy between adjacent particles $j$ and $j+1$. The potential function $V$ typically includes nonlinear terms and can be expressed as:
\[V(x) = \frac12x^2 + \frac{\alpha}{3}x^3 + \frac{\beta}{4}x^4,\]
where $\alpha$ and $\beta$ are coefficients representing the nonlinear interactions. When $\beta = 0$, the system \eqref{eq:FPU Hamiltonian_0} is called the $\alpha$-FPU system, which is related to the KdV equation. Conversely, when $\alpha = 0$, the system is referred to as the $\beta$-FPU system, which is connected to the modified KdV equation. 

\medskip

In this paper, we fix $\displaystyle \alpha = \frac12$ and $\beta = 0$\footnote{Some results established in this paper {\bf cannot} immediately be obtained for the $\beta$-FPU system. This is due to the presence of stronger resonances compared to those of the the $\alpha$-FPU system.}. Accordingly, our Hamiltonian system is given by
\begin{equation}\label{original Hamiltonian}
\left\{
    \begin{aligned} 
&~{}\mathcal H(q,p)=\sum_{j=-N}^{N-1}\left(\frac{(p(j))^2}{2}+ V\big(q(j+1)-q(j)\big) \right), \\ 
  &~{}V(x)=\frac12x^2+\frac{1}6x^3, \\ 
  &~{}q(j+2N)=q(j), \quad p(j+2N)=p(j), \end{aligned}\right.  
\end{equation}
and this yields the following FPU system:
\begin{equation}\label{FPU before rescaling}
  \left\{
\begin{aligned}
&~{}\partial_t q(t,j) = \frac{\partial \mathcal H}{\partial p(j)} = p(t,j),\\
&~{}\partial_t p(t,j)=-\frac{\partial \mathcal H}{\partial q(j)} = V'\big(q(t,j+1)-q(t,j)\big)-V'\big(q(t,j)-q(t,j-1)\big).
\end{aligned}\right.
\end{equation}
We define the relative displacement between two adjacent points as
\begin{equation}\label{eq:displacement}
r(t,j) := q(t,j+1)-q(t,j).
\end{equation}
This allows us to rewrite the system \eqref{FPU before rescaling} in the form
\[\partial_t^2 r = \Delta_1 \left( V'(r) \right),\]
where $V'$ denotes the usual derivative of the function $V$. Suppose that $N$ is a (sufficiently large) number of particles, and let
\[
h := \frac{\pi}{N}
\]
denote the distance between adjacent points. We rescale the variable $r$ by defining
\[r_h(t,x)=\frac{1}{h^2}r\left(\frac{t}{h^3},\frac{x}{h}\right),  \quad r_h : \R \times \T_h \to \R,\]
which leads to the rescaled equation
\begin{equation}\label{eq:rescaled FPU wave}
h^6\partial_t^2r_h= \Delta_h\left( V'(h^2r_h)\right),
\end{equation}
where $\Delta_h$ is the discrete Laplacian on $\T_h$ (see Section \ref{sec:Intro-Not and Def} for the definitions of $\Delta_h$ and $\T_h$). Note that \eqref{eq:rescaled FPU wave} is now posed on the box $\T_h$ with $2N$ particles. For the potential function $V(x)=\tfrac12x^2+\tfrac16x^3$, one obtains the following initial value problem for the discrete nonlinear wave equation, referred to hereafter as the FPU system:
\begin{equation}\label{FPU}
  \left\{\begin{aligned}
  &~{}\partial_t^2 r_h-\frac{1}{h^4}\Delta_h r_h=\frac{1}{2h^2}\Delta_h\big(r_h^{\,2}\big),\\
  &~{}r_h(0)=r_{h,0}, \quad \partial_t r_h(0)=r_{h,1}.
  \end{aligned}\right.
  \end{equation}
\begin{remark}
The FPU system \eqref{FPU} remains a Hamiltonian equation with the Hamiltonian\footnote{It can be derived from \eqref{original Hamiltonian}.}
\begin{equation}\label{Hamiltonian}
\mathcal H_h(r_h)=h\sum_{x\in \mathbb{T}_h}\left(\frac{1}{2}\bigg(\frac{h^2}{\sqrt{-\Delta_h}}\partial_tr_h\bigg)^2+\frac{1}{h^4}V(h^2r_h)\right).
\end{equation}
\end{remark}

\begin{remark}\label{rem:mean zero}
From \eqref{eq:displacement} and \eqref{FPU before rescaling}, together with the periodicity of $q(j)$ and $p(j)$, a direct computation gives
\[h\sum_{x\in\T_h} r_h(t,x) = h\sum_{x \in \T_h} \frac{1}{h^2}\left(q\left(\frac{t}{h^3},\frac{x}{h}+1\right)-q\left(\frac{t}{h^3},\frac{x}{h}\right)\right) = 0\]
and
\[h\sum_{x\in\T_h} (\partial_t r_h)(t,x) = h\sum_{x \in \T_h} \frac{1}{h^5}\left(p\left(\frac{t}{h^3},\frac{x}{h}+1\right)-p\left(\frac{t}{h^3},\frac{x}{h}\right)\right) = 0\]
for all $t$. These observations are crucial for the formal derivation of the coupled system, particularly when imposing the mean-zero condition on solutions in our analysis.
\end{remark}

\medskip

The main purpose of this paper is to study the continuum limit of the FPU system \eqref{FPU} as $h \to 0$, that is, as the number of particles $N$ tends to infinity. According to Zabusky and Kruskal \cite{ZK-1965}, one can expect that solutions to \eqref{FPU} are approximated by counter-propagating KdV waves
\[  r_h(t,x)\approx w^+\left(t, x-\frac{t}{h^2}\right)+ w^-\left(t, x+\frac{t}{h^2}\right),\]
  in an appropriate sense as $h \to 0$. Here, $w^\pm=w^\pm(t,x):\R \times\T \to\R$ are solutions to
  \begin{equation}\label{KdV0}
\pa_t w^\pm \pm \frac{1}{24} \pa_x^3 w^\pm \mp \frac{1}{4}\pa_x((w^\pm)^2)=0.
  \end{equation}

\medskip

The precise statement of the theorem is as follows:
  \begin{theorem}[KdV limit for periodic FPU]\label{main theorem}
Let $0 < s \le 1$ and $R > 0$ be given. Suppose that the initial data $(r_{h,0},r_{h,1})$ satisfies
\[\sup_{h\in(0,1]}  \left\|\left( r_{h,0},h^2 \nabla_h^{-1}r_{h,1}\right)\right\|_{\mathbb H^s(\T_h)} \le R,\]
where $\mathbb H^s(\T_h)$-norm is defined as in \eqref{eq:H^s for vector} (see Section \ref{sec:DFA1} for details). Then, there exist $0 < h_0 = h_0(R) < 1$ and $T(R)>0$ sufficiently small such that the following holds: 

\medskip

Let $0 < h \le h_0$ be fixed. Let $r_h(t)\in C_t([-T,T]: H^s(\mathbb{T}_h))$ be the solution to FPU \eqref{FPU} with initial data $( r_h(0),\partial_t r_h(0))$
and let $w^\pm(t)\in C_t([-T,T]: H^s(\mathbb{T}))$ be the solution to KdV \eqref{KdV0} with the initial data 
\[w_0^{\pm} =\mathcal{L}_h r_{h,0}^{\pm}, \quad \mbox{where} \quad r_{h,0}^{\, \pm} = \frac12\left( r_{h,0}\mp h^2 \nabla_h^{-1}r_{h,1} \right).\]
Here $\mathcal L_h$ denotes the interpolation operator on $L^2(\T_h)$, defined as in \eqref{def:linear interpolation_0} (see Section \ref{sec:DisLin}). Then, there exists $C(R) > 0$ independent on $h$ such that the following estimate holds: 
\begin{equation}\label{continuum limit}
  \sup_{t \in [-T,T]} \left\|( \mathcal{L}_hr_h)(t,x)- w^+(t, x-\tfrac{t}{h^2})- w^-(t, x+\tfrac{t}{h^2})\right\|_{L^2(\mathbb{T})}\le C(R) h^{\frac{2}{5}s}.
  \end{equation} 
\end{theorem}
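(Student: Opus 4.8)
The plan is to diagonalize the linear FPU evolution into a coupled pair of half-wave equations, to regularize each by a normal form transformation that defeats the one-derivative loss in the nonlinearity $\tfrac1{2h^2}\Delta_h(r_h^2)$, to pass to the two moving frames $x\mapsto x\mp t/h^2$, and finally to compare the resulting regularized FPU dynamics with the regularized KdV dynamics \eqref{KdV0} by a Gronwall-type estimate on a time interval $[-T,T]$ with $T=T(R)$. \emph{Step 1 (diagonalization).} Since $\nabla_h^2=\Delta_h$, writing $r_h=r_h^++r_h^-$ with $r_h^\pm=\tfrac12(r_h\mp h^2\nabla_h^{-1}\partial_t r_h)$ — the splitting already present in the statement — turns \eqref{FPU} into the first-order system
\[\partial_t r_h^\pm=\mp i\,\Lambda_h r_h^\pm\ \mp\ \tfrac14\nabla_h\big((r_h^++r_h^-)^2\big),\qquad \Lambda_h:=-ih^{-2}\nabla_h,\]
where $\Lambda_h$ is the Fourier multiplier of the odd real symbol $\tfrac2{h^3}\sin(\tfrac{hk}2)$, whose Taylor expansion
\[\tfrac2{h^3}\sin(\tfrac{hk}2)=\tfrac{k}{h^2}-\tfrac{k^3}{24}+O(h^2k^5)\qquad(|hk|\lesssim1)\]
drives the whole analysis. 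Conjugating by the translation $x\mapsto x\mp t/h^2$ removes the $\tfrac{k}{h^2}$ term and turns each decoupled equation into a KdV-type equation with dispersion $\pm\tfrac1{24}\partial_x^3$ as in \eqref{KdV0} (up to the signs fixed by the diagonalization), perturbed by a Fourier multiplier of size $O(h^2)$ carrying five derivatives, with nonlinearity $\tfrac14\nabla_h((r_h^++r_h^-)^2)$ and $\nabla_h=\partial_h+O(h^2\partial_h^3)$. The decisive point is that the cross term $r_h^+r_h^-$ and the off-species square are \emph{non-resonant}: in the moving frame they oscillate at frequency $\sim h^{-2}$, so a single integration by parts in time (i.e.\ the normal form) converts them into $O(h^2)$ contributions, and only the self-interaction $\partial_h((r_h^\pm)^2)$ survives to leading order — precisely the KdV nonlinearity.

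\emph{Step 2 (uniform well-posedness and regularization — the main obstacle).} Unlike the whole-line case \cite{HKY2021}, the periodic FPU propagator enjoys no local smoothing, so the derivative in $\partial_h((r_h^\pm)^2)$ cannot be recovered from a smoothing gain. Following \cite{B-1993KdV} I would establish an $L^4$-Strichartz estimate for the FPU group $e^{\mp it\Lambda_h}$ together with the associated bilinear $X^{s,b}$-estimates, \emph{with constants independent of $h\in(0,h_0]$}; combined with the normal form (differentiation-by-parts) transformation of \cite{Babin2011}, which replaces $\partial_h((r_h^\pm)^2)$ by a bounded bilinear term plus a smoothing cubic remainder, this yields well-posedness of \eqref{FPU} on $[-T,T]$ with $T=T(R)$, together with a uniform a priori bound
\[\sup_{0<h\le h_0}\Big(\|r_h^\pm\|_{X^{s,b}_h}+\|r_h^\pm\|_{C_tH^s(\T_h)}\Big)\lesssim_R 1,\]
the analogous $C_tH^s(\T)$-bound for the KdV solutions $w^\pm$ being classical. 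The difficulty is concentrated here: the FPU symbol $\tfrac2{h^3}\sin(\tfrac{hk}2)$ is not a polynomial and its group velocity $\tfrac1{h^2}\cos(\tfrac{hk}2)$ degenerates near $|k|\sim N$, so the Strichartz and bilinear estimates must be proven uniformly in $h$ by separating the regime $|hk|\ll1$ (where the FPU dispersion is a controlled perturbation of the KdV cubic) from $|hk|\sim1$ (handled by a divisor/counting argument in the spirit of \cite{B-1993KdV}), and one must check that the resonance function of the FPU dispersion is bounded below uniformly in $h$ on the relevant frequency ranges, so that the normal-form denominators are harmless; it is here that the restriction $s\le1$ enters.

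\emph{Step 3 (comparison and the exponent $\tfrac25s$).} Let $\widetilde r_h^\pm$ and $\widetilde w^\pm$ denote the normal-form variables attached to $r_h^\pm$ (in its moving frame) and to $w^\pm$, and set $v_h^\pm:=\mathcal L_h\widetilde r_h^\pm-\widetilde w^\pm$, which vanishes at $t=0$ because $w_0^\pm=\mathcal L_h r_{h,0}^\pm$. The Duhamel formula for $v_h^\pm$ carries a forcing term built from: (i) the dispersion mismatch, a multiplier of size $O(h^2)$ with five derivatives applied to $\widetilde w^\pm$; (ii) the interpolation error $\mathcal L_h-\mathrm{Id}$, which on frequency $k$ is $O(h^2k^2)$; (iii) the non-resonant contributions discarded in Step 1, of size $O(h^2)$ (times finitely many derivatives); and (iv) differences of the bounded bilinear and smoothing cubic terms produced by the normal form, which are multilinear in $v_h^\pm$ and in the uniformly bounded $\widetilde r_h^\pm,\widetilde w^\pm$. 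For a frequency cutoff $M$, the pieces (i)--(iii) localized to $|k|\le M$ are $\lesssim_R h^2M^{5-s}$ by $H^s$-interpolation, while the high-frequency tails obey $\|P_{>M}v_h^\pm\|_{L^2}\lesssim_R M^{-s}$; balancing $h^2M^{5-s}\sim M^{-s}$, i.e.\ taking $M\sim h^{-2/5}$, produces the factor $h^{\frac25s}$ in \eqref{continuum limit}.

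\emph{Step 4 (closing).} Feeding the uniform bilinear $X^{s,b}$-estimates of Step 2 into the Duhamel formula for $v_h^\pm$ and using the a priori bounds on $\widetilde r_h^\pm$ and $\widetilde w^\pm$ gives, for some $\theta>0$,
\[\sup_{t\in[-T,T]}\|v_h^\pm(t)\|_{L^2(\T)}\lesssim_R h^{\frac25s}+T^{\theta}\sup_{t\in[-T,T]}\|v_h^\pm(t)\|_{L^2(\T)},\]
so shrinking $T=T(R)$ absorbs the last term. Finally one undoes the moving-frame conjugation and the normal form transformation: their bilinear corrections for $\mathcal L_h\widetilde r_h^\pm$ and for $\widetilde w^\pm$ are built respectively from the FPU and the KdV multipliers, hence differ from one another by the same $O(h^{2s/5})$ plus a multiple of $\|v_h^\pm\|_{L^2}$, and summing the $+$ and $-$ contributions produces exactly \eqref{continuum limit}.
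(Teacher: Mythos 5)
Your proposal is correct and follows essentially the same route as the paper: diagonalization into counter-propagating half-waves in the moving frames, uniform-in-$h$ $L^4$-Strichartz and bilinear $X^{s,b}$ estimates to decouple the cross terms (whose $O(h^{-2})$ time oscillation is exploited exactly as you describe, there via modulation lower bounds rather than a literal integration by parts), the Babin--Ilyin--Titi normal form applied to both the decoupled FPU and KdV systems, and a frequency-truncated Gronwall comparison in which the rate $h^{2s/5}$ arises from interpolating the $O(h^2|k|^5)$ symbol discrepancy between the FPU and Airy flows against $H^s$ regularity. The only organizational difference is that the paper obtains uniform well-posedness from the bilinear estimates alone and reserves the normal form solely for the final decoupled-FPU-to-KdV comparison, whereas you fold it into the well-posedness step as well; this does not affect the argument.
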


\begin{remark}
Theorem \ref{main theorem} states that the interpolated solutions of the FPU system \eqref{FPU} can be approximated by counter-propagating waves that satisfy the KdV equation \eqref{KdV0}, provided the number of particles $N$ is sufficiently large (as $h \to 0$). This result underscores the connection between the discrete FPU system and the continuous KdV equation, demonstrating that the dynamics of the FPU system can be effectively described by the KdV approximation in the continuum limit.
\end{remark}

\begin{remark}
Theorem \ref{main theorem} ensures that the FPU system can be approximated by the KdV equation only when $h > 0$ is sufficiently small, corresponding to a sufficiently large $N$. In contrast, \cite{HKY2021} assumes an infinite number of particles, making it unnecessary to impose further restrictions on $h > 0$, as the continuum limit is inherently satisfied.
\end{remark}

\begin{remark}
The assumption on the initial data is simplified compared to the previous work \cite{BP-2006}. Specifically, we only assume a uniform bound on the size of the initial data in a natural Sobolev norm (without any additional weight). Furthermore, the regularity requirement for the continuum limit is reduced to $s>0$, representing a better regularity result (closer to $s=0$) compared to \cite{HKY2021}, where the same problem was considered under the setting of infinite chains.
\end{remark}

%\begin{remark}
%The regularity restriction $s \le 1$ arises from the use of the specific linear interpolation operator $\mathcal L_h$ (see \eqref{def:linear interpolation_0}). Other interpolation methods might yield different results, particularly for $s>1$. For the same reason, the convergence rate $h^{\frac25s}$ matches the result in \cite{HKY2021}, where the continuum limit of the FPU is analyzed on the whole line $\R$.
%\end{remark}

\begin{remark}
Let $\T_N = \R/(2N\Z)$ denote a periodic domain of period $2N$, and define $\displaystyle T_N = \left(\frac{N}{\pi}\right)^3T$, where $T$ is found in Theorem \ref{main theorem}. By rescaling, the continuum limit \eqref{continuum limit} in Theorem \ref{main theorem} can be interpreted as the small amplitude limit
\begin{equation}\label{Small amplitude limit}\begin{aligned}
  \sup_{|t| \le T_N} 
  & \left\|( \mathcal{L}_1r)(t,x)- \left(\tfrac{\pi}{N}\right)^2\left(w^+\left(\left(\tfrac{\pi}{N}\right)^3t, \left(\tfrac{\pi}{N}\right)(x-t)\right) + w^-\left(\left(\tfrac{\pi}{N}\right)^3t, \left(\tfrac{\pi}{N}\right)(x+t)\right)\right)\right\|_{L_x^2(\T_N)} \\
  &\le C(R) N^{-\frac32-\frac{2}{5}s},
\end{aligned}  \end{equation} 
This result is consistent with the findings in \cite{HKY2021}, which demonstrate that, under the assumption of an infinite number of particles, the FPU system is described by the KdV equation in the small amplitude limit. It further emphasizes the relationship between the continuum limit of the FPU system and the dynamics governed by the KdV equation.

\smallskip

Moreover, \eqref{Small amplitude limit} confirms that the KdV approximation remains valid over a specific timescale $N^3$, which is consistent with the observations in \cite{BP-2006}.
\end{remark}

\begin{remark}
In the periodic domain, the dynamics of the FPU system are accurately captured by the decoupled KdV equation. The solutions to the decoupled KdV system represent traveling waves that propagate without significant changes in shape or amplitude. These solutions are closely related to the energy distribution among low-frequency modes in the FPU system. Despite the presence of nonlinear interactions, the KdV equation effectively describes the observed behaviors of the FPU system. Moreover, the specific structure of nonlinearity in the FPU system suppresses constructive interference between waves. This contrasts with the typical behaviors observed in short-wavelength systems and provides a precise explanation for the metastability of the FPU system, as introduced in \cite{BP-2006}.
\end{remark}

\begin{remark}
The FPU system does not admit higher regularity conservation laws, while the conservation of the Hamiltonian \eqref{Hamiltonian} ensures that $L^2$ FPU solutions remain well-defined globally in time. This observation suggests the possibility of establishing the continuum limit for $L^2$-data. Consequently, both \eqref{continuum limit} and \eqref{Small amplitude limit} would hold for arbitrary times, provided such low regularity convergence can be achieved. However, our approach does not attain this result, leaving it as an intriguing open question.
\end{remark}

\begin{remark}\label{rem:novelty}
The main novelty of this article lies in presenting an analytic framework that captures the KdV nature under periodic boundary conditions, enabling the establishment of the continuum limit of the FPU system in the low-regularity regime. This achievement, previously observed for the KdV equation in \cite{B-1993KdV}, allows us to overcome the lack of local smoothing effects highlighted in \cite{HKY2021}.
\end{remark}

\subsection{vs. non-periodic problem}
Local smoothing is a fundamental property of solutions to dispersive equations, arising from the dispersive nature of their governing equations. This property refers to the phenomenon, typically observed in dispersive equations on unbounded domains, where solutions become smoother (in terms of Sobolev regularity) over time within localized spatial regions. Remark that this effect is not uniform across the entire domain but depends on dispersive characteristics of the equations and the specific spatial region under consideration.

The local smoothing effect results from the interplay between dispersion and time integration. Specifically, dispersion causes waves of different frequencies to spread apart, reducing constructive interference and oscillatory behavior in localized areas. Simultaneously, the spreading wavefront smooths out oscillatory irregularities, producing a locally smoother profile over time. This effect is a crucial tool for studying dispersive equations, particularly those with nonlinearities involving derivatives.

In contrast, in the periodic domain, waves are confined to the bounded region and mapped onto the periodic boundary. This confinement prevents frequencies from separating indefinitely, as they do in unbounded domains, significantly weakening the dispersion effect. Furthermore, the bounded nature of the periodic domain often results in persistent constructive interference, thereby inhibiting the smoothing effect and sustaining oscillatory behavior. 

Dispersive estimates, on the other hand, describe how solutions to dispersive equations spread out over time. In unbounded domains $\R^d$, dispersion arises from waves with different frequencies traveling at different speeds, leading to a decay in the amplitude of the solution over time. In other words, waves can spread indefinitely in unbounded domains, causing the solution's amplitude to decay. For instance, for the linear Schr\"odinger equation $iu_t + \Delta u = 0$, the dispersive estimate for the solution is given by
\begin{equation}\label{eq:dispersive estimates}
\|u(t)\|_{L^{\infty}(\R^d)} \lesssim |t|^{-\frac{d}{2}}\|u(0)\|_{L^1(\R^d)},
\end{equation}
which shows that the $L^{\infty}$-norm of the solutions decays as time increases, with the rate determined by the spatial dimension $d$. Based on this property, Strichartz estimates, mixed-norm estimates that describe how solutions behave in terms of integrability and regularity over time and space, can be established. These estimates play an important role in the study of well-posedness, scattering, and stability theory for dispersive equations.

In periodic domains $\T^d$, however, dispersive behavior differs significantly due to the compactness of the domain. In other words,  waves are confined to the bounded domain, and their amplitudes do not decay as strongly over time. Instead of decay in $L^{\infty}$, estimates often focus on bounds for mixed-norm spaces or averaging effects over time:
\begin{equation}\label{eq:L^q estimate}
\|u\|_{L^{q}([0,T] \times \T^d)} \lesssim \|u(0)\|_{H^s(\T^d)},
\end{equation}
for some regularity $s > 0$, reflecting a loss of regularity. See, for instance, \cite{Zygmund1974, Vega1992, B-1993Sch, BGT2003} for the linear Schr\"odinger equation. In particular, $L^4$-Strichartz estimates ensure the $L^2$ global well-posedness of the one-dimensional (cubic) nonlinear Schr\"odinger equation (NLS) (\cite{B-1993Sch}).

\smallskip

In discrete settings, even for infinite lattices ($h\Z$), the standard conservative scheme fails to reproduce dispersive estimates \eqref{eq:dispersive estimates} uniformly with respect to the mesh parameter $h$, or to recover Strichartz estimates (see \cite{IZ2005, IZ-2009}). Nevertheless, uniform (in $h$) Strichartz estimates with a loss of regularity address these issues and enable the continuum limit of discrete NLS \cite{HY-2019DCDS, HY-2019SIAM}.

On lattice domains with periodic boundary conditions, the situation becomes more complex. Inspired by Bourgain \cite{B-1993Sch, B-1993KdV}, it is expected that the lack of local smoothing effect can be compensated by the \emph{dispersive smoothing effect}. The dispersive smoothing effect means an additional gain in regularity, which arises from the observation that the space-time Fourier coefficients of interactions among distinct frequency-localized solutions are concentrated in regions far from the hypersurface $\{(\tau,k) : \tau = \rho(k)\}$, where $\rho(k)$ is the Fourier symbol of the linear operator of the governing equation.

Focusing on the FPU system, the phase functions of its linear propagators (see also Definition \ref{def:Xsb_h} below)
\[s_h(k) = \frac{1}{h^2}\left(k - \frac{2}{h}\sin\left(\frac{hk}{2}\right)\right)\] 
and their derivatives (group velocities) are comparable to those of Airy propagators. Indeed, 
\[\frac{1}{h^2}\left(k - \frac{2}{h}\sin\left(\frac{hk}{2}\right)\right) \sim k^3 \quad \mbox{and} \quad s_h'(k) = \frac{1}{h} \left(1-\cos\left(\frac{hk}{2}\right)\right) \sim k^2\]
on the frequency domain $(\T_h)^*$. These observations allow us to recover the Strichartz, local smoothing and maximal function estimates on $h\Z$, as well as the dispersive smoothing effect in the form of the bilinear estimates (see Proposition 5.1 and Lemma 6.1 in \cite{HKY2021}). On $\T_h$, the dispersive smoothing effect is captured via $L^4$-Strichartz estimates and the bilinear estimates (see Proposition \ref{prop:L4} and Corollary \ref{Cor:Linear Bilinear}, respectively), which represent significant contributions of this work as mentioned in Remark \ref{rem:novelty}. Such results highlight the non-trivial properties of solutions to discrete equations. For instance, solutions to the discrete Schr\"odinger equation 
\[i\partial_t u_h + \Delta_h u_h = 0\]
on $h\Z$ fail to exhibit local smoothing due to the mismatch between two group velocities corresponding to the discrete Laplacian, $\displaystyle  -\frac{2}{h}\sin(hk)$, and the continuous Laplacian, $k$, particularly, near the high-frequency edges $\displaystyle \left(k = \pm\frac{\pi}{h}\right)$ (see \cite{IZ-2009}). For the same reason, solutions on $\T_h$ fail to satisfy $L^4$-Strichartz estimates \eqref{eq:L^q estimate}, due to the near-overlap of twisted annuli, given by
\[\left\{k_1 \in (\T_h)^* : M \le \left|\tau_1 + \frac{2}{h^2}\left(1-\cos\left(hk_1\right)\right)\right| \le 2M\right\}\]
and 
\[\left\{k_2 \in (\T_h)^* : N \le \left|\tau_2 + \frac{2}{h^2}\left(1-\cos\left(hk_2\right)\right)\right| \le 2N\right\},\]
under the constraints $k = k_1+k_2$ and $\tau = \tau_1 + \tau_2$. This overlap reflects a loss of regularity, consistent with the standard Sobolev estimate. Further details and discussions on this failure, see \cite{Ignat2007,HKNY2021}. 

\smallskip

In \cite{HKY2021}, the authors, in collaboration with Hong, proved the continuum limit of the FPU system as the mesh size $h$ tends to $0$. A critical component of this work was deriving local smoothing and maximal function estimates on $h\Z$. These estimates played a key role in controlling one derivative in the nonlinearity, enabling the continuum limit to hold in a broader function space than in the earlier work \cite{SW-2000}. This advancement highlights the broader applicability of the continuum limit in settings where stricter topological constraints are not feasible.

However, as previously discussed, local smoothing and maximal function estimates are no longer valid on $\T_h$. This issue can be resolved by employing dispersive smoothing effects, particularly when deriving uniform bounds for low-regularity solutions. Nonetheless, proving the continuum limit remains challenging because interpolated solutions cannot be included in the $X^{s,b}$ spaces associated with the KdV equation. To resolve this final issue, we employ the normal form reduction method for regularizing the system. Remarkably, not only do the original FPU system and the KdV equation correspond, but their regularized systems also exhibit the same correspondence.

One notable aspect of the FPU system under periodic boundary conditions is the presence of non-trivial resonances. Resonance occurs when energy is transferred or concentrated in specific frequencies or modes under certain conditions. On a periodic domain, resonance primarily arises in at least two scenarios. First, nonlinear terms facilitate interactions between different frequency modes, generating new modes or transferring energy between existing ones. This phenomenon particularly arises when Fourier modes satisfy specific conditions. Second, resonance can emerge when certain combinations of frequencies result in matching phase or group velocities, causing energy to focus in particular directions. As a consequence of resonances, energy transfer occurs between low-frequency and high-frequency modes, significantly influencing the long-term dynamics of nonlinear waves. In some cases, resonance can disrupt solitary waves (solitons) or give rise to complex dynamics such as turbulence. Mathematically, the interaction of different frequencies generates new frequency components, further complicating the analysis and interpretation of the system.

The standard resonance for the KdV equation (and similarly for the FPU system) occurs when at least one Fourier mode in the nonlinear interactions becomes zero. This resonance can be eliminated via renormalization, ensuring that solutions satisfy the mean-zero condition. For the FPU system, the structure of the Hamiltonian system allows the mean-zero condition to be directly imposed, effectively ignoring the first resonance case. However, when the normal form reduction method is applied to the FPU system (or the KdV equation), the nonlinear terms are redistributed, leading to the emergence of new resonances. These include a strong cubic resonance, expressed for the FPU system as
\begin{equation}\label{eq:RESONANCE FPU}
\mp2i \frac{\cos\left(\frac{hk}{2}\right)\cos\left(\frac{hk}{4}\right)}{\frac{4}{h}\sin\left(\frac{hk}{4}\right)}\left|\widehat{\mathcal{V}_h^{\pm}}(t,k)\right|^2\widehat{\mathcal{V}_h^{\pm}}(t,k)
\end{equation}
and for the KdV equation as
\begin{equation}\label{eq:RESONANCE KDV}
\mp\frac{2i}{k}\left|\widehat{\mathcal{W}^{\pm}}(t,k)\right|^2\widehat{\mathcal{W}^{\pm}}(t,k),
\end{equation}
under the specific condition of frequencies $k_1=-k_2=k_3=k$. In addition to these strong resonances, a weaker resonance arises, which is characterized by its dependence on the total mass (square integral) of the solutions. In the case of the KdV equation, this weaker resonance does not appear because the oddness of the multiplier causes cancellation. In contrast, the regularized FPU system lacks this cancellation property, resulting in the presence of a weaker resonance (see Section \ref{sec:regularization} for further details). This weaker term can be effectively controlled by the additional continuum parameter $h$ present in the resonant term. It is also worth noting that the strong resonances described in \eqref{eq:RESONANCE FPU} and \eqref{eq:RESONANCE KDV} are comparable.

\subsection{Idea of the proof}
The proof of Theorem \ref{main theorem} is as follows: First, inspired by Zabusky and Kruskal \cite{ZK-1965}, we reformulate the FPU Hamiltonian system \eqref{FPU} by separating its Duhamel formula into two coupled equations \eqref{coupled FPU'}. In the coupled system, a more intuitive approach for understanding the limit procedure can be achieved by analyzing the symbols of the linear propagators and their asymptotics (see Remark \ref{rem:formal convergence of propagator}). Moreover, the coupled terms can be treated as small perturbations that become negligible in the continuum limit, leading to a convergence scheme for the KdV equation via the decoupled FPU \eqref{decoupled FPU'}. This reformulation makes the problem well-suited for the dispersive PDE techniques presented below.

Introducing the Fourier restriction norm method, initially developed by Bourgain \cite{B-1993Sch, B-1993KdV}, this approach is applied to both coupled and decoupled FPU systems, facilitating the decoupling of the coupled system. Specifically, we first establish $L^4$-Strichartz estimates, which demonstrate that the FPU solutions exhibit dispersive properties under the periodic boundary condition (see Section \ref{subsec:linear estimates}). Based on these estimates, bilinear estimates are derived (see Section \ref{subsec:Bilinear estimates}). It is worth noting that, by utilizing the gap in regularities, the mixed term and the oppositely moving wave can be rendered negligible, enabling the extraction of the decoupled system (see Remark \ref{rem:remove mixed nonlinearities} and Lemmas \ref{lem:bilinear2} and \ref{lem:bilinear3}). As a by-product, uniform-in-$h$ bounds can also be obtained for both systems as well as for the KdV equation.

To establish the continuum limit of the decoupled FPU system to the KdV equation, an additional technical method is required, distinct from the approach based on $h\Z$ \cite{HKY2021}. As previously mentioned, the local smoothing and maximal function estimates are not applicable to the periodic setting. Instead, we apply the regularization mechanism, which was developed in its current form by Babin, Ilyin, and Titi \cite{Babin2011} (known as the \emph{normal form method}), to both the decoupled FPU system and the KdV equation. Surprisingly, not only do the original decoupled FPU system and the KdV equation align well in the continuum limit (as $h \to 0$), but their higher-degree regularized counterparts also exhibit a strong match under the same limit (see Section \ref{sec:regularization}). The standard multilinear estimates (Lemma \ref{lem:multilinear estimate}), combined with $L^4$-Strichartz estimates and certain properties of the linear interpolation operator (Section \ref{sec:DisLin}), enable us to complete the proof.

\subsection*{Organization }
The paper is organized as follows: Section \ref{sec:DFA} presents preliminary analyses of discrete Fourier analysis and explores properties of the linear interpolation operator. In Section \ref{sec: outline}, we derive the coupled and decoupled FPU systems and establish their well-posedness in $L^2(\T_h)$. We also provide a brief proof of Theorem \ref{main theorem} assuming bridge Propositions \ref{Prop:Coupled to Decoupled} and \ref{prop:from decoupled to kdv}. Section \ref{sec:part1} introduces the $X^{s,b}$ spaces tailored to our systems, along with $L^4$-Strichartz estimates and bilinear estimates to demonstrate the uniform-in-$h$ well-posedness of these systems. Collecting these results, we establish Proposition \ref{Prop:Coupled to Decoupled}, a key component of the paper. Section \ref{sec:regularization} develops regularized FPU and KdV systems as part of the continuum limit proof. Finally, Section \ref{sec:part2} provides the proof of Proposition \ref{prop:from decoupled to kdv}, another key result in this paper.

\subsection*{Acknowledgements}
The first author is grateful for support by the Open KIAS Center at Korea Institute for Advanced Study. C. K. was supported by Young Research Program, National Research Foundation of Korea(NRF) grant funded by the Korea government(MSIT) (No. RS-2023-00210210) and the Basic Science Research Program through the National Research Foundation of Korea (NRF) funded by the Ministry of Education (No. 2019R1A6A1A11051177). C. Yang was supported by the National Research Foundation of Korea(NRF) grant funded by the Korea government(MSIT) (No. 2021R1C1C1005700).

\section{Preliminary : Discrete Fourier analysis on $\T_h$}\label{sec:DFA}
\subsection{Notations and basic definitions}\label{sec:DFA1}
Throughout this paper, we deal with two different types of functions, i.e.,  functions defined on $\T$ and $\T_h$. To avoid possible confusion, we use the subscript $h$ for functions on $\T_h$ with no exception. For instance, the functions $u_h$, $v_h$, and $w_h$ are defined on $\T_h$, while $u$, $v$, and $w$ are defined on $\T$.

On the other hand, we assign small letters $x, y, z, ...$ to spatial variables regardless of whether they are on $\T$ or $\T_h$, if there is no confusion. Note that the subscript $h$ determines the space of the spatial variable.

Let $a, b \in \R_+$. We use $a \lesssim b$ when $a \le Cb$ for some $C >0$. Conventionally, $a \sim b$ means $a \lesssim b$ and $b \lesssim a$. Moreover, $a \lesssim_s b$ means that the implicit constant $C > 0$ depends on $s$.

For notational convenience, we may abbreviate the domain and codomain of a function in the norm. For example, for $f_h=f_h(x): \T_h\to \R$ (resp., $f=f(x):\T \to \R$), 
\[\|f_h\|_{L^p}=\|f_h\|_{L^p(\mathbb{T}_h)}\quad\left(\textup{resp., }\|f\|_{L^p}=\|f\|_{L^p(\mathbb{T})}\right)
\]
and for $F_h=F_h(t,x):\R\times \mathbb{T}_h\to\mathbb{R}$ (resp., $F=F(t,x):\R\times\mathbb{T}\to\mathbb{R}$), 
\[
\|F_h\|_{L_t^qL^r}=\|F_h\|_{L_t^q(\R:L^r(\mathbb{T}_h))}\quad\left(\textup{resp., }\|F\|_{L_t^qL^r}=\|F\|_{L_t^q(\R:L^r(\mathbb{T}))}\right).
\]
For $s \in \R$, we define the Sobolev space $H^s(\T_h)$ as the Hilbert space equipped with the norm
   \[ \|f_h\|_{H^s(\T_h)}:= \|\langle \nabla_h \rangle^sf_h\|_{L^2(\T_h)}.\]
\begin{remark}
In view of Definition \ref{def: discrete differentials}, $H^s$ norm is equivalent to $\displaystyle \|(1-\Delta_h)^{\frac{s}{2}}f_h\|_{L^2(\T_h)}$ or 
\begin{equation}\label{eq:Hs norm}
\left(\sum_{k \in (\T_h)^*}\langle k\rangle^{2s}\left|\widehat{f_h}(k)\right|^2\right)^{1/2}
\end{equation}
by Plancherel's theorem. In what follows, we use \eqref{eq:Hs norm} for $\|f_h\|_{H^s(\T_h)}$ 
\end{remark}
For a vector-valued function $(f_h,g_h): \T_h\to \R\times\R$ (resp. $(f,g): \T\to \R\times\R$) and $s \ge 0$, let $\mathbb{H}^s(\T_h)$ (resp. $\mathbb{H}^s(\T)$) denote the Sobolev space consisting of vector-valued functions whose components belong to $H^s(\T_h)$ (resp. $H^s(\T)$), i.e.,
\begin{equation}\label{eq:H^s for vector}
\begin{aligned}
&~{}\mathbb{H}^s(\T_h) := \{(f_h,g_h) : \|(f_h,g_h)\|_{\mathbb{H}^s(\T_h)}^2 = \|f_h\|_{H^s(\T_h)}^2 + \|g_h\|_{H^s(\T_h)}^2 < \infty\}.\\
&~{}(\mbox{resp.} \;\;  \mathbb{H}^s(\T) := \{(f,g) : \|(f,g)\|_{\mathbb{H}^s(\T)}^2 = \|f\|_{H^s(\T)}^2 + \|g\|_{H^s(\T)}^2 < \infty\})
\end{aligned}
\end{equation}

\subsection{Basic inequalities and Fourier transform on a lattice}
In this section, we summarize some useful inequalities for functions on $\T_h$. 
\begin{lemma}[Lemma 2.1 in \cite{HKNY2021}]
$\;$
\begin{enumerate}
\item (Duality) For $1< p,q < \infty$ satisfying $\frac{1}{p}+\frac{1}{q}=1$, we have 
\[  \| f_h\|_{L^p(\T_h)} = \sup_{\|g_h\|_{L^{q}(\T_h)}=1}h\left| \sum_{x\in\T_h} f_h(x)\overline{g_h(x)}\right|.\]
\item (H\"older's inequality) For $1\le p,q, r\le \infty$ satisfying $\frac1p+\frac1q=\frac{1}{r}$, we have 
\[\| f_hg_h\|_{L^r(\T_h)} \leq \| f_h\|_{L^p(\T_h)}\| g_h\|_{L^q(\T_h)}.\]
\end{enumerate}
\end{lemma}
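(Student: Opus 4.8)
The plan is to recognize $L^p(\T_h)$ as the Lebesgue space of the finite measure space $(\T_h,\mu_h)$ equipped with the weighted counting measure $\mu_h := h\sum_{x\in\T_h}\delta_x$. Under this identification $\|f_h\|_{L^p(\T_h)}$ is exactly $\big(\int_{\T_h}|f_h|^p\,d\mu_h\big)^{1/p}$ for $1\le p<\infty$ and the essential supremum for $p=\infty$, since $\T_h$ consists of finitely many points. Both assertions then reduce to the classical $L^p$-duality and Hölder inequality for an arbitrary measure space, so the only work is to carry the factor $h$ correctly and to treat the endpoint exponents by hand.

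For part (2) I would first suppose $p,q<\infty$. Because $\tfrac1p+\tfrac1q=\tfrac1r$ with $\tfrac1q>0$ forces $p>r$, and symmetrically $q>r$, the exponents $\tfrac pr$ and $\tfrac qr$ are conjugate, i.e.\ $\tfrac rp+\tfrac rq=1$ with both factors $\ge 1$. Applying the classical Hölder inequality on $(\T_h,\mu_h)$ to $|f_h|^r$ and $|g_h|^r$ with these exponents gives
\[ \int_{\T_h}|f_hg_h|^r\,d\mu_h \le \Big(\int_{\T_h}|f_h|^p\,d\mu_h\Big)^{r/p}\Big(\int_{\T_h}|g_h|^q\,d\mu_h\Big)^{r/q}, \]
and taking $r$-th roots yields $\|f_hg_h\|_{L^r}\le\|f_h\|_{L^p}\|g_h\|_{L^q}$. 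The endpoint cases are immediate: if $p=\infty$ then $q=r$ and $|f_h(x)g_h(x)|\le\|f_h\|_{L^\infty}|g_h(x)|$ pointwise, the case $q=\infty$ is symmetric, and $r=\infty$ forces $p=q=\infty$.

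For part (1) the bound $\le$ is precisely (2) with $r=1$: for any $g_h$ with $\|g_h\|_{L^q}=1$,
\[ h\Big|\sum_{x\in\T_h}f_h(x)\overline{g_h(x)}\Big| \le h\sum_{x\in\T_h}|f_h(x)||g_h(x)| = \|f_hg_h\|_{L^1} \le \|f_h\|_{L^p}. \]
For the reverse direction, assuming $f_h\not\equiv 0$ (the zero case being trivial), I would exhibit an extremizer,
\[ g_h(x) := \|f_h\|_{L^p}^{-p/q}\,|f_h(x)|^{p-1}\operatorname{sgn}\big(f_h(x)\big). \]
Using $(p-1)q=p$ one computes $\|g_h\|_{L^q}^q = \|f_h\|_{L^p}^{-p}\,h\sum_x|f_h(x)|^p = 1$, while $h\sum_x f_h(x)g_h(x) = \|f_h\|_{L^p}^{-p/q}\,h\sum_x|f_h(x)|^p = \|f_h\|_{L^p}^{\,p-p/q}=\|f_h\|_{L^p}$, so the supremum is attained and equals $\|f_h\|_{L^p}$.

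There is no genuine obstacle here, as this is a standard fact transported to the weighted counting measure; the only points requiring care are the automatic strict inequalities $p,q>r$ that legitimize the conjugate-exponent reduction in (2), the verification (hinging on $(p-1)q=p$ and $p-\tfrac pq=1$) that the extremizer in (1) has unit $L^q$-norm and realizes the value $\|f_h\|_{L^p}$, and the separate but trivial handling of the endpoint exponents and of $f_h\equiv 0$.
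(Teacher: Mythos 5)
Your proof is correct. The paper does not prove this lemma at all --- it is quoted verbatim from Lemma 2.1 of \cite{HKNY2021} --- and your argument (viewing $L^p(\T_h)$ as the $L^p$ space of the weighted counting measure $h\sum_{x\in\T_h}\delta_x$, reducing H\"older to the conjugate-exponent case via $|f_h|^r,|g_h|^r$, and exhibiting the standard extremizer $g_h=\|f_h\|_{L^p}^{-p/q}|f_h|^{p-1}\operatorname{sgn}(f_h)$ for the duality, which is legitimate since the paper's $L^p(\T_h)$ consists of real-valued functions) is exactly the standard route one would expect the cited reference to take; the exponent checks $(p-1)q=p$ and $p-\tfrac{p}{q}=1$ are right.
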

\begin{lemma}[Properties of the Fourier transform on a periodic lattice, Lemma 2.3 in \cite{HKNY2021}]\label{prelim properties}
$\;$

\begin{enumerate}
\item (Inversion)
\[\mathcal{F}_h^{-1}\circ \mathcal{F}_h=\textup{Id}\textup{ on }L^2(\mathbb{T}_h),\quad\mathcal{F}_h\circ \mathcal{F}_h^{-1}=\textup{Id}\textup{ on }L^2((\mathbb{T}_h)^*).\]
\item (Plancherel's theorem) 
\[h\sum_{x\in\T_h}f_h(x)\overline{g_h(x)} = \sum_{k\in(\T_h)^*}\widehat{f_h}(k)\overline{\widehat{g_h}(k)}.\]
\item (Fourier transform of a product)
\begin{equation}\label{FT of product}
  \mathcal{F}_h(f_hg_h)(k)= \frac{1}{\sqrt{2\pi}}\sum_{k'\in(\T_h)^*}\widehat{f_h}(k')\widehat{g_h}(k-k'). 
\end{equation}
\end{enumerate}
\end{lemma}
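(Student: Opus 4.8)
The plan is to reduce all three statements to the fundamental \emph{discrete orthogonality relations} on the finite group $\T_h$ and its dual $(\T_h)^*$, each of which has exactly $2N$ elements under $h=\pi/N$. First I would establish, for $x,y\in\T_h$ and $k,k'\in(\T_h)^*$,
\begin{equation}
\sum_{k\in(\T_h)^*} e^{i(x-y)k} = \frac{2\pi}{h}\,\delta_{x,y}, \qquad h\sum_{x\in\T_h} e^{ix(k-k')} = 2\pi\,\delta_{k,k'}.
\end{equation}
Writing $x=hn$, $y=hm$ with $n,m\in\{-N,\dots,N-1\}$, the first sum becomes the finite geometric series $\sum_{k=-N}^{N-1} e^{i\pi(n-m)k/N}$, whose ratio $e^{i\pi(n-m)/N}$ is a $2N$-th root of unity: it equals $1$ exactly when $n\equiv m \pmod{2N}$, i.e. $x=y$, in which case all $2N=2\pi/h$ terms are $1$, and otherwise the sum runs over a full period of a nontrivial root of unity and vanishes. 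The dual relation is proved identically after interchanging the roles of $x$ and $k$ and using $h\cdot 2N=2\pi$.

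Given these, the inversion identity follows by direct substitution of the definitions \eqref{Fourier transform} and \eqref{Inverse Fourier transform}. I would compute
\begin{equation}
(\mathcal{F}_h^{-1}\circ\mathcal{F}_h f_h)(x) = \frac{1}{\sqrt{2\pi}}\sum_{k\in(\T_h)^*}\Big(\frac{h}{\sqrt{2\pi}}\sum_{y\in\T_h} f_h(y)e^{-iyk}\Big)e^{ixk} = \frac{h}{2\pi}\sum_{y\in\T_h} f_h(y)\sum_{k\in(\T_h)^*} e^{i(x-y)k},
\end{equation}
where interchanging the two finite sums is unproblematic, and then apply the first orthogonality relation to collapse the inner sum to $\tfrac{2\pi}{h}\delta_{x,y}$, leaving $f_h(x)$. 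The reverse composition $\mathcal{F}_h\circ\mathcal{F}_h^{-1}=\mathrm{Id}$ on $L^2((\T_h)^*)$ is handled the same way using the dual relation. Plancherel's identity is then obtained by inserting the inversion formula for both $f_h$ and $\overline{g_h}$ into $h\sum_{x\in\T_h} f_h(x)\overline{g_h(x)}$, interchanging the finite sums, and invoking the dual orthogonality relation to force $k=k'$.

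The product formula uses the same mechanism but carries the one genuine subtlety I expect to be the only real point of care. Substituting the inversion formula for $f_h$ and $g_h$ into the definition of $\mathcal{F}_h(f_h g_h)(k)$ produces
\begin{equation}
\mathcal{F}_h(f_hg_h)(k) = \frac{h}{(2\pi)^{3/2}}\sum_{k_1,k_2\in(\T_h)^*}\widehat{f_h}(k_1)\widehat{g_h}(k_2)\sum_{x\in\T_h} e^{ix(k_1+k_2-k)}.
\end{equation}
The inner sum equals $\tfrac{2\pi}{h}$ precisely when $k_1+k_2\equiv k \pmod{2N}$ and vanishes otherwise, so the surviving constraint on $k_2$ is $k_2\equiv k-k_1 \pmod{2N}$ rather than the exact equality $k_2=k-k_1$. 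This is exactly where the $2N$-periodicity of the Fourier coefficients noted after \eqref{Fourier transform} is essential: it allows $\widehat{g_h}(k_2)$ to be replaced by $\widehat{g_h}(k-k_1)$ regardless of whether $k-k_1$ lies in the fundamental domain $\{-N,\dots,N-1\}$. Summing out $k_2$ then yields the claimed convolution with the normalization constant collapsing to $\tfrac{1}{\sqrt{2\pi}}$. Since every sum in this argument is finite, no convergence questions arise, and the entire lemma is a consequence of the finite-dimensional linear algebra of the discrete Fourier transform.
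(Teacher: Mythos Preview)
Your proof is correct. The paper does not actually supply a proof of this lemma; it simply cites the result from \cite{HKNY2021}, so there is no argument in the paper to compare against. Your direct approach via the discrete orthogonality relations on the finite group $\T_h$ is the standard one, and you correctly handle the one nontrivial point in the product formula, namely that the constraint $k_1+k_2\equiv k\pmod{2N}$ must be reconciled with the stated convolution $\widehat{g_h}(k-k_1)$ by invoking the $2N$-periodicity of $\widehat{g_h}$ noted after \eqref{Fourier transform}.
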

\begin{remark}
On $\T$, one also has the same properties as Lemma \ref{prelim properties}:
\begin{enumerate}
\item (Inversion)
\[\mathcal{F}^{-1}\circ \mathcal{F}=\textup{Id}\textup{ on }L^2(\mathbb{T}),\quad\mathcal{F}\circ \mathcal{F}^{-1}=\textup{Id}\textup{ on }\ell^2(\Z).\]
\item (Plancherel's theorem) 
\[\int_{\T} f(x)\overline{g(x)} \; dx = \sum_{k\in\Z}\widehat{f}(k)\overline{\widehat{g}(k)}.\]
\item (Fourier transform of a product)
\begin{equation}\label{FT of product-conti}
  \mathcal{F}(fg)(k)= \frac{1}{\sqrt{2\pi}}\sum_{k'\in \Z}\widehat{f}(k')\widehat{g}(k-k'). 
\end{equation}
\end{enumerate}
\end{remark}

\begin{lemma}[Sobolev embedding, Lemma 2.7 in \cite{HKNY2021}]\label{lem:Bernstein inequality}
Suppose that $0 \le s \le \frac12$, $q \ge 2$, and $\frac1q = \frac12 - s$. Then, for any $\epsilon > 0$, we have
\[\|f_h\|_{L^q(\T_h)} \lesssim \|f_h\|_{H^{s+\epsilon}}.\]
\end{lemma}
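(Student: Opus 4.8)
The plan is to pass to the Fourier side via a Hausdorff--Young inequality on the lattice and then close the estimate with a single application of H\"older's inequality, which trades the $\ell^{q'}$-summation for the weighted $\ell^2$-norm defining $H^{s+\epsilon}$. The case $s=0$ (hence $q=2$) is immediate from Plancherel's theorem (Lemma \ref{prelim properties}), so I would assume $0 < s \le \tfrac12$. Writing $q'$ for the dual exponent of $q$, the hypothesis $\tfrac1q = \tfrac12 - s$ gives $\tfrac1{q'} = \tfrac12 + s$, i.e. $q' = \tfrac{2}{1+2s} \in [1,2)$.

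First I would establish the inverse Hausdorff--Young bound $\|f_h\|_{L^q(\T_h)} \lesssim \|\widehat{f_h}\|_{\ell^{q'}((\T_h)^*)}$ with a constant independent of $h$. This follows by Riesz--Thorin interpolation of $\mathcal{F}_h^{-1}$ between its two endpoints: the isometry $\mathcal{F}_h^{-1}\colon \ell^2 \to L^2$ from Plancherel's theorem (Lemma \ref{prelim properties}), and the bound $\mathcal{F}_h^{-1}\colon \ell^1 \to L^\infty$, which is the pointwise estimate $|f_h(x)| \le \tfrac{1}{\sqrt{2\pi}}\sum_{k} |\widehat{f_h}(k)| = \tfrac{1}{\sqrt{2\pi}}\|\widehat{f_h}\|_{\ell^1}$ read directly from the inversion formula \eqref{Inverse Fourier transform}. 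Both endpoint constants are independent of $h$, so the interpolated constant is as well.

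Next, on the Fourier side I would factor
\[
|\widehat{f_h}(k)|^{q'} = \left(\langle k\rangle^{s+\epsilon}|\widehat{f_h}(k)|\right)^{q'}\,\langle k\rangle^{-(s+\epsilon)q'}
\]
and apply H\"older's inequality with the conjugate pair $\left(\tfrac{2}{q'},\tfrac{2}{2-q'}\right)$, which produces the product of $\|f_h\|_{H^{s+\epsilon}}$ and the weight factor $\big(\sum_{k}\langle k\rangle^{-(s+\epsilon)q'\cdot\frac{2}{2-q'}}\big)^{(2-q')/(2q')}$. A short computation using $q' = \tfrac{2}{1+2s}$ gives $2-q' = \tfrac{4s}{1+2s}$ and hence
\[
(s+\epsilon)\,q'\cdot\frac{2}{2-q'} = 2(s+\epsilon)\cdot\frac{q'}{2-q'} = \frac{s+\epsilon}{s} = 1 + \frac{\epsilon}{s},
\]
which is strictly larger than $1$ precisely because $\epsilon>0$. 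Therefore the weight sum is dominated by $\sum_{k\in\Z}\langle k\rangle^{-(1+\epsilon/s)} < \infty$, and crucially this bound is uniform in $h$ because $(\T_h)^*$ is a finite subset of $\Z$. Combining this with the inverse Hausdorff--Young bound yields the claim.

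The only delicate point, and the whole reason for tracking constants, is uniformity in $h$: both the Hausdorff--Young constant and the convergent weight sum must not degenerate as $h \to 0$ (equivalently $N \to \infty$). The role of the regularity loss $\epsilon$ is exactly to push the weight exponent above the summability threshold $1$, reflecting the logarithmic failure at the endpoint Sobolev embedding; without it, the sum over the growing frequency lattice $(\T_h)^*$ would diverge like $\log N$ and the constant would blow up.
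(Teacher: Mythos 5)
Your proof is correct: the $h$-uniform Hausdorff--Young bound via Riesz--Thorin between the Plancherel isometry and the $\ell^1\to L^\infty$ bound, followed by H\"older with exponents $\left(\tfrac{2}{q'},\tfrac{2}{2-q'}\right)$ on the Fourier side, and the exponent arithmetic $(s+\epsilon)\tfrac{2q'}{2-q'}=1+\tfrac{\epsilon}{s}>1$ all check out, including the endpoint $s=\tfrac12$, $q'=1$. Note that the paper itself offers no proof here --- it imports the statement as Lemma 2.7 of \cite{HKNY2021} --- but your argument is the standard one for such discrete Sobolev embeddings and is essentially the proof given in that reference, with the uniformity in $h$ correctly identified as the only point requiring care.
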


\subsection{Linear interpolation}\label{sec:DisLin}
The linear interpolation operator $\mathcal L_h$ is defined by
\begin{equation}\label{def:linear interpolation_0}
  \begin{aligned}
  (\mathcal{L}_h f_h)(x):&= f_h(hk)+(\partial_h^+ f_h)(hk)\cdot(x-hk)\\
  &= f_h(hk)+\frac{f_h(hk+h)-f_h(hk)}{h}(x-hk)
  \end{aligned}
  \end{equation}
  for all $x\in [hk,hk+h)$, where $k\in(\T_h)^*$. Note that the linear interpolation converts a function on $\mathbb{T}_h$ into a continuous function on $\T$.
\begin{lemma}[Boundedness of linear interpolation, Lemma 5.1 in \cite{HKNY2021}]\label{Lem:discretization linearization inequality}
Let $0\leq s \leq 1$. Then, for any $f_h\in H^s(\mathbb{T}_h)$, we have 
\[
\| \mathcal{L}_h f_h \|_{{H}^s(\mathbb{T})} \lesssim \| f_h \|_{{H}^s(\mathbb{T}_h)} \label{ineq:discretization linearization inequality}\]
\end{lemma}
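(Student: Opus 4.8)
\textbf{Proof strategy for Lemma \ref{Lem:discretization linearization inequality}.} The plan is to work on the Fourier side and compare the Fourier coefficients of $\mathcal{L}_h f_h$ (on $\T$) with those of $f_h$ (on $(\T_h)^*$). Writing $f_h(x) = \frac{1}{\sqrt{2\pi}}\sum_{k\in(\T_h)^*}\widehat{f_h}(k)e^{ixk}$, I would insert this expansion into the piecewise-linear formula \eqref{def:linear interpolation_0} and compute $\mathcal{F}(\mathcal{L}_h f_h)(m)$ for $m\in\Z$. On each interval $[hj, hj+h)$ the interpolant is affine, so the integral $\int_{\T}(\mathcal{L}_h f_h)(x)e^{-ixm}\,dx$ breaks into $2N$ elementary integrals of the form $\int_{hj}^{hj+h}(\text{affine in }x)\,e^{-ixm}\,dx$. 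Summing the geometric series in $j$ collapses these, and one finds that $\widehat{\mathcal{L}_h f_h}(m)$ is supported (in a suitable sense) on residues: each $\widehat{f_h}(k)$ with $k\in(\T_h)^*$ contributes to the modes $m\in k+2N\Z$, weighted by an explicit multiplier $\Phi(k,m)$ that is a product of $e^{i\theta}$-type phases and the Fourier transform of the triangular hat function (so $\Phi$ decays like $\min(1, (h|m|)^{-2})$ type factors, with the main mass at $m=k$).

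The key structural fact I expect to emerge is a clean pointwise bound: for $m = k + 2N\ell$ with $k\in(\T_h)^*$,
\[
|\widehat{\mathcal{L}_h f_h}(m)| \lesssim \frac{1}{\langle \ell\rangle^2}\,|\widehat{f_h}(k)|,
\]
or something morally equivalent (the exact exponent $2$ comes from the hat function being Lipschitz but not $C^1$, which is exactly enough). Given this, I would estimate
\[
\|\mathcal{L}_h f_h\|_{H^s(\T)}^2 = \sum_{m\in\Z}\langle m\rangle^{2s}|\widehat{\mathcal{L}_h f_h}(m)|^2 \lesssim \sum_{k\in(\T_h)^*}\sum_{\ell\in\Z}\frac{\langle k+2N\ell\rangle^{2s}}{\langle \ell\rangle^4}|\widehat{f_h}(k)|^2.
\]
Here I would split the inner sum over $\ell$: the term $\ell=0$ gives exactly $\sum_k \langle k\rangle^{2s}|\widehat{f_h}(k)|^2 = \|f_h\|_{H^s(\T_h)}^2$. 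For $\ell\neq 0$, since $|k|\le N$ we have $\langle k+2N\ell\rangle \lesssim N\langle\ell\rangle \lesssim \langle k\rangle^{?}\cdots$ — more carefully, $\langle k + 2N\ell\rangle^{2s} \lesssim (N|\ell|)^{2s}$, and using $0\le s\le 1$ together with $N = \pi/h \ge \pi$ and summability of $\langle\ell\rangle^{2s-4}$ (which converges since $2s-4\le -2 <-1$), the tail contributes $\lesssim \sum_k N^{2s}|\widehat{f_h}(k)|^2$. The point is that $N^{2s}|\widehat{f_h}(k)|^2 \lesssim \langle k\rangle^{2s}|\widehat{f_h}(k)|^2$ is \emph{false} in general, so this naive bound is too lossy — which tells me the multiplier $\Phi$ must actually carry an extra $h$-gain at high output frequency that I glossed over.

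Thus the main obstacle, and where the real content lies, is obtaining the sharp high-frequency decay of the interpolation multiplier: one needs that when $|m|$ is comparable to $N$ (i.e. the output frequency is near the lattice Nyquist scale), the hat-function factor genuinely gains, so that effectively $|\widehat{\mathcal{L}_h f_h}(k+2N\ell)| \lesssim \langle\ell\rangle^{-2}\,(|k|/N)^{?}\cdot\ldots$ in a way that exactly absorbs the $\langle k+2N\ell\rangle^{2s}$ weight for $s\le 1$. The cleanest route is: the triangular hat $\chi$ on $[-h,h]$ has $\widehat{\chi}(m) = \sqrt{2\pi}\,h\,\mathrm{sinc}^2(hm/2)$ (up to constants), so the aliased contribution to mode $k+2N\ell$ carries a factor $\mathrm{sinc}^2\!\big(\tfrac{h(k+2N\ell)}{2}\big) = \mathrm{sinc}^2\!\big(\tfrac{hk}{2}+\pi\ell\big)$, whose size is $\sim \big(\tfrac{hk/2}{hk/2+\pi\ell}\big)^2 \sim \big(\tfrac{k/N}{\ell}\big)^2$ for $\ell\ne 0$ (using $|k|\le N$). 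Feeding this in, the $\ell\ne0$ tail becomes $\sum_k\sum_{\ell\ne0}\frac{(N|\ell|)^{2s}}{\ell^4}\cdot\frac{(k/N)^4}{1}|\widehat{f_h}(k)|^2 \lesssim \sum_k \frac{|k|^4}{N^{4-2s}}|\widehat{f_h}(k)|^2 \lesssim \sum_k |k|^{2s}|\widehat{f_h}(k)|^2$, the last step because $|k|^{4-2s}\le N^{4-2s}$ when $s\le 2$. This closes the estimate. So the plan is: (i) derive the explicit aliasing/multiplier formula for $\widehat{\mathcal{L}_h f_h}$; (ii) read off the $\mathrm{sinc}^2$ decay; (iii) split into $\ell=0$ (giving the main term, with constant $1$) and $\ell\neq0$ (an error absorbed using $|k|\le N$ and $s\le1$). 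I would double-check the endpoint $s=1$ and the interval continuity of $\mathcal{L}_h f_h$ (so that Parseval on $\T$ is legitimate), but these are routine; the substantive step is the sharp multiplier bound in (i)–(ii).
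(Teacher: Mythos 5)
Your proposal is correct and follows essentially the route the paper intends: the paper itself does not reprove this lemma (it cites Lemma 5.1 of \cite{HKNY2021}) but records the key ingredient separately as the "Symbol of the linear interpolation" lemma, namely $\widehat{\mathcal{L}_h f_h}(k)=\frac{4}{h^2k^2}\sin^2(\frac{hk}{2})\,\widehat{f_h}(k-\frac{2\pi}{h}n)$, which is exactly your step (i), and your refined $\mathrm{sinc}^2$ aliasing bound $\sin^2(\frac{hk_0}{2}+\pi\ell)/(\frac{hk_0}{2}+\pi\ell)^2\lesssim (k_0/(N\ell))^2$ together with the split $\ell=0$ versus $\ell\neq 0$ and the inequality $|k_0|^{4-2s}\le N^{4-2s}$ closes the estimate correctly for $0\le s\le 1$. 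Your initial self-diagnosed "too lossy" bound and its correction identify precisely the point where the argument has content, so there is no gap.
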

\begin{lemma}[Symbol of the linear interpolation, Lemma~5.4 in \cite{HKNY2021}]
The interpolation operator $\mathcal L_h$ is a Fourier multiplier in the sense that 
  \begin{equation}\label{linear interpolation multiplier}
    \widehat{\mathcal{L}_h f_h} (k)= \frac{4}{h^2k^2}\sin^2\left(\frac{hk}{2}\right) \widehat{f_h}\left(k-\frac{2\pi}{h}n\right), \quad \mbox{for} \;\; n\in\Z  \;\; \mbox{such that} \;\; \left|k-\frac{2\pi}{h}n\right|\le \frac{\pi}{h}.
   \end{equation}
\end{lemma}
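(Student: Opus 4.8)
The plan is to realize $\mathcal L_h f_h$ as a linear combination of translated tent functions and then compute its Fourier coefficients on $\T$ by unfolding the resulting periodic sum. Let $\Lambda_h(x):=\max\{1-|x|/h,\,0\}$ be the continuous piecewise-linear hat function supported on $[-h,h]$ with $\Lambda_h(0)=1$, and extend $f_h$ periodically from $\T_h$ to $h\Z$ (consistent, since $f_h$ on $\T_h$ is $2N$-periodic and $2Nh=2\pi$). First I would verify directly from \eqref{def:linear interpolation_0} that for $x\in[hk,hk+h)$ only the translates centered at $hk$ and $h(k+1)$ contribute, with $\Lambda_h(x-hk)=1-\tfrac{x-hk}{h}$ and $\Lambda_h(x-h(k+1))=\tfrac{x-hk}{h}$, so that the piecewise definition of $\mathcal L_h f_h$ on that interval equals $f_h(hk)+(\partial_h^+f_h)(hk)(x-hk)$; hence
\[(\mathcal L_h f_h)(x)=\sum_{j\in\Z}f_h(hj)\,\Lambda_h(x-hj),\qquad x\in\T,\]
with the right-hand side well-defined and $2\pi$-periodic.

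Next I would take Fourier coefficients. Using the definition \eqref{Fourier transform} of $\mathcal F$ and unfolding the $2\pi$-periodization (legitimate because $\Lambda_h$ is compactly supported) gives
\[\int_\T(\mathcal L_h f_h)(x)\,e^{-ixk}\,dx=\sum_{j=-N}^{N-1}f_h(hj)\int_\R\Lambda_h(y-hj)\,e^{-iyk}\,dy=\Big(\sum_{j=-N}^{N-1}f_h(hj)\,e^{-ihjk}\Big)\int_\R\Lambda_h(y)\,e^{-iyk}\,dy.\]
The remaining full-line integral is evaluated from the identity $\Lambda_h=\tfrac1h\,\mathbf 1_{[-h/2,h/2]}*\mathbf 1_{[-h/2,h/2]}$ together with $\int_{-h/2}^{h/2}e^{-iyk}\,dy=\tfrac2k\sin(\tfrac{hk}{2})$, yielding $\int_\R\Lambda_h(y)\,e^{-iyk}\,dy=\tfrac{4}{hk^2}\sin^2(\tfrac{hk}{2})$ (interpreted as its limiting value $h$ at $k=0$, which matches the trapezoidal identity $\int_\T\mathcal L_h f_h\,dx=h\sum_jf_h(hj)$).

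Finally I would identify the exponential sum with $\widehat{f_h}$. Since $2Nh=2\pi$, the map $k\mapsto e^{-ihjk}$ depends on $k$ only modulo $2N=\tfrac{2\pi}{h}$, and by \eqref{Fourier transform} the function $\widehat{f_h}(k')=\tfrac{h}{\sqrt{2\pi}}\sum_{j=-N}^{N-1}f_h(hj)e^{-ihjk'}$ is $2N$-periodic; hence $\sum_{j=-N}^{N-1}f_h(hj)e^{-ihjk}=\tfrac{\sqrt{2\pi}}{h}\,\widehat{f_h}\big(k-\tfrac{2\pi}{h}n\big)$ for the representative $n\in\Z$ with $|k-\tfrac{2\pi}{h}n|\le\tfrac{\pi}{h}=N$. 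Combining the three displays and dividing by $\sqrt{2\pi}$ produces exactly \eqref{linear interpolation multiplier}. The argument involves no genuine obstacle; the only care required is bookkeeping — correctly reassembling $\int_\R$ from the periodic sum, tracking the normalization constants in \eqref{Fourier transform}, and selecting the representative $n$ — and the one place where a clean choice of method pays off is computing $\int_\R\Lambda_h(y)\,e^{-iyk}\,dy$ via the box-convolution identity rather than integrating by parts twice.
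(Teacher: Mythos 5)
Your proof is correct and complete: the identification of $\mathcal L_h f_h$ with the superposition $\sum_j f_h(hj)\Lambda_h(\cdot-hj)$ of tent functions, the unfolding of the $2\pi$-periodization to a full-line integral, the evaluation of $\int_\R\Lambda_h(y)e^{-iyk}\,dy$ via the box-convolution identity, and the bookkeeping of the normalization and of the $2N$-periodicity of $\widehat{f_h}$ all check out, including the limiting interpretation at $k=0$. The paper does not prove this lemma but simply cites Lemma~5.4 of \cite{HKNY2021}; your argument is the standard self-contained derivation of that result and is consistent with the conventions \eqref{Fourier transform} and \eqref{def:linear interpolation_0} used here.
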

\begin{lemma}\label{lem:LOWLh}
Let $0\le s\le 2$. Then for $f_h\in H^s(\T_h)$, we have 
\[\left(\sum_{|k| \le \frac{\pi}{h}}\left|\mathcal F(\mathcal{L}_h f_h) - \mathcal F_h(f_{h})\right|^2\right)^{\frac12} \lesssim h^s \|f_h\|_{H^{s}(\T_h)}.\]
\end{lemma}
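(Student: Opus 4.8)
\textbf{Proof strategy for Lemma \ref{lem:LOWLh}.}

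The plan is to work entirely on the Fourier side and exploit the explicit multiplier representation \eqref{linear interpolation multiplier}. For each frequency $k$ with $|k| \le \frac{\pi}{h}$, the relevant choice of $n$ in \eqref{linear interpolation multiplier} is $n = 0$, so that $\widehat{\mathcal{L}_h f_h}(k) = m_h(k)\,\widehat{f_h}(k)$ with
\[
m_h(k) = \frac{4}{h^2 k^2}\sin^2\!\left(\frac{hk}{2}\right) = \operatorname{sinc}^2\!\left(\frac{hk}{2}\right).
\]
Hence on the range $|k|\le \frac{\pi}{h}$ the difference $\mathcal F(\mathcal L_h f_h)(k) - \mathcal F_h(f_h)(k)$ equals $(m_h(k) - 1)\widehat{f_h}(k)$, and the left-hand side of the asserted inequality is exactly
\[
\left(\sum_{|k|\le \frac{\pi}{h}} |m_h(k)-1|^2\, |\widehat{f_h}(k)|^2\right)^{1/2}.
\]
So the lemma reduces to the pointwise symbol bound $|m_h(k) - 1| \lesssim (h|k|)^s \lesssim h^s \langle k\rangle^s$ valid for $0 \le s \le 2$ on the relevant frequency range, after which one factors out $h^s$, bounds the remaining sum by $\|f_h\|_{H^s}^2$ via \eqref{eq:Hs norm}, and takes square roots.

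The heart of the matter is therefore the elementary estimate
\[
\left|\, \operatorname{sinc}^2(\theta) - 1 \,\right| \lesssim |\theta|^{s}, \qquad 0 \le s \le 2, \quad |\theta| \le \frac{\pi}{2},
\]
with $\theta = \frac{hk}{2}$. First I would handle the endpoint $s = 2$: from the Taylor expansion $\operatorname{sinc}^2(\theta) = 1 - \frac{\theta^2}{3} + O(\theta^4)$, or more robustly from the two-sided bound $\frac{4}{\pi^2} \le \operatorname{sinc}^2(\theta) \le 1$ on $|\theta|\le \frac\pi2$ together with $1 - \operatorname{sinc}^2(\theta) = (1-\operatorname{sinc}\theta)(1+\operatorname{sinc}\theta)$ and $0 \le 1 - \operatorname{sinc}\theta \le \theta^2/6$, we get $|\operatorname{sinc}^2(\theta) - 1| \le C\theta^2$ uniformly on the compact interval. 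The case $s = 0$ is trivial since $|\operatorname{sinc}^2(\theta)-1|\le 1$. For intermediate $0 < s < 2$ one interpolates the two bounds: writing $|\operatorname{sinc}^2(\theta)-1| = |\operatorname{sinc}^2(\theta)-1|^{1-s/2}\,|\operatorname{sinc}^2(\theta)-1|^{s/2} \le 1^{1-s/2}\,(C\theta^2)^{s/2} = C^{s/2}|\theta|^{s}$. Then $|\theta|^s = (h|k|/2)^s \le (h\langle k\rangle)^s \lesssim h^s \langle k\rangle^s$, which is precisely what is needed.

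The only mild subtlety — and the step I would be most careful about — is confirming that for $|k| \le \frac{\pi}{h}$ the admissible integer in \eqref{linear interpolation multiplier} is genuinely $n=0$, so that no aliased frequency $k - \frac{2\pi}{h}n$ enters. This is immediate from the constraint $|k - \frac{2\pi}{h}n| \le \frac{\pi}{h}$: if $|k| \le \frac{\pi}{h}$ then $n = 0$ satisfies it, and since the intervals $\{|k - \frac{2\pi}{h}n|\le\frac\pi h\}$ tile $\R$ with overlap only on a measure-zero set of endpoints, the choice is unique (at $|k| = \frac{\pi}{h}$ one may take either representative, and periodicity of $\widehat{f_h}$ with period $2N = 2\pi/h$ makes the two choices agree). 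With that settled, the chain of inequalities above gives the claim; no $X^{s,b}$ machinery or interpolation-operator boundedness (Lemma \ref{Lem:discretization linearization inequality}) is needed here, only the explicit symbol and a one-variable estimate.
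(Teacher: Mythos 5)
Your proposal is correct and follows essentially the same route as the paper: both reduce the lemma to the pointwise multiplier estimate $\bigl|\tfrac{4}{h^2k^2}\sin^2(\tfrac{hk}{2})-1\bigr|\lesssim (h|k|)^s$ on $|k|\le\tfrac{\pi}{h}$, obtained by interpolating the trivial bound with the $s=2$ Taylor bound (the paper gets this via the factorization in \eqref{Lh-1} together with \eqref{Lh}, you via a direct sinc-squared estimate). Your extra remarks on the $n=0$ aliasing choice and the endpoint $|k|=\tfrac{\pi}{h}$ are correct and slightly more careful than the paper's terse write-up.
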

\begin{proof}
By Taylor's remainder theorem, we know for $0 \le \alpha \le 2$ that
  \begin{equation}\label{Lh}
    \left| \frac{2}{h}\sin\left(\frac{hk}{2}\right) -k\right| \lesssim |k|\min\left(1,(h|k|)^2\right) \ls |k|(h|k|)^\alpha,
  \end{equation}
which yields
  \begin{equation}\label{Lh-1}
\left| \frac{4}{h^2k^2} \sin^2\left(\frac{hk}{2}\right)-1 \right| = \left| \frac{2}{hk} \sin\left(\frac{hk}{2}\right)-1 \right|\left| \frac{2}{hk} \sin\left(\frac{hk}{2}\right)+1 \right| \lesssim(h|k|)^\alpha,
  \end{equation}
for $0 \le \alpha \le 2$. Taking $\alpha = s$, we complete the proof.
\end{proof}
\begin{lemma}[Invariance of mean under the linear interpolation]\label{Lem:mean}
 We have 
\[  h\sum_{x\in \T_h} f_h(x)=\int_{-\pi}^{\pi}(\mathcal{L}_hf_h)(y)dy.\]
\end{lemma}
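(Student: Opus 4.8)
The plan is to establish the identity $h\sum_{x\in\T_h} f_h(x)=\int_{-\pi}^{\pi}(\mathcal{L}_hf_h)(y)\,dy$ by a direct computation, partitioning the interval $[-\pi,\pi)$ into the $2N$ subintervals $[hk,hk+h)$ indexed by $k\in(\T_h)^*$ on which $\mathcal{L}_hf_h$ is the explicit affine function given in \eqref{def:linear interpolation_0}. On each such subinterval the integral of a linear function is the width $h$ times the value at the midpoint, namely $h\bigl(f_h(hk)+\tfrac12 h\,(\partial_h^+f_h)(hk)\bigr)$, i.e. $\tfrac{h}{2}\bigl(f_h(hk)+f_h(hk+h)\bigr)$ by the definition of $\partial_h^+$.

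First I would write
\[
\int_{-\pi}^{\pi}(\mathcal{L}_hf_h)(y)\,dy=\sum_{k\in(\T_h)^*}\int_{hk}^{hk+h}\left(f_h(hk)+\frac{f_h(hk+h)-f_h(hk)}{h}(y-hk)\right)dy,
\]
then evaluate each term using $\int_{hk}^{hk+h}dy=h$ and $\int_{hk}^{hk+h}(y-hk)\,dy=\tfrac{h^2}{2}$, obtaining $\tfrac{h}{2}\bigl(f_h(hk)+f_h(hk+h)\bigr)$ for the $k$-th term. Summing over $k$ and using the periodicity of $f_h$ on $\T_h$ (so that $\sum_{k}f_h(hk+h)=\sum_k f_h(hk)$, a reindexing of the same $2N$ values), the two halves combine to give $h\sum_{k\in(\T_h)^*}f_h(hk)=h\sum_{x\in\T_h}f_h(x)$, which is the claim.

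There is essentially no obstacle here; the only point requiring a word of care is the reindexing step, where one uses that $hk+h$ ranges over $\T_h$ exactly once as $hk$ does, by the group structure of $\T_h=h\Z/(2\pi\Z)$, so that the telescoping-type sum $\sum_k f_h(hk+h)$ equals $\sum_k f_h(hk)$ rather than producing boundary terms. Alternatively, one may note the statement is equivalent to the $k=0$ case of \eqref{linear interpolation multiplier}, since $\int_{-\pi}^\pi(\mathcal{L}_hf_h)(y)\,dy=\sqrt{2\pi}\,\widehat{\mathcal{L}_hf_h}(0)=\sqrt{2\pi}\,\widehat{f_h}(0)=h\sum_{x\in\T_h}f_h(x)$, using $\lim_{k\to0}\tfrac{4}{h^2k^2}\sin^2(\tfrac{hk}{2})=1$; but the direct subinterval computation is cleaner and self-contained.
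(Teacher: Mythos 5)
Your proof is correct and follows essentially the same route as the paper's: splitting the integral over the subintervals $[hk,hk+h)$, integrating the affine interpolant exactly to get $\tfrac{h}{2}\bigl(f_h(hk)+f_h(hk+h)\bigr)$ per cell, and reindexing by periodicity. You are slightly more careful than the paper in spelling out the final reindexing step, which the paper leaves implicit.
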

\begin{proof}
It follows from \eqref{def:linear interpolation_0} and \eqref{right derivative} that 
\[\begin{aligned}
  \int_{-\pi}^{\pi}(\mathcal{L}_hf_h)(y)dy
  &=\sum_{x\in \T_h} \bigg( h f_h(x) + (\partial_h^+ f_h)(x)\int_{x}^{x+h}(y-x)dy \bigg) \\  
  &=\sum _{x\in \T_h} \Big( h f_h(x) + \frac{h^2}{2}(\partial_h^+ f_h)(x) \Big)
  =\frac{h}{2}\sum_{x\in \T_h}\left(f_h(x+h)+f_h(x)\right).
\end{aligned}\]
\end{proof}
\section{Formal derivation and proof of Theorem \ref{main theorem}}\label{sec: outline}
This section is devoted to providing a short formal derivation of KdV system from the FPU system. For more details, see Section 2.1 in \cite{HKY2021}, but here we provide it for the self-containment.
\subsection{FPU to KdV}
The Duhamel principle allows us to rewrite \eqref{FPU} as an integral equation for the wave equation, and using the Euler formula, one has the following coupled system:
\begin{equation}\label{FPU system}
  \left\{\begin{aligned} \; &\partial_tr_h^{\pm}(t,x) \pm \frac{1}{h^2}\nabla_hr_h^{\pm}(t,x) \pm \frac{1}{4}\nabla_h\left( r_h(t,x)\right)^2=0,  \\ 
    &r_h(t,x)=r_h^+(t,x) +r_h^-(t,x),
  \end{aligned}  \right. \quad \big(r_h^+, r_h^-\big):\mathbb{R}\times \T_h\to\mathbb{R}\times\mathbb{R},
\end{equation}
with initial data
\[r_{h,0}^\pm= \frac12\left( r_{h,0}\mp h^2 \nabla_h^{-1}r_{h,1} \right),\]
where $\nabla_h$ is defined as in Definition \ref{def: discrete differentials}. 
\begin{remark}
By Remark \ref{rem:mean zero} (in particular, the mean-zero condition on $\partial_t r_h$), $\nabla_h^{-1}r_{h,1}$ is well-defined.
\end{remark}
From Duhamel's principle, \eqref{FPU system} is equivalent to the following integral equation: 
\begin{equation*}
  r_h^{\, \pm}(t) =e^{\mp\frac{t}{h^2}\nabla_h}r_{h,0}^{\, \pm} \mp\frac14\int_0^t e^{\mp\frac{(t-t')}{h^2}\nabla_h} \nabla_h\left(r_h(t')^2 \right) \; dt'.
  \end{equation*}
\begin{remark}\label{rem:Mean Conservation_1}
Note that for any function $f_h$ on $\T_h$, it is known that
\[h\sum_{x\in\T_h} \nabla_h f_h(x)=\mathcal{F}_h\Big( \nabla_h f_h \Big)(0)=0.\]
Using this observation, we obtain
\[\partial_t \Big( h\sum_{x\in\T_h}r_h^{\pm}(t,x)  \Big) = \mp h\sum_{x\in\T_h}\left(\frac{\nabla_h}{h^2}r_h^{\pm}(t,x) + \frac{\nabla_h}{4}\Big( r_h(t,x)\Big)^2\right) = 0,\]
which guarantees the conservation of the mean for $r_h^{\pm}$, that is,
\begin{equation}\label{eq:MC of r_h}
h\sum_{x\in\T_h}r_h^{\pm}(t,x) = \frac{h}2 \sum_{x\in\T_h} \left(r_{h,0} \mp h^2 \nabla_h^{-1} r_{h,1}\right)(x) \quad \text{ for all } t.
\end{equation}
On the other hand, by Remark \ref{rem:mean zero}, we know
\[h\sum_{x \in \T_h} r_h(0,x) = 0 \quad \mbox{and} \quad h\sum_{x \in \T_h} (\partial_t r_h)(0,x) = 0,\]
which, in addition to \eqref{eq:MC of r_h}, guarantees
\[  h\sum_{x\in\T_h}r_h^{\pm}(t,x) = 0 \quad \mbox{for all} \;\; t.\]
\end{remark}
Next, we introduce the phase translation operator by
\[u_h^\pm(t):=e^{\pm\frac{t}{h^2}\partial_h} r_h^{\, \pm}(t),\]
where $\partial_h$ is the discrete Fourier multiplier of the symbol $ik$ (see Definition \ref{def: discrete differentials}). Then, $u_h^{\pm}$ solves the following coupled integral equation:
\begin{equation}\label{coupled FPU'}
\begin{aligned}
  u_h^\pm(t)&= S_h^\pm(t)u_{h,0}^{\, \pm} \mp\frac14 \int_0^t S_h^\pm(t-t') \nabla_h \left(u_h^\pm(t')+e^{\pm\frac{2t'}{h^2}\partial_h}u_h^\mp(t') \right)^2 \; dt',
  \end{aligned}
\end{equation}
where $u_{h,0}^{\pm} = r_{h,0}^{\pm}$ and the linear FPU propagator $S_h^\pm(t)$ is defined by
  \begin{equation}\label{eq: linear FPU flow}
S_h^\pm(t) f_h := e^{\mp\frac{t}{h^2}(\nabla_h-\partial_h)} f_h,
  \end{equation}
for any function $f_h$ on $\T_h$. By construction, the FPU system \eqref{FPU} can be recovered from the equation \eqref{coupled FPU'} via
\[r_h(t,x)=e^{-\frac{t}{h^2}\partial_h}u_h^+(t,x)+e^{\frac{t}{h^2}\partial_h}u_h^-(t,x).\]
\begin{remark}\label{rem:Mean Conservation_3}
From Remark \ref{rem:Mean Conservation_1}, a direct computation gives 
\[h\sum_{x\in\T_h} u_h^{\pm}(t,x)=\sqrt{2\pi}\mathcal{F}_h(u_h^{\pm})(t,0)=\sqrt{2\pi}\mathcal{F}_h(r_h^{\, \pm})(t,0)=h\sum_{x\in\T_h}r_h^{\, \pm}(t,x)=0, \quad \text{ for all } t,\]
which asserts that $u_h^{\pm}$ satisfies the mean zero condition.
\end{remark}

\begin{remark}\label{rem:formal convergence of propagator}
The linear propagator $S_{h}^{\pm}(t)$ defined as in \eqref{eq: linear FPU flow} formally approximates to the Airy flow $S^{\pm}(t)$ given in \eqref{eq:propagator of KdV} below as $h \to 0$ in the Fourier space. Indeed, for each $k \in (\T_h)^*$, the Taylor expansion of the phase function reveals the following formal convergence
 \[ \mp\frac{1}{h^2}\left( \frac{2}{h}\sin\left(\frac{hk}{2}\right)-k\right) \rightarrow \pm\frac{1}{24}k^3 \quad \text{ as } h\to 0.\]
\end{remark}
\begin{remark}\label{rem:remove mixed nonlinearities}
Expanding the nonlinear terms in \eqref{coupled FPU'}, one identifies the mixed term and the oppositely moving wave, expressed as 
\[u_h^\pm(t')e^{\pm\frac{2t'}{h^2}\partial_h}u_h^\mp(t') \quad \mbox{and} \quad (e^{\pm\frac{2t'}{h^2}\partial_h}u_h^\mp(t'))^2,\]
respectively. These terms can be regarded as error terms as $h \to 0$ due to their asymptotic behavior. To illustrate, suppose the nonlinear solution $u_h^{\pm}(t)$ behaves similarly to linear solutions over a short time interval. In this case, the mixed term in \eqref{coupled FPU'} can be approximated by
\[\begin{aligned}
&~{}S_h^{\pm}(-t')\nabla_h (u_h^\pm(t')e^{\pm\frac{2t'}{h^2}\partial_h}u_h^\mp(t'))\\
  &\approx 
  S_h^{\pm}(-t')\nabla_h  \left(S_h^{\pm}(t')u_{h,0}^{\pm}(x)e^{\pm \frac{2t}{h^2}\partial_h} S_h^{\mp}(t')  u_{h,0}^{\mp}(x)\right) \\ 
  &=\sum_{k\in (\T_h)^*} e^{ikx}\frac{2i}{h}\sin\left(\frac{hk}{2}\right) \sum_{\substack{k_1 \in (\T_h)^* \\ k_1 \neq 0}}e^{\pm i\frac{t'}{h^2}\phi_h(k,k_1)}   \; \widehat{u_{h,0}^{\mp}}(k_1)\widehat{u_{h,0}^{\pm}}(k-k_1),
\end{aligned}\]
where 
\[\phi_h(k,k_1)= \frac{8}{h^3}\cos\left(\frac{hk}{4}\right)\sin\left(\frac{hk_1}{4}\right)\cos\left(\frac{h(k-k_1)}{4}\right).\]
By integrating this term over time, the Duhamel term produces a factor
\[\frac{1}{\phi_h(k,k_1)} \approx \frac{h^2}{k_1},\]
which vanishes as $h \to 0$.
Alternatively, this behavior can be explained by the fact that the exponential term $e^{\pm i\frac{t'}{h^2}\phi_h(k,k_1)}$ rapidly oscillates as $h \to 0$, causing the phase cancellation. This result confirms that the mixed term asymptotically vanishes, justifying its classification as an error term. Analogously, the oppositely moving wave can be analyzed and shown to behave in the same manner. Such observations are indeed demonstrated in Lemmas \ref{lem:bilinear2} and \ref{lem:bilinear3}.
\end{remark}
Inspired by Remark \ref{rem:remove mixed nonlinearities}, one further reduces the coupled system \eqref{coupled FPU'} as the following \textit{decoupled FPU system}: 
\begin{equation}\label{decoupled FPU'}
\begin{aligned}
  v_h^\pm(t)&= S_h^\pm(t)u_{h,0}^\pm\mp\frac14 \int_0^t S_h^\pm(t-t') \nabla_h\Big(v_h^{\pm}(t') \Big)^2 \; dt',
  \end{aligned}
  \end{equation}
where $v_h^{\pm}=v_h^{\pm}(t,x):\R\times\T_h\rightarrow\R$ satisfying the mean zero condition
\[h \sum_{x \in \T_h}v_h^{\pm}(t,x) = 0, \quad \mbox{for all} \;\; t.\]
\begin{remark}
Both the coupled \eqref{coupled FPU'} and the decoupled \eqref{decoupled FPU'} systems are well-posed in $L^2(\T_h)$ (see Proposition~\ref{prop:LWP} below), but the existence time depends on the number of lattice points $N$. This dependency poses a non-trivial challenge in demonstrating the continuum limit of the FPU system to the KdV system. Nevertheless, one can capture the dispersive properties (see Sections \ref{subsec:linear estimates} and \ref{subsec:Bilinear estimates}) for the FPU solutions, and this observation enables us to establish the well-posedness of both \eqref{coupled FPU'} and \eqref{decoupled FPU'} independently of the number of lattice points $N$ (see Proposition \ref{prop:uniform bound}).
\end{remark}
As a consequence, we have
  \begin{proposition}\label{Prop:Coupled to Decoupled}
Let $h\in(0,1]$, $0< s\le 1$, and $R > 0$ be given. Suppose that 
\[\sum_{x\in\T_h}u_{h,0}^{\pm}(x)=0 \quad \mbox{and} \quad \sup_{h\in(0,1]}  \left\|\left( u_{h,0}^{+},u_{h,0}^{-}\right)\right\|_{\mathbb H^s(\T_h)} \le R.\]
Then, there exist $C(R) >0$ and $T(R)>0$ independent on $h$ such that the following holds: Let $(u_h^+(t),u_h^-(t))\in C_t([-T,T]:\mathbb H^s(\T_h))$ (resp., $(v_h^+(t),v_h^-(t))\in C_t([-T,T]:\mathbb H^s(\T_h))$) be the solution to the coupled FPU \eqref{coupled FPU'} (resp., decoupled FPU \eqref{decoupled FPU'}) with an initial data $(u_{h,0}^+, u_{h,0}^-)$. Then,
\[      \left\| u_h^{\pm}(t) - v_h^{\pm}(t) \right\|_{C_t([-T,T]:L^2(\T_h))} \le  C(R)h^{s}.\]
    \end{proposition}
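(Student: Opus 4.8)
The plan is to carry out a difference estimate in the Bourgain spaces $X^{s,b}_h$ adapted to the FPU propagators $S_h^{\pm}$ (Definition~\ref{def:Xsb_h}), feeding in two ingredients established earlier: the uniform-in-$h$ a priori bounds for solutions of both \eqref{coupled FPU'} and \eqref{decoupled FPU'} on a common time interval $[-T,T]$ with $T=T(R)$ (Proposition~\ref{prop:uniform bound}), and the bilinear estimates of Section~\ref{subsec:Bilinear estimates}, themselves built on the $L^4$-Strichartz estimate (Proposition~\ref{prop:L4}). First I would set $w_h^{\pm}:=u_h^{\pm}-v_h^{\pm}$, subtract the Duhamel formulas \eqref{coupled FPU'} and \eqref{decoupled FPU'} (the linear terms $S_h^{\pm}(t)u_{h,0}^{\pm}$ cancel since the data agree), and expand the square, so that the difference of the nonlinearities splits as
\[
\bigl(u_h^{\pm}+e^{\pm\frac{2t'}{h^2}\partial_h}u_h^{\mp}\bigr)^2-(v_h^{\pm})^2 = \underbrace{w_h^{\pm}\bigl(u_h^{\pm}+v_h^{\pm}\bigr)}_{\text{(I)}} \;+\; \underbrace{2\,u_h^{\pm}\,e^{\pm\frac{2t'}{h^2}\partial_h}u_h^{\mp}}_{\text{(II)}} \;+\; \underbrace{\bigl(e^{\pm\frac{2t'}{h^2}\partial_h}u_h^{\mp}\bigr)^2}_{\text{(III)}}.
\]
Thus $w_h^{\pm}$ solves a Duhamel equation that is linear in $w_h^{\pm}$ through (I) and has $h$-small forcing from the mixed term (II) and the oppositely moving wave (III). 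I would also note that $u_h^{\pm}$ (hence $v_h^{\pm}$) stays mean-zero by Remark~\ref{rem:Mean Conservation_3}, so the zero mode plays no role below.

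For (I) I would apply the standard bilinear estimate of Section~\ref{subsec:Bilinear estimates} (Lemma~\ref{lem:bilinear1}) together with the short-time gain $T^{\theta}$ from restricting to $[-T,T]$ and the uniform bounds of Proposition~\ref{prop:uniform bound}, obtaining
\[
\Bigl\|\int_0^t S_h^{\pm}(t-t')\,\nabla_h\bigl(w_h^{\pm}(u_h^{\pm}+v_h^{\pm})\bigr)\,dt'\Bigr\|_{X^{s,b}_h} \lesssim T^{\theta}\bigl(\|u_h^{\pm}\|_{X^{s,b}_h}+\|v_h^{\pm}\|_{X^{s,b}_h}\bigr)\|w_h^{\pm}\|_{X^{s,b}_h} \lesssim_R T^{\theta}\,\|w_h^{\pm}\|_{X^{s,b}_h}.
\]
For (II) and (III) I would invoke Lemmas~\ref{lem:bilinear2} and~\ref{lem:bilinear3}, which are the quantitative form of Remark~\ref{rem:remove mixed nonlinearities}: the resonance function of the mixed (resp.\ opposite) interaction, $\phi_h(k,k_1)=\frac{8}{h^3}\cos(\frac{hk}{4})\sin(\frac{hk_1}{4})\cos(\frac{h(k-k_1)}{4})$, stays bounded below by a negative power of $h$ away from the zero mode, so that integrating the Duhamel term in $t'$ converts the $\nabla_h$ loss and the $\langle k\rangle^s$ weights into a net gain of size $h^{s}$; combined with the uniform bounds these contribute $\lesssim_R h^{s}$.

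Collecting the three pieces yields
\[
\|w_h^{\pm}\|_{X^{s,b}_h([-T,T])} \le C(R)T^{\theta}\,\|w_h^{\pm}\|_{X^{s,b}_h([-T,T])}+C(R)\,h^{s},
\]
so after shrinking $T=T(R)$ (keeping it no larger than the existence time in Proposition~\ref{prop:uniform bound}) so that $C(R)T^{\theta}\le\frac12$, the first term on the right is absorbed, giving $\|w_h^{\pm}\|_{X^{s,b}_h([-T,T])}\le 2C(R)h^{s}$. The embedding $X^{s,b}_h\hookrightarrow C_t([-T,T]:H^s(\T_h))\hookrightarrow C_t([-T,T]:L^2(\T_h))$ for $b>\frac12$ then gives the asserted bound.

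The hard part is not this packaging but the two off-diagonal bilinear estimates (Lemmas~\ref{lem:bilinear2}, \ref{lem:bilinear3}) it consumes: one must show that, even after paying the full derivative $\nabla_h$ and carrying the $\langle k\rangle^s$ weights, the non-resonance of $\phi_h$ really produces a gain of exactly order $h^{s}$, uniformly in $h$ and for every $0<s\le 1$. This forces a careful case split near the degeneracies of $\phi_h$ --- where one of $k_1$, $k-k_1$, $k$ is small, or where the cosines vanish at the edges of $(\T_h)^*$ --- with the lost powers recovered from the modulation variable via the $X^{s,b}$ structure and the $L^4$-Strichartz bound, in the spirit of the high--low/high--high analysis in Bourgain's KdV argument but with the discrete phase $s_h(k)$ replacing $k^3$. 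By comparison, step (I) and the absorption are routine once the uniform bounds are in hand.
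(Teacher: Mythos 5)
Your overall strategy coincides with the paper's: subtract the two Duhamel formulas, split the difference of the nonlinearities into a piece linear in $w_h^{\pm}=u_h^{\pm}-v_h^{\pm}$ plus the mixed and opposite-wave source terms, control the first by the diagonal bilinear estimate (Lemma~\ref{lem:bilinear1}) with the $T^{\theta}$ gain and absorb it after shrinking $T=T(R)$, and control the sources by Lemmas~\ref{lem:bilinear2} and~\ref{lem:bilinear3} together with the uniform bounds of Proposition~\ref{prop:uniform bound}. This is exactly the paper's proof.

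There is, however, one concrete error in your bookkeeping of regularities which, as written, destroys the $h^{s}$ gain. You close the estimate in $X^{s,b}_h$, i.e.\ you measure the difference $w_h^{\pm}$ at regularity $s$. But Lemmas~\ref{lem:bilinear2} and~\ref{lem:bilinear3} give gains $h^{s'-s}$ and $h^{\frac12+s'-s}$, where $s$ is the regularity of the \emph{output} norm $Z^{s}_{h,\pm}$ and $s'$ that of the \emph{inputs}. Since the solutions are only uniformly bounded in $H^{s}$, you must take $s'=s$; with the output also at regularity $s$ the gain is $h^{s'-s}=h^{0}=1$, so your displayed inequality $\|w_h^{\pm}\|_{X^{s,b}_h}\le C(R)T^{\theta}\|w_h^{\pm}\|_{X^{s,b}_h}+C(R)h^{s}$ does not follow from the cited lemmas. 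The paper instead measures the difference at regularity $0$ (in $Y^{0,T}_{h,\pm}$, with the sources in $Z^{0,T}_{h,\pm}$) while keeping the inputs at regularity $s$; the gap $s'-s=s-0$ is precisely where the factors $h^{s}$ and $h^{\frac12+s}$ come from, and it is consistent with the proposition claiming only an $L^{2}(\T_h)$ bound. The fix is one line, but it is the whole point of the two-parameter form of those lemmas. A secondary packaging issue: the paper notes after Lemma~\ref{lem:properties} that the bilinear estimate cannot be closed in $X^{s,b}$ for $b>\frac12$, which is why the $Y^{s}_{h,\pm}$ (with $b=\frac12$ plus the $\ell^{2}_{k}L^{1}_{\tau}$ component) and $Z^{s}_{h,\pm}$ spaces are used; your final embedding should therefore be $\|\cdot\|_{C_tL^2}\lesssim\|\cdot\|_{Y^{0}_{h,\pm}}$, i.e.\ \eqref{Embedding Y}, rather than $X^{s,b}\hookrightarrow C_tH^{s}$ for $b>\frac12$. (Incidentally, your algebraic expansion, with $u_h^{\mp}$ appearing in the mixed and opposite-wave terms, is the exact identity; the paper's displayed decomposition writes these with $v_h^{\mp}$, which requires further difference terms to be exact, though either version is estimated identically.)
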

Moreover, due to Remark \ref{rem:formal convergence of propagator}, the decoupled system \eqref{decoupled FPU'} can be expected to converge to the following counter-propagating KdV system
\begin{equation}\label{kdv integral form}
w^{\pm}(t) = S^{\pm}(t)(\mathcal{L}_h u_{h,0}^{\pm} ) \mp \frac14 \int_0^t S^{\pm}(t-t') \partial_x\Big(w^{\pm}(t')\Big)^2 dt',
\end{equation}
where $w^{\pm}=w^{\pm}(t,x):\mathbb{R}\times\mathbb{T}\rightarrow \mathbb{R}$, $\mathcal L_h$ is the linear interpolation operator defined as in \eqref{def:linear interpolation_0}, and 
\begin{equation}\label{eq:propagator of KdV}
S^{\pm}(t)=e^{\mp\frac{t}{24}\partial_x^3}
\end{equation}
denotes the Airy flow. 
\begin{remark}
Note that, from the conservation of the mean for the KdV equation and Lemma \ref{Lem:mean}, the solution $w^{\pm}$ satisfies the mean-zero condition
\[
\int_{\T} w^{\pm}(t,x) \; dx = 0 \quad \mbox{for all} \; t.
\]
\end{remark}
Precisely, one has
\begin{proposition}\label{prop:from decoupled to kdv}
Let $h\in(0,1]$, $0< s\le 1$, and $R > 0$ be given. Suppose that 
\[\sum_{x\in\T_h}u_{h,0}^{\pm}(x)=0 \quad \mbox{and} \quad \sup_{h\in(0,1]}  \big\|\big( u_{h,0}^{+},u_{h,0}^{-}\big)\big\|_{\mathbb H^s(\T_h)} \le R.\]
Then, there exist $0 < h_0 = h_0(R) < 1$ and $T(R)>0$ sufficiently small such that the following holds: 

Let $(v_h^+(t),v_h^-(t)) \in C_t([-T,T]:\mathbb H^s(\T_h))$ (resp., $(w^+(t),w^-(t)) \in C_t([-T,T]:\mathbb H^s(\T))$) be the solution to the decoupled FPU \eqref{decoupled FPU'} (resp., KdV \eqref{kdv integral form}) with an initial data $(u_{h,0}^+, u_{h,0}^-)$  (resp., $(\mathcal L_hu_{h,0}^{+},\mathcal L_hu_{h,0}^{-})$). Then, there exists $C(R) > 0$ independent on $h$ such that
\[    \sup_{t\in [-T,T]}\left\| \mathcal{L}_hv_h^{\pm}(t) - w^{\pm}(t) \right\|_{L^2(\T)} \le  C(R)  h^{\frac{2s}{5}},\]
whenever $0 < h \le h_0$.
  \end{proposition}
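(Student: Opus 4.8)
The central difficulty is that the quadratic nonlinearity in \eqref{decoupled FPU'} loses a derivative and that the interpolated function $\mathcal{L}_h v_h^{\pm}$ does not lie in the $X^{s,b}$ space adapted to the Airy flow; hence $\mathcal{L}_h v_h^{\pm} - w^{\pm}$ cannot be controlled by a naive $X^{s,b}$ perturbation argument. The plan is therefore to compare, instead, the regularized (normal-form) variables constructed in Section \ref{sec:regularization}: write $\mathcal{V}_h^{\pm}$ for the normal-form transform of the decoupled FPU solution $v_h^{\pm}$ and $\mathcal{W}^{\pm}$ for that of the KdV solution $w^{\pm}$. Under this near-identity change of variables the derivative-losing quadratic term becomes a sum of cubic terms: a non-resonant cubic piece (estimable by the multilinear bound of Lemma \ref{lem:multilinear estimate} together with the $L^4$-Strichartz estimate of Proposition \ref{prop:L4}), the strong cubic resonances \eqref{eq:RESONANCE FPU} and \eqref{eq:RESONANCE KDV}, and, on the FPU side only, a weaker total-mass resonance carrying an explicit power of $h$. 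The first (trivial) resonance — a vanishing frequency in the interaction — is already excluded by the mean-zero conditions imposed on $v_h^{\pm}$ and $w^{\pm}$ (Remarks \ref{rem:Mean Conservation_1} and \ref{rem:Mean Conservation_3}, Lemma \ref{Lem:mean}). Since the correction $\mathcal{V}_h^{\pm} - v_h^{\pm}$ (resp. $\mathcal{W}^{\pm} - w^{\pm}$) is an explicit bilinear expression whose multiplier is the reciprocal of the relevant resonance function — which gains derivatives (behaving like $(kk_1k_2)^{-1}$ in the KdV case) and whose discrete version converges to the continuous one at rate $O(h^2)$ times a fixed power of the frequencies on $|k| \lesssim h^{-1}$ — the bilinear estimates of Corollary \ref{Cor:Linear Bilinear}, the interpolation lemmas (Lemmas \ref{lem:LOWLh} and \ref{Lem:discretization linearization inequality}), and the uniform-in-$h$ bounds (Proposition \ref{prop:uniform bound} for FPU, classical theory for KdV), transported through the transform, will reduce the proposition to the two estimates
\[
\|\mathcal{L}_h \mathcal{V}_h^{\pm}(t) - \mathcal{W}^{\pm}(t)\|_{C_t([-T,T]:L^2(\T))} \lesssim_R h^{2s/5}
\quad\text{and}\quad
\|\mathcal{L}_h(\mathcal{V}_h^{\pm} - v_h^{\pm})(0) - (\mathcal{W}^{\pm} - w^{\pm})(0)\|_{L^2(\T)} \lesssim_R h^{2s/5},
\]
the latter being again a difference of bilinear expressions handled as above.

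For the core comparison I would fix a frequency threshold $\Lambda = \Lambda(h) \gg 1$, to be optimized at the end. By the uniform $H^s$-bounds the parts of $\mathcal{L}_h \mathcal{V}_h^{\pm}$ and $\mathcal{W}^{\pm}$ supported on $|k| > \Lambda$ are $\lesssim_R \Lambda^{-s}$ in $L^2$, and — using the multiplier identity \eqref{linear interpolation multiplier}, which for $|k| \le \Lambda \ll \pi/h$ reduces to multiplication by $\tfrac{4}{h^2k^2}\sin^2(\tfrac{hk}{2}) = 1 + O((h|k|)^2)$ — the retained range reduces, up to an interpolation-symbol error applied to $\mathcal{W}^{\pm}$, to comparing the Fourier coefficients $\widehat{\mathcal{V}_h^{\pm}}(k)$ and $\widehat{\mathcal{W}^{\pm}}(k)$ for $|k|\le\Lambda$ through their Duhamel formulas. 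The discrepancy is then driven by: (i) the linear propagator phase error, $\big|\tfrac{1}{h^2}\big(k - \tfrac{2}{h}\sin(\tfrac{hk}{2})\big) - \tfrac{k^3}{24}\big| \lesssim h^2|k|^5$ by Taylor expansion, contributing $\lesssim_R h^2 \Lambda^{5-s}$ over $|t|\le T$; (ii) the difference of the discrete and continuous non-resonant cubic terms, estimated via Lemma \ref{lem:multilinear estimate} and Proposition \ref{prop:L4}, with symbol errors bounded by $h^2$ times a fixed power of $\Lambda$ on the retained range; (iii) the difference of the strong resonances, governed by $\big|\tfrac{\cos(hk/2)\cos(hk/4)}{(4/h)\sin(hk/4)} - \tfrac{1}{k}\big| \lesssim h^2|k|$, plus the FPU-only weak resonance which is itself $O(h^2)$ by its explicit form; and (iv) a self-improving term $\lesssim_R T^{\theta}\|\mathcal{L}_h\mathcal{V}_h^{\pm} - \mathcal{W}^{\pm}\|_{C_tL^2}$ arising from the common cubic structure. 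A continuity/bootstrap argument on $[-T,T]$ with $T = T(R)$ chosen small (as in Propositions \ref{Prop:Coupled to Decoupled} and \ref{prop:uniform bound}) then yields $\|\mathcal{L}_h\mathcal{V}_h^{\pm} - \mathcal{W}^{\pm}\|_{C_tL^2} \lesssim_R h^2\Lambda^{5-s} + \Lambda^{-s}$; taking $\Lambda = h^{-2/5}$ balances the two terms and produces the rate $h^{2s/5}$, while the various $O(h^s)$ interpolation errors appearing along the way are harmless since $s \ge 2s/5$.

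I expect the main obstacle to be steps (ii)--(iii). Because $\mathcal{L}_h\mathcal{V}_h^{\pm}$ and the discrete nonlinearities do not belong to the continuum Airy $X^{s,b}$ space, one cannot invoke the KdV bilinear/trilinear machinery directly but must re-derive the needed cubic bounds in $C_tL^2$ from the $L^4$-Strichartz estimate and Lemma \ref{lem:multilinear estimate}, and — this is the technical heart — track through the aliasing identity \eqref{linear interpolation multiplier} that the discrete convolution structure of the FPU nonlinearity matches the continuum one up to errors of size $h^2$ times a fixed power of the retained frequencies. The resonant contributions form the second difficulty: being exactly resonant they admit no oscillation gain, so the only smallness available is the explicit $h$-dependence of the FPU resonance multipliers (and of the extra weak resonance, which has no KdV counterpart); checking that $\tfrac{\cos(hk/2)\cos(hk/4)}{(4/h)\sin(hk/4)} \to \tfrac{1}{k}$ fast enough \emph{uniformly} on $|k|\lesssim h^{-2/5}$ is exactly what pins the exponent $2s/5$. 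Finally, one must verify that the normal-form transformation and its inverse are uniformly bounded on $[-T,T]$ and that they intertwine with $\mathcal{L}_h$ up to the errors controlled above, so that the reduction in the first paragraph is legitimate; this is where the uniform bounds of Proposition \ref{prop:uniform bound} and the boundedness of $\mathcal{L}_h$ (Lemma \ref{Lem:discretization linearization inequality}) are invoked once more.
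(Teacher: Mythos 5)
Your overall skeleton matches the paper's: pass to the profile variables, regularize both \eqref{decoupled FPU'} and \eqref{kdv integral form} by the Babin--Ilyin--Titi differentiation by parts, compare the resulting multipliers at rate $h^{\alpha}$ times powers of the frequencies, and split frequencies. But there is a genuine gap in how you deploy the frequency threshold. You use a single $h$-dependent cutoff $\Lambda=h^{-2/5}$ both to organize the Duhamel/bootstrap comparison of $\widehat{\mathcal V_h^{\pm}}$ and $\widehat{\mathcal W^{\pm}}$ and to balance the error terms, and this cannot close with $T=T(R)$ independent of $h$. If on $|k|\le\Lambda$ you run the \emph{unregularized} Duhamel formulas, the derivative loss gives a Gronwall constant of order $\Lambda^{3/2}$ (cf.\ \eqref{ineq:D0} and Proposition \ref{prop:differnece estimates-1}), forcing $T\lesssim\Lambda^{-3/2}=h^{3/5}\to0$. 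If instead you run the \emph{regularized} formulas on all of $|k|\le\Lambda$, the boundary terms $\textup{D}_2^{\pm}$, $\textup{D}_3^{\pm}$ evaluated at time $t$ are not integrated in time and carry no smallness at low frequencies (the gains $M^{-1}$, $M^{-1/2}$ in \eqref{ineq:D2}--\eqref{ineq:D3} are only available on $|k|\ge M$), so your ``self-improving term'' has an $O(R)$ coefficient with no factor of $T^{\theta}$ and cannot be absorbed for general $R$. The paper resolves this with a \emph{three}-region split and a \emph{fixed} intermediate threshold $M=M(R)$: on $|k|\le M$ the unregularized equation is used (Gronwall constant $TM^{3/2}$, absorbable since $M$ is fixed; Proposition \ref{prop:differnece estimates-1}), on $M\le|k|\le\pi/h$ the regularized equation is used (boundary terms absorbable thanks to the $M^{-1/2}$ gain; Proposition \ref{prop:differnece estimates-2}), and $|k|\ge\pi/h$ is handled by Bernstein (Proposition \ref{prop:exterior estimates}). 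This yields the rate $h^{s/2}$ for $\|\mathcal L_h\mathcal V_h^{\pm}-\mathcal W^{\pm}\|_{C_tL^2}$, which is \emph{better} than the stated rate.

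Relatedly, you have misplaced the source of the exponent $2s/5$. It does not come from balancing the nonlinear comparison; it comes from the purely linear commutator estimate $\|P_{\le\pi/h}(S^{\pm}(t)\mathcal L_hf_h-\mathcal L_hS_h^{\pm}(t)f_h)\|_{L^2}\lesssim h^{2s/5}\|f_h\|_{H^s}$ (Proposition \ref{prop:commutator of interpolation and propagator}), obtained from $|e^{\pm itk^3/24}-e^{\pm it s_h(k)}|\lesssim\min(1,|t|h^2|k|^5)\le|t|^{s/5}h^{2s/5}|k|^s$, with the factor $|k|^s$ absorbed into the $H^s$ norm---no threshold is needed there, and this estimate requires no bootstrap. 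Your splitting of this linear term at $|k|=h^{-2/5}$ is an equivalent way to perform that interpolation, so that piece of your argument is fine; the error is in letting the same $h$-dependent cutoff contaminate the nonlinear absorption constants. A secondary point: the profile change \eqref{eq:renormalization} is the linear conjugation $\mathcal V_h^{\pm}=S_h^{\pm}(-t)v_h^{\pm}$, not a near-identity quadratic transform, so there is no bilinear correction $\mathcal V_h^{\pm}-v_h^{\pm}$ to estimate and no initial-data matching condition of the kind in your second displayed inequality; the normal form enters only through the rewriting \eqref{eq:renormalized FPU}, \eqref{eq:renormalized KdV} of the Duhamel integrals.
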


  \begin{remark}
   The convergence rate $h^{\frac{2s}{5}}$ in Proposition~\ref{prop:from decoupled to kdv} is worse than that in Proposition~\ref{Prop:Coupled to Decoupled}. However, it seems to be optimal in the sense that the Airy flows can be approximated by the linear FPU flows with this convergence rate (see \eqref{difference2-1:symbol difference} below).
  \end{remark}
\subsection{Well-posedness of FPU}
\begin{proposition}[Local well-posedness of coupled and decouopled FPUs] \label{prop:LWP} Let $h\in(0,1]$ be fixed. For each 
  $$u_{h,0}^+,u_{h,0}^- \in L^2(\T_h),$$
there exists a time $T=T(h,\|u_{h,0}^+\|_{L^2(\T_h)},\|u_{h,0}^-\|_{L^2(\T_h)})$ for which a unique solution $(u_h^+,u_h^-)\in C([-T,T]:\mathbb{H}^0(\T_h))$ to the coupled FPU \eqref{coupled FPU'} (resp., $(v_h^+,v_h^-)\in C([-T,T]:\mathbb{H}^0(\T_h))$ to the decoupled FPU \eqref{decoupled FPU'}) exists.
\end{proposition}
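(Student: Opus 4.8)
The plan is to run a standard contraction-mapping (Picard iteration) argument in $C([-T,T]:L^2(\T_h))$ for each fixed $h \in (0,1]$, exploiting the fact that the lattice $\T_h$ has only finitely many points so that all the relevant discrete operators are bounded on $L^2(\T_h)$ (with constants depending on $h$, which is acceptable here). First I would set up the solution map. For the coupled system, define, for $\vec u = (u^+,u^-)$,
\[
\Phi^{\pm}(\vec u)(t) := S_h^{\pm}(t)u_{h,0}^{\pm} \mp \frac14 \int_0^t S_h^{\pm}(t-t')\,\nabla_h\big(u^{\pm}(t') + e^{\pm\frac{2t'}{h^2}\partial_h}u^{\mp}(t')\big)^2\,dt',
\]
and $\Phi(\vec u) = (\Phi^+(\vec u),\Phi^-(\vec u))$; for the decoupled system the map is the same with the mixed term dropped. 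The linear propagators $S_h^{\pm}(t)$ and the phase operators $e^{\pm\frac{2t'}{h^2}\partial_h}$ are unitary on $L^2(\T_h)$ (they are Fourier multipliers of modulus one), so they cost nothing.

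Next I would record the only nonlinear estimate needed, namely that on the fixed finite lattice the map $f_h \mapsto \nabla_h(f_h^2)$ is bounded from $L^2(\T_h)$ to $L^2(\T_h)$: indeed $\|\nabla_h g_h\|_{L^2(\T_h)} \le \frac{2}{h}\|g_h\|_{L^2(\T_h)}$ from the symbol bound $|\tfrac{2}{h}\sin(\tfrac{hk}{2})| \le \tfrac{2}{h}$, and $\|f_h^2\|_{L^2(\T_h)} \le h^{-1/2}\|f_h\|_{L^\infty(\T_h)}\|f_h\|_{L^2(\T_h)} \le h^{-1}\|f_h\|_{L^2(\T_h)}^2$ using $\|f_h\|_{L^\infty(\T_h)} \le h^{-1/2}\|f_h\|_{L^2(\T_h)}$. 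Hence the nonlinearity satisfies a Lipschitz estimate on balls of $L^2(\T_h)$ with constant $C(h)$ times the radius. Combining this with the time integration, which produces a factor $T$, gives
\[
\|\Phi(\vec u)\|_{C([-T,T]:\mathbb H^0)} \le \|(u_{h,0}^+,u_{h,0}^-)\|_{\mathbb H^0} + C(h)\,T\,\|\vec u\|_{C([-T,T]:\mathbb H^0)}^2
\]
and a matching difference estimate
\[
\|\Phi(\vec u) - \Phi(\vec{\tilde u})\|_{C([-T,T]:\mathbb H^0)} \le C(h)\,T\,\big(\|\vec u\| + \|\vec{\tilde u}\|\big)\,\|\vec u - \vec{\tilde u}\|_{C([-T,T]:\mathbb H^0)}.
\]
Choosing the radius to be, say, $2\|(u_{h,0}^+,u_{h,0}^-)\|_{\mathbb H^0}$ and then $T = T(h,\|u_{h,0}^+\|_{L^2},\|u_{h,0}^-\|_{L^2})$ small enough that $C(h)\,T\cdot(\text{radius}) \le \tfrac12$, the map $\Phi$ is a contraction on that ball of $C([-T,T]:\mathbb H^0(\T_h))$, and the Banach fixed point theorem yields a unique fixed point, which is the desired solution; continuity in time is automatic since $\Phi(\vec u)$ is manifestly continuous in $t$. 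The decoupled case is identical and in fact slightly simpler.

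Honestly, there is no real obstacle here: the statement is only a qualitative, $h$-dependent local well-posedness result, and every operator in sight is bounded on the finite-dimensional space $L^2(\T_h)$, so the usual Picard scheme closes immediately. The genuine difficulty — obtaining an existence time and bounds \emph{independent of $h$} (equivalently, of the number of lattice points) — is precisely what is deferred to Proposition~\ref{prop:uniform bound} via the $X^{s,b}$ and $L^4$-Strichartz machinery; that is not needed for this proposition. The only mild point of care is to keep track that the constant $C(h)$ above is finite for each fixed $h$ (it blows up as $h \to 0$, but that is irrelevant at this stage), and to note that the mean-zero condition imposed on $u_{h,0}^{\pm}$ (hence on $v_h^{\pm}$) is propagated by the flow, as already observed in Remarks~\ref{rem:Mean Conservation_1} and~\ref{rem:Mean Conservation_3}, so that $\nabla_h^{-1}$ and the reduction are consistent throughout the iteration.
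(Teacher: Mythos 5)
Your proposal is correct and follows essentially the same route as the paper: a Picard/contraction argument in $C([-T,T]:\mathbb H^0(\T_h))$ using the crude, $h$-dependent bounds $\|\nabla_h f_h\|_{L^2(\T_h)} \lesssim h^{-1}\|f_h\|_{L^2(\T_h)}$ and a reversed H\"older/embedding estimate for the quadratic term, with $T$ chosen small depending on $h$ and the data size. The only (immaterial) difference is that the paper passes through $L^4(\T_h)$ via the embedding $\ell^p\subset\ell^q$, yielding a factor $h^{-3/2}$ where your $L^\infty$ bound gives $h^{-2}$; both suffice for this fixed-$h$ statement.
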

\begin{proof}
We only deal with the coupled FPU, since the decoupled FPU follows similarly. Define a nonlinear map $\Phi=(\Phi^+,\Phi^-)$ by 
\[  \Phi^{\pm}(u_h^+,u_h^-)
  :=S_h^\pm(t)u_{h,0}^\pm\mp\frac14 \int_0^t S_h^\pm(t-t') \nabla_h \left(u_h^\pm(t')+e^{\pm\frac{2t'}{h^2}\partial_h}u_h^\mp(t')\right)^2dt'.\]
Then, for $T>0$ (to be chosen later), 
\[  \|   \Phi(u_h^+,u_h^-) \|_{C_t([-T,T]:\mathbb{H}^0(\T_h))} \ls \| u_{h,0}^{\pm}\|_{L^2(\T_h)} + T \left\| \nabla_h \left(u_h^\pm+e^{\pm\frac{2t}{h^2}\partial_h}u_h^\mp\right)^2\right\|_{C_t([-T,T]:L^2(\T_h))}.\]
Note by the continuous embedding $\ell^p \subset \ell^q$ that
\begin{equation}\label{reversed L^p-L^q inequality}
\|u_h\|_{L^q(\T_h)}\lesssim h^{-(\frac{1}{p}-\frac{1}{q})}\|u_h\|_{L^p(\T_h)}\quad\mbox{for all} \;\; q> p.
\end{equation}
Note also that the differential operator is bounded on a lattice
\begin{equation}\label{discrete operator is bounded}
  \| \nabla_h u_h \|_{L^2(\T_h)}\ls h^{-1} \|u_h\|_{L^2(\T_h)}.
\end{equation}
Using \eqref{discrete operator is bounded} and \eqref{reversed L^p-L^q inequality}, we estimate the nonlinear term by
\[\begin{aligned}
  \left\|\nabla_h \left(u_h^\pm+e^{\pm\frac{2t}{h^2}\partial_h}u_h^\mp\right)^2\right\|_{L^2(\T_h)}
  &\le 
h^{-1}\left\| \left(u_h^\pm+e^{\pm\frac{2t}{h^2}\partial_h}u_h^\mp\right)^2\right\|_{L^2(\T_h)} \\ 
 &\le 2h^{-1} \left(  \left\|  u_h^\pm\right\|_{L^4(\T_h)}^2 
 + \left\| e^{\pm\frac{2t}{h^2}\partial_h}u_h^\mp\right\|_{L^4(\T_h)}^2 \right)\\
 &\le 2Ch^{-\frac32}\left(  \left\|  u_h^\pm\right\|_{L^2(\T_h)}^2 
  + \left\| e^{\pm\frac{2t}{h^2}\partial_h}u_h^\mp\right\|_{L^2(\T_h)}^2 \right)\\
  &\le 2Ch^{-\frac32} \left\| (u_h^\pm,u_h^\mp)\right\|_{\mathbb{H}^0(\T_h)}^2,
\end{aligned}\]
where the $C > 0$ appears in the estimate \eqref{reversed L^p-L^q inequality} for $(p,q) = (2,4)$. Thus, we obtain 
\[\|   \Phi(u_h^+,u_h^-) \|_{C_t([-T,T]:\mathbb{H}^0(\T_h))} 
  \le \| (u_{h,0}^+,u_{h,0}^-)\|_{\mathbb H^0(\T_h)} +2C h^{-\frac32} T\left\| (u_h^\pm,u_h^\mp)\right\|_{C_t([-T,T]:\mathbb{H}^0(\T_h))}^2.\]
Similarly, one can verify that 
\[\begin{aligned}
 & \|   \Phi(u_{1,h}^+,u_{1,h}^-) -\Phi(u_{2,h}^+,u_{2,h}^-)  \|_{C_t([-T,T]:\mathbb{H}^0(\T_h))} \\
\le&~{} 4C h^{-\frac32} T
  \left(\sum_{j=1}^2 \left\| (u_{j,h}^\pm,u_{j,h}^\mp)\right\|_{C_t([-T,T]:\mathbb{H}^0(\T_h))}\right)
  \left\| (u_{1,h}^\pm-u_{2,h}^\pm,u_{1,h}^\mp-u_{2,h}^\mp) \right\|_{C_t([-T,T]:\mathbb{H}^0(\T_h))}.
\end{aligned}\]
Taking $T= \frac{h^\frac32}{16C\left\| (u_{h,0}^\pm,u_{h,0}^\mp)\right\|_{\mathbb{H}^0(\T_h)}}$, we prove that $\Phi$ is contractive on the ball in $C_t([-T,T]:\mathbb{H}^0(\T_h))$ of radius $2\left\| (u_{h,0}^\pm,u_{h,0}^\mp)\right\|_{\mathbb{H}^0(\T_h))}$ centered at zero. 
\end{proof}

\subsection{Proof of Theorem \ref{main theorem}}
We end this section with a brief proof of Theorem \ref{main theorem}, assuming Propositions \ref{Prop:Coupled to Decoupled} and \ref{prop:from decoupled to kdv}. A direct computation shows 
\[\begin{aligned}
&~{}\left\|( \mathcal{L}_hr_h)(t,\cdot)- w^+(t, \cdot-\tfrac{t}{h^2})- w^-(t, \cdot+\tfrac{t}{h^2})\right\|_{L^2(\mathbb{T})}\\
 \le&~{} \|\mathcal L_h(e^{-\frac{t}{h^2}\partial_h}u_h^+)(t,\cdot) - e^{-\frac{t}{h^2}\partial_x}w^+(t,\cdot)\|_{L^2(\T)} + \|\mathcal L_h(e^{\frac{t}{h^2}\partial_h}u_h^-)(t,\cdot)- e^{\frac{t}{h^2}\partial_x}w^-(t,\cdot)\|_{L^2(\T)}.
\end{aligned}\]
Now we deal only with the "$+$" part, as the rest follows analogously.

By Lemma \ref{Lem:discretization linearization inequality}, it follows that
\[\begin{aligned}
&~{} \|\mathcal L_h(e^{-\frac{t}{h^2}\partial_h}u_h^+)(t) - e^{-\frac{t}{h^2}\partial_x}w^+(t)\|_{L^2(\T)}\\
\le&~{}\|\mathcal L_h(e^{-\frac{t}{h^2}\partial_h}u_h^+)(t) - \mathcal L_h(e^{-\frac{t}{h^2}\partial_h}v_h^+)(t)\|_{L^2(\T)} + \|\mathcal L_h(e^{-\frac{t}{h^2}\partial_h}v_h^+)(t) - e^{-\frac{t}{h^2}\partial_x}w^+(t)\|_{L^2(\T)}\\
\le&~{} \|u_h^+(t)-v_h^+(t)\|_{L^2(\T_h)} + \|\mathcal L_h(v_h^+)(t) - w^+(t)\|_{L^2(\T)} + \|\mathcal L_h(e^{-\frac{t}{h^2}\partial_h}v_h^+)(t) - e^{-\frac{t}{h^2}\partial_x}\mathcal L_h(v_h^+)(t)\|_{L^2(\T)}.
\end{aligned}\]
Assuming Propositions \ref{Prop:Coupled to Decoupled} and \ref{prop:from decoupled to kdv}, the first two terms can be bounded by $h^{\frac{2s}{5}}$. For the remaining term, we split the $L^2$-norm into high- and low-frequency components as follows:
\[\begin{aligned}
\|\mathcal L_h(e^{-\frac{t}{h^2}\partial_h}v_h^+)(t) - e^{-\frac{t}{h^2}\partial_x}\mathcal L_h(v_h^+)(t)\|_{L^2(\T)} \le&~{} \|P_{\ge \frac{\pi}{h}}\left(\mathcal L_h(e^{-\frac{t}{h^2}\partial_h}v_h^+) - e^{-\frac{t}{h^2}\partial_x}\mathcal L_h(v_h^+)\right)(t)\|_{L^2(\T)}\\ 
&~{}+ \|P_{\le \frac{\pi}{h}}\left(\mathcal L_h(e^{-\frac{t}{h^2}\partial_h}v_h^+) - e^{-\frac{t}{h^2}\partial_x}\mathcal L_h(v_h^+)\right)(t)\|_{L^2(\T)}.
\end{aligned}\]
By Proposition \ref{prop:exterior estimates} and Corollary \ref{Cor:Uniform bounds} below, the first term is bounded by $h^s$. Note by \eqref{linear interpolation multiplier} that the second term vanishes. Collecting all, we complete the proof.

\section{Coupled and decoupled FPU systems}\label{sec:part1}
\subsection{Function spaces for solutions}
In this subsection, we define resolution spaces for the FPU system. 
We introduce the $X^{s,b}$ spaces, introduced by Bourgain \cite{B-1993Sch} and further developed by Kenig, Ponce, and Vega \cite{KPV-1996} and Tao \cite{T-2001}, adapted to our lattice setting.

First, we define the function space in a general setting. In subsequent applications, the spatial domain $\Xi$ will be either the torus $\mathbb{T}$ or the periodic lattice domain $\mathbb{T}_h$, and the associated symbol $P$ is chosen according to the model considered. Since the following are stated in a general setting, they can be applied in a unified way. We refer to \cite{T-2006} for the details and proofs.

\begin{definition}[$X^{s,b}$ spaces]\label{def:Xsb}
Let $\Xi$ be either $\mathbb{T}$ or $\mathbb{T}_h$. Let $p$ be a real-valued continuous function. For $s,b\in\R$, we define the $X_{\tau=\rho(k)}^{s,b}(\R \times \Xi)$ spaces ($X_{\tau=\rho(k)}^{s,b}$ in short) as the completion of $\mathcal S(\R\times \R)$ or $\mathcal S(\R\times h\Z)$ with respect to the norm 
\[\| u \|_{X_{\tau=\rho(k)}^{s,b}}^2 := \sum_{k\in (\Xi)^*}\int_{\R}\la k \ra^{2s} \la \tau -\rho(k) \ra^{2b} |\widetilde{u}(\tau,k)|^2  \;d\tau,\]
where $\tilde{u}$ denotes the space-time Fourier transform of $u$ defined by\footnote{In particular, when $\Xi = \T_h$, $\widetilde{u}$ (as in Definition \ref{def:Xsb}) is defined by 
\[\widetilde{u}_h(\tau, k) = \frac{h}{2\pi}\sum_{x \in \T_h} \int_{\R} e^{-it\tau} e^{-ix k} u_h(t,x) \; dt.\]
}
\[\widetilde{u} (\tau, k) = \frac{1}{2\pi}\int_{\R \times \Xi} e^{-it\tau} e^{-ix k} u(t,x) \; dt dx\]
and $(\Xi)^*$ is the Pontryagin dual space of $\Xi$, i.e, $(\T)^* = \Z$ and $(\T_h)^* = \Z\setminus (2N\Z)$. We particularly use $\mathcal F_{t,x}$ and $\mathcal F_{t,h}$ for $\widetilde{u}$ and $\widetilde{u_h}$, respectively, to avoid confusion.
\end{definition}

\begin{lemma}[\cite{T-2006,ET2016}]\label{lem:properties}
  Let $s,b \in \R$ and $X_{\tau=\rho(k)}^{s,b}$ spaces be defined as in Definition \ref{def:Xsb}. Let $\eta \in C_0^\infty(\R)$ be a cut-off function. Then, the following properties hold true:
  \begin{enumerate}
  \item (Nesting) $X_{\tau=\rho(k)}^{s',b'} \subset X_{\tau=\rho(k)}^{s,b}$ whenever $s \le s',~b\le b'$.
  \item (Duality) $X_{-\tau=p(-k)}^{-s,-b}$ is the dual space of $X_{\tau=\rho(k)}^{s,b}$.
  \item (Well-defined for linear solutions) For any $f \in H^s(\Xi)$, we have\footnote{ When $\Xi=\T$, $\partial = \partial_x$, otherwise $\partial=\partial_h$.}
  \[\left\|\eta(t) e^{ itp(-i\partial)} f \right\|_{X_{\tau=\rho(k)}^{s,b}} \ls_{\eta,b} \| f\|_{H^s(\Xi)}.\]
  \item (Transference principle) Let $X$ be a Banach space for which the inequality
  \[ \big\|e^{it\tau_0}e^{itp(-i\partial)}f\big\|_{X} \lesssim \|f\|_{H^s(\Xi)}\]
  holds for all $f \in H^s(\Xi)$ and $\tau_0 \in \R$. If, additionally, $b > \frac12$, then we have the embedding
  \[\|u\|_{X} \lesssim_b \|u\|_{X_{\tau=\rho(k)}^{s,b}}.\]
  In particular, we have
  \begin{equation}\label{eq:embedding}
  \|u\|_{C_tH^s(\Xi)} \lesssim \|u\|_{X_{\tau=\rho(k)}^{s,b}}.
  \end{equation}
  \item (Stability with respect to time localization) Let $0 < T < 1$, $b > \frac12$ and $f \in H^s(\Xi)$. We have
  \[\big\|\eta(\tfrac{t}{T}) e^{itp(-i\partial)} f \big\|_{X_{\tau=\rho(k)}^{s,b}} \ls_{\eta, b} T^{\frac12 - b} \|f\|_{H^s(\Xi)}.\]
  If $-\frac12<b' \le b<\frac12$, then we have
  \begin{align}\label{ineq:time localization}
    \big\|\eta(\tfrac{t}{T}) u_h\big\|_{X_{\tau=\rho(k)}^{s,b'}} \ls_{\eta, b, b'} T^{b-b'} \| u_h\|_{X_{\tau=\rho(k)}^{s,b}}.
  \end{align}
\item (Inhomogeneous estimate) Let $-\frac12 < b' \le 0 \le b \le b'+1$ and $T \le 1$. Then, we have 
\[  \left\| \eta(\tfrac{t}{T}) \int_0^t e^{ i(t-t')p(-i\partial)} F(t')dt' \right\|_{X_{\tau=\rho(k)}^{s,b}}  \ls_{\eta, b,b'} T^{1-b-b'}\| F\|_{X_{\tau=\rho(k)}^{s,b'}}.\]
  \end{enumerate}
  \end{lemma}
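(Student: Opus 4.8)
\textbf{Proof proposal for Lemma \ref{lem:properties}.}

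The plan is to reduce all seven statements to standard properties of $X^{s,b}$ spaces by exploiting the fact that the only role of the geometry of $\Xi$ is to fix the Pontryagin dual $(\Xi)^*$ over which one sums, and that the symbol $\rho(k)$ enters only as a frequency-dependent shift of the modulation variable $\tau$. First I would record the elementary unitary identity: conjugating by the linear flow, $e^{itp(-i\partial)}u$, has Fourier transform $\widetilde u(\tau+\rho(k),k)$, so that in the tilde variables the $X^{s,b}_{\tau=\rho(k)}$ norm becomes the flat weighted $L^2$ norm $\|\la k\ra^s\la\tau\ra^b\,\widetilde{(e^{-itp(-i\partial)}u)}\|_{\ell^2_k L^2_\tau}$. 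All seven claims become statements about this flat space, independent of $\rho$, and only (1) and (2) even depend on $s$ and $b$ directly.

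With this reduction in hand: (1) \emph{Nesting} is immediate from $\la k\ra^{2s}\la\tau-\rho(k)\ra^{2b}\le\la k\ra^{2s'}\la\tau-\rho(k)\ra^{2b'}$ pointwise when $s\le s'$ and $b\le b'$ (using $\la\cdot\ra\ge1$). (2) \emph{Duality} follows by Cauchy--Schwarz in the pairing $\int\sum \widetilde u\,\overline{\widetilde v}$, writing $\widetilde u\,\overline{\widetilde v}=(\la k\ra^s\la\tau-\rho(k)\ra^b\widetilde u)(\la k\ra^{-s}\la\tau-\rho(k)\ra^{-b}\overline{\widetilde v})$, with the matching lower bound obtained by choosing the extremal $v$; one must note the sign flip $-\tau=\rho(-k)$ comes from complex-conjugating the space-time Fourier transform. (3) \emph{Well-definedness for linear solutions}: $\widetilde{\eta(t)e^{itp(-i\partial)}f}(\tau,k)=\wh\eta(\tau-\rho(k))\wh f(k)$, so the $X^{s,b}$ norm equals $\|\la k\ra^s\wh f(k)\|_{\ell^2}\,\|\la\tau\ra^b\wh\eta(\tau)\|_{L^2}$ — no, more precisely $\|\la\tau\ra^b\wh\eta\|_{L^2_\tau}<\infty$ since $\eta\in C_0^\infty$, so the bound is $\ls_{\eta,b}\|f\|_{H^s}$. (4) The \emph{transference principle} is the standard argument: write $e^{itp(-i\partial)}$-adapted Fourier expansion $u(t)=\int e^{it\tau_0}\big(e^{itp(-i\partial)}g_{\tau_0}\big)\,d\tau_0$ after conjugation, apply the hypothesized bound $\|e^{it\tau_0}e^{itp(-i\partial)}f\|_X\ls\|f\|_{H^s}$ slicewise, and sum using Cauchy--Schwarz in $\tau_0$ together with $b>\frac12$ to absorb $\int\la\tau_0\ra^{-2b}\,d\tau_0<\infty$; the embedding \eqref{eq:embedding} is the special case $X=C_tH^s$. (5)--(7) \emph{Time-localization and inhomogeneous estimates} are again the classical lemmas — for (5) and the first half, split $\eta(t/T)e^{itp(-i\partial)}f$, use the bound on $\wh{\eta(T\cdot)}$ scaling like $T$, and interpolate the weight $\la\tau\ra^{b}$; for \eqref{ineq:time localization} and (7) one uses the Duhamel-boundary-term decomposition from \cite{T-2006} (or \cite{ET2016}), splitting the time integral into the contribution near and far from the characteristic surface and handling each with the one-dimensional estimate $\|\eta(t/T)\|$ in the appropriate Sobolev norm of $\tau$.

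I do not expect a genuine obstacle here — the whole point of stating the lemma in the general setting of Definition \ref{def:Xsb} is that the proofs are verbatim the ones in \cite{T-2006} and \cite{ET2016}, since those references never use more about the frequency space than that it is a locally compact abelian group carrying counting (on $(\Xi)^*$ discrete) or Lebesgue measure and a real symbol $\rho$. The one point that deserves a sentence of care is uniformity in $h$: none of the implicit constants in (1)--(7) involve $h$, because after the conjugation reduction the discrete structure appears only through the index set of an $\ell^2$ sum and the inequalities are applied termwise. I would therefore present this as a short lemma whose proof consists of the conjugation reduction plus a reference to \cite{T-2006,ET2016}, spelling out only the duality sign-flip in (2) and the $b>\frac12$ summation in (4) as the two places where a reader might want detail.
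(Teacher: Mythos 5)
Your proposal is correct and matches the paper's treatment: the paper gives no proof of this lemma, deferring entirely to \cite{T-2006,ET2016}, and your conjugation-by-the-linear-flow reduction plus the standard slicewise/foliation arguments is exactly what those references do. Your remark that the implicit constants are automatically uniform in $h$ (since the discrete structure enters only as the index set of an $\ell^2$ sum) is the one point worth making explicit, and you make it.
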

From Lemma \ref{lem:properties}, it is known that $X^{s,b}$ for some $b > \frac12$ seems be an appropriate auxiliary function space for both FPU and KdV. However, the nonlinearity of KdV cannot be controlled in $X^{s,b}$ except for $b = \frac12$ (see Theorem 1.2 in \cite{KPV-1996})\footnote{An analogous argument to that in Theorem 1.2 of \cite{KPV-1996} may yield the same result for the FPU system, but proving it here is unnecessary.}. In order to overcome this issue, we use an additional auxiliary function space $Y_{\tau=\rho(k)}^{s}=Y_{\tau=\rho(k)}^{s}(\R\times \Xi)$ equipped with the norm (see, for instance, \cite{B-1993KdV})
\[ \| u\|_{Y_{\tau=\rho(k)}^{s}}:= \|u\|_{X_{\tau=\rho(k)}^{s,\frac12}}+\|\la k\ra^s \widetilde{u}(\tau,k)\|_{\ell_{k\in (\Xi)^*}^2L_{\tau\in \R}^1}.\]
We list well-known properties of $Y_{\tau=\rho(k)}^{s}$. For the proofs, we refer to \cite{ET2016}
\begin{lemma}\label{Lem:Y}
We have 
\begin{enumerate}
\item (Embedding) For $u\in C_t^0H_x^s(\R\times\Xi)$, 
\begin{equation}\label{Embedding Y}
  \|  u \|_{C_tH_x^s} \ls \| u\|_{Y_{\tau=\rho(k)}^{s}}.
\end{equation}
\item (Well-defined for linear solutions) For any $f \in H^s(\Xi)$, we have 
\[  \left\| \eta(\tfrac{t}{T})e^{itp(-i\partial)} f \right\|_{Y_{\tau=\rho(k)}^{s}}
  \ls \|f\|_{H^s(\Xi)}.\]
\item (Inhomogeneous estimate)
For $Z_{\tau=\rho(k)}^s$ norm defined by 
\[  \| u\|_{Z_{\tau=\rho(k)}^s}:=\| u\|_{X_{\tau=\rho(k)}^{s,-\frac12}} +\left\| \langle k\rangle^s \langle \tau -\rho(k) \rangle^{-1} \widetilde{u}(\tau,k)\right\|_{\ell_{k\in \Xi^*}^2 L_{\tau\in\R}^1},\]
we have 
\[\left\|\eta(\tfrac{t}{T})\int_0^t e^{ i(t-t')p(-i\partial)}F(t')dt'\right\|_{Y_{\tau=\rho(k)}^s} \ls \| F\|_{Z_{\tau=\rho(k)}^s}.\]
\end{enumerate}
\end{lemma}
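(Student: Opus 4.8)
The plan is to dispose of items (1) and (2) quickly and to concentrate on the inhomogeneous estimate (3), which at the endpoint exponent $b=\tfrac12$ is the only one with real content. For (1), the embedding into $C_tH^s$ comes entirely from the second summand of the $Y^s$ norm: undoing the $\tau$-integration yields the pointwise-in-$t$ bound $|\wh u(t,k)|\ls\int_\R|\wt u(\tau,k)|\,d\tau$ on the spatial Fourier coefficients, so, multiplying by $\la k\ra^{2s}$ and summing in $k$, $\sup_t\norm{u(t)}_{H^s}\ls\norm{\la k\ra^s\wt u(\tau,k)}_{\ell_k^2L_\tau^1}\le\norm u_{Y^s_{\tau=\rho(k)}}$; continuity in $t$ follows from dominated convergence because $\la k\ra^s\wt u\in\ell_k^2L_\tau^1$. (This is precisely why the $\ell_k^2L_\tau^1$ term is appended to $X^{s,1/2}$, which by itself fails to embed in $C_tH^s$.) For (2), I would compute the space--time Fourier transform of $\eta(t/T)e^{itp(-i\pa)}f$, namely a fixed constant times $T\,\wh\eta\big(T(\tau-\rho(k))\big)\wh f(k)$: its $X^{s,1/2}$-part reduces, after the substitution $\sigma=T(\tau-\rho(k))$, to the integral $T\int_\R\la\sigma/T\ra\,|\wh\eta(\sigma)|^2\,d\sigma$, which is $\ls_\eta1$ for $T\le1$ since $\la\sigma/T\ra\le\la\sigma\ra/T$ and $\wh\eta$ is Schwartz, while its $\ell_k^2L_\tau^1$-part reduces to the scale-invariant $T\norm{\wh\eta(T\,\cdot)}_{L^1}=\norm{\wh\eta}_{L^1}$; in both cases the residual $k$-sum is exactly $\norm f_{H^s}^2$.

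For (3), I would start from the Fourier representation of the Duhamel term: up to a fixed constant its spatial Fourier coefficient equals $\int_\R\wt F(\tau,k)\,\frac{e^{it\tau}-e^{it\rho(k)}}{\tau-\rho(k)}\,d\tau$, and I would split $F=F_{\mathrm{lo}}+F_{\mathrm{hi}}$ on the Fourier side according to whether $|\tau-\rho(k)|\le1$ or $>1$. On the far part $F_{\mathrm{hi}}$ one has $\la\tau-\rho(k)\ra\sim|\tau-\rho(k)|$, so dividing by $\tau-\rho(k)$ gains one full modulation weight: the $e^{it\tau}$ contribution is the function with space--time Fourier transform $\wt F(\tau,k)/(\tau-\rho(k))$ (supported on $|\tau-\rho(k)|>1$), which sits in $X^{s,1/2}$ with norm $\ls\norm F_{X^{s,-1/2}}$ and whose $\ell_k^2L_\tau^1$ profile is bounded by $\norm F_{Z^s}$, while the $e^{it\rho(k)}$ contribution is the linear solution with data $k\mapsto\int_{|\tau-\rho(k)|>1}\wt F(\tau,k)/(\tau-\rho(k))\,d\tau$, whose $H^s$-finiteness is exactly the place where the $\ell_k^2L_\tau^1$ term of $Z^s$ (as opposed to merely $X^{s,-1/2}$) is indispensable; both are then absorbed by item (2) once the cutoff $\eta(t/T)$ is restored.

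On the near part $F_{\mathrm{lo}}$, I would Taylor-expand $e^{it(\tau-\rho(k))}-1=\sum_{n\ge1}\frac{(it(\tau-\rho(k)))^n}{n!}$, which recasts its contribution as $\sum_{n\ge1}\frac{c_n}{n!}\,T^n\,\eta_n(t/T)\,e^{it\rho(k)}\,g_n(k)$ with $|c_n|=1$, $\eta_n(s)=s^n\eta(s)\in C_0^\infty$, and $g_n(k)=\int_{|\tau-\rho(k)|\le1}\wt F(\tau,k)\,(\tau-\rho(k))^{n-1}\,d\tau$. Since $|\tau-\rho(k)|^{n-1}\le1$ on that region, Cauchy--Schwarz (in $\tau$, over an interval of length $2$) gives $\norm{g_n}_{H^s}\ls\norm F_{X^{s,-1/2}}$ uniformly in $n$; applying item (2) with $\eta_n$ in place of $\eta$ — whose constants grow at most like $n\,R^n$, where $R$ bounds $\supp\eta$ — and summing over $n$, the series converges thanks to the factor $T^n/n!$ and is $\ls\norm F_{Z^s}$ for $T\le1$.

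The step I expect to be the main obstacle is making all of the above uniform in $T\in(0,1]$, that is, controlling the interplay between the time localization $\eta(t/T)$ and the endpoint modulation exponent $b=\tfrac12$: multiplication by a sharp-in-time cutoff is outright unbounded on $X^{s,1/2}$, and even for the smooth cutoff a careless triangle inequality in the $\tau$-convolution against $T\wh\eta(T\,\cdot)$ costs a power of $T$. The remedy is to split the convolution variable $\sigma$ into the regimes $|\sigma|\lesssim\la\tau-\rho(k)\ra$ and $|\sigma|\gtrsim\la\tau-\rho(k)\ra$ and to exploit the rapid decay of $\wh\eta$ in the second, together with $T\le1$; this bookkeeping is by now standard in the $X^{s,b}$ theory of KdV, so for the full details I would simply defer to \cite{ET2016}.
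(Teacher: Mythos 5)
Your argument is correct and is essentially the intended one: the paper gives no proof of this lemma at all, deferring entirely to \cite{ET2016}, and what you reconstruct — inverting the $\tau$-integral for (1), the explicit formula $T\widehat\eta(T(\tau-\rho(k)))\widehat f(k)$ for (2), and for (3) the splitting of the Duhamel kernel $\frac{e^{it\tau}-e^{it\rho(k)}}{\tau-\rho(k)}$ at modulation $|\tau-\rho(k)|=1$ with a Taylor expansion on the near part — is exactly the standard proof found there. The one loose phrase is that the far-modulation $e^{it\tau}$ piece is \emph{not} ``absorbed by item (2)'' (it is not a free solution); it requires the uniform-in-$T$ multiplier bound $\|\eta(t/T)v\|_{X^{s,1/2}_{\tau=\rho(k)}}\lesssim\|v\|_{Y^s_{\tau=\rho(k)}}$, whose proof is precisely where the $\ell^2_kL^1_\tau$ component is consumed (Young's $L^2\ast L^1\to L^2$ applied to $\langle\cdot\rangle^{1/2}T\widehat\eta(T\cdot)\in L^2$ against $\widetilde v(\cdot,k)\in L^1_\tau$, rather than ``rapid decay of $\widehat\eta$'' alone) — but you correctly single this out as the crux in your closing paragraph, so the gap is one of phrasing, not of substance.
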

For the particular FPU solutions $u_{h}^{\pm}$, its corresponding $X^{s,b}$ space is defined by
\begin{definition}[$X_{h,\pm}^{s,b}$ spaces]\label{def:Xsb_h}
  For $s,b\in \R$, we define the discrete $X^{s,b}$ spaces $X_{h,\pm}^{s,b}=X_{h,\pm}^{s,b}(\R\times \T_h)$ as 
\[  X_{h,\pm}^{s,b} := X_{\tau=\pm s_h(k)}^{s,b},\]
where 
\[s_h(k) = \frac{1}{h^2} \left(k - \frac{2}{h}\sin\left(\frac{hk}{2}\right)\right).\]
Analogously, $Y_{h,\pm}^s$ and $Z_{h,\pm}^s$ can be defined by replacing $X_{\tau=\rho(k)}^{s,\frac12}$ and $X_{\tau=\rho(k)}^{s,-\frac12}$ by $X_{h,\pm}^{s,\frac12}$ and $X_{h,\pm}^{s,-\frac12}$, respectively. Moreover, in a standard manner, we localize these spaces in time for $T \in (0,1]$ as
\[Y_{h,\pm}^{s,T} = \left\{f_h : [-T,T] \times \T_h : \|f_h\|_{Y_{h,\pm}^{s,T}} = \inf\limits_{g_h = f_h \;\; \textit{\emph{in}} \;\; [-T, T] \times \T_h} \|g_h\|_{Y_{h,\pm}^s}\right\},\]
\[Z_{h,\pm}^{s,T} = \left\{f_h : [-T,T] \times \T_h : \|f_h\|_{Z_{h,\pm}^{s,T}} = \inf\limits_{g_h = f_h \;\; \textit{\emph{in}} \;\; [-T, T] \times \T_h} \|g_h\|_{Z_{h,\pm}^s}\right\}.\]
\end{definition}

\subsection{Linear estimates}\label{subsec:linear estimates}
\begin{proposition}[$L^4$-Strichartz estimates]\label{prop:L4}
Let $0 < h \le 1$.  For $b > \frac13$, we have
\[  \norm{f_h}_{L_t^4(\R:L^4(\T_h))} \lesssim \norm{f_h}_{X_{h,\pm}^{0,b}(\R \times \T_h)}\]
  for any function $f_h$ in $X_{h,\pm}^{0,b}(\R \times \T_h)$.
  \end{proposition}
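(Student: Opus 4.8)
The plan is to follow Bourgain's original argument for the periodic Airy (KdV) equation, adapting it to the discrete dispersion relation $s_h(k)$. The key point is that $L^4$-Strichartz reduces, via a $TT^*$-type / Plancherel argument, to a counting estimate for the number of lattice points on a ``twisted'' level set of the dispersive symbol, and one must check that the relevant counting bound holds \emph{uniformly in} $h\in(0,1]$. First I would reduce to $b$ slightly larger than $\tfrac13$ and, by duality and the definition of $X^{0,b}_{h,\pm}$, it suffices to prove $\|f_h\|_{L^4_{t,x}}^2 = \|f_h^2\|_{L^2_{t,x}} \lesssim \|f_h\|_{X^{0,b}_{h,\pm}}^2$, i.e.\ an $L^2$ bound on the bilinear object $f_h g_h$. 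Taking space-time Fourier transforms and writing $\wt{f_h}(\tau_i,k_i)$, the $L^2_{t,x}$ norm of the product becomes a sum/integral over the constraint $k=k_1+k_2$, $\tau=\tau_1+\tau_2$; inserting $\langle \tau_i \mp s_h(k_i)\rangle^{-b}$ weights and applying Cauchy--Schwarz in the $(\tau_1,k_1)$ variables (for fixed output $(\tau,k)$) leaves a factor
\[
\sup_{\tau,k}\ \sum_{\substack{k_1+k_2=k\\ k_i\in(\T_h)^*}}\ \int_{\tau_1+\tau_2=\tau} \frac{d\tau_1}{\langle \tau_1 \mp s_h(k_1)\rangle^{2b}\,\langle \tau_2 \mp s_h(k_2)\rangle^{2b}}.
\]

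The $\tau_1$-integral is, for $2b>\tfrac12$... actually one wants the standard convolution bound: for $2b>1/2$ the one-dimensional integral is $\lesssim \langle \mu \rangle^{1-4b}\cdot(\text{something})$ — more precisely, $\int \langle\tau_1-a\rangle^{-2b}\langle\tau-\tau_1-b\rangle^{-2b}\,d\tau_1 \lesssim \langle \tau - a - b\rangle^{-2b}$ when $2b>1$, but here $2b$ can be $<1$, so I would instead keep one weight and use it to control the $\tau_1$-integration against the resonance function. The cleaner route, which I would take, is: first do the $\tau$-integration trivially (we are estimating an $L^2_\tau$ norm, and the two dispersive weights localize $\tau$), reducing matters to
\[
\sup_{k\in(\T_h)^*,\ \mu\in\R}\ \#\Big\{(k_1,k_2): k_1+k_2=k,\ k_i\in(\T_h)^*,\ \big|\mu \mp s_h(k_1)\mp s_h(k_2)\big|\le 1\Big\}\ \lesssim\ 1,
\]
together with an elementary $\ell^2$ Schur-test estimate absorbing the $\tau$-weights (valid since $b>\tfrac13$, after using $L^3$ in $\tau$ of the product of two $L^2$ functions, à la the $L^4$ Strichartz proof where the exponent $\tfrac13$ originates from a cubic counting). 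In the same-sign case ($\mp s_h(k_1)\mp s_h(k_2)$ with equal signs), writing $k_2 = k-k_1$ the resonance function is $\mp\big(s_h(k_1)+s_h(k-k_1)\big)$, whose derivative in $k_1$ is $\mp\big(s_h'(k_1)-s_h'(k-k_1)\big)$; since $s_h'(k)=\tfrac1h(1-\cos(\tfrac{hk}{2}))$ is strictly monotone on the relevant half of $(\T_h)^*$ and comparable to $k^2$ (as recorded in the introduction), the map $k_1\mapsto s_h(k_1)+s_h(k-k_1)$ is at most boundedly-many-to-one on each unit interval, \emph{uniformly in $h$}, giving the $O(1)$ count.

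The main obstacle I anticipate is exactly this uniform-in-$h$ counting in the ``near-overlap'' regime flagged in the paper's introduction: when $k_1$ and $k-k_1$ are both near the high-frequency edges $\pm\pi/h$, the group velocities $s_h'(k_1)$ and $s_h'(k-k_1)$ nearly coincide, so the second difference of $s_h$ degenerates and the resonance function becomes nearly flat — this is precisely the mechanism by which the naive $L^4$ Strichartz \emph{fails} on $\T_h$ (as the authors note). Resolving it requires exploiting the $X^{s,b}$ weights rather than a pure counting bound: one localizes $k_1$ dyadically, and in the bad region one pays for the near-flatness by the smallness of the modulation window, or one restricts output frequencies and invokes that for $f_h$ in $X^{0,b}_{h,\pm}$ the offending Fourier region contributes a controllable amount. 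I would handle this by following the structure of \cite{B-1993KdV} adapted to $s_h$: split into (i) ``separated'' frequencies where the above monotonicity/counting argument gives the clean $O(1)$ bound with $b>\tfrac13$, and (ii) a small exceptional region near the edges where one uses the trivial Bernstein-type bound \eqref{reversed L^p-L^q inequality} together with the fact that the $X^{0,b}$ norm with $b>1/3$ controls the localized piece with a gain, so that no loss of regularity is incurred. Assembling the two regimes, summing the dyadic pieces (the sum converges because $b>\tfrac13$ strictly), and undoing the duality reduction yields the claimed estimate with implicit constant independent of $h\in(0,1]$.
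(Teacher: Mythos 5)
Your opening reductions (duality, splitting off the finitely many low frequencies, reducing to same-sign frequencies, Cauchy--Schwarz in $(\tau_1,k_1)$ to arrive at the supremum
\[
L=\sup_{\tau,\,k}\ \sum_{k_1+k_2=k}\int_{\R}\langle\tau_1-s_h(k_1)\rangle^{-2b}\,\langle\tau-\tau_1-s_h(k_2)\rangle^{-2b}\,d\tau_1
\]
) match the paper's proof. The gap is in how you then bound $L$. A uniform $O(1)$ counting bound for the level sets $\{k_1:\ |\mu-s_h(k_1)-s_h(k-k_1)|\le 1\}$ only yields $\sum_{k_1}\langle\tau-s_h(k_1)-s_h(k-k_1)\rangle^{1-4b}\lesssim\sum_{j\in\Z}\langle j\rangle^{1-4b}$, which converges only for $b>\tfrac12$; it cannot produce the threshold $b>\tfrac13$. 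The paper instead proves a pointwise \emph{product} lower bound $|\tau-s_h(k_1)-s_h(k-k_1)|\gtrsim k\,|k_1-\tilde{\gamma}_+|\,|k_1-\tilde{\gamma}_-|$ (via the elementary Lemma \ref{lem:cos} on $|1-\cos x-\alpha|$, applied to the exact factorization of the resonance function), valid for all same-sign frequencies up to the edge, and then applies a three-fold H\"older inequality; each of the three resulting sums $\sum|k_1-\cdot|^{3(1-4b)}$ converges precisely when $b>\tfrac13$. Your remark about ``$L^3$ in $\tau$ / cubic counting'' gestures at this but supplies no mechanism, and monotonicity of $s_h'$ alone does not control the vertex $k_1=k/2$, where the derivative of $k_1\mapsto s_h(k_1)+s_h(k-k_1)$ vanishes and the map is not boundedly many-to-one per unit interval of values.

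Second, the obstruction you single out as the main difficulty is not present for this symbol, and your proposed remedy would break the estimate. The ``near-overlap of twisted annuli'' discussed in the introduction concerns the discrete Schr\"odinger symbol $\tfrac{2}{h^2}(1-\cos(hk))$, whose group velocity degenerates at the edges $k=\pm\pi/h$. For the FPU symbol one has $s_h'(k)=\tfrac{2}{h^2}\sin^2(\tfrac{hk}{4})\sim k^2$ and, for same-sign interactions, the argument $\tfrac{h(2k_1-k)}{4}$ stays in $[-\tfrac{\pi}{2},\tfrac{\pi}{2}]$, so the quadratic behaviour of the resonance function is uniform all the way to $|k_1|=\pi/h$ and no exceptional edge region is needed. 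Conversely, invoking the Bernstein-type bound \eqref{reversed L^p-L^q inequality} on any such region costs a factor $h^{-1/4}$ and would destroy exactly the uniformity in $h$ that the proposition asserts. To repair the proposal you should replace the counting step by the quantitative factorization and lower bound of the resonance function and the three-fold H\"older argument, and drop the exceptional-region splitting entirely.
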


  \begin{lemma}\label{lem:cos}
  Let $\alpha \in \R$ and $x \in [-\frac{\pi}{2}, \frac{\pi}{2}]$. Then, the following statements are valid:
  \begin{enumerate}
  \item $|1- \cos x - \alpha| \ge \frac{x^2}{4}$, if $\alpha \le 0$.
  \item $|1- \cos x - \alpha| \ge \frac{1}{4}|x-\beta||x+\beta|$, if $0 < \alpha < 1$, where $\beta = \cos^{-1}(1-\alpha) \in \left(0, \frac{\pi}{2}\right)$
  \item $|1- \cos x - \alpha| \ge \frac{1}{4}\left|x-\frac{\pi}{2}\right|\left|x+\frac{\pi}{2}\right|$, if $\alpha \ge 1$.
  \end{enumerate}
  \end{lemma}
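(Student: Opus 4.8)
The plan is to prove the three inequalities by reducing each to an elementary convexity/quadratic comparison on the interval $\left[-\frac{\pi}{2},\frac{\pi}{2}\right]$. The central tool is the well-known lower bound for the cosine deficit,
\[
1 - \cos x \ge \frac{x^2}{4}, \qquad x \in \left[-\tfrac{\pi}{2}, \tfrac{\pi}{2}\right],
\]
together with the companion upper bound $1 - \cos x \le \frac{x^2}{2}$, both of which follow from the Taylor expansion of $\cos$ and the fact that the function $x \mapsto \frac{1-\cos x}{x^2}$ is monotone on this interval (at $x = \frac{\pi}{2}$ the ratio equals $\frac{4}{\pi^2} \ge \frac14$). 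I would record these two auxiliary bounds first, since every case uses one of them.

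For part~(1), when $\alpha \le 0$ we have $1 - \cos x - \alpha = (1 - \cos x) + |\alpha| \ge 1 - \cos x \ge \frac{x^2}{4}$, and since the quantity inside the absolute value is nonnegative the claim is immediate. For part~(3), when $\alpha \ge 1$, write $1 - \cos x - \alpha = -(\cos x + (\alpha - 1)) = -(\cos x) - (\alpha-1) \le -\cos x$, so $|1 - \cos x - \alpha| \ge |{-\cos x}| + (\alpha-1) \ge \cos x = \cos x$; it then remains to check $\cos x \ge \frac14\left|x - \frac{\pi}{2}\right|\left|x + \frac{\pi}{2}\right| = \frac14\left(\frac{\pi^2}{4} - x^2\right)$ on $\left[-\frac{\pi}{2},\frac{\pi}{2}\right]$. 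This is again a one-variable inequality: both sides vanish at $x = \pm\frac{\pi}{2}$, both are even, and comparing values at $x=0$ gives $1 \ge \frac{\pi^2}{16}$, which holds; a short concavity comparison (the cosine is concave on this interval while the right side is a downward parabola with the same roots) finishes it.

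Part~(2) is the one I expect to be the main obstacle, since it is the genuinely non-monotone case: for $0 < \alpha < 1$ the function $x \mapsto 1 - \cos x - \alpha$ changes sign at $x = \pm\beta$ where $\beta = \cos^{-1}(1-\alpha) \in \left(0, \frac{\pi}{2}\right)$. The idea is to exploit that $\cos\beta = 1 - \alpha$, so that
\[
1 - \cos x - \alpha = \cos\beta - \cos x = 2\sin\left(\frac{x+\beta}{2}\right)\sin\left(\frac{x-\beta}{2}\right),
\]
by the sum-to-product identity. Taking absolute values and using the elementary bound $|\sin\theta| \ge \frac{2}{\pi}|\theta|$ for $|\theta| \le \frac{\pi}{2}$ — valid here because $\left|\frac{x \pm \beta}{2}\right| \le \frac{\pi}{2}$ on the relevant range — yields
\[
|1 - \cos x - \alpha| \ge 2 \cdot \frac{2}{\pi}\left|\frac{x+\beta}{2}\right| \cdot \frac{2}{\pi}\left|\frac{x-\beta}{2}\right| = \frac{2}{\pi^2}|x-\beta||x+\beta| \ge \frac14|x-\beta||x+\beta|,
\]
where the last step uses $\frac{2}{\pi^2} > \frac14$. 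The only point requiring care is the range check for the argument of $\sin$: since $x \in \left[-\frac{\pi}{2},\frac{\pi}{2}\right]$ and $\beta \in \left(0,\frac{\pi}{2}\right)$, we have $\left|\frac{x+\beta}{2}\right| < \frac{\pi}{2}$ and $\left|\frac{x-\beta}{2}\right| < \frac{\pi}{2}$, so the linear lower bound for $|\sin|$ applies on the nose. This product-to-sum trick also retroactively unifies the argument, as parts~(1) and~(3) can be seen as the degenerate limits $\beta \to 0$ (i.e. $\alpha \to 0^+$, giving the $\frac{x^2}{4}$ bound) and $\beta \to \frac{\pi}{2}$ (i.e. $\alpha \to 1^-$).
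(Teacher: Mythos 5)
Your overall strategy for part (2) — writing $1-\cos x-\alpha=\cos\beta-\cos x=2\sin\bigl(\tfrac{x+\beta}{2}\bigr)\sin\bigl(\tfrac{x-\beta}{2}\bigr)$ and bounding each sine factor from below by Jordan's inequality — is genuinely different from the paper (which instead shows that $g(x)=|1-\cos x-\alpha|-\tfrac14|x-\beta||x+\beta|$ is nonnegative by checking its sign at $0$, $\beta$, $\tfrac{\pi}{2}$ and the sign of $g'(x)=\pm(\sin x-\tfrac{x}{2})$ on each subinterval). However, your final step contains a genuine numerical error: $\tfrac{2}{\pi^2}\approx 0.203$ is \emph{not} greater than $\tfrac14$, since $\pi^2>8$. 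As written, the Jordan bound $|\sin\theta|\ge\tfrac{2}{\pi}|\theta|$ applied to each factor separately only yields $|1-\cos x-\alpha|\ge\tfrac{2}{\pi^2}|x-\beta||x+\beta|$, which does not prove the stated constant $\tfrac14$. The loss comes from applying the worst-case slope $\tfrac{2}{\pi}$ to both factors simultaneously, even though the two arguments satisfy $\bigl|\tfrac{x+\beta}{2}\bigr|+\bigl|\tfrac{x-\beta}{2}\bigr|=\max(|x|,\beta)\le\tfrac{\pi}{2}$ and hence cannot both be near $\tfrac{\pi}{2}$. Your approach is salvageable: using the monotonicity of $t\mapsto\tfrac{\sin t}{t}$ on $[0,\tfrac{\pi}{2}]$, the product $\tfrac{\sin u}{u}\cdot\tfrac{\sin v}{v}$ under the constraint $|u|+|v|\le\tfrac{\pi}{2}$ is minimized at the corner $(|u|,|v|)=(\tfrac{\pi}{2},0)$, where it equals $\tfrac{2}{\pi}>\tfrac12$; this gives $2|\sin u||\sin v|\ge|2u||2v|\cdot\tfrac12\cdot\tfrac12=\tfrac14|x-\beta||x+\beta|$ as required. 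But that extra joint estimate is precisely the missing ingredient, and without it the proof of part (2) fails. (For the paper's actual application in the $L^4$ estimate any positive constant would do, but the lemma as stated claims $\tfrac14$.)

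Two smaller points. In part (3), the remark that ``cosine is concave while the right side is a downward parabola with the same roots'' is not by itself a proof: both sides are concave on $[-\tfrac{\pi}{2},\tfrac{\pi}{2}]$ and share the endpoints, and two such functions need not be ordered even if one dominates at the midpoint. The clean fix is the paper's: $g(x)=\cos x-\tfrac14\bigl(\tfrac{\pi^2}{4}-x^2\bigr)$ satisfies $g(\tfrac{\pi}{2})=0$ and $g'(x)=-\sin x+\tfrac{x}{2}\le 0$ on $[0,\tfrac{\pi}{2}]$, so $g\ge 0$ there, and evenness finishes it. Part (1) is fine as you wrote it, since you used the direct bound $1-\cos x\ge\tfrac{4}{\pi^2}x^2\ge\tfrac{x^2}{4}$ rather than Jordan's inequality on $\sin(x/2)$.
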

  
  \begin{proof} 
  When $\alpha \le 0$, $|1-\cos x -\alpha| \ge 1-\cos x \ge 0$ in $[-\frac{\pi}{2}, \frac{\pi}{2}]$. Let $g(x) = 1 - \cos x - \frac14x^2$. Note that $g(0) = 0$. Since $g'(x) = \sin x - \frac12 x \ge 0$ for $x \in [0, \frac{\pi}{2}]$, we know $g$ is increasing, thus $g(x) \ge 0$ for $x \in [0, \frac{\pi}{2}]$. Therefore, we have Item (1) due to the evenness of $g$. 
  
  \medskip
  
  \noindent When $\alpha \ge 1$, $|1-\cos x -\alpha| = \cos x + (\alpha-1) \ge \cos x \ge 0$. Now we set $g(x) = \cos x - \frac14\left(\frac{\pi^2}{2} - x^2\right)$. Note that $g(0) = 1 - \frac{\pi^2}{16} \ge 0$ and $g(\frac{\pi}{2}) = 0$. Since $g'(x) = -\sin x +\frac12x \le 0$ for $x \in [0, \frac{\pi}{2}]$, we know $g$ is decreasing, thus $g(x) \ge 0$ for $x \in [0, \frac{\pi}{2}]$. Therefore, we have Item (3) due to the evenness of $g$.  
  
  \medskip
  
  \noindent When $0 < \alpha <1$, let $\beta = \cos^{-1}(1-\alpha) \in (0, \frac{\pi}{2})$. Let 
  \[\begin{aligned}
  g(x) =&~{} |1-\cos x -\alpha| -  \frac{1}{4}|x-\beta||x+\beta| \\
  =&~{} \begin{cases} -1 + \cos x  + \alpha + \frac14(x-\beta)(x+\beta), \quad &\mbox{if} \;\; 0 \le x \le \beta,\\ 1- \cos x - \alpha - \frac14(x-\beta)(x+\beta), \quad &\mbox{if} \;\; \beta \le x \le \frac{\pi}{2}.\end{cases}
  \end{aligned}\]
  We have already shown that $g(0) = \alpha - \frac{\beta^2}{4} = 1- \cos \beta - \frac{\beta^2}{4} > 0$ and $g(\frac{\pi}{2}) = 1-\alpha - \frac14\left(\frac{\pi^2}{4} - \beta^2\right) = \cos \beta - \frac14\left(\frac{\pi^2}{4} - \beta^2\right) > 0$ for $\beta \in \left(0, \frac{\pi}{2}\right)$. We also note that $g(\beta) = 0$. Since
  \[g'(x) = \begin{cases} -\sin x + \frac12 x < 0, \quad &\mbox{if} \;\; 0 \le x \le \beta,\\ \sin x - \frac12x >0 , \quad &\mbox{if} \;\; \beta \le x \le \frac{\pi}{2},\end{cases}\]
  we conclude that $g(x) \ge 0$ for $x \in [0, \frac{\pi}{2}]$. Therefore, we have Item (2) due to the evenness of $g$. 
  \end{proof}

  \begin{proof}[Proof of Proposition \ref{prop:L4}]
 We only prove "+" part, and the other part follows analogously. Let $f_h = f_{h,1} + f_{h,2}$, where 
  \[\widehat{f}_{h,1}(k) = 0, \quad \mbox{if} \quad |k| > 1.\]
  Note that $|\{k \in (\mathbb{T}_h)^* : k \in \supp (\widehat{f}_1)\}| = 3$. Since $(f_h)^2 \le 2(f_{h,1})^2 + 2(f_{h,2})^2$, it suffices to treat $\norm{(f_{h,1})^2}_{L_t^2(\R:L^2(\T_h))}$ and $\norm{(f_{h,2})^2}_{L_t^2(\R:L^2(\T_h))}$ separately. Moreover, let $f_{h,2} = f_{h,2,1} + f_{h,2,2}$, where
  \[\widehat{f}_{h,2,1}(k) = 0 \quad \mbox{if} \quad k > 1.\]
  Then, similarly as before, $\norm{(f_{h,2})^2}_{L^2}^2 \le 2\norm{(f_{h,2,1})^2}_{L^2}^2 + 2\norm{(f_{h,2,2})^2}_{L^2}^2$ and $\norm{(f_{h,2,1})^2}_{L^2} = \norm{(\overline{f_{h,2,1}})^2}_{L^2} = \norm{(f_{h,2,2})^2}_{L^2}$ which enables us to assume that $\supp (\widehat{f}_{h,2}) = \{2,3, \cdots, N-1\}$.
  
  \medskip
  
  \textbf{$(f_{h,1})^2$ case.} A computation gives
  \begin{equation}\label{eq:L^4_1}
  \norm{(f_{h,1})^2}_{L_t^2(\R:L^2(\T_h))}^2 \le \sum_{k \in (\mathbb{T}_h)^*} \int_{\R} \left| \sum_{k_1 \in (\mathbb{T}_h)^*}\int_\R  |\wt{f}_{h,1}(\tau_1,k_1)||\wt{f}_{h,1}(\tau -\tau_1,k-k_1)|\; d\tau_1 \right|^2 \; d\tau .
  \end{equation}
  From the support property, the right-hand side of \eqref{eq:L^4_1} vanishes unless $|k| \le 2$. Let 
  \[\wt{F}_{h,1}(\tau, k) = \bra{\tau - s_h(k)}^{b}|\wt{f}_{h,1}(\tau,k)|.\]
  By the Cauchy-Schwarz inequality and the Minkowski inequality, we see that for $b > \frac14$,
  \[\begin{aligned}
  \mbox{RHS of } \eqref{eq:L^4_1} \lesssim&~{}\sum_{ |k| \le 2} \int_{\R} \Bigg( \sum_{|k_1| \le 1}\left(\int_\R \bra{\tau-\tau_1 - s_h(k-k_1)}^{-2b}\bra{\tau_1- s_h(k_1)}^{-2b} \; d\tau_1 \right)^{\frac12}\\
  & \hspace{7em} \times \left(\int_\R  |\wt{F}_1(\tau_1,k_1)|^2|\wt{F}_1(\tau -\tau_1,k-k_1)|^2\; d\tau_1\right)^{\frac12} \Bigg)^2 \; d\tau \\
  \lesssim&~{}\sum_{|k| \le 2} \Bigg(\sum_{|k_1|\le 1}\left(\int_{\R^2}  |\wt{F}_{h,1}(\tau_1,k_1)|^2|\wt{F}_{h,1}(\tau -\tau_1,k-k_1)|^2\; d\tau_1 d \tau \right)^{\frac12}\Bigg)^2\\
  \lesssim&~{} \norm{f_{h,1}}_{X_{h,+}^{0,b}}^4 \lesssim \norm{f_h}_{X_{h,+}^{0,b}}^4.
  \end{aligned}\]
  
  \medskip
  
  \textbf{$(f_{h,2})^2$ case.} Analogous to \eqref{eq:L^4_1}, we have
  \[
  \begin{aligned}
  \norm{(f_{h,2})^2}_{L^2(\R \times \T_h)}^2 \le&~{} \sum_{k \in (\mathbb{T}_h)^*} \int_{\R} \left| \sum_{k_1 \in (\mathbb{T}_h)^*} \int_{\R} |\wt{f}_{h,2}(\tau_1,k_1)||\wt{f}_{h,2}(\tau -\tau_1,k-k_1)|\; d\tau_1 \right|^2 \; d\tau\\
  =&~{}\sum_{\substack{k \in (\mathbb{T}_h)^* \\ k  > 1}} \int_{\R} \left| \sum_{\substack{k_1 \in (\mathbb{T}_h)^* \\ k-k_1, k_1 >1}}\int_\R  |\wt{f}_{h,2}(\tau_1,k_1)||\wt{f}_{h,2}(\tau -\tau_1,k-k_1)|\; d\tau_1 \right|^2 \; d\tau =:I.
  \end{aligned}\]
  Similarly as \eqref{eq:L^4_1}, we have
  \[\begin{aligned}
  I \lesssim&~{} \sum_{\substack{k \in (\mathbb{T}_h)^* \\ k > 1}} \int_{\R} \Bigg( \Bigg(\sum_{\substack{k_1 \in (\mathbb{T}_h)^* \\ k-k_1, k_1 > 1}}\int_\R \bra{\tau-\tau_1 - s_h(k-k_1)}^{-2b}\bra{\tau_1- s_h(k_1)}^{-2b} \; d\tau_1 \Bigg)^{\frac12}\\
  & \hspace{7em} \times \Bigg(\sum_{\substack{k_1 \in (\mathbb{T}_h)^* \\ k-k_1, k_1 > 1}}\int_\R  |\wt{F}_{h,2}(\tau_1,k_1)|^2|\wt{F}_{h,2}(\tau -\tau_1,k-k_1)|^2\; d\tau_1\Bigg)^{\frac12}  \Bigg)^2 \; d\tau \\
  \lesssim&~{} L\norm{f_{h,2}}_{X_{h,+}^{0,b}}^4,
  \end{aligned}\]
  where 
  \[L = \sup_{\substack{\tau \in \R, k \in (\mathbb{T}_h)^* \\ k > 1}}\sum_{\substack{k_1 \in (\mathbb{T}_h)^* \\ k-k_1, k_1 > 1}}\int_\R \bra{\tau-\tau_1 - s_h(k-k_1)}^{-2b}\bra{\tau_1- s_h(k_1)}^{-2b} \; d\tau_1.\]
  Thus, it is enough to show that $L \lesssim 1$ whenever $b > \frac13$.
  
  \medskip
  
  By a direct computation
  \[\int_{\R} \bra{a}^{-\alpha} \bra{b-a}^{-\alpha} \; da \lesssim \bra{b}^{1-2\alpha},\]
  for $\frac12 < \alpha < 1$, we estimate
  \[L \lesssim \sup_{\substack{\tau \in \R, k \in (\mathbb{T}_h)^* \\ k > 1}} \sum_{\substack{k_1 \in (\mathbb{T}_h)^* \\ k_1, k-k_1 > 1}} \bra{\tau-s_h(k_1) - s_h(k-k_1)}^{1-4b}.\]
 Fix $\tau \in \R$ and $k \in (\mathbb{T}_h)^*$ with $k>1$. We compute
\[\begin{aligned}
  &s_h(k_1) + s_h(k-k_1) - \tau \\
  =&~{} \frac{1}{h^2}\left(k - \frac2h\left(\sin\left(\frac{hk_1}{2}\right) + \sin\left(\frac{h(k-k_1)}{2}\right)\right)\right) - \tau\\
  =&~{} \frac{4}{h^3}\sin\left(\frac{hk}{4}\right)\left(1 - \cos\left(\frac{h(2k_1-k)}{4}\right)\right) - \left(\tau - \frac{1}{h^2}\left(k - \frac4h\sin\left(\frac{hk}{4}\right)\right)\right)\\
  =&~{} \frac{4}{h^3}\sin\left(\frac{hk}{4}\right)\left(1 - \cos\left(\frac{h(2k_1-k)}{4}\right) - C_h(\tau, k)\right)\\
  =:&~{} \frac{4}{h^3}\sin\left(\frac{hk}{4}\right)g(k_1),
  \end{aligned}\]
  where 
\[C_h(\tau, k) = \frac{\tau - \frac{1}{h^2}\left(k - \frac4h\sin\left(\frac{hk}{4}\right)\right)}{\frac{4}{h^3}\sin\left(\frac{hk}{4}\right)}.
\]
  Now we observe $g(x)$ for $x \in \left[\frac{k}{2} - \frac{\pi}{h}, \frac{k}{2} + \frac{\pi}{h}\right]$. When  $1 < x, k-x < \frac{\pi}{h}$, we know 
  \[-\frac{\pi}{2} \le \frac{h(2k_1-k)}{4} \le \frac{\pi}{2} \quad \mbox{and thus} \quad0 \le 1 - \cos\left(\frac{h(2x-k)}{4}\right) \le 1.\]
  Note also that 
  \begin{equation}\label{eq:sin}
  |\sin x| \ge \frac{|x|}{2},
  \end{equation}
  for all $|x| \le \frac{\pi}{2}$. Then, by Lemma \ref{lem:cos},
  \[|g(k_1)| \ge \frac14\left|\frac{h(2k_1-k)}{4} - \gamma\right| \left|\frac{h(2k_1-k)}{4} + \gamma \right|,\]
  where 
  \[\gamma = \begin{cases}0, \quad &\mbox{if} \;\; C_(\tau, k) \le 0, \\ \cos^{-1}(1-C_h(\tau,k)), \quad &\mbox{if} \;\; 0 < C(\tau, k) < 0, \\ \frac{\pi}{2}, \quad &\mbox{if} \;\; C(\tau, k) \ge 1. \end{cases}\]
  This, in addition to \eqref{eq:sin}, implies
  \[|\tau-s_h(k_1) - s_h(k-k_1)| \ge \frac{1}{16}k\left|k_1 - \tilde{\gamma}_+\right|\left|k_1 - \tilde{\gamma}_-\right|,\]
  where 
  \[\tilde{\gamma}_{\pm} = \frac{k}{2} \pm \frac{2\gamma}{h}.\]
  Let $\Omega_{\pm} := \{k_1 \in (\mathbb{T}_h)^* : \left|k_1 - \tilde{\gamma}_{\pm} \right| \le 1\}$. Then $|\Omega_{\pm}| \le 3$, and we have 
  \[\begin{aligned}
  L \lesssim&~{} \sum_{\substack{k_1 \in (\mathbb{T}_h)^* \setminus \Omega_{\pm} \\ k-k_1, k_1 >1}}\left(k\left|k_1 - \tilde{\gamma}_+\right|\left|k_1 - \tilde{\gamma}_-\right|\right)^{1-4b} \\
  \lesssim&~{} \left(\sum_{|k_1| > 1}|k_1|^{3(1-4b)}\right)^{\frac13}\left(\sum_{\left|k_1 - \tilde{\gamma}_+\right| > 1}\left|k_1 - \tilde{\gamma}_+\right|^{3(1-4b)}\right)^{\frac13}\left(\sum_{\left|k_1 - \tilde{\gamma}_-\right| > 1}\left|k_1 - \tilde{\gamma}_-\right|^{3(1-4b)}\right)^{\frac13}\\
  \lesssim&~{} 1,
  \end{aligned}\]
  if $b > \frac13$, which completes the proof.
  \end{proof}
  
\subsection{Bilinear estimates}\label{subsec:Bilinear estimates}
By Remark \ref{rem:Mean Conservation_3}, throughout this section, we may assume that $u_h^{\pm}$ and $v_{h}^{\pm}$ satisfy the mean zero condition. Define Hyper-surfaces by
\[\Theta : = \{(\tau_1, \tau_2, \tau_3) \in \R^3 : \tau_1 + \tau_2 + \tau_3 =0\}\]
and
\[\Delta := \{(k_1, k_2, k_3) \in ((\T_h)^*)^3 : k_1+k_2+k_3 =0, \;\; k_1k_2k_3 \neq 0\}.\] 
\begin{lemma}[Bilinear estimate I]\label{lem:bilinear1}
  Let $s\ge0$. Then, we have
\begin{equation}\label{Bilinear estimate I}
  \left\| \nabla_h (u_h^{\pm}v_h^{\pm})\right\|_{Z_{h,\pm}^s}
  \ls \|u_h^{\pm}\|_{X_{h,\pm}^{s,\frac12}}
  \|v_h^{\pm}\|_{X_{h,\pm}^{s,\frac12}},
\end{equation}
for all $u_h^{\pm}, v_h^{\pm} \in X_{h,\pm}^{s,\frac12}$.
\end{lemma}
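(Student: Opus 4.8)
The plan is to reduce the $Z_{h,\pm}^s$ bound to a dual $X^{s,b}$ estimate and then exploit the resonance structure of the FPU phase function $s_h(k)$, just as in Bourgain's treatment of KdV in \cite{B-1993KdV}. Since the $Z$-norm consists of the $X_{h,\pm}^{s,-\frac12}$ piece plus an $\ell_k^2L_\tau^1$ piece, I would handle both, but the main work is the $X_{h,\pm}^{s,-\frac12}$ estimate. By duality it suffices to show
\[
\left|\iint \overline{\widetilde{w_h}(\tau,k)}\,\langle k\rangle^s\,\frac{2i}{h}\sin\!\left(\tfrac{hk}{2}\right)\,\widetilde{(u_h^{\pm}v_h^{\pm})}(\tau,k)\,\frac{d\tau}{\langle\tau\mp s_h(k)\rangle^{1/2}}\right|
\lesssim \|w_h\|_{X_{h,\pm}^{0,1/2}}\|u_h^{\pm}\|_{X_{h,\pm}^{s,1/2}}\|v_h^{\pm}\|_{X_{h,\pm}^{s,1/2}}.
\]
Writing $k=k_1+k_2$, $\tau=\tau_1+\tau_2$, and using $\langle k\rangle^s\lesssim \langle k_1\rangle^s+\langle k_2\rangle^s$ together with $|\tfrac2h\sin(\tfrac{hk}{2})|\lesssim|k|$, the problem becomes a trilinear bound on the multiplier
\[
\frac{\min(|k_1|,|k_2|)^{?}\,|k|}{\langle\sigma\rangle^{1/2}\langle\sigma_1\rangle^{1/2}\langle\sigma_2\rangle^{1/2}},
\qquad \sigma=\tau\mp s_h(k),\ \sigma_j=\tau_j\mp s_h(k_j),
\]
and the gain comes from the resonance identity
\[
\sigma-\sigma_1-\sigma_2 = \mp\bigl(s_h(k)-s_h(k_1)-s_h(k_2)\bigr)
= \pm\frac{4}{h^3}\sin\!\left(\tfrac{hk}{4}\right)\sin\!\left(\tfrac{hk_1}{4}\right)\sin\!\left(\tfrac{hk_2}{4}\right)\cdot(\text{bounded factors}),
\]
the discrete analogue of the classical $k_1k_2k_3$ resonance for Airy. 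On $(\T_h)^*$ one has $\bigl|\tfrac4h\sin(\tfrac{hk}{4})\bigr|\sim|k|$ uniformly in $h$ (by \eqref{eq:sin}-type bounds and $|k|\le N$), so this quantity is comparable to $|k_1k_2k_3|$, hence the largest of $\langle\sigma\rangle,\langle\sigma_1\rangle,\langle\sigma_2\rangle$ dominates $|k_1k_2k_3|$.

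The key steps, in order, are: (i) set up the duality and dyadically decompose in the modulation variables $\langle\sigma\rangle\sim L$, $\langle\sigma_j\rangle\sim L_j$ and in frequency; (ii) use the resonance identity to conclude $\max(L,L_1,L_2)\gtrsim |k_1k_2k_3|$, and split into the cases according to which modulation is largest; (iii) in each case, absorb the derivative loss $|k|\lesssim \max(|k_1|,|k_2|)$ and half of the resonance gain into $\langle\sigma\rangle^{-1/2}$ or $\langle\sigma_j\rangle^{-1/2}$, reducing matters to an $L^2$-based convolution estimate; (iv) close that estimate by a Cauchy–Schwarz argument controlling a counting quantity of the form $\sup\sum_{k_1}\langle\sigma_1\rangle^{-1}\langle\sigma\mp\sigma_1\mp(\text{phase})\rangle^{-1}$, which is precisely where the $L^4$-Strichartz estimate of Proposition \ref{prop:L4} (or rather its bilinear refinement) can be invoked to bypass delicate lattice point counting; (v) finally treat the $\ell_k^2L_\tau^1$ component of the $Z$-norm by the same decomposition, using $\langle\sigma\rangle^{-1}\in L_\tau^1$ once $\langle\sigma\rangle$ carries weight strictly more than one half, which the resonance gain always supplies in the region that matters.

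The main obstacle I anticipate is the low-modulation / near-resonant region where $L,L_1,L_2$ are all small — forcing $|k_1k_2k_3|$ small, i.e. one of the frequencies $k_1,k_2,k_3$ bounded. In the continuous periodic KdV setting this is exactly where the mean-zero condition ($k_j\neq 0$) is essential, and here it is encoded in the definition of $\Delta$ together with Remark \ref{rem:Mean Conservation_3}. The subtlety specific to the lattice is that $s_h$ is not a cubic but a bounded trigonometric symbol, so the derivative $|k|$ appearing from $\nabla_h$ never exceeds $N\sim h^{-1}$; one must check that the resonance lower bound $|\tfrac4h\sin(\tfrac{hk}{4})|\gtrsim|k|$ holds down to the edge $|k|\sim\pi/h$, which follows from $|\sin x|\ge \tfrac{2}{\pi}|x|$ on $[-\tfrac\pi2,\tfrac\pi2]$ after noting $\tfrac{h|k|}{4}\le\tfrac\pi4$. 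Once that uniform-in-$h$ comparison is in hand, the argument runs parallel to the classical one and all constants are independent of $h$, as required.
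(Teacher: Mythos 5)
Your proposal follows essentially the same route as the paper's proof: duality, the resonance identity $s_h(k_1)+s_h(k_2)+s_h(k_3)=-\tfrac{8}{h^3}\sin(\tfrac{hk_1}{4})\sin(\tfrac{hk_2}{4})\sin(\tfrac{hk_3}{4})\sim k_1k_2k_3$ (valid uniformly in $h$ since $h|k|/4\le\pi/4$), a case split on the largest modulation absorbing the derivative loss, and closure via the $L^4$-Strichartz estimate of Proposition \ref{prop:L4} through an $L^2\times L^4\times L^4$ H\"older argument, with the mean-zero condition handling the near-resonant region. The only cosmetic difference is that you dyadically decompose in modulation while the paper bounds the multiplier pointwise, and your treatment of the $\ell_k^2L_\tau^1$ component is sketched more loosely than the paper's duality-against-$\ell_k^2L_\tau^\infty$ argument, but the substance is the same.
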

\begin{proof}
We prove \eqref{Bilinear estimate I} only for $u_h^+,v_h^+$ and drop the "+" sign for the simplicity. We consider the first part of the $Z$-norm. By duality, we have
\[\begin{aligned}
 \left\| \nabla_h ( u_h \cdot v_h) \right  \|_{X_{h,+}^{s,-\frac12}} &=\sup_{\|w_h\|_{X_{h,+}^{-s,\frac12}}=1}
  \left| \int_{\R}\sum_{\T_h} w_h \nabla_h 
  \left( u_h \cdot v_h\right) dt\right| \\ 
  &=\sup_{\|w_h\|_{X_{h,+}^{-s,\frac12}}=1}\left| \int_{\Theta} \sum_{\Delta} \frac{2}{h}\sin\left(\frac{hk_3}{2}\right) \widetilde{u_h}(\tau_1,k_1)\widetilde{v_h}(\tau_2,k_2)\widetilde{w_h}(\tau_3,k_3) \right|.
\end{aligned}\]
Let 
\begin{equation}\begin{aligned}\label{notation}
  U_h(\tau,k)&=  \la k\ra^s\la \tau-s_h(k) \ra^\frac12 \left| \widetilde{u_h}(\tau,k) \right|, \\
  V_h(\tau,k)&=  \la k\ra^s \la \tau-s_h(k) \ra^\frac12 \left| \widetilde{v_h}(\tau,k)\right|,  \\ 
  W_h(\tau,k)&=  \la k\ra^{-s} \la \tau-s_h(k) \ra^\frac12 \left| \widetilde{w_h}(\tau,k) \right|.
\end{aligned}\end{equation}
Then, it suffices to control
\begin{equation}\begin{aligned}\label{Dual integral}
\int_{\Theta} \sum_{\Delta} \mathcal M_h(\tau_1,\tau_2,\tau_3,k_1,k_2,k_3)U_h(\tau_1,k_1)V_h(\tau_2,k_2)W_h(\tau_3,k_3),
\end{aligned}\end{equation}
where
\[\mathcal M_h(\tau_1,\tau_2,\tau_3,k_1,k_2,k_3) = \frac{ |k_3|^s\left| \frac{2}{h}\sin\left(\frac{hk_3}{2}\right) \right|}{|k_1|^s | k_2|^s \la \tau_1-s_h(k_1) \ra^\frac12 \la \tau_2-s_h(k_2) \ra^\frac12 \la \tau_3-s_h(k_3) \ra^\frac12  }.\]
For $k_1, k_2, k_3 \in (\T_h)^*$, let 
\[G(k_1,k_2,k_3) = s_h(k_1) + s_h(k_2) + s_h(k_3)\]
be the resonance function, which plays an important role in the bilinear $X^{s,b}$-type estimates. Note that 
\[G(k_1,k_2,k_3)=-\frac{8}{h^3}\sin\left(\frac{hk_1}{4}\right)\sin\left(\frac{hk_2}{4}\right)\sin\left(\frac{hk_3}{4}\right).\]
From the identities
\[k_1 + k_2+k_3 = 0 \quad \mbox{and} \quad \tau_1-s_h(k_1) + \tau_2-s_h(k_2) +\tau_3-s_h(k_3) = - G(k_1,k_2,k_3),\]
we know that\footnote{Once applying (spacetime) Littlewood-Paley decomposition to the summand in \eqref{Dual integral}, one knows \eqref{Dual integral} vanishes unless the conditions \eqref{eq:support properties} hold.} 
\begin{equation}\label{eq:support properties}
\max_{j=1,2,3}|k_j| \sim \med_{j=1,2,3}|k_j| \quad \mbox{and} \quad \max_{j=1,2,3}|\tau_j-s_h(k_j)| \sim \max \left(\med_{i=1,2,3}|\tau_j-s_h(k_j)|, |G(k_1,k_2,k_3)|\right).
\end{equation}
With mean-zero condition($k_1k_2k_3 \neq 0$), we further know
\begin{equation}\label{lowboundofmod}
  \max_{j=1,2,3} \la\tau_j-s_h(k_j)\ra^\frac12
\gs |k_1|^\frac12|k_2|^\frac12|k_3|^\frac12.
\end{equation}
Without loss of generality, we may assume that $\la\tau_1-s_h(k_1)\ra$ is the largest one in $\mathcal M_h(\tau_1,\tau_2,\tau_3,k_1,k_2,k_3)$. By \eqref{eq:support properties} and \eqref{lowboundofmod}, the multiplier $\mathcal M_h(\tau_1,\tau_2,\tau_3,k_1,k_2,k_3)$ is bounded by 
\begin{equation}\label{eq:multiplier estimates}
   \frac{ |k_3|^{1+s} }{ |k_1|^{\frac12+s}|k_2|^{\frac12+s} |k_3|^\frac12   \la \tau_2-s_h(k_2) \ra^\frac12 \la \tau_3-s_h(k_3) \ra^\frac12  } 
  \ls \frac{1}{\la \tau_2-s_h(k_2) \ra^\frac12 \la \tau_3-s_h(k_3) \ra^\frac12 }.
\end{equation}
Then, by Parseval's identity, Proposition \ref{prop:L4} and Lemma \ref{lem:properties}, we obtain
\[\begin{aligned}
  \eqref{Dual integral} &\ls \int_{\Theta} \sum_{\Delta} \frac{1}{\la \tau_2-s_h(k_2) \ra^\frac12 \la \tau_3-s_h(k_3) \ra^\frac12 } U_h(\tau_1,k_1)V_h(\tau_2,k_2)W_h(\tau_3,k_3) \\ 
  &\ls\int_{\R} \sum_{\T_h}\mathcal{F}_{t,h}^{-1}(U_h)
  \mathcal{F}_{t,h}^{-1} \left( \frac{V_h}{\la \tau-s_h(k) \ra^\frac12 } \right)
  \mathcal{F}_{t,h}^{-1}\left(\frac{W_h}{\la \tau-s_h(k) \ra^\frac12 }  \right) dt\\
  &\ls\left\| \mathcal{F}_{t,h}^{-1}(U_h) \right\|_{L_t^2L^2}
  \left\| \mathcal{F}_{t,h}^{-1} \left( \frac{V_h}{\la \tau-s_h(k) \ra^\frac12 } \right) \right\|_{L_t^4L^4}
  \left\| \mathcal{F}_{t,h}^{-1} \left( \frac{W_h}{\la \tau-s_h(k) \ra^\frac12 } \right) \right\|_{L_t^4L^4} \\ 
  &\ls \left\| U_h \right\|_{L_{t,x}^2}
  \left\| \mathcal{F}_{t,h}^{-1} \left( \frac{V_h}{\la \tau-s_h(k) \ra^\frac12 } \right) \right\|_{X_{h,+}^{0,\frac13+}}
  \left\| \mathcal{F}_{t,h}^{-1} \left( \frac{W_h}{\la \tau-s_h(k) \ra^\frac12 } \right) \right\|_{X_{h,+}^{0,\frac13+}}\\
  &\lesssim \| u_h\|_{X_{h,+}^{s,\frac12}}\| v_h \|_{X_{h,+}^{s,\frac12}}\| w_h \|_{X_{h,+}^{-s,\frac12}}.
\end{aligned}\]

\medskip

For the second part of the $Z_{h,+}^s$-norm, we have from duality that
\[\begin{aligned}
  &~{}\left\| \frac{\la k\ra^s \mathcal{F}_{t,x}\left( \nabla_h (u_h^{+}v_h^{+})\right)(\tau,k)}{\la \tau-s_h(k)\ra } \right\|_{ \ell_{k \in (\T_h)^*}^2L_\tau^1 } \\
  \lesssim&~{}\sup_{\|\wt{w_h}\|_{ \ell_{k \in (\T_h)^*}^2L_\tau^\infty}\le1}
  \int_{\Theta} \sum_{\Delta} \tilde{\mathcal M}_h(\tau_1,\tau_2,\tau_3,k_1,k_2,k_3)U_h(\tau_1,k_1)
  V_h(\tau_2,k_2)
  |\wt{w_h}(\tau_3,k_3)|, 
\end{aligned}\]
where $U_h$ and $V_h$ are defined as in \eqref{notation} and
\[\tilde{\mathcal M}_h(\tau_1,\tau_2,\tau_3,k_1,k_2,k_3)=\frac{|k_3|^s\left|\frac{2}{h}\sin(\frac{hk_3}{2})\right|} {|k_1|^s |k_2|^s \langle \tau_1-s_h(k_1)\ra^\frac12 \langle \tau_2-s_h(k_2)\ra^\frac12  \langle \tau_3-s_h(k_3)\ra}.\]

{\bf Case 1.} $ \max\limits_{i=1,2,3} \la\tau_i-s_h(k_i) \rangle = \la\tau_1-s_h(k_1) \rangle$. Note from Proposition \ref{prop:L4} that
\begin{equation}\label{eq:embedding}
\left\|\mathcal{F}_{t,h}^{-1} \left( \frac{|\wt{w_h}(\tau,k)|}{\la \tau-s_h(k) \ra } \right) \right\|_{L_t^4L^4} \lesssim \left\|\frac{\wt{w_h}(\tau,k)}{\la \tau-s_h(k) \ra^{\frac23-} } \right\|_{ \ell_{k \in (\T_h)^*}^2L_\tau^2} \lesssim \|\wt{w_h}\|_{ \ell_{k \in (\T_h)^*}^2L_\tau^\infty}.
\end{equation}
Similarly as \eqref{eq:multiplier estimates}, the multiplier is bounded by 
\[\frac{ |k_3|^{\frac12+s} }{|k_1|^{\frac12+s} |k_2|^{\frac12+s}   \la \tau_2-s_h(k_2) \ra^\frac12 \la \tau_3-s_h(k_3) \ra } 
 \ls \frac{1}{\la \tau_2-s_h(k_2) \ra^\frac12 \la \tau_3-s_h(k_3) \ra }.\]
Then, by Proposition \ref{prop:L4} and \eqref{eq:embedding}, we have
\[\begin{aligned}
&~{}\int_{\Theta} \sum_{\Delta} \tilde{\mathcal M}_h(\tau_1,\tau_2,\tau_3,k_1,k_2,k_3)U_h(\tau_1,k_1)
  V_h(\tau_2,k_2)
  |\wt{w_h}(\tau_3,k_3)|\\
  \ls&~{} \left\| \mathcal{F}_{t,h}^{-1}(U_h) \right\|_{L_t^2L^2}
  \left\| \mathcal{F}_{t,h}^{-1} \left( \frac{V_h}{\la \tau-s_h(k) \ra^\frac12 } \right) \right\|_{L_t^4L^4}
  \left\| \mathcal{F}_{t,h}^{-1} \left( \frac{|\wt{w_h}(\tau,k)|}{\la \tau-s_h(k) \ra } \right) \right\|_{L_t^4L^4} \\ 
  \ls&~{} \| u_h\|_{X_{h,+}^{s,\frac12}}\| v_h\|_{X_{h,+}^{s,\frac12}}\|\wt{w_h}\|_{ \ell_{k \in (\T_h)^*}^2L_\tau^\infty}.
\end{aligned}\]
Note by symmetry that the case $ \max\limits_{i=1,2,3} \la\tau_i-s_h(k_i) \rangle = \la\tau_2-s_h(k_2) \rangle$ follows analogously.

{\bf Case 2.} $\displaystyle  \max\limits_{i=1,2,3} \la\tau_i-s_h(k_i) \rangle = \la\tau_3-s_h(k_3) \rangle$. By \eqref{lowboundofmod}, we have
\[\la \tau_3-s_h(k_3)\ra 
 \gtrsim |k_1|^{\frac23}|k_2|^{\frac23}|k_3|^{\frac23}\la \tau_1 - s_h(k_1) \ra^{\frac16}\la \tau_2 - s_h(k_2) \ra^{\frac16}.\]
Without loss of generality, we further assume that $|k_1| \le |k_2|$. Then, the multiplier is bounded by 
\[\frac{|k_3|^{1+s} }{ |k_1|^{s+\frac23} |k_2|^{s+\frac23} |k_3|^{\frac23}\langle \tau_1-s_h(k_1)\ra^\frac23 \langle \tau_2-s_h(k_2)\ra^\frac23}\lesssim \frac{1}{|k_1|^{\frac23}\la \tau_1-s_h(k_1)\ra^{\frac23}\la \tau_2-s_h(k_2)\ra^{\frac23}}.\]
Since both $\la \tau - s_h(k) \ra^{-\frac23}$ and $|k|^{-\frac23}$ are square integrable in $\tau$ and $k$, respectively, we have
\[\begin{aligned}
  &~{}\int_{\tau_1, \tau_2} \sum_{\substack{k_1, k_2 \\ k_1k_2 \neq 0}} \tilde{\mathcal M}_h(\tau_1,\tau_2,-\tau_1-\tau_2,k_1,k_2,-k_1 - k_2)U_h(\tau_1,k_1)
  V_h(\tau_2,k_2)
  |\wt{w_h}(-\tau_1-\tau_2,-k_1-k_2)|\\
\lesssim&~{} \sum_{\substack{k_1, k_2 \\ k_1k_2 \neq 0}} \frac{1}{|k_1|^{\frac23}}\|U_h(k_1)\|_{L_{\tau}^2}\|V_h(k_2)\|_{L_{\tau}^2}\|\wt{w_h}(-k_1-k_2)\|_{L_{\tau}^\infty}\\
  \ls&~{} \|U_h\|_{\ell_{k \in (\T_h)^*}^2L_{\tau}^2}\|V_h\|_{\ell_{k \in (\T_h)^*}^2L_{\tau}^2}\|\wt{w_h}\|_{\ell_{k \in (\T_h)^*}^2L_{\tau}^\infty}\\
\lesssim&~{} \| u_h\|_{X_{h,+}^{s,\frac12}}\| v_h\|_{X_{h,+}^{s,\frac12}}\|\wt{w_h}\|_{\ell_{k \in (\T_h)^*}^2L_\tau^\infty}.
\end{aligned}\]
\end{proof}

\begin{lemma}[Bilinear estimate II]\label{lem:bilinear2}
  Let $0\le s \le s'\le s+1$. Then,
  \[\left\| \nabla_h ( u_h^\pm \cdot e^{\pm\frac{2t}{h^2}\partial_h}v_h^\mp) \right\|_{Z_{h,\pm}^s}
    \ls h^{s'-s}\|u_h^{\pm}\|_{X_{h,\pm}^{s',\frac12}}
    \|v_h^{\mp}\|_{X_{h,\mp}^{s',\frac12}},\]
for all $u_h^{\pm} \in X_{h,\pm}^{s',\frac12}, v_h^{\mp} \in X_{h,\mp}^{s',\frac12}$.
\end{lemma}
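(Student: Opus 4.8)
The plan is to mimic the duality argument from the proof of Lemma \ref{lem:bilinear1}, but now carefully tracking the oscillation produced by the extra phase factor $e^{\pm\frac{2t}{h^2}\partial_h}$ on the $v_h^{\mp}$ input. First I would reduce, as before, to bounding a trilinear form: after pairing against a test function $w_h$ in $X_{h,\pm}^{-s,\frac12}$ (for the $X^{s,-\frac12}$ piece of the $Z$-norm) and against $\widetilde{w_h}\in\ell^2_kL^1_\tau$ (for the second piece), the question becomes one of controlling
\[
\int_\Theta \sum_\Delta \frac{|k_3|^s\left|\tfrac{2}{h}\sin\left(\tfrac{hk_3}{2}\right)\right|}{|k_1|^{s'}|k_2|^{s'}}\,\frac{\widetilde{u_h^{\pm}}(\tau_1,k_1)\,\widetilde{v_h^{\mp}}(\tau_2-\tfrac{2k_2}{h^2},k_2)\,\widetilde{w_h}(\tau_3,k_3)}{\langle\tau_1\mp s_h(k_1)\rangle^{\frac12}\langle\tau_2\mp s_h(k_2)\rangle^{\frac12}\langle\tau_3\pm s_h(k_3)\rangle^{\frac12}},
\]
where I have used the rescaled weights $|k_j|^{s'}$ on the first two legs and $|k_j|^s$ on the third, and absorbed the loss $h^{s'-s}$ via $|k_3|^s/(|k_1|^{s'}|k_2|^{s'})\le h^{s'-s}\langle k_3\rangle^s/(\langle k_1\rangle^s\langle k_2\rangle^s)\cdot(\text{bounded factor})$ once the key resonance bound below is in hand. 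The decisive point is that on the support of the integrand the shifted resonance function
\[
\widetilde G(k_1,k_2,k_3) := \mp s_h(k_1)\mp s_h(k_2)\pm s_h(k_3) \;-\;\tfrac{2k_2}{h^2}
\]
satisfies a lower bound of the form $|\widetilde G|\gtrsim h^{-3}|k_2|\,(\text{something comparable to }|k_1||k_3|)$ — essentially the $\phi_h(k,k_1)\approx \tfrac{8}{h^3}\cos(\tfrac{hk}{4})\sin(\tfrac{hk_2}{4})\cos(\tfrac{h(k-k_2)}{4})\sim k_2/h^2$ heuristic of Remark \ref{rem:remove mixed nonlinearities}, now with all signs/frequencies in place.

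The key step is therefore to prove this lower bound on $\widetilde G$. I would write $s_h(k)=\frac{1}{h^2}(k-\frac2h\sin(\frac{hk}2))$, insert the product-to-sum identities exactly as in the computation of $G$ in Lemma \ref{lem:bilinear1}, and combine the $-\frac{2k_2}{h^2}$ term with the $-\frac{2}{h^3}\sin(\frac{hk_2}{2})$ piece; after trigonometric manipulation the dominant contribution is of size $\frac1{h^3}|k_2|$ up to a factor uniformly comparable to $1$ on the frequency box $(\T_h)^*$, because the "$+$/$-$" mismatch prevents the leading $k$-terms from cancelling. Since one of $\langle\tau_j\mp s_h(k_j)\rangle$ (with the appropriate sign on the third) must dominate $|\widetilde G|$, this produces a gain of at least $h^{\frac32}|k_2|^{\frac12}$ (in fact $h^3|k_2|$ if we are willing to spend it) in the denominator, which is exactly what is needed to (i) recover the factor $h^{s'-s}$ and (ii) leave behind a multiplier of the same shape as $\mathcal M_h$ in Lemma \ref{lem:bilinear1}, namely bounded by $\langle\tau_a\mp s_h(k_a)\rangle^{-\frac12}\langle\tau_b\pm s_h(k_b)\rangle^{-\frac12}$ for the two non-maximal legs. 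From that point the estimate closes verbatim as in Lemma \ref{lem:bilinear1}: Parseval, then the $L^4$–$L^4$–$L^2$ Hölder split, then Proposition \ref{prop:L4} on the two $L^4$ factors and the $X^{0,\frac13+}\hookrightarrow L^4_tL^4$ embedding, finally returning to the $X^{s',\frac12}$ and $Z$-norms.

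One has to split into cases according to which of the three modulations $\langle\tau_1\mp s_h(k_1)\rangle,\langle\tau_2\mp s_h(k_2)\rangle,\langle\tau_3\pm s_h(k_3)\rangle$ is the largest, exactly mirroring Cases 1 and 2 of Lemma \ref{lem:bilinear1}; in the case where the maximum sits on the $w_h$-leg one uses the $|k|^{-\frac23}$ and $\langle\tau\mp s_h(k)\rangle^{-\frac23}$ square-integrability trick together with the $\ell^2_kL^\infty_\tau$ norm of $\widetilde{w_h}$, as there. For the second component of the $Z_{h,\pm}^s$-norm the only change is to replace one $L^4_tL^4$ factor by the embedding \eqref{eq:embedding} $\|\mathcal F_{t,h}^{-1}(\langle\tau\mp s_h(k)\rangle^{-1}|\widetilde{w_h}|)\|_{L^4_tL^4}\lesssim\|\widetilde{w_h}\|_{\ell^2_kL^\infty_\tau}$, precisely as in Case 1 of the previous lemma. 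The main obstacle I anticipate is the trigonometric estimate for $\widetilde G$: one must check that subtracting $2k_2/h^2$ really does not create a cancellation region inside $(\T_h)^*$ — in particular near the high-frequency edges $k_j\sim\pm\pi/h$ where $s_h$ behaves nonstandardly — and that the resulting lower bound is uniform in $h\in(0,1]$. I expect Lemma \ref{lem:cos} (or a minor variant of it) to be the right tool once the $\cos$/$\sin$ structure is exposed, so the difficulty is algebraic bookkeeping of signs rather than a genuinely new idea.
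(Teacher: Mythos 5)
Your proposal follows essentially the same route as the paper: duality, the shifted resonance function $G(k_1,k_2,k_3) = s_h(k_1) - s_h(k_2) + s_h(k_3) + \frac{2k_2}{h^2} = \frac{8}{h^3}\cos\left(\frac{hk_1}{4}\right)\sin\left(\frac{hk_2}{4}\right)\cos\left(\frac{hk_3}{4}\right)$, whose cosine factors are bounded below by $\frac{1}{\sqrt 2}$ on $(\T_h)^*$ (so the cancellation you worry about near the frequency edges does not occur and no variant of Lemma \ref{lem:cos} is needed), giving $\max_j\langle \tau_j - (\cdot)\rangle \gtrsim h^{-2}|k_2|$, followed by the same case analysis and $L^4$-Strichartz closing as in Lemma \ref{lem:bilinear1}. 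The only correction: the resonance is of size $h^{-2}|k_2|$, not $h^{-3}|k_2|\cdot|k_1||k_3|$ as you first write, so the gain in the multiplier is the factor $h|k_2|^{-\frac12}$, which combined with $h|k_3|\lesssim 1$ and the worst case $|k_2|\le|k_1|\sim|k_3|$ yields exactly $h^{s'-s}$.
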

\begin{proof}
We only prove 
\[\left\| \nabla_h ( u_h^+ \cdot e^{+\frac{2t}{h^2}\partial_h}v_h^-) \right\|_{Z_{h,+}^s}
    \ls h^{s'-s}\|u_h^{+}\|_{X_{h,+}^{s',\frac12}}
    \|v_h^{-}\|_{X_{h,-}^{s',\frac12}}.\]
Similarly as in the proof of Lemma \ref{lem:bilinear1}, we write by duality that  
\begin{equation}\label{eq:duality}
\begin{aligned}
&\left\| \nabla_h ( u_h^+ \cdot e^{\frac{2t}{h^2}\partial_h}v_h^-) \right\|_{X_{h,+}^{s,\frac12}} \\
=&~{}\sup_{\|w_h\|_{X_{h,+}^{-s,-\frac12}}\le 1}\left|\int_{\Theta} \sum_{\Delta}\frac{2}{h}\sin\left(\frac{hk_3}{2}\right)\widetilde{u_h^+}(\tau_1,k_1) 
\widetilde{v_h^-}(\tau_2-\tfrac{2k_2}{h^2},k_2) 
\widetilde{w_h}(\tau_3,k_3)\right|
\end{aligned}
\end{equation}
Define
\begin{equation}\label{eq:UVW}
\begin{aligned}
  U_h(\tau,k)&=  \la k\ra^{s'} \la \tau-s_h(k) \ra^\frac12 \left| \widetilde{u_h^+}(\tau,k) \right| \\
  V_h(\tau,k)&=  \la k\ra^{s'}  \la \tau-\tfrac{2k}{h^2}+s_h(k) \ra^\frac12 \left| \widetilde{v_h^-}(\tau-\tfrac{2k}{h^2},k)\right|  \\ 
  W_h(\tau,k)&=  \la k\ra^{-s}\la \tau-s_h(k) \ra^\frac12 \left| \widetilde{w_h}(\tau,k) \right|.
\end{aligned}
\end{equation}
Then, the right-hand side of \eqref{eq:duality} is bounded by
\begin{equation}\label{eq:bi2}
  \int_{\Theta} \sum_{\Delta} \mathcal M_h(\tau_1,\tau_2,\tau_3,k_1,k_2,k_3) U_h(\tau_1,k_1)V_h(\tau_2,k_2)W_h(\tau_3,k_3),
\end{equation}
where
\[\mathcal M_h(\tau_1,\tau_2,\tau_3,k_1,k_2,k_3) = \frac{|k_3|^s \left|\frac{2}{h} \sin\left(\frac{hk_3}{2}\right)\right|}
  { |k_1|^{s'}|k_2|^{s'}\la \tau_1-s_h(k_1) \ra^\frac12 \la \tau_2-\tfrac{2k_2}{h^2}+s_h(k_2) \ra^\frac12
  \la \tau_3-s_h(k_3) \ra^\frac12 }.\]
For $k_1, k_2, k_3 \in (\T_h)^*$, define the resonant function as 
\[G(k_1,k_2,k_3) = s_h(k_1) - s_h(k_2) + s_h(k_3) + \frac{2k_2}{h^2}.\]
Note that 
\[G(k_1,k_2,k_3)=\frac{8}{h^3}\cos\left(\frac{hk_1}{4}\right)\sin\left(\frac{hk_2}{4}\right)\cos\left(\frac{hk_3}{4}\right).\]
From the identities
\[k_1 + k_2+k_3 = 0 \quad \mbox{and} \quad \tau_1-s_h(k_1) + \tau_2 - \frac{2k_2}{h^2} + s_h(k_2) +\tau_3-s_h(k_3) = - G(k_1,k_2,k_3),\]
we know that
\[\max_{j=1,2,3}|k_j| \sim \med_{j=1,2,3}|k_j|\]
and
\begin{equation}\label{lowbound of mod}
  \begin{aligned}
  M:=&~{}\max\left(|\tau_1-s_h(k_1)|  ,\left|\tau_2-\frac{2k_2}{h}+s_h(k_2)\right|  , |\tau_3-s_h(k_3)|\right) \\ 
  \gs&~{} \frac{1}{h^3}\cos\left(\frac{hk_1}{4}\right)\cos\left(\frac{hk_3}{4}\right)\left|\sin\left(\frac{hk_2}{4}\right)\right|.
  \end{aligned}\end{equation}
Note that $\displaystyle \cos\left(\frac{hk}{4}\right) \ge \frac{1}{\sqrt{2}}$ for all $k \in (\T_h)^*$. From \eqref{lowbound of mod}, we immediately know
\[(1+M)^\frac12\gtrsim \left(1+\frac{|k_2|}{h^2}\right)^\frac12 \gtrsim h^{-1}|k_2|^\frac12.\]
Without loss of generality, we may assume that $M=|\tau_1-s_h(k_1)|$ in $\mathcal M_h(\tau_1,\tau_2,\tau_3,k_1,k_2,k_3)$. Note from \eqref{lowbound of mod} that dispersive smoothing effect is efficient when $|k_2|$ is the maximum frequency, thus it suffices to consider the worst case when $|k_2| \le |k_1| \sim |k_3|$. Since $h|k| \lesssim 1$ for all $k \in (\T_h)^*$ and $1+s-s' \ge 0$, for non-zero frequencies, the multiplier $\mathcal M_h(\tau_1,\tau_2,\tau_3,k_1,k_2,k_3)$ is bounded by
\[\frac{ h|k_3|^{1+s}|k_2|^{-\frac12}}{ |k_1|^{s'} |k_2|^{s'} \la \tau_2-\tfrac{2k_2}{h^2}+s_h(k_2) \ra^\frac12 \la \tau_3-s_h(k_3) \ra^\frac12 } \lesssim \frac{h^{s'-s}}{ \la \tau_2-\tfrac{2k_2}{h^2}+s_h(k_2) \ra^\frac12 \la \tau_3-s_h(k_3) \ra^\frac12 }.\]
Then, by Parseval's identity, Proposition \ref{prop:L4} and Lemma \ref{lem:properties}, we obtain
\[\begin{aligned}
  \eqref{eq:bi2} &\ls \int_{\Theta} \sum_{\Delta}  \frac{h^{s'-s}}{ \la \tau_2-\tfrac{2k_2}{h^2}+s_h(k_2) \ra^\frac12 \la \tau_3-s_h(k_3) \ra^\frac12 }  U_h(\tau_1,k_1)V_h(\tau_2,k_2)W_h(\tau_3,k_3) \\ 
  &\ls h^{s'-s}\int_{\R} \sum_{\T_h}\mathcal{F}_{t,h}^{-1}(U_h)
  \mathcal{F}_{t,h}^{-1} \left( \frac{V_h}{\la \tau-\tfrac{2k}{h^2}+s_h(k) \ra^\frac12 } \right)
  \mathcal{F}_{t,h}^{-1}\left(\frac{W_h}{\la \tau-s_h(k) \ra^\frac12 }  \right) dt,\\
  &\ls h^{s'-s}\left\| \mathcal{F}_{t,h}^{-1}(U_h) \right\|_{L_t^2L^2}
  \left\| \mathcal{F}_{t,h}^{-1} \left( \frac{V_h}{\la \tau-\tfrac{2k}{h^2}+s_h(k) \ra^\frac12 } \right) \right\|_{L_t^4L^4}
  \left\| \mathcal{F}_{t,h}^{-1} \left( \frac{W_h}{\la \tau-s_h(k) \ra^\frac12 } \right) \right\|_{L_t^4L^4} \\ 
  &\ls h^{s'-s}\| u_h^{+} \|_{X_{h,+}^{s',\frac12}}
  \left\| \mathcal{F}_{t,h}^{-1} \left( \frac{V_h}{\la \tau-\tfrac{2k}{h^2}+s_h(k) \ra^\frac12 } \right) \right\|_{X_{h,-}^{0,\frac13}}
  \left\| \mathcal{F}_{t,h}^{-1} \left( \frac{W_h}{\la \tau-s_h(k) \ra^\frac12 } \right) \right\|_{X_{h,+}^{0,\frac13}} \\ 
  &\ls h^{s'-s} \| u_h^{+} \|_{X_{h,+}^{s',\frac12}}
  \| v_h^{-} \|_{X_{h,-}^{s',\frac12}}
  \| w_h \|_{X_{h,+}^{-s,\frac12}}.
\end{aligned}\]

For the second part of $Z_{h,+}^s$, by duality, we have
\[\begin{aligned}
&~{}\left\| \frac{\la k\ra^s \mathcal{F}_{t,x} \left(\nabla_h ( u_h^+ \cdot e^{\frac{2t}{h^2}\partial_h}v_h^-)\right)  (\tau,k)}{\la \tau-s_h(k)\ra } \right\|_{ \ell_{k \in (\T_h)^*}^2L_\tau^1 }    \\ 
\lesssim&~{}\sup_{\|\wt{w_h}\|_{\ell_{k \in (\T_h)^*}^2L_\tau^\infty}\le1}
\int_{\Theta} \sum_{\Delta} \tilde{\mathcal M}_h(\tau_1,\tau_2,\tau_3,k_1,k_2,k_3){U_h}(\tau_1,k_1){V_h}(\tau_2,k_2)|\widetilde{w_h}(\tau_3,k_3)|,
\end{aligned}\]
where $U_h$ and $V_h$ are defined as in \eqref{eq:UVW}, and
\[\tilde{\mathcal M}_h(\tau_1,\tau_2,\tau_3,k_1,k_2,k_3) = \frac{|k_3|^s\left|\frac{2}{h}\sin\left(\frac{hk_3}{2}\right)\right|}
{ |k_1|^{s'} |k_2|^{s'} \langle \tau_1-s_h(k_1)\ra^\frac12 \langle \tau_2-\tfrac{2k_2}{h^2}+s_h(k_2)\ra^\frac12 \langle \tau_3-s_h(k_3)\ra }.\]

{\bf Case 1.}  $ M=|\tau_1-s_h(k_1)|$. Similarly as above in addition to \eqref{lowbound of mod} under the worst case $|k_2| \le |k_1| \sim |k_3|$,  $\tilde{\mathcal M}_h(\tau_1,\tau_2,\tau_3,k_1,k_2,k_3)$ is bounded by  
\[  \frac{ h|k_3|^{1+s}|k_2|^{-\frac12} }{ |k_1|^{s'}|k_2|^{s'} \la \tau_3 - s_h(k_3)\ra  \la \tau_2-\tfrac{2k_2}{h^2}+s_h(k_2) \ra^\frac12 } \ls  \frac{h^{s'-s}}{\la \tau_3 - s_h(k_3)\ra  \la \tau_2-\tfrac{2k_2}{h}+s_h(k_2) \ra^\frac12}.\]
Therefore, by Proposition \ref{prop:L4} and \eqref{eq:embedding}, we have
\[\begin{aligned}
&~{}\int_{\Theta} \sum_{\Delta} \tilde{\mathcal M}_h(\tau_1,\tau_2,\tau_3,k_1,k_2,k_3)U_h(\tau_1,k_1)
  V_h(\tau_2,k_2)
  |\wt{w_h}(\tau_3,k_3)|\\
  \ls&~{} h^{s'-s}\left\| \mathcal{F}_{t,h}^{-1}(U_h) \right\|_{L_t^2L^2}
  \left\| \mathcal{F}_{t,h}^{-1} \left( \frac{V_h}{\la \tau-s_h(k) \ra^\frac12 } \right) \right\|_{L_t^4L^4}
  \left\| \mathcal{F}_{t,h}^{-1} \left( \frac{|\wt{w_h}(\tau,k)|}{\la \tau-s_h(k) \ra } \right) \right\|_{L_t^4L^4} \\ 
  \ls&~{} h^{s'-s}\| u_h^+\|_{X_{h,+}^{s,\frac12}}\| v_h^-\|_{X_{h,-}^{s,\frac12}}\|\wt{w_h}\|_{\ell_{k \in (\T_h)^*}^2L_\tau^\infty}.
\end{aligned}\]
Note by symmetry that the case $  M=|\tau_2-\tfrac{2k_2}{h^2}+s_h(k_2)|$ follows analogously.

{\bf Case 2.}  $ M=|\tau_3-s_h(k_3)|$. By  \eqref{lowbound of mod}, we have
\[\la \tau_3-s_h(k_3)\ra 
 \gtrsim  h^{-\frac43}|k_2|^{\frac23}\la \tau_1 - s_h(k_1) \ra^{\frac16}\la \tau_2-\tfrac{2k_2}{h^2}+s_h(k_2)) \ra^{\frac16}.\]
Since the worst case occurs when $|k_2|$ is the smallest one, we further assume that $|k_2| \le |k_1| \sim |k_3|$. Then, the multiplier is bounded by 
\[\frac{|k_3|^{1+s}h^{\frac43} }{ |k_1|^{s'} |k_2|^{s'+\frac23} \la \tau_1-s_h(k_1) \ra^\frac23 \la \tau_2-\tfrac{2k_2}{h^2}+s_h(k_2) \ra^\frac23}\lesssim \frac{h^{\frac13+s'-s}}{|k_2|^{\frac23}\la  \tau_1-s_h(k_1)\ra^{\frac23}\la \tau_2-\tfrac{2k_2}{h^2}+s_h(k_2)\ra^{\frac23}}.\]
Since both $\la \tau - s_h(k) \ra^{-\frac23}$ and $k^{-\frac23}$ are square integrable in $\tau$ and $k$, respectively, we have
\[\begin{aligned}
  &~{}\int_{\tau_1, \tau_2} \sum_{\substack{k_1, k_2 \\ k_1k_2 \neq 0}} \tilde{\mathcal M}_h(\tau_1,\tau_2,-\tau_1-\tau_2,k_1,k_2,-k_1 - k_2)U_h(\tau_1,k_1)
  V_h(\tau_2,k_2)
  |\wt{w_h}(-\tau_1-\tau_2,-k_1-k_2)|\\
\lesssim&~{} \sum_{\substack{k_1, k_2 \\ k_1k_2 \neq 0}} \frac{h^{\frac13+s'-s}}{|k_2|^{\frac23}}\|U_h(k_1)\|_{L_{\tau}^2}\|V_h(k_2)\|_{L_{\tau}^2}\|\wt{w_h}(-k_1-k_2)\|_{L_{\tau}^\infty}\\
  \ls&~{} h^{\frac13+s'-s}\|U_h\|_{\ell_{k \in (\T_h)^*}^2L_{\tau}^2}\|V_h\|_{\ell_{k \in (\T_h)^*}^2L_{\tau}^2}\|\wt{w_h}\|_{\ell_{k \in (\T_h)^*}^2L_{\tau}^\infty}\\
\lesssim&~{} h^{\frac13+s'-s}\| u_h^+\|_{X_{h,+}^{s,\frac12}}\| v_h^-\|_{X_{h,-}^{s,\frac12}}\|\wt{w_h}\|_{\ell_{k \in (\T_h)^*}^2L_\tau^\infty}.
\end{aligned}\]
\end{proof}

\begin{lemma}[Bilinear estimate III]\label{lem:bilinear3}
Let $0\le s \le s'\le s+1$. Then,
\[ \left\| \nabla_h ( e^{\pm\frac{2t}{h^2}\partial_h}u_h^\mp \cdot e^{\pm\frac{2t}{h^2}\partial_h}v_h^\mp) \right\|_{Z_{h,\pm}^s}
    \ls h^{\frac12+s'-s}\|u_h^{\mp}\|_{X_{h,\mp}^{s',\frac12}}
    \|v_h^{\mp}\|_{X_{h,\mp}^{s',\frac12}},  \]
for all $u_h^{\mp}, v_h^{\mp} \in X_{h,\mp}^{s',\frac12}$.
\end{lemma}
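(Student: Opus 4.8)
The approach follows the template of the proofs of Lemmas~\ref{lem:bilinear1} and~\ref{lem:bilinear2}. By symmetry it is enough to treat the ``$+$'' case, namely
\[\left\| \nabla_h ( e^{\frac{2t}{h^2}\partial_h}u_h^- \cdot e^{\frac{2t}{h^2}\partial_h}v_h^-) \right\|_{Z_{h,+}^s}\ls h^{\frac12+s'-s}\|u_h^-\|_{X_{h,-}^{s',\frac12}}\|v_h^-\|_{X_{h,-}^{s',\frac12}}.\]
First I would dualize each of the two pieces of the $Z_{h,+}^s$-norm (the $X_{h,+}^{s,-\frac12}$-part and the $\ell_k^2L_\tau^1$-part) exactly as in the proof of Lemma~\ref{lem:bilinear2}, reducing matters to a trilinear estimate for a multiplier $\mathcal M_h$ tested against
\[U_h(\tau,k)=\la k\ra^{s'}\la\tau-\tfrac{2k}{h^2}+s_h(k)\ra^{\frac12}\big|\widetilde{u_h^-}(\tau-\tfrac{2k}{h^2},k)\big|, \qquad W_h(\tau,k)=\la k\ra^{-s}\la\tau-s_h(k)\ra^{\frac12}\big|\widetilde{w_h}(\tau,k)\big|,\]
and the analogous $V_h$ for $v_h^-$, so that $\|U_h\|_{L_{t,x}^2}=\|u_h^-\|_{X_{h,-}^{s',\frac12}}$, etc. Here I set $\mu_1=\tau_1-\tfrac{2k_1}{h^2}+s_h(k_1)$, $\mu_2=\tau_2-\tfrac{2k_2}{h^2}+s_h(k_2)$, $\mu_3=\tau_3-s_h(k_3)$, and write $M=\max_j|\mu_j|$.

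The heart of the matter is the resonance identity: on $\Theta\cap\Delta$ one has $\mu_1+\mu_2+\mu_3=-G(k_1,k_2,k_3)$ with
\[G(k_1,k_2,k_3)=s_h(k_3)-s_h(k_1)-s_h(k_2)-\frac{2k_3}{h^2}=-\frac{8}{h^3}\cos\!\Big(\frac{hk_1}{4}\Big)\cos\!\Big(\frac{hk_2}{4}\Big)\sin\!\Big(\frac{hk_3}{4}\Big),\]
the second equality being a sum-to-product computation using $k_1+k_2+k_3=0$; note that, in contrast to Lemma~\ref{lem:bilinear2}, the single sine is now attached to the output/dual frequency $k_3$. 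Since $\cos(\frac{hk}{4})\ge\frac1{\sqrt2}$ for every $k\in(\T_h)^*$ and $|\sin(\frac{hk_3}{4})|\gtrsim h|k_3|$ by \eqref{eq:sin} (because $|\frac{hk_3}{4}|\le\frac{\pi}{4}$), this gives the crucial lower bound $|G(k_1,k_2,k_3)|\gtrsim h^{-2}|k_3|$, hence $(1+M)^{\frac12}\gtrsim h^{-1}|k_3|^{\frac12}$. The multiplier $\mathcal M_h$ carries the derivative factor $|\tfrac2h\sin(\tfrac{hk_3}{2})|\lesssim|k_3|$ together with the Sobolev mismatch $\la k_3\ra^s\la k_1\ra^{-s'}\la k_2\ra^{-s'}$; absorbing the derivative into the largest modulation already produces the base gain $\tfrac{|k_3|}{(1+M)^{1/2}}\lesssim h|k_3|^{\frac12}\le h^{\frac12}$, where the last step uses $|k_3|\le N=\frac\pi h$. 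The remaining factor $h^{s'-s}$ comes from the same trade as in Lemma~\ref{lem:bilinear2}: once, say, $M=|\mu_1|$, write $\la\mu_1\ra^{-\frac12}\lesssim h|k_3|^{-\frac12}$, use $\la k_3\ra\lesssim\max(\la k_1\ra,\la k_2\ra)$ to move the surviving $|k_3|$-power onto $\la k_2\ra$ (say), and dominate the leftover frequency power by $h^{s'-s}$ via $\max_j|k_j|\le h^{-1}$ and $0\le s\le s'\le s+1$.

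With the pointwise bound $\mathcal M_h\lesssim h^{\frac12+s'-s}\la\mu_i\ra^{-\frac12}\la\mu_j\ra^{-\frac12}$ in hand for the two non-maximal modulations, I would close in the standard way: by Parseval, the $L^4$-Strichartz estimate of Proposition~\ref{prop:L4} applied to the two factors carrying $\la\mu_i\ra^{-\frac12}$ and $\la\mu_j\ra^{-\frac12}$ (which costs only a bit more than $\tfrac13$ of each such weight, leaving room), and Lemma~\ref{lem:properties} together with \eqref{eq:embedding}. For the $\ell_k^2L_\tau^1$-piece one argues verbatim along the two cases of Lemma~\ref{lem:bilinear2}, the case $M=|\mu_3|$ using in addition the refinement $\la\mu_3\ra\gtrsim h^{-\frac43}|k_3|^{\frac23}\la\mu_1\ra^{\frac16}\la\mu_2\ra^{\frac16}$ and the $\ell^2$- resp.\ $L^2$-summability of $|k|^{-\frac23}$ and $\la\tau-s_h(k)\ra^{-\frac23}$. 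I expect the main difficulty to be precisely the frequency bookkeeping in the preceding step: unlike in Lemma~\ref{lem:bilinear2}, the resonance $|G|\sim h^{-2}|k_3|$ is governed by the output frequency $k_3$, which need not lie among the two largest of $|k_1|,|k_2|,|k_3|$, so the dispersive-smoothing gain is weak exactly when $|k_3|$ is small; one must then check that the simultaneous losses from $\nabla_h$ (also of size $|k_3|$) and from $\la k_1\ra^{-s'}\la k_2\ra^{-s'}$ are recovered in every frequency regime using only $\max_j|k_j|\le\pi/h$ and $s'\le s+1$, while still keeping strictly more than $\tfrac13$ of modulation weight available on the two $L^4$-factors.
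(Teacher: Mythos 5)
Your proposal follows essentially the same route as the paper's proof: dualize both components of the $Z_{h,+}^s$-norm, use the resonance identity $\mu_1+\mu_2+\mu_3=\pm\frac{8}{h^3}\cos(\frac{hk_1}{4})\cos(\frac{hk_2}{4})\sin(\frac{hk_3}{4})$ together with $\cos(\frac{hk}{4})\ge\frac1{\sqrt2}$ and \eqref{eq:sin} to get $(1+M)^{\frac12}\gs h^{-1}|k_3|^{\frac12}$, absorb the derivative factor $|\tfrac2h\sin(\tfrac{hk_3}{2})|\ls|k_3|$ into the maximal modulation to produce $h|k_3|^{\frac12}\le h^{\frac12}$, and close via Parseval, Proposition \ref{prop:L4}, and the two-case analysis for the $\ell^2_kL^1_\tau$ piece. (The paper actually reduces at the outset to $s=s'=0$, citing the proof of Lemma \ref{lem:bilinear2} for the $h^{s'-s}$ bookkeeping you discuss at the end; that is the same trade you describe.)

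The one place your stated parameters do not close is Case 2 ($M=|\mu_3|$) of the $\ell^2_kL^1_\tau$ part. With your refinement $\la\mu_3\ra\gs h^{-\frac43}|k_3|^{\frac23}\la\mu_1\ra^{\frac16}\la\mu_2\ra^{\frac16}$, carried over verbatim from Lemma \ref{lem:bilinear2}, the multiplier is bounded by $h^{\frac43}|k_3|^{\frac13}\la\mu_1\ra^{-\frac23}\la\mu_2\ra^{-\frac23}$, and since $|k_3|$ may be as large as $\pi/h$ the best available frequency gain is $h^{\frac43}|k_3|^{\frac13}\ls h=h^{\frac12}\cdot h^{\frac12}\ls h^{\frac12}|k_2|^{-\frac12}$; the exponent $-\frac12$ is exactly not square-summable, so the final Cauchy--Schwarz in $k_2$ fails at the endpoint. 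In general, splitting $M\gs (h^{-2}|k_3|)^{a}\la\mu_1\ra^{b}\la\mu_2\ra^{b}$ with $a+2b=1$ leaves $h^{\frac12}|k_2|^{-(3a-\frac32)}$ after extracting $h^{\frac12}$, which requires $a>\frac23$; your choice $a=\frac23$ is precisely the borderline case. The paper takes $a=\frac34$, $b=\frac18$, yielding $h^{\frac12}|k_2|^{-\frac34}\la\mu_1\ra^{-\frac58}\la\mu_2\ra^{-\frac58}$, which sums. This is a fixable rebalancing of exponents rather than a conceptual gap; the rest of the argument is sound.
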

\begin{proof}
In view of the proof of Lemma \ref{lem:bilinear2}, it suffices to prove 
\[\left\| \nabla_h ( e^{\frac{2t}{h^2}\partial_h}u_h^- \cdot e^{\frac{2t}{h^2}\partial_h}v_h^-) \right\|_{Z_{h,+}^0}
    \ls h^{\frac12}\|u_h^{-}\|_{X_{h,-}^{0,\frac12}}
    \|v_h^{-}\|_{X_{h,-}^{0,\frac12}}.\]
  We start with the first part of $Z_{h,+}^0$ norm 
\begin{equation}\label{eq:dualitybi3}
\begin{aligned}
  &~{}\left\| \nabla_h ( e^{\frac{2t}{h^2}\partial_h}u_h^- \cdot e^{\frac{2t}{h^2}\partial_h}v_h^-) \right  \|_{X_{h,+}^{0,-\frac12}} \\
=&~{}\sup_{\|w_h\|_{X_{h,+}^{0,\frac12}}=1}\left| \int_{\Theta} \sum_{\Delta}
  \frac{2}{h}\sin(\tfrac{hk_3}{2})
  \widetilde{u_h^-}(\tau_1-\tfrac{2k}{h^2},k_1)
  \widetilde{v_h^-}(\tau_2-\tfrac{2k}{h^2},k_2)
  \widetilde{w_h}(\tau_3,k_3) \right|
\end{aligned}
\end{equation} 
Define
\begin{equation}\label{eq:UVW3}
\begin{aligned}
  U_h(\tau,k)&=  \la \tau-\tfrac{2k}{h^2}+s_h(k) \ra^\frac12 \left| \widetilde{u_h^-}(\tau-\tfrac{2k}{h^2},k) \right| \\
  V_h(\tau,k)&=  \la \tau-\tfrac{2k}{h^2}+s_h(k) \ra^\frac12 \left| \widetilde{v_h^-}(\tau-\tfrac{2k}{h^2},k)\right|  \\ 
  W_h(\tau,k)&=  \la \tau-s_h(k) \ra^\frac12 \left| \widetilde{w_h}(\tau,k) \right|.
\end{aligned}
\end{equation}
Then, the right-hand side of \eqref{eq:dualitybi3} is bounded by 
\begin{equation}\label{Dual integral2}
  \int_{\Theta} \sum_{\Delta}
  \mathcal M_h(\tau_1,\tau_2,\tau_3,k_1,k_2,k_3)U_h(\tau_1,k_1)V_h(\tau_2,k_2)W_h(\tau_3,k_3),
\end{equation}
where
\[\mathcal M_h(\tau_1,\tau_2,\tau_3,k_1,k_2,k_3) = \frac{\frac{2}{h}\left| \sin(\frac{hk_3}{2})\right| }
  { \la \tau_1-\tfrac{2k_1}{h^2}+s_h(k_1) \ra^\frac12 \la \tau_2-\tfrac{2k_2}{h^2}+s_h(k_2) \ra^\frac12
  \la \tau_3-s_h(k_3) \ra^\frac12  }.\]
For $k_1, k_2, k_3 \in (\T_h)^*$, define the resonant function as 
\[G(k_1,k_2,k_3) = s_h(k_1)-\frac{2k_1}{h^2}+ s_h(k_2) -\frac{2k_2}{h^2} -s_h(k_3).\]
Note that 
\[G(k_1,k_2,k_3)=-\frac{8}{h^3}\cos\left(\frac{hk_1}{4}\right)\cos\left(\frac{hk_2}{4}\right)\sin\left(\frac{hk_3}{4}\right).\]
From the identities
\[k_1 + k_2+k_3 = 0 \quad \mbox{and} \quad \tau_1-\frac{2k_1}{h^2}+s_h(k_1)   +\tau_2-\frac{2k_2}{h^2}+s_h(k_2)   +\tau_3-s_h(k_3) = - G(k_1,k_2,k_3),\]
we know that
\[\max_{j=1,2,3}|k_j| \sim \med_{j=1,2,3}|k_j|\]
and
\begin{equation} \label{lowbound of mod2}
\begin{aligned}
  M:=&~{}\max\left(\left|\tau_1-\frac{2k_1}{h^2}+s_h(k_1)\right|, \left|\tau_2-\frac{2k_2}{h^2}+s_h(k_2)\right|, |\tau_3-s_h(k_3)|\right) \\ 
  \gs&~{} \frac{1}{h^3}\cos\left(\frac{hk_1}{4}\right)\cos\left(\frac{hk_2}{4}\right)\left|\sin\left(\frac{hk_3}{4}\right)\right|,
  \end{aligned}
\end{equation}
Note that $\displaystyle \cos\left(\frac{hk}{4}\right) \ge \frac{1}{\sqrt{2}}$ for all $k \in (\T_h)^*$. From \eqref{lowbound of mod2}, we immediately know
\[(1+M)^\frac12\gtrsim   (1+M)^\frac12\ge \left(1+\frac{|k_3|}{h^2}\right)^\frac12 \ge h^{-1}|k_3|^\frac12,\]
which says that dispersive smoothing effect is efficient when $|k_3|$ is the maximum frequency, thus it suffices to consider the worst case when $|k_3| \le |k_1| \sim |k_2|$. Without loss of generality, we may assume that $M= |\tau_1-\tfrac{2k_1}{h^2}+s_h(k_1)|$ in $\mathcal M_h(\tau_1,\tau_2,\tau_3,k_1,k_2,k_3)$. Since $h|k| \lesssim 1$ for all $k \in (\T_h)^*$, for non-zero frequencies, the multiplier $\mathcal M_h(\tau_1,\tau_2,\tau_3,k_1,k_2,k_3)$ is bounded by
\[\frac{ h|k_3|^{\frac12} }{ \la \tau_2-\tfrac{2k_2}{h^2}+s_h(k_2) \ra^\frac12 \la \tau_3-s_h(k_3) \ra^\frac12 } \lesssim \frac{h^{\frac12}}{ \la \tau_2-\tfrac{2k_2}{h^2}+s_h(k_2) \ra^\frac12 \la \tau_3-s_h(k_3) \ra^\frac12 }.\]
Then, by Parseval's identity, Proposition \ref{prop:L4} and Lemma \ref{lem:properties}, we obtain
\[\begin{aligned}
  \eqref{Dual integral2} \ls&~{}  \int_{\Theta} \sum_{\Delta}\frac{h^\frac12}{ \la \tau_2-\tfrac{2k_2}{h^2}+s_h(k_2) \ra^\frac12 \la \tau_3-s_h(k_3) \ra^\frac12 } U_h(\tau_1,k_1)V_h(\tau_2,k_2)W_h(\tau_3,k_3) \\ 
  =&~{}h^\frac12 \int_{\R} \sum_{\T_h}\mathcal{F}_{t,h}^{-1}(U_h)
  \mathcal{F}_{t,h}^{-1} \left( \frac{V_h}{\la \tau-\tfrac{2k}{h^2}+s_h(k) \ra^\frac12 } \right)
  \mathcal{F}_{t,h}^{-1}\left(\frac{W_h}{\la \tau-s_h(k) \ra^\frac12 }  \right) dt\\
  \lesssim&~{}h^\frac12\left\| \mathcal{F}_{t,h}^{-1}(U_h) \right\|_{L_t^2L^2}
  \left\| \mathcal{F}_{t,h}^{-1} \left( \frac{V_h}{\la \tau-\tfrac{2k}{h^2}+s_h(k) \ra^\frac12 } \right) \right\|_{L_t^4L^4}
  \left\| \mathcal{F}_{t,h}^{-1} \left( \frac{W_h}{\la \tau-s_h(k) \ra^\frac12 } \right) \right\|_{L_t^4L^4} \\ 
  \ls&~{} h^\frac12\| u_h^{-} \|_{X_{h,-}^{0,\frac12}}
  \left\| \mathcal{F}_{t,h}^{-1} \left( \frac{V_h}{\la \tau-\tfrac{2k}{h^2}+s_h(k) \ra^\frac12 } \right) \right\|_{X_{h,-}^{0,\frac13}}
  \left\| \mathcal{F}_{t,h}^{-1} \left( \frac{W_h}{\la \tau-s_h(k) \ra^\frac12 } \right) \right\|_{X_{h,+}^{0,\frac13}} \\ 
  \lesssim&~{} h^\frac12\| u_h^{-} \|_{X_{h,-}^{0,\frac12}}
  \| v_h^{-} \|_{X_{h,-}^{0,\frac12}}
  \| w_h \|_{X_{h,+}^{0,\frac12}}.
\end{aligned}\]

Next, we consider the second part of the $Z_{h,+}^s$ norm. By duality, we have
\[\begin{aligned}
&\left \|  \frac{\mathcal F_{t,x} \left(\nabla_h ( e^{\frac{2t}{h^2}\partial_h}u_h^- \cdot e^{\frac{2t}{h^2}\partial_h}v_h^-)\right)(\tau,k)}{\la \tau - s_h(k)\ra}  \right\|_{\ell_{k \in (\T_h)^*}^2L_\tau^1} \\ 
\lesssim&\sup_{\|\wt{w_h}\|_{\ell_{k \in (\T_h)^*}^2L_\tau^\infty}\le1 } 
  \int_{\Theta} \sum_{\Delta} \tilde{\mathcal M}_h(\tau_1,\tau_2,\tau_3,k_1,k_2,k_3) U_h(\tau_1,k_1)V_h(\tau_2,k_2)|\wt{w_h}(\tau_3,k_3)|,
\end{aligned}\]
where $U_h$ and $V_h$ are defined as in \eqref{eq:UVW3}, and
\[\tilde{\mathcal M}_h(\tau_1,\tau_2,\tau_3,k_1,k_2,k_3) = \frac{ |\frac{2}{h}\sin\frac{hk_3}{2}| }{\la \tau_3 - s_h(k_3)\ra \la \tau_1-\tfrac{2k_1}{h^2}+s_h(k_1) \ra^\frac12 \la \tau_2-\tfrac{2k_2}{h^2}+s_h(k_2) \ra^\frac12}.\]

{\bf Case 1.} $ M=|\tau_1-\tfrac{2k_1}{h}+s_h(k_1)|$. Since $h|k| \lesssim 1$ for all $k \in (\T_h)^*$, from \eqref{lowbound of mod2}, the multiplier $\tilde{\mathcal M}_h(\tau_1,\tau_2,\tau_3,k_1,k_2,k_3)$ is bounded by  
\[\frac{ h|k_3|^{\frac12} }{ \la \tau_2-\tfrac{2k_2}{h^2}+s_h(k_2) \ra^\frac12\la \tau_3 - s_h(k_3)\ra}  \ls \frac{h^{\frac12}}{ \la \tau_2-\tfrac{2k_2}{h^2}+s_h(k_2) \ra^\frac12 \la \tau_3 - s_h(k_3)\ra }.\]
Therefore, by Proposition \ref{prop:L4} and \eqref{eq:embedding}, we have
\[\begin{aligned}
&~{}\int_{\Theta} \sum_{\Delta} \tilde{\mathcal M}_h(\tau_1,\tau_2,\tau_3,k_1,k_2,k_3)U_h(\tau_1,k_1)
  V_h(\tau_2,k_2)
  |\wt{w_h}(\tau_3,k_3)|\\
  \ls&~{} h^{\frac12}\left\| \mathcal{F}_{t,h}^{-1}(U_h) \right\|_{L_t^2L^2}
  \left\| \mathcal{F}_{t,h}^{-1} \left( \frac{V_h}{\la \tau-s_h(k) \ra^\frac12 } \right) \right\|_{L_t^4L^4}
  \left\| \mathcal{F}_{t,h}^{-1} \left( \frac{|\wt{w_h}(\tau,k)|}{\la \tau-s_h(k) \ra } \right) \right\|_{L_t^4L^4} \\ 
  \ls&~{} h^{\frac12}\| u_h^-\|_{X_{h,-}^{0,\frac12}}\| v_h^-\|_{X_{h,-}^{0,\frac12}}\|\wt{w_h}\|_{\ell_{k \in (\T_h)^*}^2L_\tau^\infty}.
\end{aligned}\]
Note by symmetry that the case $  M=|\tau_2-\tfrac{2k_2}{h^2}+s_h(k_2)|$ follows analogously.

{\bf Case 2.} $M=|\tau_3-s_h(k_3)| $. By  \eqref{lowbound of mod2}, we have
\[\la \tau_3-s_h(k_3)\ra 
 \gtrsim  h^{-\frac32}|k_3|^{\frac34}\la \tau_1-\tfrac{2k_1}{h^2}+s_h(k_1)) \ra^{\frac18}\la \tau_2-\tfrac{2k_2}{h^2}+s_h(k_2)) \ra^{\frac18}.\]
Since $h|k| \lesssim 1$ for all $(\T_h)^*$, the multiplier is bounded by 
\[\frac{h^{\frac32}|k_3|^{\frac14} }{ \la \tau_1-s_h(k_1) \ra^\frac58 \la \tau_2-\tfrac{2k_2}{h^2}+s_h(k_2) \ra^\frac58}\lesssim \frac{h^{\frac12}}{|k_2|^{\frac34}\la  \tau_1-s_h(k_1)\ra^{\frac58}\la \tau_2-\tfrac{2k_2}{h^2}+s_h(k_2)\ra^{\frac58}}.\]
Since both $\la \tau - s_h(k) \ra^{-\frac58}$ and $k^{-\frac34}$ are square integrable in $\tau$ and $k$, respectively, we have
\[\begin{aligned}
  &~{}\int_{\tau_1, \tau_2} \sum_{\substack{k_1, k_2 \\ k_1k_2 \neq 0}} \tilde{\mathcal M}_h(\tau_1,\tau_2,-\tau_1-\tau_2,k_1,k_2,-k_1 - k_2)U_h(\tau_1,k_1)
  V_h(\tau_2,k_2)
  |\wt{w_h}(-\tau_1-\tau_2,-k_1-k_2)|\\
\lesssim&~{} \sum_{\substack{k_1, k_2 \\ k_1k_2 \neq 0}} \frac{h^{\frac12}}{|k_2|^{\frac34}}\|U_h(k_1)\|_{L_{\tau}^2}\|V_h(k_2)\|_{L_{\tau}^2}\|\wt{w_h}(-k_1-k_2)\|_{L_{\tau}^\infty}\\
  \ls&~{} h^{\frac12}\|U_h(k_1)\|_{\ell_{k \in (\T_h)^*}^2L_{\tau}^2}\|V_h(k_2)\|_{\ell_{k \in (\T_h)^*}^2L_{\tau}^2}\|\wt{w_h}(-k_1-k_2)\|_{\ell_{k \in (\T_h)^*}^2L_{\tau}^\infty}\\
\lesssim&~{} h^{\frac12}\| u_h^-\|_{X_{h,-}^{0,\frac12}}\| v_h^-\|_{X_{h,-}^{0,\frac12}}\|\wt{w_h}\|_{\ell_{k \in (\T_h)^*}^2L_\tau^\infty}.
\end{aligned}\]
\end{proof}
As an immediate corollary, we have
\begin{corollary}\label{Cor:Linear Bilinear}
Let $0 \le s \le s' \le s+1$. Suppose $u_h^{\pm}(t),v_h^{\pm}(t)\in Y_{h,\pm}^{s',T}$ are supported in $[0,T]$ and satisfy mean zero assumption. Then, we have
\[\begin{aligned}
  \left\| \nabla_h (u_h^{\pm}v_h^{\pm})\right\|_{Z^{s,T}_{h,\pm}} &\ls T^{\frac16-}\|u_h^{\pm}\|_{Y_{h,\pm}^{s,T}}\|u_h^{\pm}\|_{Y_{h,\pm}^{s,T}}, \\ 
\left\| \nabla_h ( u_h^\pm \cdot e^{\pm\frac{2t}{h^2}\partial_h}v_h^\mp) \right\|_{Z^{s,T}_{h,\pm}} &\ls h^{s'-s}T^{\frac16-}\|u_h^{\pm}\|_{Y_{h,\pm}^{s',T}} \|v_h^{\mp}\|_{Y_{h,\mp}^{s',T}},\\  
\left\| \nabla_h ( e^{\pm\frac{2t}{h^2}\partial_h}u_h^\mp \cdot e^{\pm\frac{2t}{h^2}\partial_h}v_h^\mp) \right\|_{Z_{h,\pm}^{s,T}} &\ls h^{\frac12+s'-s}T^{\frac16-}\|u_h^{\mp}\|_{Y_{h,\mp}^{s',T}} \|v_h^{\mp}\|_{Y_{h,\mp}^{s',T}},
\end{aligned}\]
where all implicit constants are independent of $h>0$.
\end{corollary}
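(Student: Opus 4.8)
The three inequalities are obtained by combining the bilinear estimates of Lemmas~\ref{lem:bilinear1}, \ref{lem:bilinear2}, \ref{lem:bilinear3} with the time-localization property \eqref{ineq:time localization}; the powers $1$, $h^{s'-s}$, $h^{\frac12+s'-s}$ and the $h$-independence of all constants are inherited verbatim from those lemmas, so the only point is to produce the extra factor $T^{\frac16-}$. I describe the argument for the first inequality, the other two being identical once the indicated powers of $h$ and the regularity $s'$ are carried along. The starting point is a \emph{refined} form of Lemma~\ref{lem:bilinear1}: for every $\epsilon>0$,
\begin{equation}\label{refined bilinear1}
\left\|\nabla_h(u_h^{\pm}v_h^{\pm})\right\|_{Z_{h,\pm}^s}\ls_\epsilon\|u_h^{\pm}\|_{X_{h,\pm}^{s,\frac12}}\|v_h^{\pm}\|_{X_{h,\pm}^{s,\frac13+\epsilon}}+\|u_h^{\pm}\|_{X_{h,\pm}^{s,\frac13+\epsilon}}\|v_h^{\pm}\|_{X_{h,\pm}^{s,\frac12}},
\end{equation}
which follows by re-running the proof of Lemma~\ref{lem:bilinear1}: after the duality reduction and the multiplier bound \eqref{eq:multiplier estimates}, only the factor carrying the largest of the three weights $\langle\tau_j-s_h(k_j)\rangle$ is genuinely placed in $L^2_{t,x}$ (hence at modulation exponent $\tfrac12$), while the remaining two factors are fed into the $L^4_tL^4$-norm, for which Proposition~\ref{prop:L4} only requires exponent $b>\tfrac13$; when the maximal modulation falls on the dual variable $w_h$ both $u_h^\pm$ and $v_h^\pm$ enter at exponent $\tfrac13+\epsilon$, and when it falls on one of them the other does. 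The $\ell^2_kL^1_\tau$-part of $Z^s_{h,\pm}$ is treated the same way, since (including in Case~2 of the proof) the modulation weights left on $u_h^\pm$ and $v_h^\pm$ after the resonance identity are bounded below by $\langle\tau-s_h(\cdot)\rangle^{-\frac16}$, hence dominated by $\langle\tau-s_h(\cdot)\rangle^{\frac13+\epsilon}$. Lemmas~\ref{lem:bilinear2} and \ref{lem:bilinear3} admit the analogous refinements, with $s$ replaced by $s'$ on the two bilinear inputs and with the extra factors $h^{s'-s}$, $h^{\frac12+s'-s}$ respectively.

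Next I exploit the compact time support. Let $\eta\in C_0^\infty(\R)$ be a cutoff with $\eta\equiv1$ on $[0,1]$; if $u_h^\pm,v_h^\pm$ are supported in $[0,T]$ then $\eta(\tfrac tT)u_h^\pm=u_h^\pm$ and $\eta(\tfrac tT)v_h^\pm=v_h^\pm$. Fixing $\delta,\epsilon>0$ with $\tfrac13+\epsilon\le\tfrac12-\delta$ and applying \eqref{ineq:time localization} with $(b,b')=(\tfrac12-\delta,\tfrac13+\epsilon)$,
\[
\|v_h^\pm\|_{X_{h,\pm}^{s,\frac13+\epsilon}}=\bigl\|\eta(\tfrac tT)v_h^\pm\bigr\|_{X_{h,\pm}^{s,\frac13+\epsilon}}\ls T^{\frac16-\delta-\epsilon}\|v_h^\pm\|_{X_{h,\pm}^{s,\frac12-\delta}}\le T^{\frac16-\delta-\epsilon}\|v_h^\pm\|_{Y_{h,\pm}^{s,T}},
\]
and likewise for $u_h^\pm$; here I used the nesting $X_{h,\pm}^{s,\frac12}\subset X_{h,\pm}^{s,\frac12-\delta}$, the embedding $\|\cdot\|_{X_{h,\pm}^{s,\frac12}}\le\|\cdot\|_{Y_{h,\pm}^{s}}$, and the reduction of the $Y^{s}_{h,\pm}$-norm to the restricted norm $Y^{s,T}_{h,\pm}$ via a near-optimal extension. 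Since $\nabla_h(u_h^\pm v_h^\pm)$ is itself supported in $[0,T]\subset[-T,T]$, it is an admissible extension of itself in the definition of $\|\cdot\|_{Z_{h,\pm}^{s,T}}$, so inserting the last display into \eqref{refined bilinear1} gives
\[
\left\|\nabla_h(u_h^{\pm}v_h^{\pm})\right\|_{Z_{h,\pm}^{s,T}}\le\left\|\nabla_h(u_h^{\pm}v_h^{\pm})\right\|_{Z_{h,\pm}^{s}}\ls T^{\frac16-\delta-\epsilon}\|u_h^{\pm}\|_{Y_{h,\pm}^{s,T}}\|v_h^{\pm}\|_{Y_{h,\pm}^{s,T}},
\]
and letting $\delta+\epsilon\to0^+$ yields the first estimate; the second and third follow in the same way from the refinements of Lemmas~\ref{lem:bilinear2} and \ref{lem:bilinear3}, with the bilinear inputs taken in $X^{s',\cdot}_{h,\pm}$ (so the right-hand sides carry $\|\cdot\|_{Y^{s',T}_{h,\pm}}$) and with the factors $h^{s'-s}$, $h^{\frac12+s'-s}$. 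All implicit constants remain $h$-independent because this is already the case in Lemmas~\ref{lem:bilinear1}--\ref{lem:bilinear3} and in \eqref{ineq:time localization}.

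I expect the main obstacle to be establishing \eqref{refined bilinear1} and its two analogues, i.e.\ revisiting the case analysis in the proofs of Lemmas~\ref{lem:bilinear1}--\ref{lem:bilinear3} and checking, case by case, that after using the resonance function together with the lower bounds \eqref{lowboundofmod}, \eqref{lowbound of mod}, \eqref{lowbound of mod2} on the maximal modulation, the full modulation regularity $\tfrac12$ is consumed only by the factor carrying that maximal modulation, so that at least one --- and, when the maximum is attained on the dual variable, both --- of the two bilinear inputs can be demoted to exponent $\tfrac13+\epsilon$ and fed to the $b>\tfrac13$ range of Proposition~\ref{prop:L4}. A secondary, delicate point is the endpoint bookkeeping forced by the strict inequality $b<\tfrac12$ in \eqref{ineq:time localization}: one first replaces $\tfrac12$ by $\tfrac12-\delta$ and absorbs the loss into the arbitrarily small exponent $\delta$; crucially, only the $X^{\cdot,\frac13+\epsilon}$-factor is localized in time, while the $X^{\cdot,\frac12}$-factor is left as it is, so that no power $T^{-1/2}$ coming from a sharp time cutoff at the endpoint $b=\tfrac12$ is ever incurred.
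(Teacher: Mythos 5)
Your argument is correct and follows essentially the same route as the paper: both rest on the observation that the proofs of Lemmas~\ref{lem:bilinear1}--\ref{lem:bilinear3} leave at least $\tfrac16-$ of slack in the modulation exponents (the paper records this only in a footnote, while you make it explicit as the refined estimate \eqref{refined bilinear1}), and both convert that slack into the factor $T^{\frac16-}$ via the time-localization property \eqref{ineq:time localization} applied to near-optimal extensions/cutoffs. The only cosmetic difference is that the paper inserts the cutoff $\eta_T$ on both factors of the restricted-norm extensions, whereas you use the hypothesis that the inputs are already supported in $[0,T]$ to localize only the demoted factor; the substance is identical.
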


\begin{proof}
For given $0 < T  \le 1$, take $\eta\in C_0^\infty$ as a non-negative cut-off function supported on $[-2,2]$ and equals to $1$ in $[-1,1]$ such that 
\[\|\eta_T(t)u_h^{\pm}\|_{Y^s_{h,\pm}} \le 2 \|u_h^{\pm}\|_{Y^{s,T}_{h,\pm}}, \]
where $\eta_T(t) = \eta(\tfrac{t}{T})$. Then, by Lemma \ref{lem:bilinear1}\footnote{In view of the proof, one can find at least $1/6$ more dispersive smoothing effect.} and \eqref{ineq:time localization}, we have
\[\begin{aligned}
\left\| \nabla_h (u_h^{\pm}v_h^{\pm})\right\|_{Z^{s,T}_{h,\pm}} \lesssim&~{} \left\| \nabla_h ((\eta_Tu_h^{\pm}) \cdot (\eta_T v_h^{\pm}))\right\|_{Z^{s}_{h,\pm}}\\
\lesssim&~{} T^{\frac16-}\|\eta_T u_h^{\pm}\|_{X_{h,\pm}^{s,\frac12}}\|\eta_T u_h^{\pm}\|_{X_{h,\pm}^{s, \frac12}}\\
\lesssim&~{} T^{\frac16-}\|u_h^{\pm}\|_{Y_{h,\pm}^{s,T}}\| u_h^{\pm}\|_{Y_{h,\pm}^{s,T}}.
\end{aligned}\]
The rest follow analogously.
\end{proof}

\subsection{Uniform bounds for coupled and decoupled FPU}

\begin{proposition}[Uniform bounds for coupled and decoupled FPU]\label{prop:uniform bound}
Let $s\ge0$. For given initial data
\[   (u_{h,0}^+, u_{h,0}^-) \in \mathbb H^s(\T_h) \quad \mbox{with} \quad h\sum_{x \in \T_h} u_{h,0}^{\pm} = 0,\]
there exist $T=T(\| (u_{h,0}^+, u_{h,0}^-)\|_{\mathbb H^s(\T_h)})$ independent of $h\in(0,1]$ and a unique solution $(u_h^+,u_h^-)$ to the coupled FPU \eqref{coupled FPU'} ( resp., $(v_h^+,v_h^-)$ to the decoupled FPU \eqref{decoupled FPU'} ) with 
\[\begin{aligned}
  &\left\| u_h^{\pm}(t)\right\|_{Y_{h,\pm}^{s,T}}
  \ls \| (u_{h,0}^+, u_{h,0}^-)\|_{\mathbb H^s(\T_h)}, \\ 
  &\left( \text{resp.,} \left\| v_h^{\pm}(t)\right\|_{Y_{h,\pm}^{s,T}}
  \ls \| (u_{h,0}^+, u_{h,0}^-)\|_{\mathbb H^s(\T_h)} \right).
\end{aligned}\]
\end{proposition}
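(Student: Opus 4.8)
The plan is to run a standard contraction mapping argument for the Duhamel map attached to \eqref{coupled FPU'} (resp.\ \eqref{decoupled FPU'}) in the product space $Y_{h,+}^{s,T}\times Y_{h,-}^{s,T}$, taking full advantage of the fact that \emph{all} constants appearing in the linear estimates of Lemma \ref{Lem:Y} and in the bilinear estimates of Corollary \ref{Cor:Linear Bilinear} are independent of $h$. The $T^{\frac16-}$ gain in Corollary \ref{Cor:Linear Bilinear} is what supplies the smallness needed to close the iteration with an existence time $T$ depending only on $\|(u_{h,0}^+,u_{h,0}^-)\|_{\mathbb H^s(\T_h)}$. I will describe the coupled case; the decoupled case is strictly simpler, obtained by keeping only the first of the three bilinear bounds, and the restriction $s\ge0$ is precisely the range in which those bilinear estimates hold with $s'=s$.

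First I would record that the iteration stays in the mean-zero subspace: since $u_{h,0}^\pm$ has mean zero and $\nabla_h$ annihilates the zero mode (Remark \ref{rem:Mean Conservation_1}), every Duhamel iterate is mean zero, so the constraint $k_1k_2k_3\neq0$ underlying the bilinear lemmas is met at every step. Next, define $\Phi=(\Phi^+,\Phi^-)$ by the right-hand sides of \eqref{coupled FPU'}, and expand the quadratic nonlinearity as
\[\big(u_h^\pm + e^{\pm\frac{2t'}{h^2}\partial_h}u_h^\mp\big)^2 = (u_h^\pm)^2 + 2\,u_h^\pm\, e^{\pm\frac{2t'}{h^2}\partial_h}u_h^\mp + \big(e^{\pm\frac{2t'}{h^2}\partial_h}u_h^\mp\big)^2.\]
Applying Lemma \ref{Lem:Y}(2) to the linear term, the inhomogeneous estimate Lemma \ref{Lem:Y}(3) to the Duhamel term, and the three estimates of Corollary \ref{Cor:Linear Bilinear} with $s'=s$ (so that the powers $h^{s'-s}=1$ and $h^{\frac12+s'-s}=h^{\frac12}\le1$ are harmless), I expect to obtain
\[\|\Phi^\pm(u_h^+,u_h^-)\|_{Y_{h,\pm}^{s,T}} \le C\,\|(u_{h,0}^+,u_{h,0}^-)\|_{\mathbb H^s(\T_h)} + C\,T^{\frac16-}\big(\|u_h^+\|_{Y_{h,+}^{s,T}}+\|u_h^-\|_{Y_{h,-}^{s,T}}\big)^2,\]
and, by bilinearity of the nonlinearity, a parallel Lipschitz estimate for $\Phi^\pm(u_{1,h}^+,u_{1,h}^-)-\Phi^\pm(u_{2,h}^+,u_{2,h}^-)$, all with $C$ independent of $h\in(0,1]$.

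With these two estimates in hand, I would fix the radius $\rho = 2C\,\|(u_{h,0}^+,u_{h,0}^-)\|_{\mathbb H^s(\T_h)}$ and then choose $T=T(\|(u_{h,0}^+,u_{h,0}^-)\|_{\mathbb H^s(\T_h)})$ small enough that $C\,T^{\frac16-}\rho<\tfrac12$; since this threshold depends only on the data size and on the universal constants, $T$ is manifestly independent of $h$. Then $\Phi$ maps the closed ball of radius $\rho$ in $Y_{h,+}^{s,T}\times Y_{h,-}^{s,T}$ into itself and is a contraction there, and the Banach fixed point theorem produces the unique solution, which by construction satisfies $\|u_h^\pm\|_{Y_{h,\pm}^{s,T}}\lesssim\|(u_{h,0}^+,u_{h,0}^-)\|_{\mathbb H^s(\T_h)}$. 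Uniqueness in $C([-T,T]:\mathbb H^0(\T_h))$, hence agreement with the (shorter-lived, $h$-dependent) solution from Proposition \ref{prop:LWP}, follows from the embedding \eqref{Embedding Y} and the usual $L^2$-level uniqueness argument; the decoupled system is handled verbatim using only the first bilinear estimate of Corollary \ref{Cor:Linear Bilinear}. I do not expect a genuine obstacle at this stage: the substantive work — the $L^4$-Strichartz estimate (Proposition \ref{prop:L4}) and the three bilinear estimates with $h$-uniform constants (Lemmas \ref{lem:bilinear1}--\ref{lem:bilinear3} and Corollary \ref{Cor:Linear Bilinear}) — has already been carried out, so the only points demanding care are checking that every invoked estimate is genuinely $h$-uniform and that the mean-zero structure is preserved at each iteration so that the bilinear lemmas remain applicable.
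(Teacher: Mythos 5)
Your proposal is correct and follows essentially the same route as the paper: a contraction mapping argument for the Duhamel map in $Y_{h,+}^{s,T}\times Y_{h,-}^{s,T}$, using the $h$-uniform linear estimates of Lemma \ref{Lem:Y} and the three bilinear estimates of Corollary \ref{Cor:Linear Bilinear} with $s'=s$, with the factor $T^{\frac16-}$ supplying the smallness that makes $T$ depend only on the data size. Your explicit attention to the propagation of the mean-zero condition through the iteration is a point the paper handles by reference to Remark \ref{rem:Mean Conservation_3} rather than inside the proof itself, but the arguments are otherwise the same.
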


\begin{proof}
For sufficiently small $0<T\ll1$ to be chosen later, consider the nonlinear map $\Phi=(\Phi^+,\Phi^-)$ by 
\[\begin{aligned}
  \Phi^{\pm}(u_h^+,u_h^-)
  &:=S_h^\pm(t)u_{h,0}^\pm \\ 
  &\quad \mp\frac14 \int_0^t S_h^\pm(t-t') \nabla_h \Big(u_h^\pm(t')+e^{\pm\frac{2t'}{h^2}\partial_h}u_h^\mp(t')\Big)^2 dt'.
\end{aligned}\]
Let $\mathbb Y_h^s$ (analogously define $\mathbb Y^{s,T}_h$ as a time localized version) denote the solution space for $(u_h^+,u_h^-)$ equipped with the norm
\[ \|(u_h^+,u_h^-)\|_{\mathbb Y_h^s}^2:= \|u_h^+\|_{Y_{h,+}^s}^2 +\|u_h^-\|_{Y_{h,-}^s}^2.\]
By Lemma~\ref{Lem:Y} and Corollary~\ref{Cor:Linear Bilinear}, we have 
\[\left\| \Phi(u_h^+,u_h^-) \right\|_{\mathbb Y_{h,\pm}^{s,T}}  \le C\|(u_{h,0}^+,u_{h,0}^-)\|_{\mathbb H_h^s(\T_h)} + C'T^{\frac16-}\|(u_h^+,u_h^-)\|_{\mathbb Y_h^{s,T}}^2.\]
We, here, emphasize that the constants $C$ and $C'$ are independent of $h>0$. By taking $0 < T < 1$ such that 
\begin{equation}\label{eq:time choice}
CC'T^{\frac16-}\|(u_{h,0}^+,u_{h,0}^-)\|_{\mathbb H_h^s(\T_h)} < \frac14,
\end{equation}
one verifies that  $\Phi$ maps from the set 
\[\mathcal Y=\left\{ (u_h^+,u_h^-)\in \mathbb Y_h^{s,T} : \|(u_h^{+},u_h^{-})\|_{\mathbb Y_{h}^{s,T}} \le 2 C\|(u_{h,0}^+,u_{h,0}^-)\|_{\mathbb H_h^s(\T_h)} \right\}\]
to itself. The difference of two solutions can be treated similarly. Let $\mathcal N^{\pm}(u_h^+,u_h^-)(t)$ denote 
\[\mathcal{N}^{\pm}(u_h^+,u_h^-)(t)=\nabla_h \bigg\{\Big(u_h^\pm(t)\Big)^2+\Big( e^{\pm\frac{2t}{h^2}\partial_h}u_h^\mp(t)\Big)^2+2u_h^\pm(t)e^{\pm\frac{2t}{h^2}\partial_h}u_h^\mp(t)\bigg\}.\]
For $(u_h^+,u_h^-),(w_h^+,w_h^-) \in \mathcal Y$, we write
\[\begin{aligned}
  &\mathcal{N}^{\pm}(u_h^+,u_h^-)-\mathcal{N}^{\pm}(w_h^+,w_h^-) \\ 
  &=\nabla_h\Big\{ (u_h^{\pm}(t)+w_h^{\pm})(u_h^{\pm}(t)-w_h^{\pm})\Big\}   
  + \nabla_h\Big\{ \Big( e^{\pm\frac{2t}{h^2}\partial_h}(u_h^\mp(t)+w_h^\mp(t)) \Big) \Big( e^{\pm\frac{2t}{h^2}\partial_h}(u_h^\mp(t)-w_h^\mp(t)) \Big) \Big\} \\ 
  &\quad +\nabla_h\Big\{ \Big(u_h^\pm(t)-w_h^\pm(t)\Big) e^{\pm\frac{2t}{h^2}\partial_h}u_h^\mp(t)  \Big\}
  +\nabla_h\Big\{ u_h^\pm(t) e^{\pm\frac{2t}{h^2}\partial_h}\big( u_h^\mp(t)-w_h^\mp(t)\big)  \Big\},
\end{aligned}\]
which guarantees 
\[\begin{aligned}
&\le C'T^{\frac16-}(\|(u_h^+,u_h^-)\|_{\mathbb Y_h^s} + \|(w_h^+,w_h^-)\|_{\mathbb Y_h^s})\|(u_h^+,u_h^-)-(w_h^+,w_h^-)\|_{\mathbb Y_{h}^{s,T}}  \\ 
&<\frac12\|(u_h^{+},u_h^{-})-(w_h^{+},w_h^{-})\|_{\mathbb Y_{h}^{s,T}},
\end{aligned}\]
by taking the same $T > 0$ as in \eqref{eq:time choice}. Thus, we conclude that $\Phi$ is contractive on $\mathcal Y$. $(u_h^{+},u_h^{-})$ is a solution to \eqref{coupled FPU'} which, by uniqueness, coincides in $C_t([-T,T]:H^s(\T_h))$ with the solution constructed in Proposition~\ref{prop:LWP}.
\end{proof}

As a direct consequence of \eqref{Embedding Y} and proposition~\ref{prop:L4}, we have
\begin{corollary}[Uniform bounds for FPU solutions]\label{Cor:Uniform bounds}
Let $s\ge0$.  For given initial data
\[   (u_{h,0}^+, u_{h,0}^-) \in \mathbb H^s(\T_h) \quad \mbox{with} \quad h\sum_{x \in \T_h} u_{h,0}^{\pm} = 0,\]
Let $(u_h^+(t),u_h^-(t))$  (resp., $(v_h^+(t),v_h^-(t))$) be the solutions to the coupled FPU \eqref{coupled FPU'} (resp., to the decoupled FPU \eqref{decoupled FPU'}) given in Proposition~\ref{prop:uniform bound}. Then,
\[\begin{aligned}
  &~{}\left\| u_h^{\pm}(t) \right\|_{C([-T,T]:H^s(\T_h))} \ls \| u_{h,0}^{\pm}  \|_{H^s(\T_h)}, \qquad \| u_{h}^{\pm}\|_{L^4([0,T] \times \T_h)} \ls \|u_{h,0}^{\pm}\|_{H^s(\T_h)}, \\
  &~{}\left\| v_h^{\pm}(t) \right\|_{C([-T,T]:H^s(\T_h))} \ls \| u_{h,0}^{\pm}  \|_{H^s(\T_h)}, \qquad \| v_{h}^{\pm}\|_{L^4([0,T] \times \T_h)} \ls \|u_{h,0}^{\pm}\|_{H^s(\T_h)}. 
\end{aligned}\]
\end{corollary}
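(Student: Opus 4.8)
The plan is to read off Corollary \ref{Cor:Uniform bounds} directly from Proposition \ref{prop:uniform bound} by combining it with the two embeddings already in hand: the time-continuity estimate \eqref{Embedding Y} of Lemma \ref{Lem:Y} and the $L^4$-Strichartz estimate of Proposition \ref{prop:L4}. The only mild technicality is that $\|\cdot\|_{Y^{s,T}_{h,\pm}}$ is an infimum over extensions, so one first passes to a global-in-time extension before invoking the global estimates.

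Concretely, I would fix the time $T=T(\|(u_{h,0}^+,u_{h,0}^-)\|_{\mathbb H^s(\T_h)})$ and the solution $(u_h^+,u_h^-)$ (resp. $(v_h^+,v_h^-)$) supplied by Proposition \ref{prop:uniform bound}, so that $\|u_h^{\pm}\|_{Y^{s,T}_{h,\pm}}\lesssim \|(u_{h,0}^+,u_{h,0}^-)\|_{\mathbb H^s(\T_h)}$ with an $h$-independent constant, and then choose extensions $\widetilde u_h^{\pm}$ on $\R\times\T_h$ that coincide with $u_h^{\pm}$ on $[-T,T]\times\T_h$ and satisfy $\|\widetilde u_h^{\pm}\|_{Y^s_{h,\pm}}\le 2\|u_h^{\pm}\|_{Y^{s,T}_{h,\pm}}$. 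The $C_tH^s$-bound then follows from \eqref{Embedding Y} applied to $\widetilde u_h^{\pm}$ and restriction to $[-T,T]$, i.e. $\|u_h^{\pm}\|_{C([-T,T]:H^s(\T_h))}\le \|\widetilde u_h^{\pm}\|_{C_tH^s_x}\lesssim \|\widetilde u_h^{\pm}\|_{Y^s_{h,\pm}}$. For the $L^4$-bound I would first invoke, for $s\ge0$, the nesting part of Lemma \ref{lem:properties} to get $\|\widetilde u_h^{\pm}\|_{X^{0,1/2}_{h,\pm}}\le\|\widetilde u_h^{\pm}\|_{X^{s,1/2}_{h,\pm}}\le\|\widetilde u_h^{\pm}\|_{Y^s_{h,\pm}}$, then apply Proposition \ref{prop:L4} with $b=\tfrac12>\tfrac13$, and finally restrict the space-time integration from $\R\times\T_h$ to $[0,T]\times\T_h$, which only decreases the $L^4$-norm. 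Both estimates for $v_h^{\pm}$ are identical, using the decoupled part of Proposition \ref{prop:uniform bound}.

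Since the implicit constants in Proposition \ref{prop:uniform bound}, \eqref{Embedding Y}, Lemma \ref{lem:properties}, and Proposition \ref{prop:L4} are all independent of $h$, the resulting bounds are uniform in $h\in(0,1]$, as claimed. There is no substantive obstacle here; the two cosmetic points worth a remark are (i) the passage to a global extension just mentioned, which is needed precisely because $Y^{s,T}_{h,\pm}$ is a restriction space, and (ii) that for the coupled system the right-hand side is most naturally the full paired norm $\|(u_{h,0}^+,u_{h,0}^-)\|_{\mathbb H^s(\T_h)}$, whereas for the decoupled system \eqref{decoupled FPU'} the $+$ and $-$ components genuinely separate, so there the bound by the single component $\|u_{h,0}^{\pm}\|_{H^s(\T_h)}$ is the correct and natural one.
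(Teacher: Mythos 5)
Your proposal is correct and follows essentially the same route as the paper, which derives the corollary directly from the $Y^{s,T}_{h,\pm}$ bounds of Proposition \ref{prop:uniform bound} combined with the embedding \eqref{Embedding Y} and the $L^4$-Strichartz estimate of Proposition \ref{prop:L4}. Your additional remarks on passing to a global extension and on the nesting step $X^{s,\frac12}_{h,\pm}\subset X^{0,b}_{h,\pm}$ merely make explicit what the paper leaves implicit.
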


\subsection{Decoupling the FPU system: Proof of Proposition~\ref{Prop:Coupled to Decoupled}}

\begin{proof}[Proof of Proposition~\ref{Prop:Coupled to Decoupled}]
Let $T=T(\|(u_{h,0}^+,u_{h,0}^-)\|_{\mathbb H^s(\T_h)})>0$ be the common lifespan of solutions $(u_h^+(t),u_h^-(t))$ to the coupled FPU and $(v_h^+(t),v_h^-(t))$ to the decoupled FPU with initial data $(u_{h,0}^+,u_{h,0}^-)$ constructed in Proposition~\ref{prop:uniform bound}. Moreover, the solutions are uniformly (in $h$) bounded by the size of initial data
\begin{equation}\label{uniform bounds of sol}
 \|(u_h^{+},u_h^{-})\|_{\mathbb Y_{h}^{s,T}}, \|(v_h^{+},v_h^{-})\|_{\mathbb Y_{h}^{s,T}} \le 2 C\|(u_{h,0}^+,u_{h,0}^-)\|_{\mathbb H^s(\T_h)}.
\end{equation}
By subtracting \eqref{coupled FPU'} from \eqref{decoupled FPU'}, we obtain 
\[\begin{aligned}
u_h^{\pm}(t) &- v_h^{\pm}(t)
=\mp\int_0^t S_h^{\pm}(t-t')\nabla_h \left\{ (u_{h}^{\pm}(t')+v_h^{\pm}(t'))(u_{h}^{\pm}(t')-v_h^{\pm}(t')) \right\}dt'  \\ 
&\mp\int_0^t S_h^{\pm}(t-t')\nabla_h \bigg\{ \Big( e^{\pm\frac{2t'}{h^2}\partial_h}v_h^\mp(t')\Big)^2+2v_h^\pm(t')e^{\pm\frac{2t'}{h^2}\partial_h}v_h^\mp(t')\bigg \} dt'.
\end{aligned}\] 
By taking $Y_{h,\pm}^{0,T}$ norm and then applying Corollary~\ref{Cor:Linear Bilinear} to the quadratic terms, we estimate 
\[\begin{aligned}
  \|u_h^{\pm} - v_h^{\pm}\|_{Y_{h,\pm}^{0,T}}
  &\le C'T^{\frac16-}\Big(\|u_h^{+}\|_{Y_{h,+}^{0,T}} 
  + \|u_h^{-}\|_{Y_{h,-}^{0,T}} \Big) \|u_h^{\pm} - v_h^{\pm}\|_{Y_{h,\pm}^{0,T}} \\ 
  &\quad +C'T^{\frac16-}h^{\frac12+s}
  \|v_{h}^{\mp}\|_{Y_{h,\mp}^{s,T}}^2
  +C'T^{\frac16-}h^{s}
  \|v_{h}^{\pm}\|_{Y_{h,\pm}^{s,T}}
  \|v_{h}^{\mp}\|_{Y_{h,\mp}^{s,T}},
\end{aligned}\]
for some $C'>0$ uniform in $h\in(0,1]$. 
By shrinking the time interval, if necessary, such that 
$$CC'T^{\frac16-}\|(u_{h,0}^+,u_{h,0}^-)\|_{\mathbb H^s(\T_h)}<\frac18,$$
we obtain from the uniform bounds of solutions \eqref{uniform bounds of sol} that 
\[  \|u_h^{\pm} - v_h^{\pm}\|_{Y_{h,\pm}^{0,T}}
  \ls h^s \|(u_{h,0}^+,u_{h,0}^-)\|_{\mathbb H^s(\T_h)}^2.\]
Then, the embedding $Y_{h,\pm}^{0,T}\hookrightarrow C_t([-T,T]:L^2(\T_h))$ gives the desired results.

\end{proof}

\section{Reformulations : Normal form reduction method}\label{sec:regularization}
Due to the lack of smoothing effect, it  cannot be shown directly that the decoupled system \eqref{decoupled FPU'} converges to the KdV system \eqref{kdv integral form} similarly as in \cite{HKY2021}. Thus, we reformulate the decoupled FPU \eqref{decoupled FPU'} using the argument in \cite{Babin2011} as well as the corresponding KdV system \eqref{kdv integral form}. 

By definitions of the Fourier transforms \eqref{Fourier transform} and their inversion formula \eqref{Inverse Fourier transform}, the constant $\displaystyle \frac{1}{\sqrt{2\pi}}$ appears when taking the Fourier transform to $(\mathcal{V}_h^{\pm})^2$ and $(\mathcal{W}^{\pm})^2$, see \eqref{FT of product} and \eqref{FT of product-conti}. Throughout this section, we drop the constant $\displaystyle \frac{1}{\sqrt{2\pi}}$ from \eqref{FT of product} and \eqref{FT of product-conti} for the sake of convenience. 
\subsection{Regularizing the FPU system via the Normal form approach}
Let us recall the decoupled FPU \eqref{decoupled FPU'} on $\T_h$ with initial data $u_{h,0}^{\pm}$ as
\[ v_h^{\pm}(t) = S_h^{\pm}(t) u_{h,0}^{\pm} \mp \frac14 \int_0^t S_h^{\pm}(t-t') \nabla_h\left(v_h^{\pm}(t')\right)^2 dt', \]
where $S_h^\pm(t)=e^{\mp \frac{t}{h^2}(\nabla_h-\partial_h)}$. The corresponding KdV system \eqref{kdv integral form} on $\T$ with initial data $\mathcal{L}_hu_{h,0}^{\pm}$ is given by
\[  w^{\pm}(t) = S^{\pm}(t)(\mathcal{L}_h u_{h,0}^{\pm} ) \mp \frac14 \int_0^t S^{\pm}(t-t') \partial_x\left(w^{\pm}(t')\right)^2 dt',\]
where $S^{\pm}(t)=e^{\mp\frac{t}{24}\partial_x^3 }$. Note from Lemma~\ref{Lem:discretization linearization inequality} and Lemma~\ref{Lem:mean} that  
 \[ \|\mathcal{L}_h u_{h,0}^{\pm}\|_{H^s(\T)} \ls \|u_{h,0}^{\pm}\|_{H^s(\T_h)}, \; \text{ and } \; \int_{\T} \mathcal{L}_h u_{h,0}^{\pm}(x) \; dx= 0.\]
Let us define a profile $\mathcal{V}_h^{\pm}$ by 
\begin{equation}\label{eq:renormalization}
  \widehat{\mathcal{V}_h^{\pm} }(t,k)=e^{\pm it\frac{1}{h^2}\left(\frac{2}{h}\sin(\frac{hk}{2})-k\right)}\widehat{v}_h^{\pm}(t,k), \quad \text{ equivalently, } \quad \mathcal{V}^{\pm}_h(t)=S_h^{\pm}(-t)v_h^{\pm}(t).
\end{equation}
Note that 
\[  \widehat{\mathcal{V}_h^{\pm}}(t,0) 
  =\widehat{v_h^{\pm}}(t,0) 
  =h\sum_{x\in\T_h}v_h^{\pm}(t,x)=0 \; \text{ for all } t.\]
Then, $\mathcal{V}_h^{\pm}$ (indeed, $\widehat{\mathcal{V}_h^{\pm}}$) solves the differential equation
\begin{equation}\label{time derivative of v}
  \partial_t   \widehat{\mathcal{V}_h^{\pm}}(t,k) =  \mp \frac14 \frac{2i}{h}\sin\left(\frac{hk}{2}\right) \sum_{\substack{k=k_1+k_2 \\ k_1k_2\neq0}}  e^{\pm it \Phi_h^2(k,k_1,k_2)}   \widehat{\mathcal{V}_h^{\pm}}(t,k_1)   \widehat{\mathcal{V}_h^{\pm}}(t,k_2)=:\mathcal F_h \textup{B}_1^{\pm}(\mathcal{V}_h^{\pm},\mathcal{V}_h^{\pm})(t,k),
\end{equation}
or the integral equation
\begin{equation}\label{eq:FT of V}
\begin{aligned}
  \widehat{\mathcal{V}_h^{\pm}}(t,k) &=  \widehat{u_{h,0}^{\pm}}(k) \mp  \frac14 \frac{2i}{h}\sin\left(\frac{hk}{2}\right) \int_0^t \sum_{ \substack{k=k_1+k_2 \\ k_1k_2\neq0}}  e^{\pm it' \Phi_h^2(k,k_1,k_2) }   \widehat{\mathcal{V}_h^{\pm}}(t',k_1)  \widehat{\mathcal{V}_h^{\pm}}(t',k_2) \; dt',
\end{aligned}
\end{equation}
where
\begin{equation}\label{eq:Resonance_2}
\begin{aligned}
\Phi_h^2(k,k_1,k_2) =&~{}\frac{1}{h^2}\left\{\left(\frac{2}{h}\sin\left(\frac{hk}{2}\right)-k\right) -\left(\frac{2}{h}\sin\left(\frac{hk_1}{2}\right)-k_1\right) -\left(\frac{2}{h}\sin\left(\frac{hk_2}{2}\right)-k_2\right)\right\}\\
=&~{} -\frac{8}{h^3}\sin\left(\frac{hk_1}{4}\right)\sin\left(\frac{hk_2}{4}\right) \sin\left(\frac{hk}{4}\right).
\end{aligned}
\end{equation}
Note that $\Phi_h^2$ is symmetric over the variables. Let us define a bilinear form
\[\mathcal F_h \textup{B}_2^{\pm}(\mathcal{V}_h^{\pm},\mathcal{V}_h^{\pm})(t,k) = \sum_{\substack{k=k_1+k_2 \\ k_1k_2 \neq 0}}\frac{\frac{2i}{h}\sin\left(\frac{hk}{2}\right)}{i\Phi_h^2(k,k_1,k_2)}e^{\pm it\Phi_h^2(k,k_1,k_2)}\wh{\mathcal{V}_h^{\pm}}(t,k_1)\wh{\mathcal{V}_h^{\pm}}(t,k_2).\]
Then, by symmetry, a direct computation gives
\[\begin{aligned}
\widehat{\mathcal{V}_h^{\pm}}(t,k) =&~{}  \widehat{u_{h,0}^{\pm}}(k) -\frac14\mathcal F_h\textup{B}_2^{\pm}(\mathcal{V}_h^{\pm},\mathcal{V}_h^{\pm})(t,k) + \frac14\mathcal F_h\textup{B}_2^{\pm}(\mathcal{V}_h^{\pm},\mathcal{V}_h^{\pm})(0,k) \\
&~{}+ \frac12\int_0^t \mathcal F_h\textup{B}_2^{\pm}(\mathcal{V}_h^{\pm}, \partial_t \mathcal{V}_h^{\pm})(t',k) \; dt'.
\end{aligned}\]
Using \eqref{time derivative of v}  and \eqref{eq:Resonance_2}, one has  
\[\begin{aligned}
&~{}\int_0^t\mathcal{F}_h\textup{B}_2^{\pm}(\mathcal{V}_h^{\pm},\partial_t\mathcal{V}_h^{\pm})(t',k) \; dt'\\
=&~{} \int_0^t \sum_{\substack{k=k_1+k_2 \\ k_1k_2 \neq 0}}\frac{\frac{2i}{h}\sin\left(\frac{hk}{2}\right)}{i\Phi_h^2(k,k_1,k_2)}e^{\pm it'\Phi_h^2(k,k_1,k_2)}\widehat{\mathcal{V}_h^{\pm}}(t',k_1)\partial_t\widehat{\mathcal{V}_h^{\pm}}(t',k_2) \; dt'\\
=&~{}\mp \frac14 \int_0^t \sum_{\substack{k=k_1+k_2 \\ k_1k_2 \neq 0}}\frac{\frac{2i}{h}\sin\left(\frac{hk}{2}\right)}{i\Phi_h^2(k,k_1,k_2)}e^{\pm it'\Phi_h^2(k,k_1,k_2)}\widehat{\mathcal{V}_h^{\pm}}(t',k_1)\\
&~{} \hspace{3em}\times \frac{2i}{h}\sin\left(\frac{hk_2}{2}\right) \sum_{\substack{k_2=k_{21}+k_{22} \\ k_{21}k_{22}\neq0}}  e^{\pm it' \Phi_h^2(k_2,k_{21},k_{22})}   \widehat{\mathcal{V}_h^{\pm}}(t',k_{21})   \widehat{\mathcal{V}_h^{\pm}}(t',k_{22}) \; dt' \\
=&~{}\int_0^t \sum_{\substack{k=k_1+k_2+k_3\\k_1k_2k_3 \neq 0 \\ k_2 + k_3 \neq 0}} \frac{\pm 2i \cos\left(\frac{hk}{4}\right)\cos\left(\frac{h(k_2+k_3)}{4}\right)}{\frac4h\sin\left(\frac{hk_1}{4}\right)}e^{\pm it' \Phi_h^3(k,k_1,k_2,k_3)}\widehat{\mathcal{V}_h^{\pm}}(t',k_1)\widehat{\mathcal{V}_h^{\pm}}(t',k_2)\widehat{\mathcal{V}_h^{\pm}}(t',k_3)\;dt'\\
=:&~{} \int_0^t \mathcal{F}_h \mathcal{N}_{h,3}^{\pm}(\mathcal{V}_h^{\pm},\mathcal{V}_h^{\pm},\mathcal{V}_h^{\pm})(t',k) \; dt',
\end{aligned}\]
where 
\begin{equation}\label{eq:resonance_3}
\begin{aligned}
\Phi_h^3(k,k_1,k_2,k_3) :=&~{} \Phi_h^2(k,k_1,k_2+k_3) + \Phi_h^2(k_2+k_3,k_2,k_3)\\
=&~{} -\frac{8}{h^3}\sin\left(\frac{h(k_1+k_2)}{4}\right)\sin\left(\frac{h(k_2+k_3)}{4}\right) \sin\left(\frac{h(k_3+k_1)}{4}\right).
\end{aligned}
\end{equation}
Note that $\Phi_h^3$ is symmetric over the variables. We further decompose the summation in $\mathcal{F}_h\mathcal{N}_{h,3}^{\pm}(\mathcal{V}_h^{\pm},\mathcal{V}_h^{\pm},\mathcal{V}_h^{\pm})$ into two parts:
\[\sum_{\substack{k=k_1+k_2+k_3\\k_1k_2k_3 \neq 0 \\ k_2 + k_3 \neq 0}} = \sum_{\substack{k=k_1+k_2+k_3,  \;k_1k_2k_3\neq0 \\ k_2+k_3\neq0 ,  \; (k_1+k_2)(k_1+k_3)=0}} \; + \; \sum_{\substack{k=k_1+k_2+k_3,  \;k_1k_2k_3\neq0 \\  (k_1+k_2)(k_2+k_3)(k_1+k_3)\neq0 }},\]
and their corresponding parts of $\mathcal{F}_h\mathcal{N}_{h,3}^{\pm}(\mathcal{V}_h^{\pm},\mathcal{V}_h^{\pm},\mathcal{V}_h^{\pm})$ are respectively denoted by $(\mathcal{F}_h\mathcal{N}_{h,3}^{\pm})_\mathcal{R}$ and $(\mathcal{F}_h\mathcal{N}_{h,3}^{\pm})_\mathcal{NR}$, thus, we write
\begin{equation}\label{decomposition of B}
  \mathcal{F}_h\mathcal{N}_{h,3}^{\pm}(\mathcal{V}_h^{\pm},\mathcal{V}_h^{\pm},\mathcal{V}_h^{\pm})(t',k) = (\mathcal{F}_h\mathcal{N}_{h,3}^{\pm})_\mathcal{R}(\mathcal{V}_h^{\pm},\mathcal{V}_h^{\pm},\mathcal{V}_h^{\pm})(t',k)
  +(\mathcal{F}_h\mathcal{N}_{h,3}^{\pm})_{\mathcal{NR}}(\mathcal{V}_h^{\pm},\mathcal{V}_h^{\pm},\mathcal{V}_h^{\pm})(t',k).
\end{equation}
Here, by the subscript $\mathcal{R}$ and $\mathcal{NR}$, we mean the resonant and nonresonant terms, respectively.

First, consider the resonant term $(\mathcal{F}_h\mathcal{N}_{h,3}^{\pm})_\mathcal{R}$. The set of frequencies under
\[k_2+k_3\neq0, (k_1+k_2)(k_1+k_3)=0\]
can be divided into the following three sets
\[\begin{aligned}
  \mathcal{R}_{h,1} & =\{ k_1+k_2=0 \} \cap \{ k_1+k_3=0 \} \cap \{ k_2+k_3\neq 0 \}, \\ 
  \mathcal{R}_{h,2} & =\{ k_1+k_2=0 \} \cap \{ k_1+k_3\neq 0 \} \cap \{ k_2+k_3\neq 0 \}, \\ 
  \mathcal{R}_{h,3}& =\{ k_1+k_2\neq 0 \} \cap \{ k_1+k_3=0 \} \cap \{ k_2+k_3\neq 0 \}.
\end{aligned}\]

Over $\mathcal{R}_{h,1}$, we know $k_1=-k_2=-k_3$ and $k=-k_1$, which assure 
\[(\mathcal{F}_h\mathcal{N}_{h,3}^{\pm})_{\mathcal{R}}(\mathcal{V}_h^{\pm},\mathcal{V}_h^{\pm},\mathcal{V}_h^{\pm})(t,k)|_{\mathcal R_{h,1}} = \mp2i \frac{\cos\left(\frac{hk}{2}\right)\cos\left(\frac{hk}{4}\right)}{\frac{4}{h}\sin\left(\frac{hk}{4}\right)}\left|\widehat{\mathcal{V}_h^{\pm}}(t,k)\right|^2\widehat{\mathcal{V}_h^{\pm}}(t,k),\]
due to $\displaystyle \widehat{\mathcal{V}_h^{\pm}}(t,-k) = \overline{\widehat{\mathcal{V}_h^{\pm}}(t,k)}$, where $\overline{z}$, $z \in \C$, is the complex conjugate of $z$. Here $f|_A$ denotes the restriction of $f$ on the set $A$.

Under the symmetry over frequencies, we know
\[(\mathcal{F}_h\mathcal{N}_{h,3}^{\pm})_\mathcal{R}(\mathcal{V}_h^{\pm},\mathcal{V}_h^{\pm},\mathcal{V}_h^{\pm})(t,k)|_{\mathcal{R}_{h,2}} = (\mathcal{F}_h\mathcal{N}_{h,3}^{\pm})_\mathcal{R}(\mathcal{V}_h^{\pm},\mathcal{V}_h^{\pm},\mathcal{V}_h^{\pm})(t,k)|_{\mathcal{R}_{h,3}}.\]
Over $\mathcal{R}_{h,2}$, we know $k_1=-k_2\neq \pm k_3$ and $k=k_3$, which assure 
\[(\mathcal{F}_h\mathcal{N}_{h,3}^{\pm})_\mathcal{R}(\mathcal{V}_h^{\pm},\mathcal{V}_h^{\pm},\mathcal{V}_h^{\pm})(t,k)|_{\mathcal{R}_{h,2}} = \pm 2i \cos\left(\frac{hk}{4}\right)\widehat{\mathcal{V}_h^{\pm}}(t,k)\sum_{\substack{k_1 \in (\T_h)^* \\ k_1 \neq \pm k}} \frac{\cos\left(\frac{h(k-k_1)}{4}\right)}{\frac{4}{h}\sin\left(\frac{hk_1}{4}\right)}|\widehat{\mathcal{V}_h^{\pm}}(t,k_1)|^2.\]
From the angle difference identity for the cosine function, and symmetry and anti-symmetry for cosine and sine functions, respectively, we have
\[\begin{aligned}
&~{}\sum_{\substack{k_1 \in (\T_h)^* \\ k_1 \neq \pm k}} \frac{\cos\left(\frac{h(k-k_1)}{4}\right)}{\frac{4}{h}\sin\left(\frac{hk_1}{4}\right)}|\widehat{\mathcal{V}_h^{\pm}}(t,k_1)|^2 \\
=&~{} \sum_{\substack{k_1 \in (\T_h)^* \\ k_1 \neq \pm k}} \frac{\cos\left(\frac{hk}{4}\right)\cos\left(\frac{hk_1}{4}\right)+\sin\left(\frac{hk}{4}\right)\sin\left(\frac{hk_1}{4}\right)}{\frac{4}{h}\sin\left(\frac{hk_1}{4}\right)}|\widehat{\mathcal{V}_h^{\pm}}(t,k_1)|^2\\
=&~{} \frac{h}{4}\sin\left(\frac{hk}{4}\right)\sum_{\substack{k_1 \in (\T_h)^* \\ k_1 \neq \pm k}} |\widehat{\mathcal{V}_h^{\pm}}(t,k_1)|^2,
\end{aligned}\]
which concludes
\[(\mathcal{F}_h\mathcal{N}_{h,3}^{\pm})_\mathcal{R}(\mathcal{V}_h^{\pm},\mathcal{V}_h^{\pm},\mathcal{V}_h^{\pm})(t,k)|_{\mathcal{R}_{h,2} \cup \mathcal{R}_{h,3}} =\pm \frac{ih}{2}\sin\left(\frac{hk}{2}\right)\widehat{\mathcal{V}_h^{\pm}}(t,k)\sum_{\substack{k_1 \in (\T_h)^* \\ k_1 \neq \pm k}} |\widehat{\mathcal{V}_h^{\pm}}(t,k_1)|^2.
\]
Collecting all, we obtain 
\[\begin{aligned}
(\mathcal{F}_h\mathcal{N}_{h,3}^{\pm})_\mathcal{R}(\mathcal{V}_h^{\pm},\mathcal{V}_h^{\pm},\mathcal{V}_h^{\pm})(t,k)
  =&~{} \mp2i \frac{\cos\left(\frac{hk}{2}\right)\cos\left(\frac{hk}{4}\right)}{\frac{4}{h}\sin\left(\frac{hk}{4}\right)}\left|\widehat{\mathcal{V}_h^{\pm}}(t,k)\right|^2\widehat{\mathcal{V}_h^{\pm}}(t,k)  \\ 
&~{}\pm \frac{ih}{2}\sin\left(\frac{hk}{2}\right)\widehat{\mathcal{V}_h^{\pm}}(t,k)\sum_{\substack{k_1 \in (\T_h)^* \\ k_1 \neq \pm k}} |\widehat{\mathcal{V}_h^{\pm}}(t,k_1)|^2.
\end{aligned}\]

Now, we consider the nonresonant term $(\mathcal{F}_h\mathcal{N}_{h,3}^{\pm})_{\mathcal{NR}}$ in \eqref{decomposition of B}. Let us define the (cubic) non-resonant set
\begin{equation}\label{eq:A(k)}
  \mathcal{A}(k)=\big\{ (k_1,k_2,k_3)\in ((\T_h)^*)^3: \; k=k_1+k_2+k_3,\; k_1k_2k_3\neq0, \; (k_1+k_2)(k_2+k_3)(k_1+k_3)\neq0\; \big\}
\end{equation}
and a trilinear form
\[\mathcal{F}_h \textup{B}_3^{\pm}(\mathcal{V}_h^{\pm},\mathcal{V}_h^{\pm},\mathcal{V}_h^{\pm})(t,k) := \sum_{(k_1,k_2,k_3)\in\mathcal{A}(k)}
  \frac{2i\cos(\frac{hk}{4})\cos\left(\frac{h(k_2+k_3)}{4}\right)}
  {\frac{4i}{h}\sin(\frac{hk_1}{4})\Phi_h^3(k,k_1,k_2,k_3)} e^{ \pm it\Phi_h^3(k,k_1,k_2,k_3)}\prod_{j=1}^3\widehat{\mathcal{V}_h^{\pm}}(t,k_j).\]
Then, a direct computation under the symmetry on $k_2$ and $k_3$ variables in the multiplier gives 
\[\begin{aligned}
 \int_0^t (\mathcal{F}_h\mathcal{N}_{h,3}^{\pm})_{\mathcal{NR}}(\mathcal{V}_h^{\pm},\mathcal{V}_h^{\pm},\mathcal{V}_h^{\pm})(t',k)dt'
  =&~{}\mathcal{F}_h{\textup{B}_3^{\pm}}(\mathcal{V}_h^{\pm},\mathcal{V}_h^{\pm},\mathcal{V}_h^{\pm})(0,k)
  +\mathcal{F}_h{\textup{B}_3^{\pm}}(\mathcal{V}_h^{\pm},\mathcal{V}_h^{\pm},\mathcal{V}_h^{\pm})(t,k) \\ 
  &~{}- \int_0^t\mathcal{F}_h{\textup{B}_3^{\pm}}(\partial_t\mathcal{V}_h^{\pm},\mathcal{V}_h^{\pm},\mathcal{V}_h^{\pm})(t',k) \; dt'\\
&~{}-2 \int_0^t\mathcal{F}_h{\textup{B}_3^{\pm}}(\mathcal{V}_h^{\pm},\mathcal{V}_h^{\pm},\partial_t\mathcal{V}_h^{\pm})(t',k) \; dt'.
\end{aligned}\]
Using \eqref{time derivative of v} and \eqref{eq:resonance_3}, one has 
\[\begin{aligned}
  \mathcal{F}_h\textup{B}_3^{\pm}(\partial_t\mathcal{V}_h^{\pm},\mathcal{V}_h^{\pm},\mathcal{V}_h^{\pm})(t,k)
   &=\mp\frac{i}{2} \sum_{(k_1,k_2,k_3,k_3)\in\mathcal{B}(k)}
   \frac{\cos\left(\frac{hk}{4}\right)\cos\left(\frac{h(k_1+k_2)}{4}\right)\cos\left(\frac{h(k_3+k_4)}{4}\right)}{\Phi_h^3(k,k_1+k_2,k_3,k_4)} \\   
&~{}\times e^{ \pm it\Phi_h^4(k,k_1,k_2,k_3,k_4) } \widehat{\mathcal{V}_h^{\pm}}(t,k_1) \widehat{\mathcal{V}_h^{\pm}}(t,k_2) \widehat{\mathcal{V}_h^{\pm}}(t,k_3) \widehat{\mathcal{V}_h^{\pm}}(t,k_4),
 \end{aligned}\]
 where the set $\mathcal{B}(k)$ of frequencies is given by
\begin{equation}\label{eq:B(k)}
\mathcal{B}(k)=  \left\{
(k_1,k_2,k_3,k_4) \in ((\T_h)^*)^4 : \begin{array}{l}
k= k_1+k_2+k_3+k_4, \;  k_1k_2k_3k_4 \neq 0,\\
(k_1+k_2)(k_1+k_2+k_3)(k_1+k_2+k_4)(k_3+k_4)\neq0
\end{array} \right\},
\end{equation} 
and the quartic resonant function is given by
\[
\Phi_h^4(k,k_1,k_2,k_3,k_4)= \Phi_h^3(k,k_1+k_2,k_3,k_4) + \Phi_h^2(k_1+k_2,k_1,k_2).\\
\]   
Similarly, we also have (by changing the variables)
\[\begin{aligned}
  \mathcal{F}_h\textup{B}_3^{\pm}(\mathcal{V}_h^{\pm},\mathcal{V}_h^{\pm},\partial_t\mathcal{V}_h^{\pm})(t,k)
   &=\mp \frac{i}{4} \sum_{(k_1,k_2,k_3,k_3)\in \mathcal{B}(k)}
   \frac{\cos\left(\frac{hk}{4}\right)\cos\left(\frac{h(k_1+k_2+k_4)}{4}\right)\sin\left(\frac{h(k_1+k_2)}{2}\right)}{\sin\left(\frac{hk_3}{4}\right)\Phi_h^3(k,k_1+k_2,k_3,k_4)} \\   
&~{}\times e^{ \pm it\Phi_h^4(k,k_1,k_2,k_3,k_4) } \widehat{\mathcal{V}_h^{\pm}}(t,k_1) \widehat{\mathcal{V}_h^{\pm}}(t,k_2) \widehat{\mathcal{V}_h^{\pm}}(t,k_3) \widehat{\mathcal{V}_h^{\pm}}(t,k_4).
 \end{aligned}\]
Thus,
\[\begin{aligned}
&~{}\mathcal{F}_h\textup{B}_3^{\pm}(\partial_t\mathcal{V}_h^{\pm},\mathcal{V}_h^{\pm},\mathcal{V}_h^{\pm})(t,k) + 2\mathcal{F}_h\textup{B}_3^{\pm}(\mathcal{V}_h^{\pm},\mathcal{V}_h^{\pm},\partial_t\mathcal{V}_h^{\pm})(t,k)\\
=&~{}\mp\frac{i}{2} \sum_{(k_1,k_2,k_3,k_3)\in\mathcal{B}(k)} \mathfrak{M}(k,k_1,k_2,k_3,k_4) e^{ \pm it\Phi_h^4(k,k_1,k_2,k_3,k_4) } \widehat{\mathcal{V}_h^{\pm}}(t,k_1) \widehat{\mathcal{V}_h^{\pm}}(t,k_2) \widehat{\mathcal{V}_h^{\pm}}(t,k_3) \widehat{\mathcal{V}_h^{\pm}}(t,k_4)\\
=:&~{}\mathcal{F}_h\textup{B}_4^{\pm}(\mathcal{V}_h^{\pm},\mathcal{V}_h^{\pm},\mathcal{V}_h^{\pm},\mathcal{V}_h^{\pm})(t,k),
\end{aligned}\]
where
\[\mathfrak{M}(k,k_1,k_2,k_3,k_4) = \frac{\cos\left(\frac{hk}{4}\right)\cos\left(\frac{h(k_{1+k_2})}{4}\right)\cos\left(\frac{h(k_3+k_4)}{4}\right)}{\Phi_h^3(k,k_1+k_2,k_3,k_4)} + \frac{\cos\left(\frac{hk}{4}\right)\cos\left(\frac{h(k_1+k_2+k_4)}{4}\right)\sin\left(\frac{h(k_1+k_2)}{2}\right) }{\sin\left(\frac{hk_3}{4}\right)\Phi_h^3(k,k_1+k_2,k_3,k_4)}\]
Collecting all, we conclude that

\begin{equation}\label{eq:renormalized FPU}
\begin{aligned}
\widehat{\mathcal{V}_h^{\pm}}(t,k) =&~{}  \widehat{u_{h,0}^{\pm}}(k)-\frac14\mathcal F_h\textup{B}_2^{\pm}(\mathcal{V}_h^{\pm},\mathcal{V}_h^{\pm})(t,k) + \frac14\mathcal F_h\textup{B}_2^{\pm}(\mathcal{V}_h^{\pm},\mathcal{V}_h^{\pm})(0,k) \\
&~{}+\int_0^t (\mathcal F_h\mathcal{N}_{h,3}^{\pm})_\mathcal{R}(\mathcal{V}_h^{\pm},\mathcal{V}_h^{\pm},\mathcal{V}_h^{\pm})(t',k) \; dt'\\
&~{}+ \frac12\mathcal{F}_h{\textup{B}_3^{\pm}}(\mathcal{V}_h^{\pm},\mathcal{V}_h^{\pm},\mathcal{V}_h^{\pm})(0,k)
  +\frac12\mathcal{F}_h{\textup{B}_3^{\pm}}(\mathcal{V}_h^{\pm},\mathcal{V}_h^{\pm},\mathcal{V}_h^{\pm})(t,k) \\ 
&~{}-\frac12 \int_0^t\mathcal{F}_h\textup{B}_4^{\pm}(\mathcal{V}_h^{\pm},\mathcal{V}_h^{\pm},\mathcal{V}_h^{\pm},\mathcal{V}_h^{\pm})(t',k) \; dt'.
\end{aligned}
\end{equation}

\subsection{Regularizing the KdV equation}
We repeat the argument as in the previous subsection for KdV equation, already done by \cite{Babin2011}, but we briefly arrange the computation for the sake of self-containedness and reader's convenience.

Let us define a profile $\mathcal{W}^{\pm}$ by 
\begin{equation}\label{eq:renormalization of w}
  \widehat{\mathcal{W}^{\pm} }(t,k)=e^{\mp it\frac{k^3}{24}}\widehat{w}^{\pm}(t,k), \quad \text{ equivalently, } \quad \mathcal{W}^{\pm}(t)=S^{\pm}(-t)w^{\pm}(t).
\end{equation}
Note that 
\[\widehat{\mathcal{W}^{\pm}}(t,0) 
  =\widehat{w^{\pm}}(t,0) 
  =\int_{\T} w^{\pm}(t,x) \; dx =0 \; \text{ for all } t.\]
Then, $\mathcal{W}^{\pm}$ (indeed, $\widehat{\mathcal{W}^{\pm}}$) solves the differential equation
\begin{equation}\label{time derivative of w}
  \partial_t   \widehat{\mathcal{W}^{\pm}}(t,k) =  \mp \frac14 (ik) \sum_{\substack{k=k_1+k_2 \\ k_1k_2\neq0}}  e^{\pm it \Psi^2(k,k_1,k_2)}   \widehat{\mathcal{W}^{\pm}}(t,k_1)   \widehat{\mathcal{W}^{\pm}}(t,k_2) =:\mathcal F \textup{D}_1^{\pm}(\mathcal{W}^{\pm},\mathcal{W}^{\pm})(t,k),
\end{equation}
or the integral equation
\begin{equation}\label{eq:FT of W}
\begin{aligned}
  \widehat{\mathcal{W}^{\pm}}(t,k) &=  \widehat{\mathcal{L}_h u_{h,0}^{\pm} }(k) \mp  \frac14 (ik) \int_0^t \sum_{ \substack{k=k_1+k_2 \\ k_1k_2\neq0}}  e^{\pm it' \Psi^2(k,k_1,k_2) }   \widehat{\mathcal{W}^{\pm}}(t',k_1)  \widehat{\mathcal{W}^{\pm}}(t',k_2) \; dt',
\end{aligned}
\end{equation}
where
\begin{equation}\label{eq:Resonance_2 of W}
\Psi^2(k,k_1,k_2) =-\frac{1}{24}(k^3 - k_1^3 - k_2^3) = -\frac18kk_1k_2.
\end{equation}
Note that $\Psi^2$ is symmetric over the variables. Let us define a bilinear form
\[\mathcal F \textup{D}_2^{\pm}(\mathcal{W}^{\pm},\mathcal{W}^{\pm})(t,k) = \sum_{\substack{k=k_1+k_2 \\ k_1k_2 \neq 0}}\frac{ik}{i\Psi^2(k,k_1,k_2)}e^{\pm it\Psi^2(k,k_1,k_2)}\wh{\mathcal{W}^{\pm}}(t,k_1)\wh{\mathcal{W}^{\pm}}(t,k_2).\]
Then, by symmetry, a direct computation gives
\[\begin{aligned}
\widehat{\mathcal{W}^{\pm}}(t,k) =&~{}  \widehat{\mathcal{L}_h u_{h,0}^{\pm}}(k) -\frac14\mathcal F\textup{D}_2^{\pm}(\mathcal{W}^{\pm},\mathcal{W}^{\pm})(t,k) + \frac14\mathcal F\textup{D}_2^{\pm}(\mathcal{W}^{\pm},\mathcal{W}^{\pm})(0,k) \\
&~{}+ \frac12\int_0^t \mathcal F\textup{D}_2^{\pm}(\mathcal{W}^{\pm}, \partial_t \mathcal{W}^{\pm})(t',k) \; dt'.
\end{aligned}\]
Using \eqref{time derivative of w}  and \eqref{eq:Resonance_2 of W}, one has  
\[\begin{aligned}
&~{}\int_0^t\mathcal{F}\textup{D}_2^{\pm}(\mathcal{W}^{\pm},\partial_t\mathcal{W}^{\pm})(t',k)dt'\\
=&~{} \int_0^t \sum_{\substack{k=k_1+k_2 \\ k_1k_2 \neq 0}}\frac{ik}{i\Psi^2(k,k_1,k_2)}e^{\pm it'\Psi^2(k,k_1,k_2)}\widehat{\mathcal{W}^{\pm}}(t',k_1)\partial_t\widehat{\mathcal{W}^{\pm}}(t',k_2) \; dt'\\
=&~{}\mp \frac14 \int_0^t \sum_{\substack{k=k_1+k_2 \\ k_1k_2 \neq 0}}\frac{ik}{i\Psi^2(k,k_1,k_2)}e^{\pm it'\Psi^2(k,k_1,k_2)}\widehat{\mathcal{W}^{\pm}}(t',k_1)\\
&~{} \hspace{3em}\times (ik_2) \sum_{\substack{k_2=k_{21}+k_{22} \\ k_{21}k_{22}\neq0}}  e^{\pm it' \Psi^2(k_2,k_{21},k_{22})}   \widehat{\mathcal{W}^{\pm}}(t',k_{21})   \widehat{\mathcal{W}^{\pm}}(t',k_{22}) \; dt' \\
=&~{}\int_0^t \sum_{\substack{k=k_1+k_2+k_3\\k_1k_2k_3 \neq 0 \\ k_2 + k_3 \neq 0}} \frac{\pm 2i}{k_1}e^{\pm it' \Psi^3(k,k_1,k_2,k_3)}\widehat{\mathcal{W}^{\pm}}(t',k_1)\widehat{\mathcal{W}^{\pm}}(t',k_3)\widehat{\mathcal{W}^{\pm}}(t',k_3)\;dt'\\
=:&~{} \int_0^t \mathcal{F} \mathcal{N}_3^{\pm}(\mathcal{W}^{\pm},\mathcal{W}^{\pm},\mathcal{W}^{\pm})(t',k) \; dt',
\end{aligned}\]
where 
\begin{equation}\label{eq:resonance_3 of W}
\Psi^3(k,k_1,k_2,k_3) := \Psi^2(k,k_1,k_2+k_3) + \Psi^2(k_2+k_3,k_2,k_3) = -\frac18(k_1+k_2)(k_2+k_3)(k_1+k_3).
\end{equation}
Note that $\Psi_h^3$ is symmetric over the variables. We further decompose the summation in $\mathcal{F} \mathcal{N}_3^{\pm}(\mathcal{W}^{\pm},\mathcal{W}^{\pm},\mathcal{W}^{\pm})$ into two parts:
\[\sum_{\substack{k=k_1+k_2+k_3\\k_1k_2k_3 \neq 0 \\ k_2 + k_3 \neq 0}} = \sum_{\substack{k=k_1+k_2+k_3,  \;k_1k_2k_3\neq0 \\ k_2+k_3\neq0 ,  \; (k_1+k_2)(k_1+k_3)=0}} \; + \; \sum_{\substack{k=k_1+k_2+k_3,  \;k_1k_2k_3\neq0 \\  (k_1+k_2)(k_2+k_3)(k_1+k_3)\neq0 }},\]
and their corresponding parts of $\mathcal{F} \mathcal{N}_3^{\pm}(\mathcal{W}^{\pm},\mathcal{W}^{\pm},\mathcal{W}^{\pm})$ are respectively denoted by $(\mathcal{F}\mathcal{N}_3^{\pm})_\mathcal{R}$ and $(\mathcal{F}\mathcal{N}_3^{\pm})_\mathcal{NR}$, thus, we write
\begin{equation}\label{decomposition of B of W}
  \mathcal{F}\mathcal{N}_3^{\pm}(\mathcal{W}^{\pm},\mathcal{W}^{\pm},\mathcal{W}^{\pm})(t',k) = (\mathcal{F}\mathcal{N}_3^{\pm})_\mathcal{R}(\mathcal{W}^{\pm},\mathcal{W}^{\pm},\mathcal{W}^{\pm})(t',k)
  +(\mathcal{F}\mathcal{N}_3^{\pm})_{\mathcal{NR}}(\mathcal{W}^{\pm},\mathcal{W}^{\pm},\mathcal{W}^{\pm})(t',k).
\end{equation}
Here, by the subscript $\mathcal{R}$ and $\mathcal{NR}$, we mean the resonant and nonresonant terms, respectively.

First, consider the resonant term $(\mathcal{F}\mathcal{N}_3^{\pm})_\mathcal{R}$. The set of frequencies under
\[k_2+k_3\neq0, (k_1+k_2)(k_1+k_3)=0\]
can be divided into the following three sets
\[\begin{aligned}
  \mathcal{R}_1 & =\{ k_1+k_2=0 \} \cap \{ k_1+k_3=0 \} \cap \{ k_2+k_3\neq 0 \}, \\ 
  \mathcal{R}_2 & =\{ k_1+k_2=0 \} \cap \{ k_1+k_3\neq 0 \} \cap \{ k_2+k_3\neq 0 \}, \\ 
  \mathcal{R}_3& =\{ k_1+k_2\neq 0 \} \cap \{ k_1+k_3=0 \} \cap \{ k_2+k_3\neq 0 \}.
\end{aligned}\]

Over $\mathcal{R}_1$, we know $k_1=-k_2=-k_3$ and $k=-k_1$, which assure 
\[(\mathcal{F}\mathcal{N}_3^{\pm})_\mathcal{R}(\mathcal{W}^{\pm},\mathcal{W}^{\pm},\mathcal{W}^{\pm})(t,k)|_{\mathcal R_1} = \mp\frac{2i}{k}\left|\widehat{\mathcal{W}^{\pm}}(t,k)\right|^2\widehat{\mathcal{W}^{\pm}}(t,k).\]

Over $\mathcal{R}_2$, we know $k_1=-k_2\neq \pm k_3$ and $k=k_3$, which assure 
\[(\mathcal{F}\mathcal{N}_3^{\pm})_\mathcal{R}(\mathcal{W}^{\pm},\mathcal{W}^{\pm},\mathcal{W}^{\pm})(t,k)|_{\mathcal R_2} = \pm 2i \widehat{\mathcal{W}^{\pm}}(t,k)\sum_{\substack{k_1 \in \Z \\ k_1 \neq \pm k}} \frac{1}{k_1}|\widehat{\mathcal{W}^{\pm}}(t,k_1)|^2 = 0,\]
due to the symmetric in $k_1$. Similarly, we have 
\[(\mathcal{F}\mathcal{N}_3^{\pm})_\mathcal{R}(\mathcal{W}^{\pm},\mathcal{W}^{\pm},\mathcal{W}^{\pm})(t,k)|_{\mathcal R_3} = \pm 2i \widehat{\mathcal{W}^{\pm}}(t,k)\sum_{\substack{k_1 \in \Z \\ k_1 \neq \pm k}} \frac{1}{k_1}|\widehat{\mathcal{W}^{\pm}}(t,k_1)|^2 = 0,\]
Collecting all, we obtain 
\[ (\mathcal{F}\mathcal{N}_3^{\pm})_\mathcal{R}(\mathcal{W}^{\pm},\mathcal{W}^{\pm},\mathcal{W}^{\pm})(t,k)= \mp\frac{2i}{k}\left|\widehat{\mathcal{W}^{\pm}}(t,k)\right|^2\widehat{\mathcal{W}^{\pm}}(t,k).\]

Now, we consider the nonresonant term $(\mathcal{F}\mathcal{N}_3^{\pm})_{\mathcal{NR}}$ in \eqref{decomposition of B of W}. Let us define the (cubic) non-resonant set
\[\Lambda(k)=\big\{ (k_1,k_2,k_3)\in \Z^3: \; k=k_1+k_2+k_3,\; k_1k_2k_3\neq0, \; (k_1+k_2)(k_2+k_3)(k_1+k_3)\neq0\; \big\}\]
and a trilinear form
\[\mathcal{F} \textup{D}_3^{\pm}(\mathcal{W}^{\pm},\mathcal{W}^{\pm},\mathcal{W}^{\pm})(t,k) := \sum_{(k_1,k_2,k_3)\in\Lambda(k)}
  \frac{2i}
  {ik_1\Psi^3(k,k_1,k_2,k_3)} e^{ \pm it\Psi^3(k,k_1,k_2,k_3)}\prod_{j=1}^3\widehat{\mathcal{W}^{\pm}}(t,k_j).\]
Then, a direct computation under the symmetric on $k_2$ and $k_2$ variables in the multiplier gives 
\[\begin{aligned}
 \int_0^t (\mathcal{F}\mathcal{N}_3^{\pm})_{\mathcal{NR}}(\mathcal{W}^{\pm},\mathcal{W}^{\pm},\mathcal{W}^{\pm})(t',k)dt'
  =&~{}\mathcal{F}{\textup{D}_3^{\pm}}(\mathcal{W}^{\pm},\mathcal{W}^{\pm},\mathcal{W}^{\pm})(0,k)
  +\mathcal{F}{\textup{D}_3^{\pm}}(\mathcal{W}^{\pm},\mathcal{W}^{\pm},\mathcal{W}^{\pm})(t,k) \\ 
  &~{}- \int_0^t\mathcal{F}{\textup{D}_3^{\pm}}(\partial_t\mathcal{W}^{\pm},\mathcal{W}^{\pm},\mathcal{W}^{\pm})(t',k) \; dt'\\
&~{}-2 \int_0^t\mathcal{F}{\textup{D}_3^{\pm}}(\mathcal{W}^{\pm},\mathcal{W}^{\pm},\partial_t\mathcal{W}^{\pm})(t',k) \; dt'.
\end{aligned}\]
Using \eqref{time derivative of w} and \eqref{eq:resonance_3 of W}, one has 
\[\begin{aligned}
  \mathcal{F}\textup{D}_3^{\pm}(\partial_t\mathcal{W}^{\pm},\mathcal{W}^{\pm},\mathcal{W}^{\pm})(t,k)
   &=\mp\frac{i}{2} \sum_{(k_1,k_2,k_3,k_3)\in \Gamma(k)}\frac{1}{\Psi^3(k,k_1+k_2,k_3,k_4)} \\   
&~{}\times e^{ \pm it\Psi^4(k,k_1,k_2,k_3,k_4) } \widehat{\mathcal{W}^{\pm}}(t,k_1) \widehat{\mathcal{W}^{\pm}}(t,k_2) \widehat{\mathcal{W}^{\pm}}(t,k_3) \widehat{\mathcal{W}^{\pm}}(t,k_4),
 \end{aligned}\]
 where the set $\Gamma(k)$ of frequencies is given by
\[\Gamma(k)=  \Bigg\{
(k_1,k_2,k_3,k_4) \in \Z^4 : \begin{array}{l}
k= k_1+k_2+k_3+k_4, \;  k_1k_2k_3k_4 \neq 0,\\
(k_1+k_2)(k_1+k_2+k_3)(k_1+k_2+k_4)(k_3+k_4)\neq0
\end{array} \Bigg\}, \] 
and the quartic resonant function is given by
\[
\Psi^4(k,k_1,k_2,k_3,k_4)= \Psi^3(k,k_1+k_2,k_3,k_4) + \Psi^2(k_1+k_2,k_1,k_2).\\
\]   
Similarly, we also have (by changing the variables)
\[\begin{aligned}
  \mathcal{F}\textup{D}_3^{\pm}(\mathcal{W}^{\pm},\mathcal{W}^{\pm},\partial_t\mathcal{W}^{\pm})(t,k)
   &=\mp \frac{i}{2} \sum_{(k_1,k_2,k_3,k_3)\in \Gamma(k)}
   \frac{k_1+k_2}{k_3\Psi^3(k,k_1+k_2,k_3,k_4)} \\   
&~{}\times e^{ \pm it\Psi^4(k,k_1,k_2,k_3,k_4) } \widehat{\mathcal{W}^{\pm}}(t,k_1) \widehat{\mathcal{W}^{\pm}}(t,k_2) \widehat{\mathcal{W}^{\pm}}(t,k_3) \widehat{\mathcal{W}^{\pm}}(t,k_4).
 \end{aligned}\]
Thus,
\[\begin{aligned}
&~{}\mathcal{F}\textup{D}_3^{\pm}(\partial_t\mathcal{W}^{\pm},\mathcal{W}^{\pm},\mathcal{W}^{\pm})(t,k) + 2\mathcal{F}\textup{D}_3^{\pm}(\mathcal{W}^{\pm},\mathcal{W}^{\pm},\partial_t\mathcal{W}^{\pm})(t,k)\\
=&~{}\mp\frac{i}{2} \sum_{(k_1,k_2,k_3,k_3)\in \Gamma(k)}\frac{2(k_1+k_2) + k_3}{k_3\Psi^3(k,k_1+k_2,k_3,k_4)}e^{ \pm it\Psi^4(k,k_1,k_2,k_3,k_4) } \prod_{j=1}^4\widehat{\mathcal{W}^{\pm}}(t,k_j)\\
=:&~{}\mathcal{F}\textup{D}_4^{\pm}(\mathcal{W}^{\pm},\mathcal{W}^{\pm},\mathcal{W}^{\pm},\mathcal{W}^{\pm})(t,k).
\end{aligned}\]
Collecting all, we conclude that
\begin{equation}\label{eq:renormalized KdV}
\begin{aligned}
\widehat{\mathcal{W}^{\pm}}(t,k) =&~{}  \widehat{\mathcal{L}_h u_{h,0}^{\pm}}(k) -\frac14\mathcal F\textup{D}_2^{\pm}(\mathcal{W}^{\pm},\mathcal{W}^{\pm})(t,k) + \frac14\mathcal F\textup{D}_2^{\pm}(\mathcal{W}^{\pm},\mathcal{W}^{\pm})(0,k) \\
&~{} +\int_0^t (\mathcal F\mathcal{N}_3^{\pm})_\mathcal{R}(\mathcal{W}^{\pm},\mathcal{W}^{\pm},\mathcal{W}^{\pm})(t',k) \; dt'\\
&~{}+ \frac12\mathcal{F}{\textup{D}_3^{\pm}}(\mathcal{W}^{\pm},\mathcal{W}^{\pm},\mathcal{W}^{\pm})(0,k)
  +\frac12\mathcal{F}{\textup{D}_3^{\pm}}(\mathcal{W}^{\pm},\mathcal{W}^{\pm},\mathcal{W}^{\pm})(t,k) \\ 
&~{}-\frac12 \int_0^t\mathcal{F}\textup{D}_4^{\pm}(\mathcal{W}^{\pm},\mathcal{W}^{\pm},\mathcal{W}^{\pm},\mathcal{W}^{\pm})(t',k) \; dt'.
\end{aligned}
\end{equation}

\begin{remark}
Comparing \eqref{eq:renormalized KdV} with \eqref{eq:renormalized FPU}, the resonant term
\[ \mp\frac{2i}{k} \int_0^t\left|\widehat{\mathcal{W}^{\pm}}(t',k)\right|^2\widehat{\mathcal{W}^{\pm}}(t',k) \; dt'\]
corresponds to 
\[\mp2i\frac{\cos\left(\frac{hk}{2}\right)\cos\left(\frac{hk}{4}\right)}{\frac{4}{h}\sin\left(\frac{hk}{4}\right)}\int_0^t \left|\widehat{\mathcal{V}_h^{\pm}}(t',k)\right|^2\widehat{\mathcal{V}_h^{\pm}}(t',k) \; dt',\]
while the rest
\[\pm \frac{ih}{2}\sin\left(\frac{hk}{2}\right) \int_0^t\widehat{\mathcal{V}_h^{\pm}}(t,k)\sum_{\substack{k_1 \in (\T_h)^* \\ k_1 \neq \pm k}} |\widehat{\mathcal{V}_h^{\pm}}(t,k_1)|^2 \; dt'\]
itself is negligible in $L^2$ due to an additional $h$.
\end{remark}

\section{Continuum limit of the Decoupled FPU system to the KdV system}\label{sec:part2}

\subsection{KdV equation}
In this section, we are going to introduce some interesting results concerned with KdV equation.

\begin{lemma}[$L^4$-Strichartz estimates \cite{B-1993KdV}]\label{lem:L4 KdV}
For $f \in X^{0,\frac13}_{\tau = k^3}$, we have
\[\|f\|_{L^4_{t,x}(\R \times \T)} \lesssim \|f\|_{X_{\tau = k^3}^{0,\frac13}}.\]
\end{lemma}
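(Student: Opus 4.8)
\textbf{Plan for proving Lemma \ref{lem:L4 KdV}.}

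The plan is to adapt the classical Bourgain argument for periodic dispersive equations, which is essentially the same counting argument that appears in the proof of Proposition \ref{prop:L4} above, but now with the exact Airy symbol $\rho(k)=k^3$ instead of the FPU symbol $s_h(k)$. First I would reduce the $L^4$-estimate to an $L^2$-estimate on $f^2$ (or rather $|f|^2$): writing $g = \mathcal F_{t,x}^{-1}(\langle\tau-k^3\rangle^{1/3}|\widetilde f|)$, it suffices to bound $\|G^2\|_{L^2_{t,x}}$ where $\widetilde G(\tau,k)=\langle\tau-k^3\rangle^{-1/3}|\widetilde g(\tau,k)|$, and by splitting off the finitely many low frequencies $|k|\le 1$ (handled exactly as the $(f_{h,1})^2$ case in Proposition \ref{prop:L4}, since the support has bounded cardinality) we may assume all frequencies satisfy $|k|>1$. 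Expanding the Fourier transform of the product and applying Cauchy--Schwarz in $(\tau_1,k_1)$ with the weight $\langle\tau_1-k_1^3\rangle^{-2/3}\langle(\tau-\tau_1)-(k-k_1)^3\rangle^{-2/3}$, the estimate reduces to showing the uniform bound
\[
L := \sup_{\tau\in\R,\ k\in\Z}\ \sum_{\substack{k_1\in\Z\\ k_1,k-k_1\neq 0}} \int_\R \langle\tau_1-k_1^3\rangle^{-2b}\langle(\tau-\tau_1)-(k-k_1)^3\rangle^{-2b}\,d\tau_1 \ \lesssim\ 1
\]
for $b=\tfrac13$.

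The main step is the resonance-function computation. Using the one-dimensional convolution bound $\int_\R\langle a\rangle^{-2b}\langle c-a\rangle^{-2b}\,da\lesssim\langle c\rangle^{1-4b}$ (valid for $\tfrac12<2b<1$, i.e.\ exactly the borderline $b=\tfrac13$ gives $1-4b=-\tfrac13$), the double integral in $\tau_1$ collapses to $\langle \tau - k_1^3-(k-k_1)^3\rangle^{1-4b}$, and the key algebraic identity
\[
k_1^3+(k-k_1)^3 = k^3 - 3k\,k_1(k-k_1)
\]
shows that, with $k$ and $\tau$ fixed, the quantity $\tau-k_1^3-(k-k_1)^3 = (\tau-k^3) + 3k\,k_1(k-k_1)$ is a quadratic polynomial in $k_1$ with leading coefficient $-3k$. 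Completing the square, $3k\,k_1(k-k_1) = -3k\big(k_1-\tfrac{k}{2}\big)^2 + \tfrac{3k^3}{4}$, so on the range $|k_1|>1$, $|k-k_1|>1$ the integer $k_1$ lies away from at most $O(1)$ points where the quadratic is $O(1)$, and elsewhere $|\tau-k_1^3-(k-k_1)^3|\gtrsim |k|\,|k_1-\alpha_+|\,|k_1-\alpha_-|$ for the two roots $\alpha_\pm$. Then $L\lesssim \sum_{k_1}\big(|k|\,|k_1-\alpha_+|\,|k_1-\alpha_-|\big)^{1-4b}$, and Hölder's inequality in three factors (splitting the exponent $1-4b=-\tfrac13$ as $3\times(-\tfrac19)$... more precisely writing the summand as a product and using $\ell^3$) bounds this by $\big(\sum_{|k_1|>1}|k_1|^{3(1-4b)}\big)^{1/3}$ times the analogous sums centered at $\alpha_\pm$, each of which converges since $3(1-4b)=-1<-1$ fails at $b=\tfrac13$ — so in fact one needs $b>\tfrac13$ strictly, and at $b=\tfrac13$ one argues by an $\varepsilon$-room / dyadic-summation refinement as in Bourgain's original proof, or simply notes the statement is stated for the endpoint where a more careful logarithmic bookkeeping (or the divisor bound) is used.

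The hard part will be the endpoint $b=\tfrac13$: the naive Hölder estimate only gives $L\lesssim 1$ for $b>\tfrac13$, and recovering the exact value $b=\tfrac13$ requires either the standard trick of peeling off a dyadic piece in $\langle\tau-k^3\rangle$ and using the divisor bound $d(n)\lesssim_\varepsilon n^\varepsilon$ to count lattice points on the parabola, or invoking the known result directly. Since the statement attributes the lemma to Bourgain \cite{B-1993KdV}, the cleanest route is to present the reduction above, carry the counting argument to the point where it is visibly Bourgain's, and cite \cite{B-1993KdV} (and e.g.\ \cite{ET2016}) for the endpoint refinement; alternatively one could state and use the slightly weaker $b>\tfrac13$ version throughout, which is all that Lemma \ref{lem:bilinear1} and Proposition \ref{prop:L4} actually invoke. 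I would take the former approach for consistency with the FPU analogue already proved in Proposition \ref{prop:L4}, where the same $b>\tfrac13$ threshold appears.
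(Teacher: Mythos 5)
Your proposal is correct and follows essentially the same route as the paper, whose entire proof of this lemma is the remark that it is ``analogous to the proof of Proposition \ref{prop:L4}'' together with citations to \cite{B-1993KdV, T-2001, T-2006, Oh-2012}; your reduction, the identity $k_1^3+(k-k_1)^3=k^3-3kk_1(k-k_1)$, and the three-factor H\"older counting are exactly the Airy-symbol version of that argument. Your observation that the naive adaptation only reaches $b>\tfrac13$ and that the stated endpoint $b=\tfrac13$ requires Bourgain's dyadic/lattice-point refinement (or, equivalently, that the $b=\tfrac13+$ version suffices for every application in the paper, e.g.\ Lemma \ref{lem:bilinear1}) is accurate and is precisely the point the paper's one-line proof elides.
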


\begin{proof}
The proof is analogous in the proof of Proposition \ref{prop:L4}, and it can be found in \cite{B-1993KdV} as well as \cite{T-2001, T-2006, Oh-2012}.
\end{proof}

\begin{proposition}[$L^2$ well-posedness \cite{B-1993KdV}]\label{prop:KdV well-posed}
The KdV system 
\begin{equation}\label{eq:KdV system}
\begin{aligned}
 \pa_t w_\pm \pm \frac{1}{24} \pa_x^3 w_\pm \mp \frac{1}{4}\pa_x(w_\pm^2)&=0,\\
  w_\pm(0)&=w_{\pm,0}.
\end{aligned}
\end{equation}
is well-posed in $L^2(\T)$.
\end{proposition}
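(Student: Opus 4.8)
The plan is to prove Proposition~\ref{prop:KdV well-posed} by a standard fixed-point (contraction) argument for the Duhamel formulation of \eqref{eq:KdV system} in Bourgain's space $Y_{\tau=\pm k^3}^{0}$, closely mirroring the argument already carried out for the FPU systems in Section~\ref{sec:part1}. First I would reduce to the ``$+$'' equation (the ``$-$'' case being identical under $x\mapsto -x$), impose the mean-zero condition (conserved by the KdV flow, as noted in the remark after \eqref{eq:propagator of KdV}), and write the solution map
\[
\Phi(w)(t) = \eta(t)S^{+}(t)w_{+,0} - \tfrac14\,\eta(t)\int_0^t S^{+}(t-t')\,\partial_x\bigl(w(t')^2\bigr)\,dt',
\]
where $\eta$ is a smooth time cutoff and $S^{+}(t)=e^{-\frac{t}{24}\partial_x^3}$ is the Airy flow, and seek a fixed point in a ball of $Y^{0,T}_{\tau=k^3}$ for small $T$.

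The two analytic ingredients are: (i) the linear and inhomogeneous estimates for $Y^s$ and $Z^s$ spaces from Lemma~\ref{Lem:Y}, which apply verbatim with $p(-i\partial)=-\frac{1}{24}\partial_x^3$ on $\Xi=\T$ and give $\|\eta(\tfrac{t}{T})S^{+}(t)w_{+,0}\|_{Y^0_{\tau=k^3}}\lesssim \|w_{+,0}\|_{L^2(\T)}$ and $\|\eta(\tfrac{t}{T})\int_0^t S^{+}(t-t')F(t')\,dt'\|_{Y^0_{\tau=k^3}}\lesssim \|F\|_{Z^0_{\tau=k^3}}$; and (ii) the bilinear estimate
\[
\bigl\|\partial_x(w_1 w_2)\bigr\|_{Z^0_{\tau=k^3}} \lesssim \|w_1\|_{X^{0,\frac12}_{\tau=k^3}}\,\|w_2\|_{X^{0,\frac12}_{\tau=k^3}}
\]
for mean-zero $w_1,w_2$. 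The latter is the periodic KdV bilinear estimate of Bourgain/Kenig--Ponce--Vega, and I would prove it exactly as Lemma~\ref{lem:bilinear1} was proved: by duality, write the trilinear form with multiplier $|k_3|/(\langle\tau_1-k_1^3\rangle^{1/2}\langle\tau_2-k_2^3\rangle^{1/2}\langle\tau_3-k_3^3\rangle^{1/2})$ on $\Theta\times\Delta$, use the resonance identity $k_1^3+k_2^3+k_3^3 = 3k_1k_2k_3$ when $k_1+k_2+k_3=0$ to deduce $\max_j\langle\tau_j-k_j^3\rangle \gtrsim |k_1k_2k_3|$ (the mean-zero condition $k_1k_2k_3\neq0$ is exactly what makes this nontrivial), bound the multiplier by $\langle\tau_{\mathrm{med}}-\cdot\rangle^{-1/2}\langle\tau_{\min}-\cdot\rangle^{-1/2}$ after absorbing the largest modulation, and close with Parseval together with the $L^4$-Strichartz estimate of Lemma~\ref{lem:L4 KdV}; the two subcases (largest modulation on a factor vs.\ on the output, and the $\ell^2_kL^1_\tau$ part of the $Z$-norm) are handled just as in Lemma~\ref{lem:bilinear1}.

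Combining (i), (ii), and the time-localization gain $\|\eta_T u\|_{X^{0,1/2}}\lesssim T^{\theta}\|u\|_{Y^0}$ for some $\theta>0$ (as used in Corollary~\ref{Cor:Linear Bilinear}), one gets $\|\Phi(w)\|_{Y^{0,T}}\le C\|w_{+,0}\|_{L^2} + C'T^{\theta}\|w\|_{Y^{0,T}}^2$ and a matching Lipschitz bound, so for $T=T(\|w_{+,0}\|_{L^2})$ small $\Phi$ is a contraction on a ball; the embedding $Y^{0,T}\hookrightarrow C_t([-T,T]:L^2(\T))$ from \eqref{Embedding Y} yields a unique solution $w_+\in C_t L^2$. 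Finally, since $T$ depends only on $\|w_{+,0}\|_{L^2}$ and the $L^2$ norm is (for mean-zero data) essentially controlled by the conserved mass of KdV, the local solution iterates to a global one, giving well-posedness in $L^2(\T)$. The main obstacle is the bilinear estimate (ii): one must be careful that the derivative $\partial_x$ in the nonlinearity is genuinely recovered from the resonance function $3k_1k_2k_3$, which degenerates when any frequency vanishes — this is precisely why the mean-zero reduction is not cosmetic but essential, and why the estimate only holds in the $b=\tfrac12$ endpoint space supplemented by the $\ell^2_k L^1_\tau$ piece rather than in a plain $X^{0,b}$ with $b>\tfrac12$.
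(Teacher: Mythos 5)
Your proposal is correct and reproduces exactly the argument the paper relies on: the statement is simply cited from Bourgain \cite{B-1993KdV}, and the proof you outline (contraction in $Y^0_{\tau=k^3}$ via the resonance identity $k^3-k_1^3-k_2^3=3kk_1k_2$, the $L^4$-Strichartz estimate of Lemma \ref{lem:L4 KdV}, and globalization by conservation of $\int w^2\,dx$) is the standard one, which is also the template for the paper's own Lemma \ref{lem:bilinear1}. One small caution: the time gain should be phrased as coming from the slack in the bilinear estimate (placing one factor in $X^{0,b'}$ with $b'<\tfrac12$ and then applying \eqref{ineq:time localization}), since $\|\eta_T u\|_{X^{0,1/2}}\lesssim T^{\theta}\|u\|_{Y^0}$ fails as literally written at the endpoint $b=\tfrac12$; this is precisely how Corollary \ref{Cor:Linear Bilinear} extracts its factor $T^{\frac16-}$.
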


\begin{remark}
Low regularity results for KdV equation have been extensively studied. We, for instance, refer to \cite{KPV-1996, CKSTT-2003, KT-2006}.
\end{remark}

As an immediate corollary, we have
\begin{corollary}[Uniform bounds for KdV solutions]\label{cor:uniform bound for KdV}
Let $s\ge0$.  For given initial data $w_{\pm,0}  \in H^s(\T)$, let $w_{\pm}(t)$ be the solutions to the KdV system \eqref{eq:KdV system} given in Proposition~\ref{prop:KdV well-posed}. Then,
\[\left\| w_{\pm}(t) \right\|_{C([-T,T]:H_x^s(\T))} \ls \| w_{\pm,0}  \|_{H^s(\T)}, \qquad \| w_{\pm}\|_{L^4([0,T] \times \T)} \ls \|w_{\pm,0}\|_{H^s(\T)}.\]
\end{corollary}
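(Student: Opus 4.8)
The plan is to deduce Corollary~\ref{cor:uniform bound for KdV} directly from Proposition~\ref{prop:KdV well-posed} together with the $X^{s,b}$/$Y^s$ machinery already set up in Section~\ref{sec:part1} (Lemma~\ref{lem:properties}, Lemma~\ref{Lem:Y}), exactly as Corollary~\ref{Cor:Uniform bounds} was deduced from Proposition~\ref{prop:uniform bound}. First I would observe that the fixed-point argument producing the solution $w_{\pm}$ of \eqref{eq:KdV system} in $L^2(\T)$ is in fact run in the space $Y^{0}_{\tau=\mp k^3/24}$ (this is the content of Proposition~\ref{prop:KdV well-posed}, via Bourgain's argument and the bilinear estimate whose FPU analogue is Lemma~\ref{lem:bilinear1}), and that the same argument with $s\ge 0$ in place of $s=0$ yields, for data $w_{\pm,0}\in H^s(\T)$, a solution on a common time interval $[-T,T]$ with the uniform bound $\|w_{\pm}\|_{Y^{s,T}_{\tau=\mp k^3/24}}\lesssim \|w_{\pm,0}\|_{H^s(\T)}$; here the $L^4$-Strichartz estimate of Lemma~\ref{lem:L4 KdV} is what makes the bilinear estimate (and hence the contraction) close, just as Proposition~\ref{prop:L4} does in the discrete setting.

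Next I would extract the two claimed bounds. For the first, the embedding \eqref{Embedding Y} of Lemma~\ref{Lem:Y} gives $\|w_{\pm}\|_{C([-T,T]:H^s_x)}\lesssim \|w_{\pm}\|_{Y^{s,T}_{\tau=\mp k^3/24}}\lesssim \|w_{\pm,0}\|_{H^s(\T)}$. For the second, restrict to the time slab: since $\|w_\pm\|_{L^4([0,T]\times\T)}\le \|\eta_T w_\pm\|_{L^4_{t,x}(\R\times\T)}$ for a suitable cutoff $\eta_T$ with $\eta_T\equiv 1$ on $[-T,T]$, apply Lemma~\ref{lem:L4 KdV} to get $\|\eta_T w_\pm\|_{L^4_{t,x}}\lesssim \|\eta_T w_\pm\|_{X^{0,\frac13}_{\tau=\mp k^3/24}}\lesssim \|\eta_T w_\pm\|_{X^{0,\frac12}_{\tau=\mp k^3/24}}\lesssim \|w_\pm\|_{Y^{0,T}_{\tau=\mp k^3/24}}$, where the last two steps use the nesting in Lemma~\ref{lem:properties}(1) and the definition of the $Y$-norm/time localization \eqref{ineq:time localization}. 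Since $\|w_\pm\|_{Y^{0,T}}\lesssim \|w_\pm\|_{Y^{s,T}}\lesssim \|w_{\pm,0}\|_{H^s(\T)}$ for $s\ge 0$, this is exactly the stated $L^4$ bound.

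The bookkeeping I want to be careful about is that $T$ depends only on $\|w_{\pm,0}\|_{H^s(\T)}$ (indeed on the $L^2$ norm, by the $L^2$ theory, but choosing it in terms of the $H^s$ norm is harmless and is what the statement asserts) and not on anything else — in particular the implicit constants in both inequalities are absolute once $T$ is fixed; this mirrors the choice of $T$ in \eqref{eq:time choice}. The only genuinely non-routine point is that Proposition~\ref{prop:KdV well-posed} is quoted from \cite{B-1993KdV} as an $L^2$ well-posedness statement, so I would note that its proof is precisely a contraction in $Y^{0}_{\tau=\mp k^3/24}$ (cf.\ \cite{B-1993KdV, T-2001, T-2006, Oh-2012}), which both produces the uniform $Y$-bound and legitimizes the use of Lemma~\ref{lem:L4 KdV}; with that identification in hand the corollary is immediate, and I expect no real obstacle beyond stating these references cleanly.
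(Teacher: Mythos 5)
Your proposal is correct and follows essentially the same route as the paper, whose proof of this corollary is simply the citation of Proposition~\ref{prop:KdV well-posed} and Lemma~\ref{lem:L4 KdV}; you have merely made explicit the underlying $Y^{s,T}$ contraction bound and the embedding/nesting steps that the paper leaves implicit. No gaps.
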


\begin{proof}
It follows from Proposition \ref{prop:KdV well-posed} and Lemma \ref{lem:L4 KdV}.
\end{proof}

\subsection{Some preliminaries}

For $M \in \mathbb N$, let $P_{\le M}$ denote the projection operator on $L^2(\T)$ defined by
\[P_{\le M}f(x) = \frac{1}{\sqrt{2\pi}}\sum_{|k| \le M} \widehat{f}(k).\]
With this, we define $P_{\ge M}$ and $P_{L \le \cdot \le M}$ by
\[P_{\ge M} = I - P_{< M} \quad \mbox{and} \quad P_{L \le \cdot \le M} = P_{\le M} - P_{< L},\]
where $I$ is the identity operator on $L^2(\T)$.

We first address some preliminaries.
\begin{lemma}\label{lem:multilinear estimate}
Let $M > 0$. Then,
\begin{align}
  \|P_{\le M} \textup{D}_1^{\pm}(f_1,f_2) \|_{L^2(\T)} 
  &\ls M^\frac32 \|f_1\|_{L^2(\T)}\| f_2\|_{L^2(\T)}, \label{ineq:D0} \\ 
  \|P_{\ge M} \textup{D}_2^{\pm}(f_1,f_2)  \|_{L^2(\T)}
  &\ls \frac{1}{M}\|f_1\|_{L^2(\T)}\| f_2\|_{L^2(\T)} \label{ineq:D2} ,\\ 
  \|P_{\ge M} \textup{D}_3^{\pm}(f_1,f_2,f_3)  \|_{L^2(\T)}
  &\ls \frac{1}{M^{\frac12}}\| f_1\|_{L^2(\T)}\| f_2\|_{L^2(\T)}\| f_3\|_{L^2(\T)} \label{ineq:D3}  \\ 
  \|P_{\ge M} \textup{D}_4^{\pm}(f_1,f_2,f_3,f_4)  \|_{L^2(\T)}
  &\ls \| f_1\|_{L^2(\T)}\| f_2\|_{L^2(\T)}\|f_3\|_{L^2(\T)}\|_{L^2(\T)}\| f_4\|_{L^2(\T)}, \label{ineq:D4} 
\end{align}
for $f_j \in L^2(\T)$, $1 \le j \le 4$. 
\end{lemma}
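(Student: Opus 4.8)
The plan is to estimate each of the four multilinear operators $\textup{D}_j^\pm$ directly in Fourier space, using only: (i) the explicit form of the multipliers, which after the renormalization all carry the denominators $\Psi^2,\Psi^3$ whose absolute values factor as products of the shifted frequencies (see \eqref{eq:Resonance_2 of W}, \eqref{eq:resonance_3 of W}); (ii) Cauchy--Schwarz in the internal summation variables together with the square-summability of $|k|^{-\alpha}$ for $\alpha>1/2$; and (iii) the frequency projection $P_{\le M}$ or $P_{\ge M}$ to harvest the power of $M$. I will write $\widehat{f_j}$ for the Fourier coefficients and suppress the harmless constant $\frac{1}{\sqrt{2\pi}}$ as the section already does.

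For \eqref{ineq:D0}: on $\{|k|\le M\}$ the multiplier $ik$ of $\textup{D}_1^\pm$ is bounded by $M$, so $\|P_{\le M}\textup{D}_1^\pm(f_1,f_2)\|_{L^2}\le M\,\|\sum_{k=k_1+k_2}|\widehat{f_1}(k_1)||\widehat{f_2}(k_2)|\|_{\ell^2_{|k|\le M}}$; the inner sum has length $\lesssim M$, so one Cauchy--Schwarz in $k_1$ gives a factor $M^{1/2}$ and then Plancherel yields $M^{3/2}\|f_1\|_{L^2}\|f_2\|_{L^2}$. For \eqref{ineq:D2}: here the multiplier of $\textup{D}_2^\pm$ is $\dfrac{ik}{i\Psi^2(k,k_1,k_2)}=\dfrac{8}{k_1k_2}$, bounded by $8/|k_1|$; with $|k|\ge M$ and $k=k_1+k_2$ at least one of $|k_1|,|k_2|$ is $\gtrsim M$, so by symmetry the multiplier is $\lesssim M^{-1}\min(|k_1|,|k_2|)^{-1}\cdot\min(|k_1|,|k_2|)\le M^{-1}$... more precisely $|k_1k_2|\ge |k|\cdot\min(|k_1|,|k_2|)\gtrsim M\min(|k_1|,|k_2|)$, so the multiplier is $\lesssim (M\,|k_{\min}|)^{-1}$, and summing the square-summable factor $|k_{\min}|^{-1}$ against $\ell^2\times\ell^2$ via Cauchy--Schwarz gives $M^{-1}\|f_1\|_{L^2}\|f_2\|_{L^2}$. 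For \eqref{ineq:D3}: the multiplier of $\textup{D}_3^\pm$ is $\dfrac{2i}{ik_1\Psi^3(k,k_1,k_2,k_3)}$ with $|\Psi^3|=\frac18|k_1+k_2||k_2+k_3||k_1+k_3|$, and on $\{|k|\ge M\}$ one of the three pair-sums $k_i+k_j$ has size $\gtrsim M$ (since they sum to $2k$); distributing the three surviving denominators as $|k_1|^{-1}$ times two factors of the form $|k_i+k_j|^{-2/3}$ together with the gain $M^{-1/2}$ from the large pair-sum, one sums against $\ell^2$ in the two free internal frequencies. Finally \eqref{ineq:D4}: the multiplier $\dfrac{2(k_1+k_2)+k_3}{k_3\Psi^3(k,k_1+k_2,k_3,k_4)}$ has numerator $\lesssim |k_1+k_2|+|k_3|$ and denominator containing $|k_3|\cdot|\Psi^3|$; the key is that the numerator is always dominated by the denominator up to $O(1)$ because $|\Psi^3(k,k_1+k_2,k_3,k_4)|\gtrsim$ the largest of the three pair-sums times... so the multiplier is $\lesssim$ a product of factors each $\lesssim |m|^{-2/3}$ in two free internal summation variables, whence Cauchy--Schwarz closes it with no power of $M$, as claimed.

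The main obstacle is \eqref{ineq:D4}: one must verify that the rational multiplier $\mathfrak{M}$'s KdV analogue $\dfrac{2(k_1+k_2)+k_3}{k_3\,\Psi^3(k,k_1+k_2,k_3,k_4)}$ really stays bounded by a product of two square-summable weights on the whole quartic non-resonant set $\Gamma(k)$; the danger is a near-cancellation making $\Psi^3$ small while $k_3$ or $k_1+k_2$ is large. Here one uses that $\Gamma(k)$ excludes exactly the factors $(k_1+k_2)$, $(k_3+k_4)$, $(k_1+k_2+k_3)=(k-k_4)$, $(k_1+k_2+k_4)=(k-k_3)$ vanishing, so $\Psi^3(k,k_1+k_2,k_3,k_4)=-\frac18(k-k_4)(k_3+k_4)(k-k_3)$ is a product of three nonzero integers, each $\gtrsim1$, and in particular $|\Psi^3|\gtrsim\max(|k_3+k_4|,|k-k_3|,|k-k_4|)\gtrsim |k_3|$ after a short case check on which factor is largest; combined with $|2(k_1+k_2)+k_3|\le 2|k-k_3-k_4|+|k_3|\lesssim|k-k_4|+|k_3|$ one extracts two genuinely negative powers. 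Once this is done, distributing powers so that each remaining internal variable carries a weight with exponent $<-1/2$ and applying Cauchy--Schwarz/Young in $\ell^2$ is routine. The other three estimates are direct and I would present them briskly.
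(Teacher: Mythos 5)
Your proposal is correct and follows essentially the same route as the paper: bound each multiplier in Fourier space using the factorizations of $\Psi^2$ and $\Psi^3$ into nonzero integer pair-sums, extract the power of $M$ from the projection, and close with weighted Cauchy--Schwarz against square-summable reciprocal weights in the free internal frequencies. The only (cosmetic) differences are that for \eqref{ineq:D3} the paper distributes $\max(|k_j|)^{1/2}$ as $|k_2|^{1/4}|k_3|^{1/4}$ and closes via H\"older and Sobolev embedding on the smoothed functions $F_j$ rather than your direct weighted Cauchy--Schwarz, and for \eqref{ineq:D4} the paper uses the exact partial-fraction identity $\frac{2(k_1+k_2)+k_3}{k_3(k_1+k_2+k_3)}=\frac{2}{k_3}-\frac{1}{k_1+k_2+k_3}$ where you use the triangle inequality on the numerator --- both yield the same two three-reciprocal terms and the same final quintilinear sums.
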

\begin{proof}
\eqref{ineq:D0} just follows from the duality and the Cauchy-Schwarz inequality.

For \eqref{ineq:D2}, we know
\[ |\mathcal F \textup{D}_2^{\pm}(f_1,f_2)(k)|
  \ls \sum_{\substack{k=k_1+k_2 \\  k_1k_2\neq0}} \frac{1}{|k_1k_2|}\big|\hat{f}_1(k_1)\big|
  \big|\hat{f}_2(k_2)\big|.\]
Since $k=k_1+k_2$ with $k_1k_2 \neq 0$, we have $M \le |k|\lesssim \max(|k_1|,|k_2|)$. Without loss of generality, we assume that $|k_1| \ge |k_2|$, Using $|k| \le |k_1(k-k_1)|$, the Cauchy-Schwarz inequality, and the summation of $|k|^{-2}$ over $|k| \ge M$, one has
\[\begin{aligned}
  \|P_{\ge M} \textup{D}_2^{\pm}(f_1,f_2)  \|_{L^2(\T)} \lesssim&~{} \left(\sum_{|k| \ge M} \left|\sum_{k_1 \neq 0}\frac{1}{|k_1(k-k_1)|}|\hat{f}_1(k_1)\hat{f}_2(k-k_1)| \right|^2\right)^{\frac12}\\
\lesssim&~{} \frac{1}{M}\|f_1\|_{L^2(\T)}\|f_1\|_{L^2(\T)}.
\end{aligned}\]

For \eqref{ineq:D3}, let $\widehat{F}_1(k_1) = |k_1|^{-1}|\widehat{f}_1(k_1)|$ and $\widehat{F}_j(k_j) = |k_j|^{-\frac14}|\widehat{f}_j(k_j)|$, $j=2,3$. From
   \[ |(k_1+k_2)(k_1+k_3)(k_2+k_3)| \gs \max(|k_1|,|k_2|,|k_3|, |k|) \gs |k_2|^{\frac14} |k_3|^{\frac14} |k|^{\frac12},\]
on $\Lambda(k)$, we have 
\[\begin{aligned}
  \|P_{\ge M} \mathcal{F} \textup{D}_3^{\pm}(f_1,f_2,f_3) \|_{L^2(\T)} =&~{} \left(\sum_{|k| \ge M}\left|\sum_{(k_1,k_2,k_3)\in\Lambda(k)}
  \frac{2i}
  {ik_1\Psi^3(k,k_1,k_2,k_3)} e^{ \pm it\Psi^3(k,k_1,k_2,k_3)}\prod_{j=1}^3\widehat{f}_j(k_j)\right|^2\right)^{\frac12}\\
\ls&~{} \frac{1}{M^{\frac12}}\left\|\sum_{(k_1,k_2,k_3)\in\Lambda(k)} \widehat{F}_1(k_1)\widehat{F}_2(k_2)\widehat{F}_3(k_3)\right\|_{\ell_k^2}\\ 
  \lesssim&~{} \frac{1}{M^{\frac12}}\|F_1F_2F_3\|_{L^2(\T)} \\ 
  \lesssim&~{} \frac{1}{M^{\frac12}}\|F_1\|_{L^\infty(\T)}\|F_2\|_{L^4(\T)}\|F_3\|_{L^4(\T)}\\
  \lesssim&~{} \frac{1}{M^{\frac12}}\| f_1\|_{L^2(\T)}\| f_2\|_{L^2(\T)}\| f_3\|_{L^2(\T)}.
\end{aligned}\]

For \eqref{ineq:D4}, observe that 
\[\begin{aligned}
&~{}\frac{2(k_1+k_2)+k_3}{k_3(k_1+k_2+k_3)(k_1+k_2+k_4)(k_3+k_4)}\\
=&~{}\frac{2}{k_3(k_1+k_2+k_4)(k_3+k_4)} - \frac{1}{(k_1+k_2+k_3)(k_1+k_2+k_4)(k_3+k_4)},
\end{aligned}\]
which yields
\[\begin{aligned}
&~{}\left|\mathcal{F}\textup{D}_4^{\pm}(f_1,f_2,f_3,f_4)(k)\right|\\
\lesssim&~{} \sum_{(k_1,k_2,k_3,k_3)\in \Gamma(k)}\frac{1}{k_3(k_1+k_2+k_4)(k_3+k_4)} \prod_{j=1}^4\widehat{f}_j(k_j)\\
+&~{} \sum_{(k_1,k_2,k_3,k_3)\in \Gamma(k)}\frac{1}{(k_1+k_2+k_3)(k_1+k_2+k_4)(k_3+k_4)}\prod_{j=1}^4\widehat{f}_j(k_j).
\end{aligned}\]
By duality, it suffices to show
\begin{equation}\label{eq:quartiliniear-1}
\sum_{\Pi_1(k)}\frac{\widehat{f}_1(k_1)\widehat{f}_2(k_2)\widehat{f}_3(k_3)\widehat{f}_4(k-k_1-k_2-k_3)\widehat{g}(-k)}{k_3(k-k_3)(k-k_1-k_2)} \lesssim \prod_{j=1}^4\|f_j\|_{L^2(\T)}\|g\|_{L^2(\T)}
\end{equation}
and
\begin{equation}\label{eq:quartiliniear-2}
\sum_{\Pi_2(k)}\frac{\widehat{f}_1(k_1)\widehat{f}_2(k_2)\widehat{f}_3(k_3)\widehat{f}_4(k-k_1-k_2-k_3)\widehat{g}(-k)}{(k_1+k_2+k_3)(k-k_3)(k-k_1+k_2)} \lesssim \prod_{j=1}^4\|f_j\|_{L^2(\T)}\|g\|_{L^2(\T)}
\end{equation}
where $\Pi_1(k)$ and $\Pi_2(k)$ are the sets of frequencies given by
\[\Pi_1(k) = \{(k_1,k_2,k_3,k_4) \in (\Z)^4 : k_1+k_2+k_3+k_4 = k, k_3(k_1+k_2+k_4)(k_3+k_4)\neq 0\}\] and 
\[\Pi_2(k) = \{(k_1,k_2,k_3,k_4) \in (\Z)^4 : k_1+k_2+k_3+k_4 = k, (k_1+k_2+k_3)(k_1+k_2+k_4)(k_3+k_4) \neq 0\},\]
respectively. By Cauchy-Schwarz inequality, we have
\[\begin{aligned}
\mbox{LHS of } \eqref{eq:quartiliniear-1} \lesssim&~{} \left(\sum_{\Pi_1(k)}\frac{|\widehat{f}_1(k_1)\widehat{f}_3(k_3)|^2}{|(k-k_3)(k-k_1-k_2)|^2}\right)^{\frac12}\left(\sum_{\Pi_1(k)}\frac{|\widehat{f}_2(k_2)\widehat{f}_4(k-k_1-k_2-k_3)\widehat{g}(-k)|^2}{|k_3|^2}\right)^{\frac12}\\
\lesssim&~{}\|f_1\|_{L^2(\T)}\|f_2\|_{L^2(\T)}\|f_3\|_{L^2(\T)}\|f_4\|_{L^2(\T)}\|g\|_{L^2(\T)}
\end{aligned}\]
and
\[\begin{aligned}
\mbox{LHS of } \eqref{eq:quartiliniear-2} \lesssim&~{} \left(\sum_{\Pi_2(k)}\frac{|\widehat{f}_1(k_1)\widehat{f}_3(k_3)|^2}{|(k_1+k_2+k_3)(k-k_3)|^2}\right)^{\frac12}\left(\sum_{\Pi_2(k)}\frac{|\widehat{f}_2(k_2)\widehat{f}_4(k-k_1-k_2-k_3)\widehat{g}(-k)|^2}{|k-k_1-k_2|^2}\right)^{\frac12}\\
\lesssim&~{}\|f_1\|_{L^2(\T)}\|f_2\|_{L^2(\T)}\|f_3\|_{L^2(\T)}\|f_4\|_{L^2(\T)}\|g\|_{L^2(\T)}.
\end{aligned}\]
\end{proof}
\begin{lemma}\label{Lem:M}
Let $0<h\le1$ and $0\le \alpha \le 1$. Let
\[\mathcal M_{h,0}^{2,\pm}(t,k,k_1,k_2) = ke^{\pm it\Psi^2(k,k_1,k_2)}- \frac{8}{h^3k^2}\sin^3\left(\frac{hk}{2}\right)e^{\pm it \Phi_h^2(k,k_1,k_2)}.\]
Then, for $|k|,|k_1|,|k_2|\le \frac{\pi}{h}$, we have
\[  |\mathcal M_{h,0}^{2,\pm}(t,k,k_1,k_2)|\ls |k|^{1+\alpha}h^\alpha(1+|t|)^\alpha \left( |k_1k_2|\max(|k|,|k_1|,|k_2|)\right)^\alpha.\]
  \end{lemma}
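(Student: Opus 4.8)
The plan is to estimate the difference $\mathcal M_{h,0}^{2,\pm}(t,k,k_1,k_2)$ by splitting it into two pieces: the difference of the amplitude prefactors and the difference of the oscillatory exponentials, i.e. to write
\begin{align*}
\mathcal M_{h,0}^{2,\pm}(t,k,k_1,k_2) =&~{} \left(k - \frac{8}{h^3k^2}\sin^3\left(\frac{hk}{2}\right)\right)e^{\pm it\Psi^2(k,k_1,k_2)}\\
&~{}+ \frac{8}{h^3k^2}\sin^3\left(\frac{hk}{2}\right)\left(e^{\pm it\Psi^2(k,k_1,k_2)} - e^{\pm it\Phi_h^2(k,k_1,k_2)}\right).
\end{align*}
For the first term I would use the Taylor-remainder bound \eqref{Lh}--\eqref{Lh-1} applied with exponent $\alpha$: since $\frac{8}{h^3k^2}\sin^3(\frac{hk}{2}) = k\left(\frac{2}{hk}\sin(\frac{hk}{2})\right)^3$ and $\left|\frac{2}{hk}\sin(\frac{hk}{2})\right| \le 1$ for $|k|\le \frac{\pi}{h}$, the factored identity $a-b^3 = (a-b)(a^2+ab+b^2)$ together with \eqref{Lh} gives $\left|k - \frac{8}{h^3k^2}\sin^3(\frac{hk}{2})\right| \lesssim |k|(h|k|)^\alpha$, which is clearly dominated by the claimed right-hand side (the factor $(1+|t|)^\alpha(|k_1k_2|\max(\cdots))^\alpha \ge 1$). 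Since the exponential has modulus one, this piece is controlled.

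For the second term, I would bound the amplitude $\frac{8}{h^3k^2}\sin^3(\frac{hk}{2}) = |k|\cdot\left|\frac{2}{hk}\sin(\frac{hk}{2})\right|^3 \lesssim |k|$, and then use $|e^{i\theta_1} - e^{i\theta_2}| \le \min(2, |\theta_1 - \theta_2|) \le |\theta_1-\theta_2|^\alpha$ for $0\le\alpha\le1$ together with the phase difference $|\theta_1 - \theta_2| = |t|\,|\Psi^2(k,k_1,k_2) - \Phi_h^2(k,k_1,k_2)|$. The key remaining ingredient is the estimate
\[\left|\Psi^2(k,k_1,k_2) - \Phi_h^2(k,k_1,k_2)\right| \lesssim h^{\beta}\left(|k_1k_2|\max(|k|,|k_1|,|k_2|)\right)^{1+\beta/?}\]
— more precisely, recalling $\Psi^2 = -\frac18 k k_1 k_2$ and $\Phi_h^2 = -\frac{8}{h^3}\sin(\frac{hk_1}{4})\sin(\frac{hk_2}{4})\sin(\frac{hk}{4})$, one writes $\Phi_h^2 = -\frac18 k k_1 k_2 \prod \frac{4}{hk_j}\sin(\frac{hk_j}{4})$ (with $k_3 = k$) and applies \eqref{Lh}-type bounds to each of the three factors, peeling off one power of $h^\alpha$ and a corresponding factor $|k_j|^\alpha$; this yields $|\Psi^2 - \Phi_h^2| \lesssim |k k_1 k_2|\,(h\max(|k|,|k_1|,|k_2|))^\alpha$. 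Raising this to the power $\alpha$ and multiplying by $|t|^\alpha$ and the amplitude $|k|$ produces a bound of the form $|k|^{1+\alpha}h^\alpha|t|^\alpha\left(|k_1k_2|\max(|k|,|k_1|,|k_2|)\right)^\alpha$, which combined with the crude bound $\lesssim 2|k| \le 2|k|^{1+\alpha}(|k_1k_2|\max)^\alpha$ for the regime where $h^\alpha|t|^\alpha(\cdots)^\alpha \gtrsim 1$ gives the claimed inequality with $(1+|t|)^\alpha$.

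The main obstacle I anticipate is bookkeeping the exponents correctly when distributing the single available power $\alpha$ across the three sine factors in $\Phi_h^2$ versus $k k_1 k_2$: naively applying \eqref{Lh} to all three factors would cost $h^{3\alpha}$ and $(\text{product of }|k_j|)^\alpha$, which is stronger in $h$ but must be reconciled with the stated form that has only $h^\alpha$ and an asymmetric $|k_1 k_2|^\alpha \max^\alpha$ structure; the resolution is to apply the full bound \eqref{Lh} to only one factor (the largest frequency, giving $\max^\alpha$) and the trivial bound $\left|\frac{4}{hk_j}\sin(\frac{hk_j}{4})\right| \le 1$ to the other two, which is exactly why the $|k_1 k_2|$ appears with the first power while $\max$ carries the extra $\alpha$. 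I would also need to double-check the endpoint cases $\alpha = 0$ (trivial, both sides bounded by $\lesssim |k|$) and $\alpha = 1$ to make sure the interpolation-type argument via $\min(2,\cdot) \le (\cdot)^\alpha$ is applied consistently. Everything else is a routine application of the elementary trigonometric inequalities already recorded in the proof of Lemma~\ref{lem:LOWLh}.
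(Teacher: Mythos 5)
Your decomposition and overall strategy coincide with the paper's: split $\mathcal M_{h,0}^{2,\pm}$ into an amplitude-difference piece and an oscillation-difference piece, control the former by \eqref{Lh}--\eqref{Lh-1}, and control the latter by interpolating $|e^{i\theta_1}-e^{i\theta_2}|\le\min(2,|\theta_1-\theta_2|)$ against a phase-difference bound obtained by telescoping over the three sine factors. (The paper splits the amplitude difference into two terms via $\frac{8}{h^3k^2}\sin^3\left(\frac{hk}{2}\right) = \frac{2}{h}\sin\left(\frac{hk}{2}\right)\cdot\frac{4}{h^2k^2}\sin^2\left(\frac{hk}{2}\right)$ rather than your cubic factorization, but this is cosmetic.)

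One exponent slip needs fixing. You state the phase-difference bound as $|\Psi^2-\Phi_h^2|\lesssim |kk_1k_2|\,\big(h\max(|k|,|k_1|,|k_2|)\big)^\alpha$ and then ``raise it to the power $\alpha$''; that produces $h^{\alpha^2}\big(\max(|k|,|k_1|,|k_2|)\big)^{\alpha^2}|kk_1k_2|^\alpha$, and since $h^{\alpha^2}\ge h^\alpha$ for $0<\alpha<1$ this does not imply the claimed $h^\alpha$ decay. The correct bookkeeping --- and what the paper does --- is to prove the phase-difference bound \emph{linearly} in $h$, namely $|\Psi^2-\Phi_h^2|\lesssim h\,|kk_1k_2|\max(|k|,|k_1|,|k_2|)$, by using \eqref{Lh} with exponent $1$ on exactly one factor of each term in the telescoping sum and the trivial bound $\left|\tfrac{4}{hk_j}\sin\left(\tfrac{hk_j}{4}\right)\right|\le1$ on the others, and then to put \emph{all} of the $\alpha$-interpolation into the single step $\min\big(2,|t||\Psi^2-\Phi_h^2|\big)\lesssim\big(|t||\Psi^2-\Phi_h^2|\big)^\alpha$. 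With that correction your argument closes exactly as in the paper.
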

  \begin{proof}
By the mean-value theorem, the Taylor remainder theorem, and \eqref{Lh} (for $\alpha=1$), we have
\[\begin{aligned}
\left| e^{\pm it\Phi_h^2(k,k_1,k_2)} -e^{\pm it\Psi^2(k,k_1,k_2)}\right| \lesssim&~{}  |t| \left| \Phi_h^2(k,k_1,k_2) - \Psi^2(k,k_1,k_2) \right| \\ 
\lesssim&~{} |t|\left| \frac{4}{h}\sin\left(\frac{hk}{4}\right) -k\right| \left| \frac{2}{h^2}\sin\left(\frac{hk_1}{4}\right) \sin\left(\frac{hk_2}{4}\right)\right| \\
&~{}+ |t|\left|\frac{4}{h}\sin\left(\frac{hk_1}{4}\right) -k_1\right| \left| \frac{k}{2h}\sin\left( \frac{hk_2}{4}\right)\right|\\
&~{}+|t|\left| \frac{4}{h}\sin\left(\frac{hk_2}{4}\right) -k_2\right| \left| \frac{kk_1}{8}\right| \\ 
\lesssim&~{} |t|h(|k|+|k_1|+|k_2|) |kk_1k_2|.
\end{aligned}\] 
Interpolating with the following trivial bound 
\[    \left| e^{\pm it \Phi_h^2(k,k_1,k_2)} -e^{\pm it \Psi^2(k,k_1,k_2)}\right| \lesssim 1,\]
one has
  \begin{equation}\label{exponential differnece}
\left| e^{\pm it\Phi_h^2(k,k_1,k_2)} -e^{\pm it \Psi^2(k,k_1,k_2)}\right| \lesssim \left( h|t||kk_1k_2|\max(|k|,|k_1|,|k_2|)\right)^\alpha,
  \end{equation}
for $0 \le \alpha \le 1$. For any $0 \le \alpha \le 1$, collecting \eqref{Lh}, \eqref{Lh-1} and \eqref{exponential differnece}, we conclude that
  \[\begin{aligned}
    |\mathcal M_{h,0}^{2,\pm}(t,k,k_1,k_2)| \ls&~{} \left|\frac{2}{h}\sin\left(\frac{hk}{2}\right) -k\right|\left|\frac{4}{h^2k^2}\sin^2\left(\frac{hk}{2}\right)e^{\pm it \Phi_h^2(k,k_1,k_2)}\right|\\
&~{}+ |k|\left|\frac{4}{h^2k^2}\sin^2\left(\frac{hk}{2}\right)-1\right|\left|e^{\pm it \Phi_h^2(k,k_1,k_2)}\right|\\
&~{}+ |k|\left|e^{\pm it\Phi_h^2(k,k_1,k_2)} -e^{\pm it \Psi^2(k,k_1,k_2)}\right|\\
\lesssim&~{} |k||hk|^\alpha + |k|\left( h|t||kk_1k_2|\max(|k|,|k_1|,|k_2|)\right)^\alpha,
  \end{aligned}\]
which completes the proof.
  \end{proof}

\begin{lemma}\label{Lem:M1}
Let $0<h \le 1$ and $0\le \alpha \le1$. Let
\[\mathcal{M}_{h,1}^{2,\pm}(t,k,k_1,k_2) := \frac{k}{\Psi^2(k,k_1,k_2)}e^{\pm it \Psi^2(k,k_1,k_2)}-\frac{8\sin^3\left(\frac{hk}{2}\right)}{h^3k^2\Phi_h^2(k,k_1,k_2)}e^{\pm it \Phi_h^2(k,k_1,k_2)}\]
Then, for $|k|,|k_1|,|k_2|\le \frac{\pi}{h}$ with $k_1k_2 \neq 0$, we have
\[  | \mathcal{M}_{h,1}^{\pm}(t,k,k_1,k_2) |\ls   \frac{h^\alpha(1+|t|^\alpha)}{|k_1k_2|}\left(|kk_1k_2|\max(|k|,|k_1|,|k_2|) \right)^\alpha.\]
\end{lemma}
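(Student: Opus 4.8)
The plan is to deduce Lemma~\ref{Lem:M1} from Lemma~\ref{Lem:M} by peeling off the resonance denominators. First I would record, using the explicit formulas $\Psi^2(k,k_1,k_2)=-\tfrac18 kk_1k_2$ from \eqref{eq:Resonance_2 of W} and $\Phi_h^2(k,k_1,k_2)=-\tfrac{8}{h^3}\sin(\tfrac{hk}{4})\sin(\tfrac{hk_1}{4})\sin(\tfrac{hk_2}{4})$ from \eqref{eq:Resonance_2}, the elementary bounds
\[
|\Psi^2(k,k_1,k_2)|=\tfrac18|kk_1k_2|,\qquad
|\Phi_h^2(k,k_1,k_2)|\gtrsim|kk_1k_2|,\qquad
\left|\tfrac{8}{h^3k^2}\sin^3(\tfrac{hk}{2})\right|\le|k|,
\]
where the lower bound on $|\Phi_h^2|$ comes from $|\sin x|\ge\tfrac{|x|}{2}$ on $[-\tfrac{\pi}{2},\tfrac{\pi}{2}]$ (see \eqref{eq:sin}), valid since $|\tfrac{hk_j}{4}|\le\tfrac{\pi}{4}$ when $|k_j|\le\tfrac{\pi}{h}$, and the last bound uses $|\sin x|\le|x|$; here I restrict to $k\ne0$ (the contribution of $k=0$, where $k_1=-k_2$, can be estimated directly and is of size $O(h^2)$). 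Writing $P_h(k):=\tfrac{8}{h^3k^2}\sin^3(\tfrac{hk}{2})$, so that $\mathcal M_{h,0}^{2,\pm}=ke^{\pm it\Psi^2}-P_h(k)e^{\pm it\Phi_h^2}$, I would then use the identity
\[
\mathcal M_{h,1}^{2,\pm}(t,k,k_1,k_2)=\frac{\mathcal M_{h,0}^{2,\pm}(t,k,k_1,k_2)}{\Psi^2(k,k_1,k_2)}+P_h(k)\,e^{\pm it\Phi_h^2}\,\frac{\Phi_h^2(k,k_1,k_2)-\Psi^2(k,k_1,k_2)}{\Psi^2(k,k_1,k_2)\,\Phi_h^2(k,k_1,k_2)}
\]
to split the multiplier into a main part controlled by Lemma~\ref{Lem:M} and a denominator correction.

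Next I would estimate the two pieces of the identity at the endpoint $\alpha=1$ and then interpolate. For the main part, Lemma~\ref{Lem:M} with $\alpha=1$ together with $|\Psi^2|\sim|kk_1k_2|$ yields $|\mathcal M_{h,0}^{2,\pm}/\Psi^2|\lesssim h(1+|t|)\,|k|\max(|k|,|k_1|,|k_2|)$. For the correction, I would invoke the symbol estimate $|\Phi_h^2-\Psi^2|\lesssim h(|k|+|k_1|+|k_2|)|kk_1k_2|$ already proved in the course of Lemma~\ref{Lem:M}, combine it with $|P_h(k)|\le|k|$ and $|\Psi^2\Phi_h^2|\gtrsim|kk_1k_2|^2$, and get a bound $\lesssim h\max(|k|,|k_1|,|k_2|)/|k_1k_2|$; since $|k|,|k_1k_2|\ge1$ this is dominated by $h\,|k|\max(|k|,|k_1|,|k_2|)$, the same expression as for the main part. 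Altogether,
\[
|\mathcal M_{h,1}^{2,\pm}(t,k,k_1,k_2)|\lesssim\frac{h(1+|t|)}{|k_1k_2|}\,|kk_1k_2|\max(|k|,|k_1|,|k_2|),
\]
and the trivial bound $|\mathcal M_{h,1}^{2,\pm}|\le|k/\Psi^2|+|P_h(k)/\Phi_h^2|\lesssim 1/|k_1k_2|$ also holds. Interpolating these two bounds geometrically ($X=X^{\alpha}X^{1-\alpha}$) and using the subadditivity $(1+|t|)^{\alpha}\le1+|t|^{\alpha}$ would give exactly the claimed estimate.

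There is no serious obstacle here: the whole computation mirrors the proof of Lemma~\ref{Lem:M}, the only new ingredient being the division by the resonance functions $\Psi^2$ and $\Phi_h^2$. The point that needs care is therefore to establish at the outset that this division is harmless, i.e. the $h$-independent lower bounds $|\Psi^2|,|\Phi_h^2|\gtrsim|kk_1k_2|$ (and hence the separate, trivial handling of $k=0$); once these are in place, the rest is a routine bookkeeping of powers of $|k|,|k_1|,|k_2|$.
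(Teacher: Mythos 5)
Your proposal is correct, and it reaches the stated bound by a genuinely different route than the paper. The paper proves Lemma \ref{Lem:M1} directly: it first peels off the factor $\bigl(\tfrac{4}{h^2k^2}\sin^2(\tfrac{hk}{2})-1\bigr)\tfrac{2}{h}\sin(\tfrac{hk}{2})/\Phi_h^2$ using \eqref{Lh-1}, then telescopes the remaining difference into (exponential difference)$\times\tfrac{k}{\Psi^2}$ plus (multiplier difference)$\times e^{\pm it\Phi_h^2}$, estimating $\bigl|\tfrac{\cos(hk/4)}{\mathcal K_1\mathcal K_2}-\tfrac{1}{k_1k_2}\bigr|$ term by term with \eqref{Lh} and the cosine analogue, so that the interpolation parameter $\alpha$ is carried through every elementary estimate. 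You instead use the algebraic identity $\mathcal M_{h,1}^{2,\pm}=\mathcal M_{h,0}^{2,\pm}/\Psi^2+P_h(k)e^{\pm it\Phi_h^2}(\Phi_h^2-\Psi^2)/(\Psi^2\Phi_h^2)$, prove only the two endpoint bounds ($\alpha=0$ trivially, $\alpha=1$ via Lemma \ref{Lem:M} and the symbol-difference estimate), and interpolate geometrically at the end. Your route is arguably cleaner in that it reuses Lemma \ref{Lem:M} as a black box and isolates the one genuinely new point, namely the $h$-independent lower bounds $|\Psi^2|,|\Phi_h^2|\gtrsim|kk_1k_2|$ (the latter from \eqref{eq:sin} since $|hk_j/4|\le\pi/4$); the paper's route avoids the final interpolation step and produces the slightly more precise bound $\tfrac{h^\alpha}{|k_1k_2|}(|k|^\alpha+|k_1|^\alpha+|k_2|^\alpha)$ for the non-oscillatory part. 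Two minor caveats, neither fatal: the symbol-difference bound $|\Phi_h^2-\Psi^2|\lesssim h(|k|+|k_1|+|k_2|)|kk_1k_2|$ that you invoke is an intermediate display inside the proof of Lemma \ref{Lem:M} rather than part of its statement, so you should cite that display explicitly; and your domination of the correction term by $h|k|\max(|k|,|k_1|,|k_2|)$ uses $|k|\ge1$, so the exclusion of $k=0$ you already flag is genuinely needed (as it implicitly is in the paper's own proof, and harmlessly so, since the lemma is only applied for $|k|\ge M>1$).
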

\begin{proof}
By \eqref{Lh-1}, we know
\[\begin{aligned}
\left|\left(\frac{4}{h^2k^2}\sin^2\left(\frac{hk}{2}\right)-1\right)\frac{\frac{2}{h}\sin\left(\frac{hk}{2}\right)}{\Phi_h^2(k,k_1,k_2)}\right| \lesssim&~{} (h|k|)^{\alpha}\left|\frac{\frac{4}{h}\sin\left(\frac{hk}{4}\right)\cos\left(\frac{hk}{4}\right)}{\Phi_h^2(k,k_1,k_2)}\right|\\
\lesssim&~{} (h|k|)^{\alpha}\left|\frac{h^2}{\sin\left(\frac{hk_1}{4}\right)\sin\left(\frac{hk_2}{4}\right)}\right|\\
\lesssim&~{} \frac{(h|k|)^{\alpha}}{|k_1k_2|},
\end{aligned}\]
for any $0 \le \alpha \le 2$. Thus, it suffices to deal with
\begin{equation}\label{eq:M1-1}
\left|\frac{k}{\Psi^2(k,k_1,k_2)}e^{\pm it \Psi^2(k,k_1,k_2)}-\frac{2\sin\left(\frac{hk}{2}\right)}{h\Phi_h^2(k,k_1,k_2)}e^{\pm it \Phi_h^2(k,k_1,k_2)}\right|.
\end{equation}
Let $\displaystyle \mathcal K_j = \frac{4}{h}\sin\left(\frac{hk_j}{4}\right)$, $j=1,2$. Then, it is known that $|\mathcal K_j| \sim |k_j|$, $j=1,2$. Note that similarly as \eqref{Lh}, we have
\begin{equation}\label{eq:Lh-cosine}
\left|\cos\left(\frac{hk}{4}\right)-1\right| \lesssim (h|k|)^{\alpha},
\end{equation}
for any $0 \le \alpha \le 2$. By \eqref{Lh}, we obtain 
\[\begin{aligned}
\left|\frac{\cos\left(\frac{hk}{4}\right)}{\mathcal K_1 \mathcal K_2} - \frac{1}{k_1k_2}\right| =&~{} \left|\frac{\cos\left(\frac{hk}{4}\right)(k_1-\mathcal K_1)k_2}{\mathcal K_1 \mathcal K_2 k_1 k_2}+\frac{\left(\cos\left(\frac{hk}{4}\right)-1\right)\mathcal K_1k_2}{\mathcal K_1 \mathcal K_2 k_1 k_2}+\frac{\mathcal K_1\left(k_2 - \mathcal K_2\right)}{\mathcal K_1 \mathcal K_2 k_1 k_2}\right|\\
\lesssim&~{} \left|\frac{(h|k_1|)^{\alpha}}{\mathcal K_1 \mathcal K_2}\right| + \left|\frac{(h|k|)^{\alpha}}{\mathcal K_2k_1}\right| + \left|\frac{(h|k_2|)^{\alpha}}{\mathcal K_2k_1}\right|\\
\sim&~{} \frac{h^{\alpha}}{|k_1k_2|}\left(|k|^{\alpha} + |k_1|^{\alpha} + |k_2|^{\alpha}\right),
\end{aligned}\]
for any $0 \le \alpha \le 2$. Thus, by \eqref{exponential differnece}, we obtain for $0 \le \alpha \le 1$ that
\[\begin{aligned}
\eqref{eq:M1-1} \le&~{} \left|\frac{k}{\Psi^2(k,k_1,k_2)}\left(e^{\pm it \Psi^2(k,k_1,k_2)} - e^{\pm it \Phi_h^2(k,k_1,k_2)}\right)\right| \\
&~{}+ \left|\left(\frac{2\sin\left(\frac{hk}{2}\right)}{h\Phi_h^2(k,k_1,k_2)} - \frac{k}{\Psi^2(k,k_1,k_2)}\right)e^{\pm it \Phi_h^2(k,k_1,k_2)}\right|\\
\lesssim&~{} \frac{1}{|k_1k_2|}\left( h|t||kk_1k_2|\max(|k|,|k_1|,|k_2|)\right)^\alpha + \frac{h^s}{|k_1k_2|}\left(|k|^\alpha + |k_1|^\alpha + |k_2|^\alpha\right),
\end{aligned}\]
which completes the proof.
\end{proof}

\begin{lemma}\label{Lem:M2}
Let $0<h \le 1$ and $0\le \alpha \le1$. Let
\[\begin{aligned}
\mathcal{M}_{h}^{3,\pm}(t,k,k_1,k_2,k_3) :=&~{} \frac{2}{k_1\Psi^3(k,k_1,k_2,k_3)}e^{\pm it \Psi^3(k,k_1,k_2,k_3)}\\
&~{}-\frac{2\sin^2\left(\frac{hk}{2}\right)\cos\left(\frac{hk}{4}\right)\cos\left(\frac{h(k_2+k_3)}{4}\right)}{hk^2\sin\left(\frac{hk_1}{4}\right)\Phi_h^3(k,k_1,k_2,k_3)}e^{\pm it \Phi_h^3(k,k_1,k_2,k_3)}
\end{aligned}\]
over $\mathcal A(k)$ defined as in \eqref{eq:A(k)}. Then, we have
\[  | \mathcal{M}_{h}^{3,\pm}(t,k,k_1,k_2,k_3) |\ls \frac{h^\alpha(1+|t|)^\alpha(\max(|k|,|k_1|,|k_2|,|k_3|))^{2\alpha}}{|k_1|\max(|k_1|,|k_2|,|k_3|)}.\]
\end{lemma}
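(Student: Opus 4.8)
I would follow the template of Lemmas~\ref{Lem:M} and~\ref{Lem:M1}. Abbreviate $\Psi^3=\Psi^3(k,k_1,k_2,k_3)$, $\Phi_h^3=\Phi_h^3(k,k_1,k_2,k_3)$, write $A=\tfrac{2}{k_1\Psi^3}$ for the KdV prefactor and $B$ for the FPU prefactor (so $\mathcal M_h^{3,\pm}=Ae^{\pm it\Psi^3}-Be^{\pm it\Phi_h^3}$), and set $m:=\max(|k|,|k_1|,|k_2|,|k_3|)$. The first step is the decomposition
\[\mathcal M_h^{3,\pm}=A\big(e^{\pm it\Psi^3}-e^{\pm it\Phi_h^3}\big)+(A-B)\,e^{\pm it\Phi_h^3},\]
which separates the exponential defect from the prefactor defect. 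Before estimating either term I would record two structural facts valid on $\mathcal A(k)$. Since $k=k_1+k_2+k_3$ one has $m\sim\max(|k_1|,|k_2|,|k_3|)\ge1$; and since the paired sums $k_2+k_3=k-k_1$, $k_1+k_3=k-k_2$, $k_1+k_2=k-k_3$ are nonzero with $\tfrac h4|k_i+k_j|\le\tfrac\pi2$, the estimate~\eqref{Lh} and the lower bound~\eqref{eq:sin} give $\big|\tfrac4h\sin(\tfrac h4(k_i+k_j))\big|\sim|k_i+k_j|$, hence
\[|\Phi_h^3|\sim|\Psi^3|=\tfrac18|k-k_1|\,|k-k_2|\,|k-k_3|\gtrsim m\]
(the lower bound holds because the three $|k-k_j|$ cannot all be small when $m$ is large, as $|k|=\tfrac12\big|\sum_j(k-k_j)\big|$ and $|k_i|\le|k|+|k-k_i|$). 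Moreover, telescoping the product $\prod_j(k-k_j)$ against $\prod_j\tfrac4h\sin(\tfrac h4(k-k_j))$ and applying~\eqref{Lh} to each factor yields $|\Psi^3-\Phi_h^3|\lesssim h^\alpha m^\alpha|\Psi^3|$ for $0\le\alpha\le1$. Note that the target of the lemma is comparable to $h^\alpha(1+|t|)^\alpha m^{2\alpha-1}/|k_1|$.

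For the exponential defect, the elementary inequality $|e^{ia}-e^{ib}|\le\min(2,|a-b|)\lesssim|a-b|^\alpha$ combined with the bound on $|\Psi^3-\Phi_h^3|$ gives
\[\big|A\big(e^{\pm it\Psi^3}-e^{\pm it\Phi_h^3}\big)\big|\lesssim\frac{1}{|k_1|\,|\Psi^3|}\big(|t|\,h^\alpha m^\alpha|\Psi^3|\big)^\alpha=\frac{h^\alpha|t|^\alpha m^\alpha}{|k_1|\,|\Psi^3|^{1-\alpha}},\]
and since $|\Psi^3|\gtrsim m$ this is $\lesssim h^\alpha(1+|t|)^\alpha m^{2\alpha-1}/|k_1|$, the claim for this piece.

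For the prefactor defect, I would rewrite $A$ and $B$ in the $\mathcal K$-notation of Lemma~\ref{Lem:M1} ($\mathcal K_j=\tfrac4h\sin(\tfrac{hk_j}4)$, $|\mathcal K_j|\sim|k_j|$, and likewise for the paired sums), reducing $A-B$ to a single rational expression of the form $\tfrac1{k_1\prod(k_i+k_j)}$ minus $\tfrac{\mathcal K^2\cos^3(\frac{hk}4)\cos(\frac{h(k_2+k_3)}4)}{k^2\,\mathcal K_1\prod\mathcal K_{ij}}$. This I would estimate by telescoping factor by factor: replace $k_1\mapsto\mathcal K_1$ and $k_i+k_j\mapsto\mathcal K_{ij}$ via~\eqref{Lh}, and the remaining scalar $1\mapsto\tfrac{\mathcal K^2}{k^2}\cos^3(\tfrac{hk}4)\cos(\tfrac{h(k_2+k_3)}4)$ via~\eqref{Lh-1} and~\eqref{eq:Lh-cosine}; each replacement costs a factor $h^\alpha$ times a power of a frequency that is $\lesssim m^\alpha$, while the denominator keeps size $|k_1|\,|\Psi^3|$. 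The outcome is $|A-B|\lesssim h^\alpha m^\alpha/(|k_1|\,|\Psi^3|)\lesssim h^\alpha m^{\alpha-1}/|k_1|\le h^\alpha m^{2\alpha-1}/|k_1|$ (using $m\ge1$), completing the estimate.

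The main obstacle will be the weight bookkeeping: confirming in each of the resulting pieces that the accumulated factors $(h|k|)^\alpha$, $(h|k_2+k_3|)^\alpha$, $(h|k_1|)^\alpha$, together with the negative powers of $|\Psi^3|$ appearing in $A$ and $B$, are all absorbed by $h^\alpha m^{2\alpha}$ divided by $|k_1|\max(|k_1|,|k_2|,|k_3|)$. Everything hinges on the nonresonance lower bound $|\Psi^3|\sim|\Phi_h^3|\gtrsim m$ on $\mathcal A(k)$ (the only place the hypothesis $(k_1+k_2)(k_2+k_3)(k_1+k_3)\ne0$ enters) together with $m\sim\max(|k_1|,|k_2|,|k_3|)$; once these are in hand, the telescoping steps are routine analogues of those carried out for the quadratic resonance in Lemma~\ref{Lem:M1}.
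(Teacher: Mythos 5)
Your decomposition and estimates coincide with the paper's own proof of Lemma \ref{Lem:M2}: the paper likewise splits $\mathcal M_h^{3,\pm}$ into an oscillatory defect $\frac{1}{k_1\Psi^3}\big(e^{\pm it\Psi^3}-e^{\pm it\Phi_h^3}\big)$ and a prefactor defect multiplied by $e^{\pm it\Phi_h^3}$, controls the prefactor by exactly the factor-by-factor telescoping with \eqref{Lh}, \eqref{Lh-1}, \eqref{eq:Lh-cosine} that you describe, and closes with the nonresonance bound $|(k_1+k_2)(k_1+k_3)(k_2+k_3)|\gtrsim\max(|k_1|,|k_2|,|k_3|)$, which is the same lower bound you invoke as $|\Psi^3|\gtrsim m$.

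One slip in the oscillatory piece: in your displayed chain you interpolate twice, inserting the already-interpolated bound $|\Psi^3-\Phi_h^3|\lesssim h^\alpha m^\alpha|\Psi^3|$ into $\big(|t|\,|a-b|\big)^\alpha$, which produces $h^{\alpha^2}m^{\alpha^2}$ rather than $h^\alpha m^\alpha$. Since $h^{\alpha^2}\ge h^\alpha$ for $0\le\alpha\le1$, the resulting bound does not imply the stated one; the deficit $(hm)^{\alpha^2-\alpha}$ is unbounded when $hm$ is small (e.g.\ $m\sim1$, $h\to0$). The repair is immediate and is what the paper does in \eqref{eq:M^3-1}: use the Lipschitz ($\alpha=1$) telescoping bound $|\Psi^3-\Phi_h^3|\lesssim h\,m\,|\Psi^3|$ inside $\min\big(2,|t|\,|a-b|\big)$ and interpolate once, obtaining $\big|e^{\pm it\Psi^3}-e^{\pm it\Phi_h^3}\big|\lesssim\big(h\,|t|\,m\,|\Psi^3|\big)^\alpha$; dividing by $|k_1|\,|\Psi^3|$ and using $|\Psi^3|\gtrsim m$ then gives the claimed $h^\alpha(1+|t|)^\alpha m^{2\alpha-1}/|k_1|$. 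With that correction your argument is precisely the paper's.
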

\begin{proof}
Let
\[\mathcal{M}_{h,1}^{3,\pm}(t,k,k_1,k_2,k_3) := \frac{1}{k_1\Psi^3(k,k_1,k_2,k_3)}\left(e^{\pm it \Psi^3(k,k_1,k_2,k_3)}-e^{\pm it \Phi_h^3(k,k_1,k_2,k_3)}\right)\]
and
\[\mathcal{M}_{h,2}^{3,\pm}(t,k,k_1,k_2,k_3) := \frac{1}{k_1\Psi^3(k,k_1,k_2,k_3)} - \frac{\sin^2\left(\frac{hk}{2}\right)\cos\left(\frac{hk}{4}\right)\cos\left(\frac{h(k_2+k_3)}{4}\right)}{hk^2\sin\left(\frac{hk_1}{4}\right)\Phi_h^3(k,k_1,k_2,k_3)}.\]
Then, $\displaystyle \mathcal{M}_{h}^{3,\pm}(t,k,k_1,k_2,k_3) = 2\left(\mathcal{M}_{h,1}^{3,\pm}(t,k,k_1,k_2,k_3) + \mathcal{M}_{h,2}^{3,\pm}(t,k,k_1,k_2,k_3)e^{\pm it \Phi_h^3(k,k_1,k_2,k_3)}\right)$, and thus it suffices to control $|\mathcal{M}_{h,1}^{3,\pm}(t,k,k_1,k_2,k_3)|$ and $|\mathcal{M}_{h,2}^{3,\pm}(t,k,k_1,k_2,k_3)|$. Let $\displaystyle \mathcal K_{j\ell} = \frac{4}{h}\sin\left(\frac{h(k_j+k_\ell)}{4}\right)$ and $k_{j\ell} = k_j+k_\ell$, $j,\ell=1,2,3$. Then, it is known that $|\mathcal K_{j\ell}| \sim |k_{j\ell}|$, $j,\ell=1,2,3$. Similarly as the proof of \eqref{exponential differnece}, we have for $0 \le \alpha \le 1$ that
\begin{equation}\label{eq:M^3-1}
\left|e^{\pm it \Psi^3(k,k_1,k_2,k_3)}-e^{\pm it \Phi_h^3(k,k_1,k_2,k_3)}\right|\lesssim h^\alpha(1+|t|)^\alpha \left(|k_{12}||k_{13}||k_{23}| \max(|k_{12}|,|k_{13}|,|k_{23}|)\right)^{\alpha}.
\end{equation}
Note that $\max(|k_{12}|,|k_{13}|,|k_{23}|) \lesssim \max(|k_1|,|k_2|,|k_3|)$ and $|k_{12}k_{13}k_{23}| \gtrsim \max(|k_1|,|k_2|,|k_3|)$. Then, \eqref{eq:M^3-1} guarantees
\[|\mathcal{M}_{h,1}^{3,\pm}(t,k,k_1,k_2,k_3)| \lesssim \frac{h^\alpha(1+|t|)^\alpha(\max(|k_1|,|k_2|,|k_3|))^\alpha}{|k_1|(|k_{12}k_{13}k_{23}|)^{1-\alpha}} \lesssim \frac{h^\alpha(1+|t|)^\alpha(\max(|k_1|,|k_2|,|k_3|))^{2\alpha}}{|k_1|\max(|k_1|,|k_2|,|k_3|)}.\]
On the other hand, note by \eqref{eq:Lh-cosine} that
\begin{equation}\label{eq:M^3-2}
\left|1-\cos\left(\frac{hk}{4}\right)\cos\left(\frac{hk_{23}}{4}\right)\right| \lesssim (h|k| + h|k_{23}|)^{\alpha} \lesssim h^{\alpha}(\max(|k|,|k_2|,|k_3|))^{\alpha},
\end{equation}
for any $0 \le \alpha \le 2$. Moreover, by \eqref{Lh}, we have
\[\begin{aligned}
\left|\frac{1}{k_1\Psi^3(k,k_1,k_2,k_3)}-\frac{1}{\mathcal K_1\Phi_h^3(k,k_1,k_2,k_3)}\right|\lesssim&~{} \left|\frac{\mathcal K_1\mathcal K_{12}\mathcal K_{13}\mathcal K_{23}-k_1k_{12}k_{13}k_{23}}{k_1k_{12}k_{13}k_{23}\mathcal K_1\mathcal K_{12}\mathcal K_{13}\mathcal K_{23}}\right|\\
\lesssim&~{} \left|\frac{(\mathcal K_1-k_1)\mathcal K_{12}\mathcal K_{13}\mathcal K_{23}}{k_1k_{12}k_{13}k_{23}\mathcal K_1\mathcal K_{12}\mathcal K_{13}\mathcal K_{23}}\right| \\
&~{}+ \left|\frac{k_1(\mathcal K_{12}-k_{12})\mathcal K_{13}\mathcal K_{23}}{k_1k_{12}k_{13}k_{23}\mathcal K_1\mathcal K_{12}\mathcal K_{13}\mathcal K_{23}}\right| \\
&~{}+  \left|\frac{k_1k_{12}(\mathcal K_{13}-k_{13})\mathcal K_{23}}{k_1k_{12}k_{13}k_{23}\mathcal K_1\mathcal K_{12}\mathcal K_{13}\mathcal K_{23}}\right| \\
&~{}+ \left|\frac{k_1k_{12}\mathcal K_{13}(\mathcal K_{23}-k_{23})}{k_1k_{12}k_{13}k_{23}\mathcal K_1\mathcal K_{12}\mathcal K_{13}\mathcal K_{23}}\right|\\
\lesssim&~{}  \frac{h^{\alpha}(|k_1|^{\alpha}+|k_{12}|^{\alpha}+|k_{13}|^{\alpha}+|k_{23}|^{\alpha})}{|k_1||k_{12}||k_{13}||k_{23}|}\\
\lesssim&~{} \frac{h^{\alpha}(\max(|k_1|,|k_2|,|k_3|))^{\alpha}}{|k_1|\max(|k_1|,|k_2|,|k_3|)},
\end{aligned}\]
for any $0 \le \alpha \le 2$, and where $\mathcal K_1$ is defined in the proof of Lemma \ref{Lem:M1}. Together with \eqref{Lh-1} and \eqref{eq:M^3-2}, we obtain
\[\begin{aligned}
\mathcal{M}_{h,2}^{3,\pm}(t,k,k_1,k_2,k_3)\lesssim&~{}\left|\left(1-\frac{4\sin^2\left(\frac{hk}{2}\right)}{h^2k^2}\right)\frac{\cos\left(\frac{hk}{4}\right)\cos\left(\frac{hk_{23}}{4}\right)}{\mathcal K_1\Phi_h^3(k,k_1,k_2,k_3)}\right|\\
&~{}+\left|\left(1-\cos\left(\frac{hk}{4}\right)\cos\left(\frac{hk_{23}}{4}\right)\right)\frac{1}{\mathcal K_1\Phi_h^3(k,k_1,k_2,k_3)}\right|\\
&~{}+\left|\frac{1}{k_1\Psi^3(k,k_1,k_2,k_3)}-\frac{1}{\mathcal K_1\Phi_h^3(k,k_1,k_2,k_3)}\right|\\
\lesssim&~{} \frac{h^\alpha(\max(|k|,|k_1|,|k_2|,|k_3|))^\alpha}{|k_1|\max(|k_1|,|k_2|,|k_3|)},
\end{aligned}\]
for any $0 \le \alpha\le 1$. Since $(\max(|k|,|k_1|,|k_2|,|k_3|))^{\alpha} \lesssim (\max(|k|,|k_1|,|k_2|,|k_3|))^{2\alpha}$ for $\alpha \ge 0$, we complete the proof.
\end{proof}

\begin{lemma}\label{Lem:Mh4}
Let $0<h \le 1$ and $0\le \alpha \le1$. Let
\[\begin{aligned}
\mathcal{M}_{h,1}^{4,\pm}(t,k,k_1,k_2,k_3,k_4) :=&~{} \frac{2(k_1+k_2)}{k_3\Psi^3(k,k_1+k_2,k_3,k_4)}e^{\pm it \Psi^4(k,k_1,k_2,k_3,k_4)}\\
&~{}-\frac{4\sin^2\left(\frac{hk}{2}\right)}{h^2k^2}\frac{\sin\left(\frac{h(k_1+k_2)}{2}\right)\cos\left(\frac{hk}{4}\right)\cos\left(\frac{h(k_1+k_2+k_4)}{4}\right)}{\sin\left(\frac{hk_3}{4}\right)\Phi_h^3(k,k_1+k_2,k_3,k_4)}e^{\pm it \Phi_h^4(k,k_1,k_2,k_3,k_4)}
\end{aligned}\]
and 
\[\begin{aligned}
\mathcal{M}_{h,2}^{4,\pm}(t,k,k_1,k_2,k_3,k_4) :=&~{} \frac{1}{\Psi^3(k,k_1+k_2,k_3,k_4)}e^{\pm it \Psi^4(k,k_1,k_2,k_3,k_4)}\\
&~{}-\frac{4\sin^2\left(\frac{hk}{2}\right)}{h^2k^2}\frac{\cos\left(\frac{hk}{4}\right)\cos\left(\frac{h(k_1+k_2)}{4}\right)\cos\left(\frac{h(k_3+k_4)}{4}\right)}{\Phi_h^3(k,k_1+k_2,k_3,k_4)}e^{\pm it \Phi_h^4(k,k_1,k_2,k_3,k_4)}
\end{aligned}\]
over $\mathcal B(k)$ defined as in \eqref{eq:B(k)}. Let
\[\mathcal{M}_{h}^{4,\pm}(t,k,k_1,k_2,k_3,k_4) = \mathcal{M}_{h,1}^{4,\pm}(t,k,k_1,k_2,k_3,k_4) + \mathcal{M}_{h,2}^{4,\pm}(t,k,k_1,k_2,k_3,k_4).\]
Then, we have
\[\begin{aligned}
&~{}| \mathcal{M}_{h}^{4,\pm}(t,k,k_1,k_2,k_3,k_4)|\\
\ls&~{} \frac{h^\alpha(1+|t|)^\alpha|k_{12}|}{|k_3k_{123}k_{124}k_{34}|}\left(\max(|k|, |k_1|, |k_2|, |k_3|, |k_{12}|, |k_{34}|, |k_{123}|, |k_{124}|)\right)^\alpha \left(|k_{123}k_{124}k_{34}|^\alpha + |k_{12}k_1k_2|^\alpha\right).
\end{aligned}\]
\end{lemma}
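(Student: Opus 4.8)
The plan is to bound $\mathcal{M}_h^{4,\pm}$ by treating $\mathcal{M}_{h,1}^{4,\pm}$ and $\mathcal{M}_{h,2}^{4,\pm}$ separately, exactly as in Lemma~\ref{Lem:M2}: for each of the two we split the difference ``$\Psi$-term minus $\Phi_h$-term'' into (i) the difference coming from the exponentials $e^{\pm it\Psi^4}-e^{\pm it\Phi_h^4}$ with the multiplier frozen at its continuous form, and (ii) the difference of the algebraic multipliers themselves (with the exponential frozen at $e^{\pm it\Phi_h^4}$). Since $h|k_j|\lesssim 1$ and all phases $\Phi_h^j$, $\Psi^j$ are products of (discrete) frequencies, each quantity that appears is a ratio of such products, and the whole estimate reduces to repeated use of \eqref{Lh}, \eqref{Lh-1}, \eqref{eq:Lh-cosine}, and the interpolation trick used to prove \eqref{exponential differnece}. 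It is convenient throughout to abbreviate $k_{12}=k_1+k_2$, $k_{34}=k_3+k_4$, $k_{123}=k_1+k_2+k_3$, $k_{124}=k_1+k_2+k_4$, and $\mathcal{K}_A = \tfrac4h\sin(\tfrac{h k_A}{4})$ for each index set $A$, so that $|\mathcal{K}_A|\sim|k_A|$, and to recall $\Phi_h^3(k,k_{12},k_3,k_4) = -\tfrac8{h^3}\sin(\tfrac{h k_{123}}{4})\sin(\tfrac{h k_{34}}{4})\sin(\tfrac{h k_{124}}{4})$, $\Psi^3(k,k_{12},k_3,k_4)=-\tfrac18 k_{123}k_{34}k_{124}$, with $\Phi_h^4=\Phi_h^3(k,k_{12},k_3,k_4)+\Phi_h^2(k_{12},k_1,k_2)$ and likewise for $\Psi^4$.

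First I would handle the exponential difference. As in \eqref{eq:M^3-1}, the mean-value theorem together with \eqref{Lh} applied to each of the factors making up $\Phi_h^4-\Psi^4$ gives, after interpolating against the trivial bound $1$,
\[
\left|e^{\pm it\Psi^4}-e^{\pm it\Phi_h^4}\right| \lesssim h^\alpha(1+|t|)^\alpha\Big(\big|k_{123}k_{34}k_{124}\big|+\big|k_{12}k_1k_2\big|\Big)^{\alpha}\big(\max(|k|,|k_1|,|k_2|,|k_3|,|k_{12}|,|k_{34}|,|k_{123}|,|k_{124}|)\big)^{\alpha},
\]
for $0\le\alpha\le1$; the two summands inside come from $\Phi_h^3-\Psi^3$ and $\Phi_h^2(k_{12},k_1,k_2)-\Psi^2(k_{12},k_1,k_2)$ respectively. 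Dividing by $|\Psi^3(k,k_{12},k_3,k_4)|\cdot|k_3/(k_1+k_2)|^{\mp1}$ (appropriately for each of the two pieces) and using $|k_{123}k_{34}k_{124}|^{1-\alpha}\gtrsim \max(\dots)^{1-\alpha}$ converts this into the claimed bound on the ``exponential part'' of $\mathcal{M}_{h,1}^{4,\pm}$ and $\mathcal{M}_{h,2}^{4,\pm}$.

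Next I would handle the multiplier difference. For $\mathcal{M}_{h,2}^{4,\pm}$ this is the cleaner case: one writes $\tfrac1{\Psi^3}-\tfrac{(\text{product of sines and cosines})}{\Phi_h^3}$ as a telescoping sum in which each term replaces one discrete factor $\mathcal{K}_A$ (or one $\cos(\tfrac{h k_A}{4})$, or the factor $\tfrac{4}{h^2k^2}\sin^2(\tfrac{hk}{2})$) by its continuous counterpart, each replacement costing $h^\alpha(\max|\cdot|)^\alpha$ by \eqref{Lh}, \eqref{Lh-1}, \eqref{eq:Lh-cosine}, exactly as in the displayed telescoping estimate in the proof of Lemma~\ref{Lem:M2}; the surviving denominator is $|k_{123}k_{34}k_{124}|$, which gives the factor $\tfrac{1}{|k_3 k_{123}k_{124}k_{34}|}$ after one also accounts for the extra $|k_{12}|$ in the numerator of the target bound coming from the $\mathcal{M}_{h,1}$ normalization (one keeps $|k_{12}|/|k_3|$ visible throughout). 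For $\mathcal{M}_{h,1}^{4,\pm}$ the numerator additionally carries $\sin(\tfrac{h k_{12}}{2})$ versus $k_{12}$ and $\tfrac1{\sin(h k_3/4)}$ versus $\tfrac1{k_3}$; these are controlled by \eqref{Lh}, \eqref{Lh-1} in the same telescoping manner, noting $|\sin(h k_3/4)|\sim h|k_3|$ so the $1/\sin$ factor is harmless.

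The main obstacle will be bookkeeping the telescoping for $\mathcal{M}_{h,1}^{4,\pm}$: there are many discrete factors ($\mathcal K_{123}$, $\mathcal K_{34}$, $\mathcal K_{124}$, $\mathcal K_{12}$, $\tfrac4h\sin(\tfrac{hk_3}{4})$, $\cos(\tfrac{hk}{4})$, $\cos(\tfrac{h k_{124}}{4})$, $\sin(\tfrac{h k_{12}}{2})$, and $\tfrac{4}{h^2k^2}\sin^2(\tfrac{hk}{2})$), and one must check that after collecting all $h^\alpha$ gains the remaining denominator is exactly $|k_3 k_{123}k_{124}k_{34}|$ and the remaining numerator is exactly $|k_{12}|$ times a bounded quantity times the polynomial factor $(\max|\cdot|)^\alpha(|k_{123}k_{124}k_{34}|^\alpha+|k_{12}k_1k_2|^\alpha)$ — in particular one must verify that each discrete-to-continuous replacement genuinely produces a $\max(\dots)^\alpha$ and never a spurious large power. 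This is entirely routine given Lemmas~\ref{Lem:M}, \ref{Lem:M1}, \ref{Lem:M2}, but it is the only place where care is needed; once the $\alpha=1$ Lipschitz bound and the trivial $\alpha=0$ bound are both established, the stated range $0\le\alpha\le1$ follows by interpolation.
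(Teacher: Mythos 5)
Your proposal follows essentially the same route as the paper's proof: split each of $\mathcal{M}_{h,1}^{4,\pm}$ and $\mathcal{M}_{h,2}^{4,\pm}$ into an exponential-difference part (bounded via the mean-value theorem, \eqref{Lh}, and interpolation against the trivial bound, with the phase difference split as $(\Phi_h^3-\Psi^3)+(\Phi_h^2-\Psi^2)$ producing the two summands $|k_{123}k_{124}k_{34}|^\alpha$ and $|k_{12}k_1k_2|^\alpha$) and an algebraic multiplier-difference part handled by telescoping one discrete factor at a time using \eqref{Lh}, \eqref{Lh-1}, and \eqref{eq:Lh-cosine}, exactly as the paper does. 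The bookkeeping you flag as the main obstacle is precisely what the paper carries out, and your intermediate form $(A+B)^\alpha\max^\alpha$ is equivalent (for $0\le\alpha\le1$) to the paper's $A^\alpha\max_1^\alpha+B^\alpha\max_2^\alpha$, so the argument is correct as outlined.
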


  \begin{proof}
Let $\mathcal K_j$, $\mathcal K_{j\ell}$, and $k_{j\ell}$ be defined as in the proofs of Lemma \ref{Lem:M1} and \ref{Lem:M2} for $j,\ell = 1,2,3,4$. Additionally, let $\displaystyle \mathcal K_{j\ell m} = \frac{4}{h}\sin\left(\frac{h(k_j+k_\ell+k_m)}{4}\right)$ and $k_{j\ell m} = k_j + k_\ell + k_m$, $j,\ell,m = 1,2,3,4$. Similarly as the proof of \eqref{exponential differnece}, we have 
\begin{equation}\label{eq:M^4-1}
\begin{aligned}
&~{}\left|e^{\pm it \Psi^4(k,k_1,k_2,k_3,k_4)}-e^{\pm it \Phi_h^4(k,k_1,k_2,k_3,k_4)}\right| \\
\le&~{} \left|e^{\pm it \Psi^3(k,k_{12},k_3,k_4)}-e^{\pm it \Phi_h^3(k,k_{12},k_3,k_4)}\right| +\left|e^{\pm it \Psi^2(k_{12},k_1,k_2)}-e^{\pm it \Phi_h^2(k_{12},k_1,k_2)}\right|\\
\lesssim&~{} h^\alpha(1+|t|)^\alpha\left((|k_{123}k_{124}k_{34}|\max(|k_{123}|,|k_{124}|,|k_{34}|))^\alpha + (|k_{12}k_{1}k_{2}|\max(|k_{12}|,|k_{1}|,|k_{2}|))^\alpha\right),
\end{aligned}
\end{equation} 
for $0 \le \alpha \le 1$. Note by \eqref{eq:Lh-cosine} that
\begin{equation}\label{eq:M^4-0}
\begin{aligned}
\left|1-\cos\left(\frac{hk_{12}}{4}\right)\cos\left(\frac{hk}{4}\right)\cos\left(\frac{hk_{124}}{4}\right)\right|\lesssim&~{} (h|k_{12}|+h|k|+h|k_{124}|)^{\alpha}\\
\lesssim&~{}h^{\alpha}(\max(|k_{12}|,|k|,|k_{124}|))^{\alpha}, 
\end{aligned}
\end{equation}
for any $0 \le \alpha \le 2$. Moreover, by \eqref{Lh}, we have
\[\begin{aligned}
\left|\frac{k_{12}}{k_3k_{123}k_{124}k_{34}} - \frac{\mathcal K_{12}}{\mathcal K_{3}\mathcal K_{123}\mathcal K_{124}\mathcal K_{34}}\right| =&~{} \left|\frac{k_{12}\mathcal K_{3}\mathcal K_{123}\mathcal K_{124}\mathcal K_{34} - \mathcal K_{12}k_3k_{123}k_{124}k_{34}}{k_3k_{123}k_{124}k_{34}\mathcal K_{3}\mathcal K_{123}\mathcal K_{124}\mathcal K_{34}}\right|\\
\le&~{} \left|\frac{(k_{12}-\mathcal K_{12})\mathcal K_{3}\mathcal K_{123}\mathcal K_{124}\mathcal K_{34}}{k_3k_{123}k_{124}k_{34}\mathcal K_{3}\mathcal K_{123}\mathcal K_{124}\mathcal K_{34}}\right|\\
&~{}+\left|\frac{\mathcal K_{12}(\mathcal K_{3}-k_3)\mathcal K_{123}\mathcal K_{124}\mathcal K_{34}}{k_3k_{123}k_{124}k_{34}\mathcal K_{3}\mathcal K_{123}\mathcal K_{124}\mathcal K_{34}}\right|\\
&~{}+\left|\frac{\mathcal K_{12}k_3(\mathcal K_{123} - k_{123})\mathcal K_{124}\mathcal K_{34}}{k_3k_{123}k_{124}k_{34}\mathcal K_{3}\mathcal K_{123}\mathcal K_{124}\mathcal K_{34}}\right|\\
&~{}+\left|\frac{\mathcal K_{12}k_3k_{123}(\mathcal K_{124}-k_{124})\mathcal K_{34}}{k_3k_{123}k_{124}k_{34}\mathcal K_{3}\mathcal K_{123}\mathcal K_{124}\mathcal K_{34}}\right|\\
&~{}+\left|\frac{\mathcal K_{12}k_3k_{123}k_{124}(\mathcal K_{34}-k_{34})}{k_3k_{123}k_{124}k_{34}\mathcal K_{3}\mathcal K_{123}\mathcal K_{124}\mathcal K_{34}}\right|\\
\lesssim&~{}\frac{|k_{12}|h^\alpha(|k_{12}|^{\alpha}+|k_{3}|^{\alpha}+|k_{123}|^{\alpha}+|k_{124}|^{\alpha}+|k_{34}|^{\alpha})}{|k_3||k_{123}||k_{124}||k_{34}|}\\
\end{aligned}\]
for any $0 \le \alpha \le 2$. Together with \eqref{eq:M^4-1}, \eqref{Lh-1} and \eqref{eq:M^4-0}, we obtain
\[\begin{aligned}
&~{}\left|\mathcal{M}_{h,1}^{4,\pm}(t,k,k_1,k_2,k_3,k_4)\right| \\
\lesssim&~{} \left|\frac{k_{12}}{k_3\Psi^3(k,k_{12},k_3,k_4)}\left(e^{\pm it \Psi^4(k,k_1,k_2,k_3,k_4)}-e^{\pm it \Phi_h^4(k,k_1,k_2,k_3,k_4)}\right)\right|\\ &~{}+\left|\left(1-\frac{4\sin^2\left(\frac{hk}{2}\right)}{h^2k^2}\right)\frac{\sin\left(\frac{hk_{12}}{2}\right)\cos\left(\frac{hk}{4}\right)\cos\left(\frac{hk_{124}}{4}\right)}{\sin\left(\frac{hk_3}{4}\right)\Phi_h^3(k,k_{12},k_3,k_4)}\right|\\
&~{}+\left|\left(1-\cos\left(\frac{hk_{12}}{4}\right)\cos\left(\frac{hk}{4}\right)\cos\left(\frac{hk_{124}}{4}\right)\right)\frac{\sin\left(\frac{hk_{12}}{4}\right)}{\sin\left(\frac{hk_3}{4}\right)\Phi_h^3(k,k_{12},k_3,k_4)}\right|\\
&~{}+\left|\frac{k_{12}}{k_3k_{123}k_{124}k_{34}} - \frac{\mathcal K_{12}}{\mathcal K_{3}\mathcal K_{123}\mathcal K_{124}\mathcal K_{34}}\right|\\
\lesssim&~{} \frac{h^\alpha(1+|t|)^\alpha|k_{12}|}{|k_3k_{123}k_{124}k_{34}|}\left(\max(|k|, |k_1|, |k_2|, |k_3|, |k_{12}|, |k_{34}|, |k_{123}|, |k_{124}|)\right)^\alpha \left(|k_{123}k_{124}k_{34}|^\alpha + |k_{12}k_1k_2|^\alpha\right),
\end{aligned}\]
for any $0 \le \alpha \le 1$. Analogously, we also have
\[\begin{aligned}
&~{}\left|\mathcal{M}_{h,2}^{4,\pm}(t,k,k_1,k_2,k_3,k_4)\right| \\
\lesssim&~{} \left|\frac{1}{\Psi^3(k,k_{12},k_3,k_4)}\left(e^{\pm it \Psi^4(k,k_1,k_2,k_3,k_4)}-e^{\pm it \Phi_h^4(k,k_1,k_2,k_3,k_4)}\right)\right|\\ &~{}+\left|\left(1-\frac{4\sin^2\left(\frac{hk}{2}\right)}{h^2k^2}\right)\frac{\cos\left(\frac{hk}{4}\right)\cos\left(\frac{hk_{12}}{4}\right)\cos\left(\frac{hk_{34}}{4}\right)}{\Phi_h^3(k,k_{12},k_3,k_4)}\right|\\
&~{}+\left|\left(1-\cos\left(\frac{hk}{4}\right)\cos\left(\frac{hk_{12}}{4}\right)\cos\left(\frac{hk_{34}}{4}\right)\right)\frac{1}{\Phi_h^3(k,k_{12},k_3,k_4)}\right|\\
&~{}+\left|\frac{1}{k_{123}k_{124}k_{34}} - \frac{1}{\mathcal K_{123}\mathcal K_{124}\mathcal K_{34}}\right|\\
\lesssim&~{} \frac{h^\alpha(1+|t|)^\alpha|k_{12}|}{|k_3k_{123}k_{124}k_{34}|}\left(\max(|k|, |k_1|, |k_2|, |k_3|, |k_{12}|, |k_{34}|, |k_{123}|, |k_{124}|)\right)^\alpha \left(|k_{123}k_{124}k_{34}|^\alpha + |k_{12}k_1k_2|^\alpha\right),
\end{aligned}\]
for any $0 \le \alpha \le 1$. Collecting all, we complete the proof.
\end{proof}

\subsection{Continuum limit to KdV: Proof of Proposition~\ref{prop:from decoupled to kdv}}
We finally prove the convergence of decoupled FPU \eqref{decoupled FPU'} to KdV \eqref{kdv integral form}. Let $T=T(\| u_{h,0}^{\pm}\|_{H^s(\T_h)})>0$ be the common existence time for the solution $v_h^{\pm}(t)$ (resp. $w^{\pm}(t)$) to the decoupled FPU with initial data $u_{h,0}^{\pm}$ constructed in Proposition~\ref{prop:uniform bound} (resp. the KdV equation  with initial data $\mathcal{L}_hu_{h,0}^{\pm}$ constructed in Proposition~\ref{prop:KdV well-posed}).

\begin{proposition}\label{prop:exterior estimates}
Let $0 \le s \le 1$. Let $f_h$ and $g$ be any $H^s(\T_h)$ and $H^s(\T)$ functions, respectively. Then,
\[\| P_{\ge \frac{\pi}{h}}\mathcal L_h f_h\|_{L^2(\T)} \lesssim h^s\|f_h\|_{H^s(\T_h)} \quad \mbox{and} \quad \| P_{\ge \frac{\pi}{h}}g\|_{L^2(\T)} \lesssim h^s\|g\|_{H^s(\T)},\]
where $\mathcal L_h$ is the linear interpolation defined as in \eqref{def:linear interpolation_0}.
\end{proposition}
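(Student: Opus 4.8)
The plan is to reduce both inequalities to the elementary fact that the projection $P_{\ge \pi/h}$ only sees Fourier modes of size $\gtrsim h^{-1}$, which is worth exactly $h^s$ derivatives when passing from $H^s$ to $L^2$. First I would record, for an arbitrary $g \in H^s(\T)$ and any integer cutoff $M \ge 1$, that every mode $k$ with $|k| \ge M$ satisfies $\langle k\rangle^{-2s} \le |k|^{-2s} \le M^{-2s}$, whence
\[
\|P_{\ge M}g\|_{L^2(\T)}^2 = \sum_{|k| \ge M} |\widehat{g}(k)|^2 \le M^{-2s} \sum_{k \in \Z} \langle k\rangle^{2s}|\widehat{g}(k)|^2 = M^{-2s}\|g\|_{H^s(\T)}^2 .
\]
Taking $M = \pi/h$, which is the integer $N$ since $h = \pi/N$, gives $\|P_{\ge \pi/h}g\|_{L^2(\T)} \le (\pi/h)^{-s}\|g\|_{H^s(\T)} \lesssim h^s\|g\|_{H^s(\T)}$, i.e.\ the second estimate (with implicit constant in fact no worse than $\pi^{-s} \le 1$).

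For the first estimate I would exploit that $\mathcal L_h f_h$ is itself an $H^s(\T)$ function with a uniformly controlled norm. Indeed, Lemma~\ref{Lem:discretization linearization inequality}, available precisely in the range $0 \le s \le 1$ assumed here, gives $\|\mathcal L_h f_h\|_{H^s(\T)} \lesssim \|f_h\|_{H^s(\T_h)}$. Applying the tail bound of the previous paragraph to $g = \mathcal L_h f_h$ and then invoking Lemma~\ref{Lem:discretization linearization inequality} once more yields
\[
\|P_{\ge \pi/h}\mathcal L_h f_h\|_{L^2(\T)} \lesssim h^s\|\mathcal L_h f_h\|_{H^s(\T)} \lesssim h^s\|f_h\|_{H^s(\T_h)} ,
\]
which is exactly the claim; this finishes the proof modulo the two cited ingredients.

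If one prefers an argument not invoking Lemma~\ref{Lem:discretization linearization inequality}, I would fall back on the explicit symbol \eqref{linear interpolation multiplier}. Writing $k = \ell + \tfrac{2\pi}{h}n$ with $|\ell| \le \pi/h$, and using that shifting $k$ by $\tfrac{2\pi}{h}$ shifts the argument of $\sin^2$ by $\pi$, one has $\sin^2(\tfrac{hk}{2}) = \sin^2(\tfrac{h\ell}{2})$, so $\widehat{\mathcal L_h f_h}(k) = \tfrac{4}{h^2k^2}\sin^2(\tfrac{h\ell}{2})\widehat{f_h}(\ell)$. For $|k| \ge \pi/h$ the relevant $n$ is nonzero except for the finitely many boundary modes $|k| = \pi/h$ (handled directly), hence $|k| = |\ell + 2Nn| \gtrsim N|n|$; summing $\sum_{n \ne 0}|\ell + 2Nn|^{-4} \lesssim N^{-4} \sim h^4$ cancels the $h^{-4}$ from squaring the $k^{-2}$ factor, and the residual bound $\sin^4(\tfrac{h\ell}{2}) \le \min((\tfrac{h\ell}{2})^2,1) \lesssim (h|\ell|)^{2s} \lesssim h^{2s}\langle\ell\rangle^{2s}$ produces the factor $h^{2s}$ after summing in $\ell$ against $\|f_h\|_{H^s(\T_h)}^2$.

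I do not anticipate a genuine difficulty here, as this is a soft statement. The one spot that rewards a little care, and only in the hands-on variant, is that the naive per-mode estimate $|\ell|^{2s}/|k|^{2s}$ fails to be summable in $n$ when $s \le \tfrac12$, so one must retain the full $k^{-4}$ decay available in the $L^2$-norm together with the vanishing of $\sin^2(\tfrac{h\ell}{2})$ at $\ell = 0$; the route through Lemma~\ref{Lem:discretization linearization inequality} sidesteps this entirely.
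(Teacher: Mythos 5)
Your proposal is correct and follows essentially the same route as the paper: the paper's proof is a one-line appeal to Bernstein's inequality (your tail bound $\|P_{\ge M}g\|_{L^2}\le M^{-s}\|g\|_{H^s}$) combined with the boundedness of $\mathcal L_h$ from Lemma \ref{Lem:discretization linearization inequality}. Your alternative symbol-based computation is a fine extra check but is not needed.
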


\begin{proof}
It immediately follows from Bernstein's inequality and the boundedness of the linear interpolation operator.
\end{proof}

\begin{proposition}[Comparison between linear FPU and Airy flows]\label{prop:commutator of interpolation and propagator}
Let $0 \le s \le 5$ and $|t| \le 1$ be fixed. Let $f_h$ be any $H^s(\T_h)$ function. Then,
\[\left\| P_{\le \frac{\pi}{h}}\left(S^{\pm}(t)\mathcal L_h f_h(t)  - \mathcal L_h S_h^{\pm}(t) f_h(t)\right)\right\|_{L^2(\T)} \lesssim h^{\frac{2s}{5}}\|f_h\|_{H^s(\T_h)}.\]
\end{proposition}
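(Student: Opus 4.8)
The plan is to work entirely on the Fourier side and reduce the estimate to a pointwise comparison of the two linear symbols, combined with the multiplier $\tfrac{4}{h^2k^2}\sin^2(\tfrac{hk}{2})$ coming from the interpolation operator $\mathcal L_h$. Recall from \eqref{linear interpolation multiplier} that for $|k|\le \tfrac{\pi}{h}$ one has $\widehat{\mathcal L_h f_h}(k) = \tfrac{4}{h^2k^2}\sin^2(\tfrac{hk}{2})\,\widehat{f_h}(k)$, and both $S^{\pm}(t)$ and $S_h^{\pm}(t)$ act as pure Fourier multipliers: $S^{\pm}(t)$ by $e^{\pm i t k^3/24}$ and $S_h^{\pm}(t)$ by $e^{\pm i t\, s_h(k)/\,}$, more precisely by $e^{\mp \frac{t}{h^2}(\frac{2i}{h}\sin(\frac{hk}{2})-ik)} = e^{\mp i t\, s_h(k)}$ with $s_h(k)=\tfrac{1}{h^2}(k-\tfrac{2}{h}\sin(\tfrac{hk}{2}))$ (note $\mp\frac{1}{h^2}(\frac{2}{h}\sin(\frac{hk}{2})-k) \to \pm\frac{k^3}{24}$ by Remark \ref{rem:formal convergence of propagator}). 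Hence, after projecting to $|k|\le \tfrac{\pi}{h}$, the operator in question is the Fourier multiplier
\[
m_h^{\pm}(t,k) := \frac{4}{h^2k^2}\sin^2\!\left(\frac{hk}{2}\right)\left(e^{\pm i t k^3/24} - e^{\mp i t\, s_h(k)}\right),
\]
and by Plancherel it suffices to show $\sup_{|k|\le \pi/h}\,\langle k\rangle^{-s}|m_h^{\pm}(t,k)| \lesssim h^{2s/5}$ uniformly for $|t|\le 1$.

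First I would dispose of the interpolation factor: by \eqref{Lh-1}, $\big|\tfrac{4}{h^2k^2}\sin^2(\tfrac{hk}{2}) - 1\big| \lesssim (h|k|)^{\alpha}$ for any $0\le\alpha\le 2$, and the factor is bounded by $1$, so the difference between $m_h^{\pm}$ and $\big(e^{\pm itk^3/24}-e^{\mp its_h(k)}\big)$ contributes at most $(h|k|)^{\alpha}$, which is controlled as below. The core is therefore the exponential difference: by the mean-value theorem,
\[
\left|e^{\pm i t k^3/24} - e^{\mp i t\, s_h(k)}\right| \lesssim |t|\left|\frac{k^3}{24} + s_h(k)\right| = \frac{|t|}{h^2}\left|\frac{2}{h}\sin\!\left(\frac{hk}{2}\right) - k + \frac{h^2 k^3}{24}\right|.
\]
Using the next term in the Taylor expansion of $\sin$, the quantity inside is $O(h^4|k|^5)$, so this bound is $\lesssim |t|\,h^2 |k|^5 \lesssim h^2|k|^5$. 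On the other hand the exponential difference is trivially $\lesssim 1$. Interpolating the two bounds with weight $\beta\in[0,1]$ gives $\big|e^{\pm itk^3/24}-e^{\mp its_h(k)}\big| \lesssim (h^2|k|^5)^{\beta} = h^{2\beta}|k|^{5\beta}$ for all $0\le\beta\le 1$. Since $|k|\le \tfrac{\pi}{h}$, we may trade powers of $|k|$ for powers of $h^{-1}$: for the target we want $\langle k\rangle^{-s} h^{2\beta}|k|^{5\beta} \lesssim h^{2s/5}$. Choosing $\beta = s/5$ (legitimate since $0\le s\le 5$) yields $h^{2s/5}|k|^{s}\langle k\rangle^{-s} \lesssim h^{2s/5}$, as desired; the interpolation-factor error $(h|k|)^{\alpha}$ with $\alpha$ chosen so that $h^{\alpha}|k|^{\alpha}\langle k\rangle^{-s}\lesssim h^{2s/5}$ on $|k|\le\pi/h$ is handled the same way (e.g. $\alpha=2s/5$, which lies in $[0,2]$). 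Summing the $\ell^2_k$-weighted squares then gives the claimed $L^2(\T)$ bound.

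The main obstacle — really the only genuinely delicate point — is getting the sharp exponent in the Taylor estimate $\big|\tfrac{2}{h}\sin(\tfrac{hk}{2}) - k + \tfrac{h^2k^3}{24}\big| \lesssim h^4|k|^5$ with a constant uniform in $h$ and $k$ on the relevant range, and then being careful that the interpolation between this and the trivial bound, together with the substitution $|k|\lesssim h^{-1}$, produces exactly $h^{2s/5}$ and not something worse; the loss from $h^2$ (derivative level) to $h^{2s/5}$ is precisely the price of spreading the gain over the full frequency band, and is flagged in the paper as likely optimal (cf.\ the remark after Proposition \ref{prop:from decoupled to kdv} referring to the symbol difference). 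Everything else is routine bookkeeping with \eqref{Lh}, \eqref{Lh-1}, and Plancherel.
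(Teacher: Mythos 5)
Your proposal is correct and follows essentially the same route as the paper's proof: on the Fourier side the difference is the common interpolation multiplier $\frac{4}{h^2k^2}\sin^2(\frac{hk}{2})$ times the difference of the two phase factors, which is bounded by $\min\left(1,|t|h^2|k|^5\right)\lesssim |t|^{s/5}h^{2s/5}|k|^{s}$ via the mean-value theorem, Taylor's theorem, and interpolation against the trivial bound, exactly as in \eqref{difference2-1:symbol difference}. Two cosmetic points: the symbol of $S_h^{\pm}(t)$ is $e^{\pm it s_h(k)}$ rather than $e^{\mp it s_h(k)}$, so the relevant phase difference is $\left|\tfrac{k^3}{24}-s_h(k)\right|=\tfrac{1}{h^2}\left|\tfrac{h^2k^3}{24}-k+\tfrac{2}{h}\sin\left(\tfrac{hk}{2}\right)\right|$ (your final Taylor expression is the correct $O(h^4|k|^5)$ quantity; only the intermediate sign is off), and the $\left|\tfrac{4}{h^2k^2}\sin^2(\tfrac{hk}{2})-1\right|$ error term from \eqref{Lh-1} is unnecessary, since that factor is common to both terms, factors out of the difference, and can simply be bounded by $1$.
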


\begin{proof}
Note that for $0 \le s \le 5$, by the mean-value theorem and the Taylor remainder theorem, we have
\[\begin{aligned}
\left| e^{\pm it\frac{k^3}{24}} - e^{\pm it\frac{1}{h^2}\left( k - \frac{2}{h}\sin(\frac{hk}{2})\right) } \right| \lesssim&~{} \min\left(1,|t|\left|\frac{k^3}{24}-\frac{1}{h^2}\left(k-\frac{2}{h}\sin\left(\frac{hk}{2}\right)\right)\right|\right) \\
\lesssim&~{} \min\left(1,|t|h^2|k|^5\right) \\
\lesssim&~{} |t|^{\frac{s}{5}}h^{\frac{2s}{5}}|k|^s.
\end{aligned}\]
Thus, we conclude for $|t| \le 1$ that
\begin{equation}\label{difference2-1:symbol difference}
\begin{aligned}
  &~{}\left\| P_{\le \frac{\pi}{h}}\left(S^{\pm}(t)\mathcal L_h f_h(t)  - \mathcal L_h S_h^{\pm}(t) f_h(t)\right) \right\|_{L^2(\T)} \\ 
  =&~{}\left\| \left( e^{\pm it\frac{k^3}{24}} - e^{\pm it\frac{1}{h^2}\left(k- \frac{2}{h}\sin\left(\frac{hk}{2}\right)\right) } \right)\frac{4}{h^2k^2}\sin^2\left(\frac{hk}{2}\right)  \widehat{f}_h(t,k) \right \|_{L^2(|k|\le \frac{\pi}{h})} \\ 
  \lesssim&~{} h^{\frac{2s}{5}} \|f_h(t)\|_{H^s(\T_h)}.
\end{aligned}
\end{equation}
\end{proof}

\begin{proposition}\label{prop:differnece estimates-1}
Let $0 \le s \le 1$ and $0 < h \le 1$. For $1 < M < \frac{\pi}{h}$, we have 
\begin{equation}\label{lowfreq:esti}
\begin{aligned}
&~{}\|P_{\le M}(\mathcal L_{h} \mathcal V_{h}^{\pm} - \mathcal W^{\pm} )(t)\|_{L^2(\T)}\\
\lesssim&~{}  h^{\frac{s}{2}}M^{\frac{3}{2}+\frac{s}{2}} \int_0^t (1+|t'|)^{\frac{s}{2}}\|\mathcal{V}_h^{\pm}(t')\|_{H^{s}(\T_h)}^2 \; dt'\\
&~{}+ M^{\frac{3}{2}} \int_0^t \|(\mathcal L_{h} \mathcal V_{h}^{\pm} - \mathcal W^{\pm})(t')\|_{L^2(\T)} \left(\|\mathcal V_{h}^{\pm}(t')\|_{H^s(\T)}+\|\mathcal W^{\pm}(t')\|_{H^s(\T)}\right) \; dt',
\end{aligned}
\end{equation}
for any $\mathcal V_{h}^{\pm} \in H^s(\T_h)$ and $\mathcal W^{\pm} \in H^s(\T)$. The implicit constant does not depend on $h\in(0,1]$.
\end{proposition}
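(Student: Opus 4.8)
The plan is to work with the \emph{bare} Duhamel formulation rather than with the renormalized equations of Section~\ref{sec:regularization}. Observe first that since $M<\frac{\pi}{h}$, on the frequency range $|k|\le M$ the linear interpolation acts, by \eqref{linear interpolation multiplier}, as the Fourier multiplier $m_h(k):=\frac{4}{h^2k^2}\sin^2\!\big(\frac{hk}{2}\big)$ with no aliasing, and the FPU quadratic symbol obeys $\big|\tfrac2h\sin(\tfrac{hk}{2})\big|\le|k|\le M$ there; hence no derivative is lost at low frequencies, which is exactly why (in contrast to the boundary and high-frequency estimates) renormalization is not needed here. Using \eqref{time derivative of v} and \eqref{time derivative of w} in integrated form, $\widehat{\mathcal V_h^{\pm}}(t,k)=\widehat{u_{h,0}^{\pm}}(k)+\int_0^t\mathcal F_h\textup{B}_1^{\pm}(\mathcal V_h^{\pm},\mathcal V_h^{\pm})(t',k)\,dt'$ and $\widehat{\mathcal W^{\pm}}(t,k)=\widehat{\mathcal L_hu_{h,0}^{\pm}}(k)+\int_0^t\mathcal F\textup{D}_1^{\pm}(\mathcal W^{\pm},\mathcal W^{\pm})(t',k)\,dt'$. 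Since $\widehat{\mathcal L_hu_{h,0}^{\pm}}(k)=m_h(k)\widehat{u_{h,0}^{\pm}}(k)=\widehat{\mathcal L_h\mathcal V_h^{\pm}}(0,k)$ for $|k|\le M$, the data contributions cancel under $P_{\le M}$, and I am left with
\[P_{\le M}\big(\mathcal L_h\mathcal V_h^{\pm}-\mathcal W^{\pm}\big)(t)=\int_0^t P_{\le M}\Big(m_h\,\mathcal F_h\textup{B}_1^{\pm}(\mathcal V_h^{\pm},\mathcal V_h^{\pm})-\mathcal F\textup{D}_1^{\pm}(\mathcal W^{\pm},\mathcal W^{\pm})\Big)(t')\,dt'.\]

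Next I would insert $\mp\mathcal F\textup{D}_1^{\pm}(\mathcal L_h\mathcal V_h^{\pm},\mathcal L_h\mathcal V_h^{\pm})$ and split the integrand as $\mathrm I+\mathrm{II}$, where $\mathrm I:=m_h\,\mathcal F_h\textup{B}_1^{\pm}(\mathcal V_h^{\pm},\mathcal V_h^{\pm})-\mathcal F\textup{D}_1^{\pm}(\mathcal L_h\mathcal V_h^{\pm},\mathcal L_h\mathcal V_h^{\pm})$ is a pure symbol-mismatch term and $\mathrm{II}:=\mathcal F\textup{D}_1^{\pm}(\mathcal L_h\mathcal V_h^{\pm},\mathcal L_h\mathcal V_h^{\pm})-\mathcal F\textup{D}_1^{\pm}(\mathcal W^{\pm},\mathcal W^{\pm})$ is bilinear in $\mathcal L_h\mathcal V_h^{\pm}-\mathcal W^{\pm}$. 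For $\mathrm{II}$, by bilinearity and the symmetry of $\textup{D}_1^{\pm}$ one has $\textup{D}_1^{\pm}(a,a)-\textup{D}_1^{\pm}(b,b)=\textup{D}_1^{\pm}(a-b,a)+\textup{D}_1^{\pm}(b,a-b)$ with $a=\mathcal L_h\mathcal V_h^{\pm}$, $b=\mathcal W^{\pm}$, so \eqref{ineq:D0} gives $\|P_{\le M}\mathrm{II}(t')\|_{L^2(\T)}\lesssim M^{3/2}\|(\mathcal L_h\mathcal V_h^{\pm}-\mathcal W^{\pm})(t')\|_{L^2(\T)}\big(\|\mathcal L_h\mathcal V_h^{\pm}(t')\|_{L^2}+\|\mathcal W^{\pm}(t')\|_{L^2}\big)$; then $\|\mathcal L_h\mathcal V_h^{\pm}\|_{L^2(\T)}\lesssim\|\mathcal V_h^{\pm}\|_{L^2(\T_h)}\le\|\mathcal V_h^{\pm}\|_{H^s(\T_h)}$ by Lemma~\ref{Lem:discretization linearization inequality}, and $\|\mathcal W^{\pm}\|_{L^2}\le\|\mathcal W^{\pm}\|_{H^s}$, which after integration in $t'$ is precisely the second term on the right of \eqref{lowfreq:esti}.

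For $\mathrm I$, on $|k|\le M$ I would expand both pieces over $k_1+k_2=k$, writing each low-frequency input as $\widehat{\mathcal L_h\mathcal V_h^{\pm}}(k_j)=m_h(k_j)\widehat{\mathcal V_h^{\pm}}(k_j)$; the leading contribution is $\pm\tfrac{i}{4}\sum_{k=k_1+k_2}\mathcal M_{h,0}^{2,\pm}(t',k,k_1,k_2)\widehat{\mathcal V_h^{\pm}}(k_1)\widehat{\mathcal V_h^{\pm}}(k_2)$ with $\mathcal M_{h,0}^{2,\pm}$ the multiplier of Lemma~\ref{Lem:M}, plus two parasitic terms: (a) the factors $m_h(k_j)-1$ attached to the inputs, controlled by $|m_h(k_j)-1|\lesssim(h|k_j|)^{s/2}$ via \eqref{Lh-1}; and (b) the discrepancy between the discrete convolution over $(\T_h)^*$ and the continuum one over $\Z$, whose wrap-around and high-input terms involve only the frequencies of $\mathcal V_h^{\pm}$ and $\mathcal L_h\mathcal V_h^{\pm}$ near $\pm N$, bounded using $N^{-s}\sim h^{s}$ together with Proposition~\ref{prop:exterior estimates}. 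To the leading term I would apply Lemma~\ref{Lem:M} with $\alpha=\tfrac s2$ — the largest exponent for which $\big(|k_1k_2|\max(|k|,|k_1|,|k_2|)\big)^{s/2}\lesssim\langle k_1\rangle^{s}\langle k_2\rangle^{s}$ — bound $|k|^{1+s/2}\le M^{1+s/2}$, and then use the elementary Cauchy–Schwarz convolution bound $\|P_{\le M}(fg)\|_{L^2(\T)}\lesssim M^{1/2}\|f\|_{L^2}\|g\|_{L^2}$ (the algebraic core of \eqref{ineq:D0}) with $f=g=\langle\nabla_h\rangle^{s}|\mathcal V_h^{\pm}|$. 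This yields $\|P_{\le M}\mathrm I(t')\|_{L^2(\T)}\lesssim M^{3/2+s/2}h^{s/2}(1+|t'|)^{s/2}\|\mathcal V_h^{\pm}(t')\|_{H^{s}(\T_h)}^2$, while the parasitic terms (a), (b) contribute only $O\big(h^{s}M^{3/2}\|\mathcal V_h^{\pm}\|_{H^s}^2\big)$, which is absorbed since $h^{s}\le h^{s/2}$ for $h\le1$. Integrating over $[0,t]$ and combining with the $\mathrm{II}$ bound gives \eqref{lowfreq:esti}, with all constants independent of $h\in(0,1]$.

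The only genuine work is the bookkeeping for $\mathrm I$: cleanly isolating the true symbol mismatch $\mathcal M_{h,0}^{2,\pm}$ from the corrections produced by the interpolation symbol $m_h$ and by aliasing in the discrete convolution, and checking that each of those corrections carries an extra factor $h^{s}$ (hence is harmless). Once the pointwise bound of Lemma~\ref{Lem:M} is in hand the rest is routine; the choice $\alpha=s/2$ is forced by the need to rearrange the frequency weights as $\langle k_1\rangle^{s}\langle k_2\rangle^{s}$, and no $L^4$-Strichartz input is needed at this low-frequency stage.
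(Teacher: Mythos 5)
Your proposal is correct and follows essentially the same route as the paper: cancel the data under $P_{\le M}$, split the Duhamel integrand into the symbol-mismatch piece and the piece bilinear in $\mathcal L_h\mathcal V_h^{\pm}-\mathcal W^{\pm}$, estimate the latter with \eqref{ineq:D0}, and estimate the former with Lemma \ref{Lem:M} at $\alpha=s/2$ plus Cauchy--Schwarz, with the interpolation-symbol and aliasing corrections carrying an extra power of $h$. The only (cosmetic) difference is bookkeeping: the paper keeps $\widehat{\mathcal L_h\mathcal V_h^{\pm}}$ as the inputs of its main term $I_1$ and isolates the replacement $\mathcal L_h\mathcal V_h^{\pm}\to\mathcal V_h^{\pm}$ and the $|k_j|>\pi/h$ contributions as separate terms $I_2$, $I_3$, whereas you peel off the multiplier $m_h(k_j)-1$ and the convolution discrepancy up front.
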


\begin{proof}
From \eqref{eq:FT of V} and \eqref{eq:FT of W}, we write for $|k|\le M$ that
\[\begin{aligned}
(\mathcal L_{h} \mathcal V_{h}^{\pm} - \mathcal W^{\pm} )(t,k)
&=\int_0^t \mathcal L_{h} B_1^{\pm}(\mathcal V_{h}^{\pm},\mathcal V_{h}^{\pm})(t',k) - D_1^{\pm}( \mathcal W^{\pm} , \mathcal W^{\pm} )(t',k) \; dt'\\ 
&=\int_0^t \left(\mathcal L_{h} B_1^{\pm}(\mathcal V_{h}^{\pm},\mathcal V_{h}^{\pm})-D_1^{\pm}( \mathcal{L}_{h}\mathcal V_{h}^{\pm} ,  \mathcal{L}_{h}\mathcal V_{h}^{\pm}  ) \right)(t',k) \; dt'\\
&~{}+ \int_0^t D_1^{\pm}(\mathcal L_{h} \mathcal V_{h}^{\pm} - \mathcal W^{\pm},\mathcal L_{h} \mathcal V_{h}^{\pm} + \mathcal W^{\pm} )(t',k) \; dt'.
\end{aligned}\]
By \eqref{ineq:D0} and the Minkowski inequality, we first have
\[\begin{aligned}
&~{}\|P_{\le M}D_1^{\pm}(\mathcal L_{h} \mathcal V_{h}^{\pm} - \mathcal W^{\pm},\mathcal L_{h} \mathcal V_{h}^{\pm} + \mathcal W^{\pm} )(t')\|_{L^2(\T)}\\
 \lesssim&~{} M^{\frac32}\|(\mathcal L_{h} \mathcal V_{h}^{\pm} - \mathcal W^{\pm})(t')\|_{L^2(\T)} \left(\|\mathcal L_{h} \mathcal V_{h}^{\pm}(t')\|_{L^2(\T)}+\|\mathcal W^{\pm}(t')\|_{L^2(\T)}\right).
\end{aligned}\]
On the other hand, by \eqref{linear interpolation multiplier} and the symmetry over $k_1$ and $k_2$, a direct computation for $|k| \le M$ gives
\[\mathcal{F} \left( \mathcal L_{h} B_1^{\pm}(\mathcal V_{h}^{\pm},\mathcal V_{h}^{\pm})(t',k) - D_1^{\pm}( \mathcal{L}_{h}\mathcal V_{h}^{\pm} ,  \mathcal{L}_{h}\mathcal V_{h}^{\pm}  ))\right)(t',k) = I_1(t',k) + I_2(t',k) + I_3(t',k),\]
where
\[I_1(t',k) =\mp i \sum_{\substack{k=k_1+k_2 \\ |k_1|, |k_2| \le \frac{\pi}{h} \\ k_1k_2 \neq 0}}\mathcal M_{h,0}^{2,\pm}(t',k,k_1,k_2)\widehat{\mathcal{L}_{h}\mathcal V_{h}^{\pm}}(t',k_1)\widehat{\mathcal{L}_{h}\mathcal V_{h}^{\pm}}(t',k_2),\]
here $\mathcal M_{h,0}^{2,\pm}(t',k,k_1,k_2)$ is introduced in Lemma \ref{Lem:M},
\[I_2(t',k) = \mp\frac{ik}{4} \sum_{\substack{k=k_1+k_2 \\ |k_1|, |k_2| \le \frac{\pi}{h} \\ k_1k_2 \neq 0}} e^{\pm it' \Psi^2(k,k_1,k_2)} \left(\widehat{\mathcal{L}_{h}\mathcal V_{h}^{\pm}} - \widehat{\mathcal{V}_h^{\pm}}  \right)(t',k_1) \left(\widehat{\mathcal{L}_{h}\mathcal V_{h}^{\pm}} + \widehat{\mathcal{V}_h^{\pm}}\right)(t',k_2),\]
and
\[I_3(t',k) = \mp\frac{ik}{4} \sum_{\substack{k=k_1+k_2 \\ |k_1| > \frac{\pi}{h}  \; \text{or} \; |k_2| > \frac{\pi}{h} \\ k_1k_2 \neq 0}} e^{\pm it' \Psi^2(k,k_1,k_2)} \widehat{\mathcal{L}_{h}\mathcal V_{h}^{\pm}}(t',k_1)\widehat{\mathcal{L}_{h}\mathcal V_{h}^{\pm}}(t',k_2).\]
By Lemma~\ref{Lem:M} and the Cauchy-Schwarz inequality, we have 
\[   \|P_{\le M}I_1(t')\|_{L^2(\T)} \lesssim h^\alpha (1+|t'|)^{\alpha} M^{\frac32+\alpha} \|\mathcal{V}_h^{\pm}(t')\|_{H^{2\alpha}(\T_h)}\|\mathcal{V}_h^{\pm}(t')\|_{H^{\alpha}(\T_h)},\]
for any $0 \le \alpha \le 1$. On the other hand, by the Cauchy-Schwarz inequality and Lemma \ref{lem:LOWLh}, we have
\begin{equation}\label{eq:I_2}
\begin{aligned}
\|P_{\le M}I_2(t')\|_{L^2(\T)} \lesssim&~{} h^\alpha M^{\frac32}\left(\sum_{|k| \le \frac{\pi}{h}}\left|\widehat{\mathcal{L}_{h}\mathcal V_{h}^{\pm}} - \widehat{\mathcal{V}_h^{\pm}} \right|^2\right)^{\frac12} \left(\sum_{|k| \le \frac{\pi}{h}}\left|\widehat{\mathcal{L}_{h}\mathcal V_{h}^{\pm}} + \widehat{\mathcal{V}_h^{\pm}} \right|^2\right)^{\frac12}\\
\lesssim&~{}h^\alpha M^{\frac32}\|\mathcal{V}_h^{\pm}(t')\|_{H^{\alpha}(\T_h)} \left(\|\mathcal{L}_h \mathcal{V}_h^{\pm}(t')\|_{L^2(\T)} + \|\mathcal{V}_h^{\pm}(t')\|_{L^2(\T_h)}\right),
\end{aligned}
\end{equation}
for any $0 \le \alpha \le 2$. Moreover, by the Cauchy-Schwarz inequality, we have
\begin{equation}\label{eq:I_3}
\begin{aligned}
\|P_{\le M}I_3(t')\|_{L^2(\T)} \lesssim&~{} M^{\frac32}\|P_{> \frac{\pi}{h}}\mathcal{L}_{h}\mathcal V_{h}^{\pm}(t')\|_{L^2(\T)} \|\mathcal{L}_h\mathcal{V}_h^{\pm}(t')\|_{L^2(\T)}\\
\lesssim&~{}h^\alpha M^{\frac32}\|\mathcal{L}_h \mathcal{V}_h^{\pm}(t')\|_{H^{\alpha}(\T)}\|\mathcal{L}_h \mathcal{V}_h^{\pm}(t')\|_{L^2(\T)},
\end{aligned}
\end{equation}
for any $\alpha \ge 0$. Collecting all in addition to Lemma \ref{Lem:discretization linearization inequality}, we obtain
\[\begin{aligned}
&~{}\|P_{\le M}(\mathcal L_{h} \mathcal V_{h}^{\pm} - \mathcal W^{\pm} )(t)\|_{L^2(\T)}\\ 
\lesssim&~{}  h^{\alpha}M^{\frac{3}{2}+\alpha} \int_0^t (1+|t'|)^{\alpha}\|\mathcal{V}_h^{\pm}(t')\|_{H^{2\alpha}(\T_h)}\|\mathcal{V}_h^{\pm}(t')\|_{H^{\alpha}(\T_h)} \; dt'\\
&~{}+ h^{\alpha}M^{\frac{3}{2}} \int_0^t \|\mathcal{V}_h^{\pm}(t')\|_{H^{\alpha}(\T_h)}\|\mathcal{V}_h^{\pm}(t')\|_{L^2(\T_h)} \; dt'\\
&~{}+ M^{\frac{3}{2}} \int_0^t \|(\mathcal L_{h} \mathcal V_{h}^{\pm} - \mathcal W^{\pm})(t')\|_{L^2(\T)} \left(\|\mathcal V_{h}^{\pm}(t')\|_{L^2(\T_h)}+\|\mathcal W^{\pm}(t')\|_{L^2(\T)}\right) \; dt'.
\end{aligned}\]
Taking $2\alpha = s$, we complete the proof.
\end{proof}

\begin{proposition}\label{prop:differnece estimates-2}
Let $0 \le s \le 1$ and $0 < h \le 1$. For $1 < M < \frac{\pi}{h}$, we have 
\begin{equation}\label{lowfreq:esti}
\begin{aligned}
&\left\|P_{ M \le \cdot \le \frac{\pi}{h}} \left( \mathcal{L}_h\mathcal{V}^{\pm}_h- \mathcal{W}^{\pm}\right)(t)
  \right\|_{L^2(\T)}  \\ 
\lesssim&~{} \frac{1}{M^{\frac12}} \sup_{t'\in[0,t]}\left(\|( \mathcal{L}_h\mathcal{V}_h^{\pm}-\mathcal{W}^{\pm})(t')\|_{L^{2}(\T)}\big(1+\|\mathcal{V}_h^{\pm}(t')\|_{H^s(\T_h)}+\|\mathcal{W}^{\pm}(t')\|_{H^s(\T)} \big)^2\right)\\ 
&~{}+h^{\frac{s}{2}}(1+|t'|)^{1+\frac{s}{2}} \sup_{t'\in[0,t]}\left(\left(1+\|\mathcal{V}_h^{\pm}(t')\|_{H^s(\T_h)}\right)\|\mathcal{V}_h^{\pm}(t')\|_{H^s(\T_h)}^2\right)\\
&~{}+  \int_0^t \| (\mathcal{L}_h\mathcal{V}_h^{\pm}-\mathcal{W}^{\pm})(t')\|_{L^2(\T)}
  \left( 1+ \|\mathcal{V}_h^{\pm}(t')\|_{H^s(\T_h)} + \| \mathcal{W}^{\pm}(t')\|_{H^s(\T)} \right)^3 dt' \\ 
\end{aligned}
\end{equation}
for any $\mathcal V_{h}^{\pm} \in H^s(\T_h)$ and $\mathcal W^{\pm} \in H^s(\T)$. The implicit constant does not depend on $h\in(0,1)$.
\end{proposition}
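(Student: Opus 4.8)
The plan is to subtract the regularized KdV equation \eqref{eq:renormalized KdV} from the interpolation of the regularized FPU equation \eqref{eq:renormalized FPU} and to estimate the localized difference $P_{M\le\cdot\le\frac{\pi}{h}}(\mathcal L_h\mathcal V_h^{\pm}-\mathcal W^{\pm})$ contribution by contribution. On $|k|\le\frac{\pi}{h}$ the interpolation operator acts on the $k$-th Fourier coefficient as multiplication by $\frac{4}{h^2k^2}\sin^2(\frac{hk}{2})$ (take $n=0$ in \eqref{linear interpolation multiplier}), so the two initial-data terms both equal $\widehat{\mathcal L_hu_{h,0}^{\pm}}(k)$ and cancel exactly. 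It then remains to compare the matched pairs coming from \eqref{eq:renormalized FPU} and \eqref{eq:renormalized KdV}: the quadratic boundary terms $\textup{B}_2^{\pm}$ vs.\ $\textup{D}_2^{\pm}$ (at $t$ and at $0$), the cubic resonant integrals $(\mathcal F_h\mathcal N_{h,3}^{\pm})_{\mathcal R}$ vs.\ $(\mathcal F\mathcal N_3^{\pm})_{\mathcal R}$, the cubic boundary terms $\textup{B}_3^{\pm}$ vs.\ $\textup{D}_3^{\pm}$ (at $t$ and at $0$), and the quartic Duhamel integrals $\textup{B}_4^{\pm}$ vs.\ $\textup{D}_4^{\pm}$. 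The solutions are uniformly bounded in $h$ in $H^s$ and $L^4$ by Corollaries~\ref{Cor:Uniform bounds} and \ref{cor:uniform bound for KdV} (using that $\mathcal V_h^{\pm}=S_h^{\pm}(-t)v_h^{\pm}$, $\mathcal W^{\pm}=S^{\pm}(-t)w^{\pm}$ are related to the solutions by unitary phase multipliers).

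For each matched pair ($j=2,3,4$) I would insert the hybrid object that keeps the KdV multilinear form but feeds it $\mathcal L_h\mathcal V_h^{\pm}$, writing schematically
\[
\mathcal L_h\textup{B}_j^{\pm}(\mathcal V_h^{\pm})-\textup{D}_j^{\pm}(\mathcal W^{\pm})
=\big(\mathcal L_h\textup{B}_j^{\pm}(\mathcal V_h^{\pm})-\textup{D}_j^{\pm}(\mathcal L_h\mathcal V_h^{\pm})\big)
+\big(\textup{D}_j^{\pm}(\mathcal L_h\mathcal V_h^{\pm})-\textup{D}_j^{\pm}(\mathcal W^{\pm})\big).
\]
The first (symbol) difference is controlled by the multiplier-comparison lemmas---Lemma~\ref{Lem:M1} for $j=2$, Lemma~\ref{Lem:M2} for $j=3$, Lemma~\ref{Lem:Mh4} for $j=4$---together with Lemma~\ref{lem:LOWLh} to trade each input $\widehat{\mathcal L_h\mathcal V_h^{\pm}}(k_\ell)$ for $\widehat{\mathcal V_h^{\pm}}(k_\ell)$, and Proposition~\ref{prop:exterior estimates} to absorb the aliasing pieces carried by the frequencies $|k_\ell|>\frac{\pi}{h}$ of $\mathcal L_h\mathcal V_h^{\pm}$ (the analogue of the $I_1,I_2,I_3$ splitting in the proof of Proposition~\ref{prop:differnece estimates-1}); the gain $h^{\alpha}(1+|t|)^{\alpha}$ with $\alpha=\tfrac s2$ is extracted by distributing the frequency powers appearing in those lemmas onto the $\widehat{\mathcal V_h^{\pm}}(k_\ell)$ factors so that the surplus is absorbed into $\|\mathcal V_h^{\pm}\|_{H^s}$ and Cauchy--Schwarz/$\ell^\infty$-$\ell^2$ closes the remaining sums. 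The second (solution) difference is multilinear, hence a sum of terms with exactly one factor $\mathcal L_h\mathcal V_h^{\pm}-\mathcal W^{\pm}$; the exterior bounds \eqref{ineq:D2}, \eqref{ineq:D3}, \eqref{ineq:D4} of Lemma~\ref{lem:multilinear estimate} then supply the respective $M$-decays $M^{-1}$, $M^{-1/2}$, $M^{0}$. The cubic resonant term is read off from the explicit formulas in the Remark after \eqref{eq:renormalized KdV}: the FPU cubic resonance differs from $\mp\frac{2i}{k}|\widehat{\mathcal W^{\pm}}|^2\widehat{\mathcal W^{\pm}}$ by an $\mathcal L_h$-symbol factor handled as above (via \eqref{Lh-1} and an elementary expansion of $\tfrac h4\cos(\tfrac{hk}{2})\cos(\tfrac{hk}{4})/\sin(\tfrac{hk}{4})$) and by the algebraic difference $|a_k|^2a_k-|b_k|^2b_k$, bounded using $\sup_k|\widehat f(k)|\le\|f\|_{L^2}$; the genuinely new weak resonance $\pm\frac{ih}{2}\sin(\frac{hk}{2})\widehat{\mathcal V_h^{\pm}}\sum_{k_1\neq\pm k}|\widehat{\mathcal V_h^{\pm}}(t,k_1)|^2$ is absorbed into the $h^{s/2}$ term since $|\frac h2\sin(\frac{hk}{2})|\lesssim h^2|k|\lesssim h$ for $|k|\le\frac{\pi}{h}$.

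Collecting the pieces gives the three lines of the asserted bound: the non-integrated boundary terms ($\textup{B}_2,\textup{B}_3$ at $t$ and their symbol differences at $t=0$, which survive because the exponentials equal $1$ there while the rational multipliers still differ) furnish the first two lines, with $M^{-1/2}$ the worst $M$-decay (from the $\textup{D}_3$ bound) and $h^{s/2}(1+|t|)^{1+s/2}$ the worst weight (the extra power of $|t|$ is produced by the single Duhamel integration inside the $\textup{B}_3$ normal-form step); the time-integrated Duhamel terms---the cubic resonant integral and $\int_0^t\textup{B}_4^{\pm}$---furnish the last line, and it is precisely the solution-difference part of the \emph{quartic} that admits \emph{no} $M$-decay from \eqref{ineq:D4}, forcing it to remain inside the time integral against $(1+\|\mathcal V_h^{\pm}\|_{H^s}+\|\mathcal W^{\pm}\|_{H^s})^3$ (the $M$ is then optimized and the Gronwall closure against Proposition~\ref{prop:differnece estimates-1} carried out when assembling Proposition~\ref{prop:from decoupled to kdv}). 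The main obstacle is not any individual estimate---all analytic inputs are already in hand---but the bookkeeping across the four normal-form levels together with their interpolation and aliasing corrections: one must verify that, on the range $|k|\lesssim\frac{\pi}{h}$ where the naive symbol difference is only $O(1)$, the frequency weights can always be rearranged to extract a clean $h^{s/2}$, while simultaneously the KdV multilinear estimates leave enough $M$-decay on each solution-difference piece, the quartic being the borderline case.
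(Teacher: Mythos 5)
Your proposal is correct and follows essentially the same route as the paper: the same hybrid insertion of $\textup{D}_j^{\pm}(\mathcal L_h\mathcal V_h^{\pm})$ to split each level into a symbol/interpolation/aliasing difference (handled by Lemmas~\ref{Lem:M1}, \ref{Lem:M2}, \ref{Lem:Mh4}, Lemma~\ref{lem:LOWLh}, and the $P_{>\pi/h}$ bound) and a telescoped solution difference (handled by \eqref{ineq:D2}--\eqref{ineq:D4} with the respective $M^{-1}$, $M^{-1/2}$, $M^{0}$ decays), together with the separate treatment of the strong and weak cubic resonances. The only slight imprecision is the attribution of the extra power of $|t|$ in the second line: it arises from time-integrating the $h^{s/2}(1+|t'|)^{s/2}$ contributions of the resonant and quartic Duhamel terms rather than from the $\textup{B}_3$ boundary terms themselves, but this does not affect the argument.
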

\begin{proof}
From \eqref{eq:renormalized FPU} and \eqref{eq:renormalized KdV} in addition to \eqref{linear interpolation multiplier}, we write for $M<|k|\le\frac{\pi}{h}$ that 
\[  \mathcal F(\mathcal{L}_h\mathcal{V}_h^{\pm} - \mathcal{W}^{\pm})(t,k) = (\widehat{J}_1+\widehat{J}_2+\widehat{J}_3+ \widehat{J}_4)(t,k),\]
where 
\[\widehat{J}_1(t,k) =-\frac14 \sum_{t'= 0,t}\mathcal F\left(\mathcal{L}_h\textup{B}_2^{\pm}(\mathcal{V}_h^{\pm},\mathcal{V}_h^{\pm})-\textup D_2^{\pm}(\mathcal{W}^{\pm},\mathcal{W}^{\pm})\right)(t',k),\]
\[\widehat{J}_2(t,k) =\frac12 \sum_{t'=0,t} \mathcal F\left(\mathcal{L}_h\textup{B}_3^{\pm}(\mathcal{V}_h^{\pm},\mathcal{V}_h^{\pm},\mathcal{V}_h^{\pm}) - \textup D_3^{\pm}(\mathcal{W}^{\pm},\mathcal{W}^{\pm},\mathcal{W}^{\pm})\right)(t',k),\]
\[\widehat{J}_3(t,k) = \int_0^t \left((\mathcal F_h(\mathcal L_h \mathcal{N}_{h,3}^{\pm}))_\mathcal{R}(\mathcal{V}_h^{\pm},\mathcal{V}_h^{\pm},\mathcal{V}_h^{\pm}) -(\mathcal F\mathcal{N}_3^{\pm})_\mathcal{R}(\mathcal{W}^{\pm},\mathcal{W}^{\pm},\mathcal{W}^{\pm})\right)(t',k)  \; dt',\]
and
\[\widehat{J}_4(t,k) = - \frac{1}{2}\int_0^t \mathcal F\left( \mathcal{L}_h\textup{B}_4^{\pm}(\mathcal{V}_h^{\pm},\mathcal{V}_h^{\pm},\mathcal{V}_h^{\pm},\mathcal{V}_h^{\pm}) - \textup{D}_4^{\pm}(\mathcal{W}^{\pm},\mathcal{W}^{\pm},\mathcal{W}^{\pm},\mathcal{W}^{\pm})\right)(t',k) \; dt'.\]

\medskip

\underline{{\bf $J_1$ estimate}}. We write 
\[\begin{aligned}
\left(\mathcal{L}_h\textup{B}_2^{\pm}(\mathcal{V}_h^{\pm},\mathcal{V}_h^{\pm})-\textup D_2^{\pm}(\mathcal{W}^{\pm},\mathcal{W}^{\pm})\right)(t') =&~{} \left(\mathcal{L}_h \textup{B}_2^{\pm}(\mathcal{V}_h^{\pm},\mathcal{V}_h^{\pm})-\textup D_2^{\pm}(\mathcal L_h\mathcal{V}_h^{\pm},\mathcal L_h\mathcal{V}_h^{\pm}) \right)(t')\\
&~{}+  \textup D_2^{\pm}(\mathcal L_h\mathcal{V}_h^{\pm}-\mathcal{W}^{\pm},\mathcal L_h\mathcal{V}_h^{\pm}+\mathcal{W}^{\pm})(t').
\end{aligned}\]
By \eqref{ineq:D2}, we immediately know
\[\begin{aligned}
&~{}\left\|P_{ M \le \cdot \le \frac{\pi}{h}}\left(\textup D_2^{\pm}(\mathcal L_h\mathcal{V}_h^{\pm}-\mathcal{W}^{\pm},\mathcal L_h\mathcal{V}_h^{\pm}+\mathcal{W}^{\pm})(t')\right)\right\|_{L^2(\T)}\\
\lesssim&~{} \frac1M\| (\mathcal L_h\mathcal{V}_h^{\pm}-\mathcal{W}^{\pm})(t') \|_{L^2(\T)}\left(\|\mathcal L_h\mathcal{V}_h^{\pm}(t')\|_{L^2(\T)} + \|\mathcal{W}^{\pm}(t')\|_{L^2(\T)}\right).
\end{aligned}\]
For the rest, by the symmetry over $k_1$ and $k_2$, we write
\[\begin{aligned}
&~{}\mathcal F\left(\mathcal{L}_h \textup{B}_2^{\pm}(\mathcal{V}_h^{\pm},\mathcal{V}_h^{\pm})-\textup D_2^{\pm}(\mathcal L_h\mathcal{V}_h^{\pm},\mathcal L_h\mathcal{V}_h^{\pm}) \right)(t',k)\\
=&~{}-\sum_{\substack{k=k_1+k_2 \\ k_1k_2 \neq 0 \\ |k_1|, |k_2| \le \frac{\pi}{h}}} \mathcal{M}_{h,1}^{2,\pm}(t',k,k_1,k_2) \widehat{\mathcal{V}_h^{\pm}}(t',k_1)  \widehat{\mathcal{V}_h^{\pm}}(t',k_2)\\
&~{}+\sum_{\substack{k=k_1+k_2 \\ k_1k_2 \neq 0 \\ |k_1|, |k_2| \le \frac{\pi}{h}}}\frac{k}{\Psi^2(k,k_1,k_2)}e^{\pm i t' \Psi^2(k,k_1,k_2)}\left( \widehat{\mathcal{L}_{h}\mathcal V_{h}^{\pm}} - \widehat{\mathcal{V}_h^{\pm}}  \right)(t',k_1)
  \left( \widehat{\mathcal{L}_{h}\mathcal V_{h}^{\pm}} + \widehat{\mathcal{V}_h^{\pm}} \right)(t',k_2)\\
&~{}+\sum_{\substack{k=k_1+k_2 \\ k_1k_2 \neq 0 \\ |k_1| > \frac{\pi}{h} \text{or} |k_2| > \frac{\pi}{h}}}\frac{k}{\Psi^2(k,k_1,k_2)}e^{\pm i t' \Psi^2(k,k_1,k_2)}\widehat{\mathcal{L}_{h}\mathcal V_{h}^{\pm}}(t',k_1)
\widehat{\mathcal{L}_{h}\mathcal V_{h}^{\pm}}(t',k_2)\\
=:&~{} \widehat{J}_{11}+\widehat{J}_{12}+\widehat{J}_{13},
\end{aligned}\]
where $\mathcal M_{h,1}^{2,\pm}(t',k,k_1,k_2)$ is introduced in Lemma \ref{Lem:M1}. For $J_{11}$, by the symmetry on $k_1$ and $k_2$, we may assume that $|k_1| \ge |k_2|$. Since $k=k_1+k_2$, we have $|k| \lesssim |k_1|$, and thus, by Lemma \ref{Lem:M1}, we have for $0 \le \alpha \le \frac12$ that
\[\begin{aligned}
  &~{}\| P_{ M \le \cdot \le \frac{\pi}{h}} J_{11}(t')\|_{L^2(\T)} \\ 
  \ls&~{} h^\alpha(1+|t'|)^\alpha\Bigg(\sum_{M \le |k| \le \frac{\pi}{h}}\Bigg|\sum_{\substack{0 < |k_1| \le \frac{\pi}{h}\\k_1 \neq k}} \frac{|k_1|^{\alpha}|\widehat{\mathcal{V}_h^{\pm}}(t',k_1)||k-k_1|^{\alpha}|\widehat{\mathcal{V}_h^{\pm}}(t',k-k_1)|}{|k_1|^{1-2\alpha}|k-k_1|}\Bigg|^2\Bigg)^{\frac12}\\
\lesssim&~{}h^\alpha(1+|t'|)^\alpha\Bigg(\sum_{M \le |k| \le \frac{\pi}{h}}\Bigg(\sum_{\substack{0 < |k_1| \le \frac{\pi}{h}\\k_1 \neq k}}\frac{1}{|k-k_1|^2}\Bigg) \Bigg(\sum_{\substack{0 < |k_1| \le \frac{\pi}{h}\\k_1 \neq k}}|k_1|^{2\alpha}|\widehat{\mathcal{V}_h^{\pm}}(t',k_1)|^2|k-k_1|^{2\alpha}|\widehat{\mathcal{V}_h^{\pm}}(t',k-k_1)|^2\Bigg)\Bigg)^{\frac12}\\
\lesssim&~{}h^\alpha(1+|t'|)^\alpha\|\mathcal{V}_h^{\pm}(t')\|_{H^\alpha(\T_h)}\|\mathcal{V}_h^{\pm}(t')\|_{H^\alpha(\T_h)}.
\end{aligned}\]
For $J_{12}$ and $J_{13}$, similarly as \eqref{eq:I_2} and \eqref{eq:I_3}, respectively, but using \eqref{ineq:D2}, we obtain
\[\begin{aligned}
\|P_{ M \le \cdot \le \frac{\pi}{h}} J_{12}(t')\|_{L^2(\T)} \lesssim&~{} \frac{1}{M}\left(\sum_{|k| \le \frac{\pi}{h}}\left|\widehat{\mathcal{L}_{h}\mathcal V_{h}^{\pm}} - \widehat{\mathcal{V}_h^{\pm}} \right|^2\right)^{\frac12} \left(\sum_{|k| \le \frac{\pi}{h}}\left|\widehat{\mathcal{L}_{h}\mathcal V_{h}^{\pm}} + \widehat{\mathcal{V}_h^{\pm}} \right|^2\right)^{\frac12}\\
\lesssim&~{} \frac{h^\alpha}{M}\|\mathcal{V}_h^{\pm}(t')\|_{H^{\alpha}(\T_h)} \left(\|\mathcal{L}_h \mathcal{V}_h^{\pm}(t')\|_{L^2(\T)} + \|\mathcal{V}_h^{\pm}(t')\|_{L^2(\T_h)}\right),
\end{aligned}\]
for any $0 \le \alpha \le 2$, and
\[\begin{aligned}
  \|P_{ M \le \cdot \le \frac{\pi}{h}} J_{13}(t')\|_{L^2(\T)}\lesssim&~{} \frac{1}{M}\|P_{> \frac{\pi}{h}}\mathcal{L}_{h}\mathcal V_{h}^{\pm}(t')\|_{L^2(\T)} \|\mathcal{L}_h\mathcal{V}_h^{\pm}(t')\|_{L^2(\T)}\\
\lesssim&~{}\frac{h^{\alpha}}{M}\|\mathcal{L}_h \mathcal{V}_h^{\pm}(t')\|_{H^{\alpha}(\T)}\|\mathcal{L}_h \mathcal{V}_h^{\pm}(t')\|_{L^2(\T)},
\end{aligned}\]
for any $\alpha \ge 0$. Collecting all and taking $2\alpha = s$ in addition to Lemma \ref{Lem:discretization linearization inequality}, we obtain for $t'=0, t$ that
\begin{equation}\label{eq:J_1 term}
\begin{aligned}
\|P_{ M \le \cdot \le \frac{\pi}{h}} J_1(t')\|_{L^2(\T)} \lesssim&~{} \frac1M\| (\mathcal L_h\mathcal{V}_h^{\pm}-\mathcal{W}^{\pm})(t') \|_{L^2(\T)}\left(\|\mathcal{V}_h^{\pm}(t')\|_{H^s(\T_h)} + \|\mathcal{W}^{\pm}(t')\|_{H^s(\T)}\right)\\
&~{}+h^{\frac{s}{2}}(1+|t'|)^{\frac{s}{2}}\|\mathcal{V}_h^{\pm}(t')\|_{H^s(\T_h)}^2+\frac{h^{\frac{s}{2}}}{M}\|\mathcal{V}_h^{\pm}(t')\|_{H^s(\T_h)}^2.
\end{aligned}
\end{equation}

\medskip

\underline{{\bf $J_2$ estimate}}. We write 
\[\begin{aligned}
\mathcal{L}_h\textup{B}_3^{\pm}(\mathcal{V}_h^{\pm},\mathcal{V}_h^{\pm},\mathcal{V}_h^{\pm})-\textup{D}_3^{\pm}(\mathcal{W}^{\pm},\mathcal{W}^{\pm},\mathcal{W}^{\pm})(t') =&~{} \left(\mathcal{L}_h\textup{B}_3^{\pm}(\mathcal{V}_h^{\pm},\mathcal{V}_h^{\pm},\mathcal{V}_h^{\pm})
-\textup{D}_3^{\pm}(\mathcal{L}_h\mathcal{V}_h^{\pm},\mathcal{L}_h\mathcal{V}_h^{\pm},\mathcal{L}_h\mathcal{V}_h^{\pm})\right)(t')\\
&~{}+\textup{D}_3^{\pm}\left(\mathcal{L}_h\mathcal{V}_h^{\pm}-\mathcal{W}^{\pm},\mathcal{L}_h\mathcal{V}_h^{\pm},\mathcal{L}_h\mathcal{V}_h^{\pm}\right))(t')\\
&~{}+\textup{D}_3^{\pm}\left(\mathcal{W}^{\pm},\mathcal{L}_h\mathcal{V}_h^{\pm}-\mathcal{W}^{\pm},\mathcal{L}_h\mathcal{V}_h^{\pm}\right)(t')\\ 
&~{}+\textup{D}_3^{\pm}\left(\mathcal{W}^{\pm},\mathcal{W}^{\pm},\mathcal{L}_h\mathcal{V}_h^{\pm}-\mathcal{W}^{\pm}\right)(t').
\end{aligned}\]
By \eqref{ineq:D3}, we immediately know 
\[\begin{aligned}
\|P_{ M \le \cdot \le \frac{\pi}{h}}\textup{D}_3^{\pm}\left(\mathcal{L}_h\mathcal{V}_h^{\pm}-\mathcal{W}^{\pm},\mathcal{L}_h\mathcal{V}_h^{\pm},\mathcal{L}_h\mathcal{V}_h^{\pm}\right))(t')\|_{L^{2}(\T)} \lesssim \frac{1}{M^{\frac12}}\|(\mathcal{L}_h\mathcal{V}_h^{\pm}-\mathcal{W}^{\pm})(t')\|_{L^2(\T)}\|\mathcal{L}_h\mathcal{V}_h^{\pm}(t')\|_{L^2(\T)}^2.
\end{aligned}\]
Analogously, we have
\[\begin{aligned}
&~{}\|P_{ M \le \cdot \le \frac{\pi}{h}}\textup{D}_3^{\pm}\left(\mathcal{W}^{\pm},\mathcal{L}_h\mathcal{V}_h^{\pm}-\mathcal{W}^{\pm},\mathcal{L}_h\mathcal{V}_h^{\pm}\right)(t')\|_{L^{2}(\T)}\\
\lesssim&~{} \frac{1}{M^{\frac12}}\|(\mathcal{L}_h\mathcal{V}_h^{\pm}-\mathcal{W}^{\pm})(t')\|_{L^2(\T)}\|\mathcal{L}_h\mathcal{V}_h^{\pm}(t')\|_{L^2(\T)}\|\mathcal{W}^{\pm}(t')\|_{L^2(\T)}
\end{aligned}\]
and
\[\begin{aligned}
\|P_{ M \le \cdot \le \frac{\pi}{h}}\textup{D}_3^{\pm}\left(\mathcal{W}^{\pm},\mathcal{W}^{\pm},\mathcal{L}_h\mathcal{V}_h^{\pm}-\mathcal{W}^{\pm}\right)(t')\|_{L^{2}(\T)} \lesssim \frac{1}{M^{\frac12}}\|(\mathcal{L}_h\mathcal{V}_h^{\pm}-\mathcal{W}^{\pm})(t')\|_{L^2(\T)}\|\mathcal{W}^{\pm}(t')\|_{L^2(\T)}^2.
\end{aligned}\]
For the rest, we write for $M<|k|\le\frac{\pi}{h}$ that
\[\mathcal F\left(\mathcal{L}_h\textup{B}_3^{\pm}(\mathcal{V}_h^{\pm},\mathcal{V}_h^{\pm},\mathcal{V}_h^{\pm})
-\textup{D}_3^{\pm}(\mathcal{L}_h\mathcal{V}_h^{\pm},\mathcal{L}_h\mathcal{V}_h^{\pm},\mathcal{L}_h\mathcal{V}_h^{\pm})\right)(t',k) = \widehat{J}_{21}+\widehat{J}_{22}+\widehat{J}_{23},\]
where
\[\widehat{J}_{21} = -\sum_{(k_1,k_2,k_3)\in\mathcal{A}(k)}\mathcal M_{h}^{3,\pm}(t',k,k_{1},k_{2},k_{3})\widehat{\mathcal{V}_h^{\pm}}(t,k_1)  \widehat{\mathcal{V}_h^{\pm}}(t,k_2)\widehat{ \mathcal{V}_h^{\pm}}(t,k_3),\]
here $\mathcal M_{h}^{3,\pm}(t',k,k_{1},k_{2},k_{3})$ and $\mathcal{A}(k)$ are introduced in Lemma \ref{Lem:M2} and \eqref{eq:A(k)}, respectively,
\[\begin{aligned}
\widehat{J}_{22} = -\sum_{(k_1,k_2,k_3)\in\mathcal{A}(k)}\frac{2e^{\pm it'\Psi^3(k,k_1,k_2,k_3)}}{k_1\Psi^3(k,k_1,k_2,k_3)}&~{}\Big(\big(\widehat{\mathcal{L}_h\mathcal{V}_h^{\pm}} - \widehat{\mathcal{V}_{h}^{\pm}}\big)(t',k_1)  \widehat{\mathcal{V}_{h}^{\pm}}(t',k_2) 
  \widehat{\mathcal{V}_{h}^{\pm}}(t',k_3)\\
&~{}+\widehat{\mathcal{L}_h\mathcal{V}_h^{\pm}}(t',k_1)\big(\widehat{\mathcal{L}_h\mathcal{V}_h^{\pm}} - \widehat{\mathcal{V}_{h}^{\pm}}\big)(t',k_2)  \widehat{\mathcal{V}_{h}^{\pm}}(t',k_3)\\
&~{}+\widehat{\mathcal{L}_h\mathcal{V}_h^{\pm}}(t',k_1)\widehat{\mathcal{L}_h\mathcal{V}_h^{\pm}}(t',k_2)\big(\widehat{\mathcal{L}_h\mathcal{V}_h^{\pm}} - \widehat{\mathcal{V}_{h}^{\pm}}\big)(t',k_3)\Big),
\end{aligned}\]
and
\[\widehat{J}_{23} =-\sum_{\substack{(k_1,k_2,k_3)\in \Lambda(k) \\ \max(|k_1|,|k_2|,|k_3|) > \frac{\pi}{h}}}\frac{2e^{\pm it'\Psi^3(k,k_1,k_2,k_3)}}{k_1\Psi^3(k,k_1,k_2,k_3)} \widehat{\mathcal{L}_h\mathcal{V}_h^{\pm}}(t',k_1)\widehat{\mathcal{L}_h\mathcal{V}_h^{\pm}}(t',k_2)\widehat{\mathcal{L}_h\mathcal{V}_h^{\pm}}(t',k_3).\]
For $J_{21}$, note that $|k_3| \le \max(|k_1|,|k_2|,|k_3|)$. If $\max(|k|,|k_1|,|k_2|,|k_3|) = |k_2|$\footnote{The estimate of $J_{21}$ does not depend on the choice $\max(|k|,|k_1|,|k_2|,|k_3|) = |k_2|$.}, by Lemma~\ref{Lem:M2}, we have
\[\begin{aligned}
  &~{}\| P_{ M \le \cdot \le \frac{\pi}{h}} J_{21}(t')\|_{L^2(\T)} \\ 
  \ls&~{} h^\alpha(1+|t'|)^\alpha\Bigg(\sum_{M \le |k| \le \frac{\pi}{h}}\Bigg|\sum_{\substack{0 < |k_1|, |k_2| \le \frac{\pi}{h}\\k_1+k_2 \neq k}} \frac{|\widehat{\mathcal{V}_h^{\pm}}(t',k_1)||k_2|^{2\alpha}|\widehat{\mathcal{V}_h^{\pm}}(t',k_2)||\widehat{\mathcal{V}_h^{\pm}}(t',k-k_1-k_2)|}{|k_1||k-k_1-k_2|}\Bigg|^2\Bigg)^{\frac12}\\
\lesssim&~{}h^\alpha(1+|t'|)^\alpha\left(\sum_{\substack{0 < |k_1|, |k_2| \le \frac{\pi}{h}\\k_1+k_2 \neq k}}\frac{1}{|k_1|^2|k-k_1-k_2|^2} \right)^{\frac12}\\
&~{}\times \left(\sum_{M \le |k| \le \frac{\pi}{h}}\sum_{\substack{0 < |k_1|, |k_2| \le \frac{\pi}{h}\\k_1+k_2 \neq k}}|\widehat{\mathcal{V}_h^{\pm}}(t',k_1)|^2|k_2|^{4\alpha}|\widehat{\mathcal{V}_h^{\pm}}(t',k_2)|^2|\widehat{\mathcal{V}_h^{\pm}}(t',k-k_1-k_2)|^2\right)^{\frac12}\\
\lesssim&~{}h^\alpha(1+|t'|)^\alpha\|\mathcal{V}_h^{\pm}(t')\|_{H^{2\alpha}(\T_h)}\|\mathcal{V}_h^{\pm}(t')\|_{L^2(\T_h)}^2,
\end{aligned}\]
for any $0 \le \alpha \le 1$. For the other cases, we also have the same result. For $J_{22}$ and $J_{23}$, similarly as \eqref{eq:I_2} and \eqref{eq:I_3}, respectively, but using \eqref{ineq:D3}, we obtain
\[\begin{aligned}
\|P_{ M \le \cdot \le \frac{\pi}{h}} J_{22}(t')\|_{L^2(\T)} \lesssim&~{} \frac{1}{M^{\frac12}}\left(\sum_{|k| \le \frac{\pi}{h}}\left|\widehat{\mathcal{L}_{h}\mathcal V_{h}^{\pm}} - \widehat{\mathcal{V}_h^{\pm}} \right|^2\right)^{\frac12} \\
&~{}\times \left(\|\mathcal{L}_h\mathcal{V}_h^{\pm}(t')\|_{L^2(\T)}^2+\|\mathcal{L}_h\mathcal{V}_h^{\pm}(t')\|_{L^2(\T)}\|\mathcal{V}_h^{\pm}(t')\|_{L^2(\T_h)}+\|\mathcal{V}_h^{\pm}(t')\|_{L^2(\T_h)}^2\right)\\
\lesssim&~{} \frac{h^\alpha}{M^{\frac12}}\|\mathcal{V}_h^{\pm}(t')\|_{H^{\alpha}(\T_h)} \left(\|\mathcal{L}_h\mathcal{V}_h^{\pm}(t')\|_{L^2(\T)}+\|\mathcal{V}_h^{\pm}(t')\|_{L^2(\T_h)}\right)^2,
\end{aligned}\]
for any $0 \le \alpha \le 2$, and
\[\begin{aligned}
  \|P_{ M \le \cdot \le \frac{\pi}{h}} J_{23}(t')\|_{L^2(\T)}\lesssim&~{} \frac{1}{M^{\frac12}}\|P_{> \frac{\pi}{h}}\mathcal{L}_{h}\mathcal V_{h}^{\pm}(t')\|_{L^2(\T)} \|\mathcal{L}_h\mathcal{V}_h^{\pm}(t')\|_{L^2(\T)}^2\\
\lesssim&~{}\frac{h^{\alpha}}{M^{\frac12}}\|\mathcal{L}_h \mathcal{V}_h^{\pm}(t')\|_{H^{\alpha}(\T)}\|\mathcal{L}_h \mathcal{V}_h^{\pm}(t')\|_{L^2(\T)}^2,
\end{aligned}\]
for any $\alpha \ge 0$. Collecting all and taking $2\alpha = s$ in addition to Lemma \ref{Lem:discretization linearization inequality}, we obtain for $t'=0, t$ that
\begin{equation}\label{eq:J_2 term}
\begin{aligned}
\|P_{ M \le \cdot \le \frac{\pi}{h}} J_2(t')\|_{L^2(\T)} \lesssim&~{} \frac{1}{M^{\frac12}}\| (\mathcal L_h\mathcal{V}_h^{\pm}-\mathcal{W}^{\pm})(t') \|_{L^2(\T)}\left(\|\mathcal{V}_h^{\pm}(t')\|_{H^s(\T_h)} + \|\mathcal{W}^{\pm}(t')\|_{H^s(\T)}\right)^2\\
&~{}+h^{\frac{s}{2}}(1+|t'|)^{\frac{s}{2}}\|\mathcal{V}_h^{\pm}(t')\|_{H^s(\T_h)}^3+\frac{h^{\frac{s}{2}}}{M^{\frac12}}\|\mathcal{V}_h^{\pm}(t')\|_{H^s(\T_h)}^3.
\end{aligned}
\end{equation}

\medskip
\underline{{\bf $J_3$ estimate}}. We write 
\[\left((\mathcal L_h\mathcal{N}_{h,3}^{\pm})_\mathcal{R}(\mathcal{V}_h^{\pm},\mathcal{V}_h^{\pm},\mathcal{V}_h^{\pm}) -(\mathcal{N}_3^{\pm})_\mathcal{R}(\mathcal{W}^{\pm},\mathcal{W}^{\pm},\mathcal{W}^{\pm})\right)(t') = J_{31}+J_{32}+J_{33}+J_{34},\]
where
\[\widehat{J}_{31} = \mp 2i \left(\frac{4\sin\left(\frac{hk}{2}\right)}{h^2k^2}\frac{\cos\left(\frac{hk}{2}\right)\cos\left(\frac{hk}{4}\right)}{\frac{4}{h}\sin\left(\frac{hk}{4}\right)}-\frac1k \right)|\widehat{\mathcal{V}_h^{\pm}}(t',k)|^2\widehat{\mathcal{V}_h^{\pm}}(t',k) ,\]
\[\widehat{J}_{32} = \mp \frac{2i}{k} \left(|\widehat{\mathcal{V}_h^{\pm}}|^2\widehat{\mathcal{V}_h^{\pm}} - |\widehat{\mathcal L_h\mathcal{V}_h^{\pm}}|^2\widehat{\mathcal L_h\mathcal{V}_h^{\pm}}\right)(t',k),\]
\[\widehat{J}_{33} = \mp\frac{2i}{k}\left(|\widehat{\mathcal L_h\mathcal{V}_h^{\pm}}|^2\widehat{\mathcal L_h\mathcal{V}_h^{\pm}}-|\widehat{\mathcal{W}^{\pm}}|^2\widehat{\mathcal{W}^{\pm}}\right)(t',k),\]
and
\[\widehat{J}_{34} = \pm \frac{2ih\sin^3\left(\frac{hk}{2}\right)}{h^2k^2}\widehat{\mathcal{V}_h^{\pm}}(t',k)\sum_{\substack{k_1 \in (\T_h)^* \\ k_1 \neq \pm k}} |\widehat{\mathcal{V}_h^{\pm}}(t',k_1)|^2.\]
One can easily see that
\[\begin{aligned}
\|P_{ M \le \cdot \le \frac{\pi}{h}} J_{32}(t')\|_{L^2(\T)} \lesssim&~{} \frac{1}{M}\left(\sum_{|k| \le \frac{\pi}{h}}\left|\widehat{\mathcal{L}_{h}\mathcal V_{h}^{\pm}}(k) - \widehat{\mathcal{V}_h^{\pm}}(k) \right|^2\right)^{\frac12}\left(\|\mathcal L_h\mathcal{V}_h^{\pm}(t')\|_{L^2(\T)} + \|\mathcal{V}_h^{\pm}(t')\|_{L^2(\T_h)}\right)^2\\
\lesssim&~{}\frac{h^{\alpha}}{M}\|\mathcal{V}_h^{\pm}(t')\|_{H^{\alpha}(\T_h)}\left(\|\mathcal L_h\mathcal{V}_h^{\pm}(t')\|_{L^2(\T)} + \|\mathcal{V}_h^{\pm}(t')\|_{L^2(\T_h)}\right)^2,
\end{aligned}\]
for any $0 \le \alpha \le 2$ due to Lemma \ref{lem:LOWLh}, and 
\[\|P_{ M \le \cdot \le \frac{\pi}{h}} J_{33}(t')\|_{L^2(\T)} \lesssim \frac{1}{M}\|(\mathcal L_h\mathcal{V}_h^{\pm}-\mathcal{W}^{\pm})(t') \|_{L^2(\T)}\left(\|\mathcal L_h\mathcal{V}_h^{\pm}(t')\|_{L^2(\T)} + \|\mathcal{W}^{\pm}(t')\|_{L^2(\T)}\right)^2.\]
Moreover, since 
\[\left|\frac{2ih\sin^3\left(\frac{hk}{2}\right)}{h^2k^2}\right| \lesssim h \quad \mbox{and} \quad \sum_{\substack{k_1 \in (\T_h)^* \\ k_1 \neq \pm k}} |\widehat{\mathcal{V}_h^{\pm}}(t',k_1)|^2 \lesssim \|\mathcal{V}_h^{\pm}(t')\|_{L^2(\T_h)}^2,\]
we have
\[\|P_{ M \le \cdot \le \frac{\pi}{h}} J_{34}(t')\|_{L^2(\T)} \lesssim h\|\mathcal{V}_h^{\pm}(t')\|_{L^2(\T_h)}^3.\]
For the rest $J_{31}$, by \eqref{Lh-1}, \eqref{eq:M^3-2} and \eqref{Lh}, we have
\[\begin{aligned}
&~{}\|P_{ M \le \cdot \le \frac{\pi}{h}} J_{31}(t')\|_{L^2(\T)}\\
&~{}\lesssim
\left(\sum_{M \le |k| \le \frac{\pi}{h}}\left|\frac{4\sin\left(\frac{hk}{2}\right)}{h^2k^2}\frac{\cos\left(\frac{hk}{2}\right)\cos\left(\frac{hk}{4}\right)}{\frac{4}{h}\sin\left(\frac{hk}{4}\right)}-\frac1k \right|^2|\widehat{\mathcal{V}_h^{\pm}}(t',k)|^6\right)^{\frac12}\\
\lesssim&~{} \left(\sum_{M \le |k| \le \frac{\pi}{h}}\left(\frac{h^{\alpha}|k|^{\alpha}}{|k|}\right)^2|\widehat{\mathcal{V}_h^{\pm}}(t',k)|^6\right)^{\frac12}\\
\lesssim&~{}\frac{h^{\alpha}}{M}\|\mathcal{V}_h^{\pm}(t')\|_{H^{\alpha}(\T_h)}\|\mathcal{V}_h^{\pm}(t')\|_{L^2(\T_h)}^2,
\end{aligned}\]
for any $0 \le \alpha \le 2$. Collecting all and taking $\alpha = s$ in addition to Lemma \ref{Lem:discretization linearization inequality}, we obtain
\begin{equation}\label{eq:J_3 term}
\begin{aligned}
&~{}\|P_{ M \le \cdot \le \frac{\pi}{h}} J_3(t')\|_{L^2(\T)} \\
\lesssim&~{} \int_0^t \Bigg(\frac{1}{M}\|(\mathcal{V}_h^{\pm}-\mathcal{W}^{\pm})(t') \|_{L^2(\T)}\left(\|\mathcal L_h\mathcal{V}_h^{\pm}(t')\|_{H^s(\T)} + \|\mathcal{W}^{\pm}(t')\|_{H^s(\T)}\right)^2\\
&~{}\hspace{2em}+\frac{h^s}{M}\|\mathcal{V}_h^{\pm}(t')\|_{H^{s}(\T_h)}^3 + h\|\mathcal{V}_h^{\pm}(t')\|_{H^{s}(\T_h)}^3\Bigg)\;dt'.
\end{aligned}
\end{equation}

\medskip
\underline{{\bf $J_4$ estimate}}. We write 
\[\begin{aligned}
&~{}\left(\mathcal{L}_h\textup{B}_4^{\pm}(\mathcal{V}_h^{\pm},\mathcal{V}_h^{\pm},\mathcal{V}_h^{\pm},\mathcal{V}_h^{\pm}) - \textup{D}_4^{\pm}(\mathcal{W}^{\pm},\mathcal{W}^{\pm},\mathcal{W}^{\pm},\mathcal{W}^{\pm})\right)(t')\\
=&~{} \left(\mathcal{L}_h\textup{B}_4^{\pm}(\mathcal{V}_h^{\pm},\mathcal{V}_h^{\pm},\mathcal{V}_h^{\pm},\mathcal{V}_h^{\pm}) - \textup{D}_4^{\pm}(\mathcal L_h\mathcal{V}_h^{\pm},\mathcal L_h\mathcal{V}_h^{\pm},\mathcal L_h\mathcal{V}_h^{\pm},\mathcal L_h\mathcal{V}_h^{\pm})\right)(t') \\
&~{}+\textup{D}_4^{\pm}(\mathcal{L}_h\mathcal{V}_h^{\pm}-\mathcal{W}^{\pm},\mathcal{L}_h\mathcal{V}_h^{\pm},\mathcal{L}_h\mathcal{V}_h^{\pm},\mathcal{L}_h\mathcal{V}_h^{\pm})(t')\\
&~{}+\textup{D}_4^{\pm}(\mathcal{W}^{\pm},\mathcal{L}_h\mathcal{V}_h^{\pm}-\mathcal{W}^{\pm},\mathcal{L}_h\mathcal{V}_h^{\pm},\mathcal{L}_h\mathcal{V}_h^{\pm})(t')\\
&~{}+\textup{D}_4^{\pm}(\mathcal{W}^{\pm},\mathcal{W}^{\pm},\mathcal{L}_h\mathcal{V}_h^{\pm}-\mathcal{W}^{\pm},\mathcal{L}_h\mathcal{V}_h^{\pm})(t')\\
&~{}+\textup{D}_4^{\pm}(\mathcal{W}^{\pm},\mathcal{W}^{\pm},\mathcal{W}^{\pm},\mathcal{L}_h\mathcal{V}_h^{\pm}-\mathcal{W}^{\pm})(t').
\end{aligned}\]
By \eqref{ineq:D4}, we immediately know 
\[\begin{aligned}
&~{}\|P_{ M \le \cdot \le \frac{\pi}{h}}\textup{D}_4^{\pm}(\mathcal{L}_h\mathcal{V}_h^{\pm}-\mathcal{W}^{\pm},\mathcal{L}_h\mathcal{V}_h^{\pm},\mathcal{L}_h\mathcal{V}_h^{\pm},\mathcal{L}_h\mathcal{V}_h^{\pm})(t')\|_{L^{2}(\T)} \\
\lesssim&~{} \|(\mathcal{L}_h\mathcal{V}_h^{\pm}-\mathcal{W}^{\pm})(t')\|_{L^2(\T)}\|\mathcal{L}_h\mathcal{V}_h^{\pm}(t')\|_{L^2(\T)}^3.
\end{aligned}\]
Analogously, we have
\[\begin{aligned}
&~{}\|P_{ M \le \cdot \le \frac{\pi}{h}}\textup{D}_4^{\pm}(\mathcal{W}^{\pm},\mathcal{L}_h\mathcal{V}_h^{\pm}-\mathcal{W}^{\pm},\mathcal{L}_h\mathcal{V}_h^{\pm},\mathcal{L}_h\mathcal{V}_h^{\pm})(t')\|_{L^{2}(\T)}\\
\lesssim&~{} \|(\mathcal{L}_h\mathcal{V}_h^{\pm}-\mathcal{W}^{\pm})(t')\|_{L^2(\T)}\|\mathcal{W}^{\pm}(t')\|_{L^2(\T)}\|\mathcal{L}_h\mathcal{V}_h^{\pm}(t')\|_{L^2(\T)}^2,
\end{aligned}\]
\[\begin{aligned}
&~{}\|P_{ M \le \cdot \le \frac{\pi}{h}}\textup{D}_4^{\pm}(\mathcal{W}^{\pm},\mathcal{W}^{\pm},\mathcal{L}_h\mathcal{V}_h^{\pm}-\mathcal{W}^{\pm},\mathcal{L}_h\mathcal{V}_h^{\pm})(t')\|_{L^{2}(\T)}\\
\lesssim&~{} \|(\mathcal{L}_h\mathcal{V}_h^{\pm}-\mathcal{W}^{\pm})(t')\|_{L^2(\T)}\|\mathcal{W}^{\pm}(t')\|_{L^2(\T)}^2\|\mathcal{L}_h\mathcal{V}_h^{\pm}(t')\|_{L^2(\T)},
\end{aligned}\]
and
\[\begin{aligned}
\|P_{ M \le \cdot \le \frac{\pi}{h}}\textup{D}_4^{\pm}(\mathcal{W}^{\pm},\mathcal{W}^{\pm},\mathcal{W}^{\pm},\mathcal{L}_h\mathcal{V}_h^{\pm}-\mathcal{W}^{\pm})(t')\|_{L^{2}(\T)} \lesssim \|(\mathcal{L}_h\mathcal{V}_h^{\pm}-\mathcal{W}^{\pm})(t')\|_{L^2(\T)}\|\mathcal{W}^{\pm}(t')\|_{L^2(\T)}^3.
\end{aligned}\]
For the rest, we write for $M < |k| \le \frac{\pi}{h}$ that
\[\mathcal F\left(\mathcal{L}_h\textup{B}_4^{\pm}(\mathcal{V}_h^{\pm},\mathcal{V}_h^{\pm},\mathcal{V}_h^{\pm},\mathcal{V}_h^{\pm}) - \textup{D}_4^{\pm}(\mathcal L_h\mathcal{V}_h^{\pm},\mathcal L_h\mathcal{V}_h^{\pm},\mathcal L_h\mathcal{V}_h^{\pm},\mathcal L_h\mathcal{V}_h^{\pm})\right)(t',k) = \widehat{J}_{41}+\widehat{J}_{42}+\widehat{J}_{43},\]
where
\[\widehat{J}_{41} = \pm \frac{i}{2}\sum_{(k_1,k_2,k_3,k_4) \in \mathcal B(k)}\mathcal M_{h}^{4,\pm}(t',k,k_{1},k_{2},k_{3},k_4)\widehat{ \mathcal{V}_h^{\pm}}(t',k_1)\widehat{ \mathcal{V}_h^{\pm}}(t',k_2)\widehat{ \mathcal{V}_h^{\pm}}(t',k_3)\widehat{ \mathcal{V}_h^{\pm}}(t',k_4),\]
here $\mathcal M_{h}^{4,\pm}(t',k,k_{1},k_{2},k_{3},k_4)$ and $\mathcal{B}(k)$ are introduced in Lemma \ref{Lem:Mh4} and \eqref{eq:B(k)}, respectively,
\[\begin{aligned}
\widehat{J}_{42} = \pm \frac{i}{2}\sum_{(k_1,k_2,k_3,k_4) \in \mathcal B(k)}&~{}\frac{2(k_1+k_2)+k_3}{k_3\Psi^3(k,k_1+k_2,k_3,k_4)}e^{\pm it' \Psi^4(k,k_1,k_2,k_3,k_4)}\\
\times &~{}\Big(\big(\widehat{\mathcal{L}_h\mathcal{V}_h^{\pm}} - \widehat{\mathcal{V}_{h}^{\pm}}\big)(t',k_1)  \widehat{\mathcal{V}_{h}^{\pm}}(t',k_2) 
  \widehat{\mathcal{V}_{h}^{\pm}}(t',k_3)\widehat{\mathcal{V}_{h}^{\pm}}(t',k_4)\\
&~{}+\widehat{\mathcal{L}_h\mathcal{V}_h^{\pm}}(t',k_1)\big(\widehat{\mathcal{L}_h\mathcal{V}_h^{\pm}} - \widehat{\mathcal{V}_{h}^{\pm}}\big)(t',k_2)  \widehat{\mathcal{V}_{h}^{\pm}}(t',k_3) \widehat{\mathcal{V}_{h}^{\pm}}(t',k_4)\\
&~{}+\widehat{\mathcal{L}_h\mathcal{V}_h^{\pm}}(t',k_1)\widehat{\mathcal{L}_h\mathcal{V}_h^{\pm}}(t',k_2)\big(\widehat{\mathcal{L}_h\mathcal{V}_h^{\pm}} - \widehat{\mathcal{V}_{h}^{\pm}}\big)(t',k_3)\widehat{\mathcal{V}_{h}^{\pm}}(t',k_4)\\
&~{}+\widehat{\mathcal{L}_h\mathcal{V}_h^{\pm}}(t',k_1)\widehat{\mathcal{L}_h\mathcal{V}_h^{\pm}}(t',k_2)\widehat{\mathcal{L}_h\mathcal{V}_h^{\pm}}(t',k_3)\big(\widehat{\mathcal{L}_h\mathcal{V}_h^{\pm}} - \widehat{\mathcal{V}_{h}^{\pm}}\big)(t',k_4)\Big),
\end{aligned}\]
and
\[\begin{aligned}
\widehat{J}_{43} = \pm \frac{i}{2}\sum_{\substack{(k_1,k_2,k_3,k_4) \in  \mathcal B(k) \\ \max(|k_j|:j=1,2,3,4) > \frac{\pi}{h}}}&~{}\frac{2(k_1+k_2)+k_3}{k_3\Psi^3(k,k_1+k_2,k_3,k_4)}e^{\pm it' \Psi^4(k,k_1,k_2,k_3,k_4)}\\
&~{}\times \widehat{\mathcal{L}_h\mathcal{V}_h^{\pm}}(t',k_1)\widehat{\mathcal{L}_h\mathcal{V}_h^{\pm}}(t',k_2)\widehat{\mathcal{L}_h\mathcal{V}_h^{\pm}}(t',k_3)\widehat{\mathcal{L}_h\mathcal{V}_h^{\pm}}(t',k_4).
\end{aligned}\]
For $J_{41}$, let $k_{max} = \max(|k_1|, |k_2|, |k_3|, |k_{12}|, |k_{34}|, |k_{123}|, |k_{124}|)$. By Lemma \ref{Lem:Mh4} and $k_{12} = k_{123} - k_3$, $J_{41}$ is divided by the following four terms: 
\begin{equation}\label{eq:j_41-1}
\sum_{(k_1,k_2,k_3,k_4) \in \mathcal B(k)}\frac{k_{max}^{\alpha}|k_{123}k_{124}k_{34}|^\alpha}{|k_{123}k_{124}k_{34}|}\widehat{ \mathcal{V}_h^{\pm}}(t',k_1)\widehat{ \mathcal{V}_h^{\pm}}(t',k_2)\widehat{ \mathcal{V}_h^{\pm}}(t',k_3)\widehat{ \mathcal{V}_h^{\pm}}(t',k_4),
\end{equation}
\begin{equation}\label{eq:j_41-2}
\sum_{(k_1,k_2,k_3,k_4) \in \mathcal B(k)}\frac{k_{max}^{\alpha}|k_{123}k_{124}k_{34}|^\alpha}{|k_{3}k_{124}k_{34}|}\widehat{ \mathcal{V}_h^{\pm}}(t',k_1)\widehat{ \mathcal{V}_h^{\pm}}(t',k_2)\widehat{ \mathcal{V}_h^{\pm}}(t',k_3)\widehat{ \mathcal{V}_h^{\pm}}(t',k_4),
\end{equation}
\begin{equation}\label{eq:j_41-3}
\sum_{(k_1,k_2,k_3,k_4) \in \mathcal B(k)}\frac{k_{max}^{\alpha}|k_{12}k_{1}k_{2}|^\alpha}{|k_{123}k_{124}k_{34}|}\widehat{ \mathcal{V}_h^{\pm}}(t',k_1)\widehat{ \mathcal{V}_h^{\pm}}(t',k_2)\widehat{ \mathcal{V}_h^{\pm}}(t',k_3)\widehat{ \mathcal{V}_h^{\pm}}(t',k_4),
\end{equation}
and
\begin{equation}\label{eq:j_41-4}
\sum_{(k_1,k_2,k_3,k_4) \in \mathcal B(k)}\frac{k_{max}^{\alpha}|k_{12}k_{1}k_{2}|^\alpha}{|k_{3}k_{124}k_{34}|}\widehat{ \mathcal{V}_h^{\pm}}(t',k_1)\widehat{ \mathcal{V}_h^{\pm}}(t',k_2)\widehat{ \mathcal{V}_h^{\pm}}(t',k_3)\widehat{ \mathcal{V}_h^{\pm}}(t',k_4),
\end{equation}
whenever $0 \le \alpha \le \frac12$. Let $k_{123}^* = k-k_{123}$. 

For \eqref{eq:j_41-1}, we first fix $\alpha < \frac12$. Note that
\[\sum_{\substack{0 < |k_1|,|k_2|,|k_3| < \frac{\pi}{h} \\ (k-k_3)(k-k_1-k_2) \neq 0}}\frac{|k_2|^{4\alpha}|\widehat{ \mathcal{V}_h^{\pm}}(t',k_2)|^2}{|(k-k_3)(k-k_1-k_2)|^{2-2\alpha}} \lesssim \|\mathcal{V}_h^{\pm}(t')\|_{H^{2\alpha}(\T_h)}^2\]
and 
\[\begin{aligned}
&~{}\sum_{M \le |k| \le \frac{\pi}{h}}\sum_{\substack{0 < |k_1|,|k_2|,|k_3| < \frac{\pi}{h} \\ k_{123} \neq 0}}\frac{|k_1|^{4\alpha}|\widehat{ \mathcal{V}_h^{\pm}}(t',k_1)|^2|k_3|^{4\alpha}|\widehat{ \mathcal{V}_h^{\pm}}(t',k_3)|^2|k_{123}^*|^{4\alpha}|\widehat{ \mathcal{V}_h^{\pm}}(t',k_{123}^*)|^2}{|k_1+k_2+k_3|^{2-2\alpha}}\\
\lesssim&~{}\|\mathcal{V}_h^{\pm}(t')\|_{H^{2\alpha}(\T_h)}^6.
\end{aligned}\]
Consequently,
\begin{equation}\label{eq:j_4-1}
\begin{aligned}
&~{}\left(\sum_{M \le |k| \le \frac{\pi}{h}}|\eqref{eq:j_41-1}|^2\right)^{\frac12}\\
\lesssim&~{}\Bigg(\sum_{M \le |k| \le \frac{\pi}{h}} \Bigg(\sum_{\substack{0 < |k_1|,|k_2|,|k_3| < \frac{\pi}{h} \\ (k-k_3)(k-k_1-k_2) \neq 0}}\frac{|k_2|^{4\alpha}|\widehat{ \mathcal{V}_h^{\pm}}(t',k_2)|^2}{|(k-k_3)(k-k_1-k_2)|^{2-2\alpha}}\Bigg)\\
&~{}\hspace{4em}\times \Bigg(\sum_{\substack{0 < |k_1|,|k_2|,|k_3| < \frac{\pi}{h} \\ k_{123} \neq 0}}\frac{|k_1|^{4\alpha}|\widehat{ \mathcal{V}_h^{\pm}}(t',k_1)|^2|k_3|^{4\alpha}|\widehat{ \mathcal{V}_h^{\pm}}(t',k_3)|^2|k_{123}^*|^{4\alpha}|\widehat{ \mathcal{V}_h^{\pm}}(t',k_{123}^*)|^2}{|k_1+k_2+k_3|^{2-2\alpha}}\Bigg)\Bigg)^{\frac12}\\
\lesssim&~{}\|\mathcal{V}_h^{\pm}(t')\|_{H^{2\alpha}(\T_h)}^4.
\end{aligned}
\end{equation} 
Now we consider the case  $\alpha = \frac12$. It is not difficult to see that 
\[\sum_{\substack{0 < |k_1|,|k_2|,|k_3| < \frac{\pi}{h} \\ k_{123}^*k_{123}(k-k_3)(k-k_1-k_2) \neq 0}}\frac{k_{max}}{|k_{123}(k-k_3)(k-k_1-k_2)||k_1k_2k_3k_{123}^*|^2} \lesssim 1,\]
which yields
\[\begin{aligned}
&~{}\left(\sum_{M \le |k| \le \frac{\pi}{h}}|\eqref{eq:j_41-1}|^2\right)^{\frac12}\\
\lesssim&~{}\Bigg(\sum_{M \le |k| \le \frac{\pi}{h}} \Bigg(\sum_{\substack{0 < |k_1|,|k_2|,|k_3| < \frac{\pi}{h} \\ k_{123}^*k_{123}(k-k_3)(k-k_1-k_2) \neq 0}}\frac{k_{max}}{|k_{123}(k-k_3)(k-k_1-k_2)||k_1k_2k_3k_{123}^*|^2}\Bigg)\\
&~{}\times \Bigg(\sum_{0 < |k_1|,|k_2|,|k_3| < \frac{\pi}{h}}|k_1|^{2}|\widehat{ \mathcal{V}_h^{\pm}}(t',k_1)|^2|k_2|^{2}|\widehat{ \mathcal{V}_h^{\pm}}(t',k_2)|^2|k_3|^{2}|\widehat{ \mathcal{V}_h^{\pm}}(t',k_3)|^2|k_{123}^*|^{2}|\widehat{ \mathcal{V}_h^{\pm}}(t',k_{123}^*)|^2\Bigg)\Bigg)^{\frac12}\\
\lesssim&~{}\|\mathcal{V}_h^{\pm}(t')\|_{H^{1}(\T_h)}^4.
\end{aligned}\]

For \eqref{eq:j_41-2}, we first fix $\alpha < \frac12$. Note that 
\[\sum_{0 < |k_1|,|k_2|,|k_3| < \frac{\pi}{h}}\frac{|k_1|^{4\alpha}|\widehat{ \mathcal{V}_h^{\pm}}(t',k_1)|^2|k_{123}^*|^{4\alpha}|\widehat{ \mathcal{V}_h^{\pm}}(t',k_{123}^*)|^2}{|k_3|} \lesssim \|\mathcal{V}_h^{\pm}(t')\|_{H^{2\alpha}(\T_h)}^4\]
and 
\[\sum_{M \le |k| \le \frac{\pi}{h}}\sum_{\substack{0 < |k_1|,|k_2|,|k_3| < \frac{\pi}{h} \\ (k-k_3)(k-k_1-k_2) \neq 0}}\frac{|k_2|^{4\alpha}|\widehat{ \mathcal{V}_h^{\pm}}(t',k_2)|^2|k_3|^{4\alpha}|\widehat{ \mathcal{V}_h^{\pm}}(t',k_3)|^2}{|(k-k_3)(k-k_1-k_2)|^{2-2\alpha}}\lesssim\|\mathcal{V}_h^{\pm}(t')\|_{H^{2\alpha}(\T_h)}^4.
\]
Consequently,
\[\begin{aligned}
&~{}\left(\sum_{M \le |k| \le \frac{\pi}{h}}|\eqref{eq:j_41-2}|^2\right)^{\frac12}\\
\lesssim&~{}\Bigg(\sum_{M \le |k| \le \frac{\pi}{h}} \Bigg(\sum_{0 < |k_1|,|k_2|,|k_3| < \frac{\pi}{h}}\frac{|k_1|^{4\alpha}|\widehat{ \mathcal{V}_h^{\pm}}(t',k_1)|^2|k_{123}^*|^{4\alpha}|\widehat{ \mathcal{V}_h^{\pm}}(t',k_{123}^*)|^2}{|k_3|}\Bigg)\\
&~{}\hspace{4em}\times \Bigg(\sum_{\substack{0 < |k_1|,|k_2|,|k_3| < \frac{\pi}{h} \\ (k-k_3)(k-k_1-k_2) \neq 0}}\frac{|k_2|^{4\alpha}|\widehat{ \mathcal{V}_h^{\pm}}(t',k_2)|^2|k_3|^{4\alpha}|\widehat{ \mathcal{V}_h^{\pm}}(t',k_3)|^2}{|(k-k_3)(k-k_1-k_2)|^{2-2\alpha}}\Bigg)\Bigg)^{\frac12}\\
\lesssim&~{}\|\mathcal{V}_h^{\pm}(t')\|_{H^{2\alpha}(\T_h)}^4.
\end{aligned}\] 
On the other hand, it is not difficult to see that 
\[\sum_{\substack{0 < |k_1|,|k_2|,|k_3| < \frac{\pi}{h} \\ k_{123}^*(k-k_3)(k-k_1-k_2) \neq 0}}\frac{k_{max}^2}{|k_3|^2|(k-k_3)(k-k_1-k_2)||k_1k_2k_3k_{123}^*|^2} \lesssim 1,\]
which yields
\[\begin{aligned}
&~{}\left(\sum_{M \le |k| \le \frac{\pi}{h}}|\eqref{eq:j_41-2}|^2\right)^{\frac12}\\
\lesssim&~{}\Bigg(\sum_{M \le |k| \le \frac{\pi}{h}} \Bigg(\sum_{\substack{0 < |k_1|,|k_2|,|k_3| < \frac{\pi}{h} \\ k_{123}^*(k-k_3)(k-k_1-k_2) \neq 0}}\frac{k_{max}^2}{|k_3|^2|(k-k_3)(k-k_1-k_2)||k_1k_2k_3k_{123}^*|^2}\Bigg)\\
&~{}\times \Bigg(\sum_{0 < |k_1|,|k_2|,|k_3| < \frac{\pi}{h}}|k_1|^{2}|\widehat{ \mathcal{V}_h^{\pm}}(t',k_1)|^2|k_2|^{2}|\widehat{ \mathcal{V}_h^{\pm}}(t',k_2)|^2|k_3|^{2}|\widehat{ \mathcal{V}_h^{\pm}}(t',k_3)|^2|k_{123}^*|^{2}|\widehat{ \mathcal{V}_h^{\pm}}(t',k_{123}^*)|^2\Bigg)\Bigg)^{\frac12}\\
\lesssim&~{}\|\mathcal{V}_h^{\pm}(t')\|_{H^{1}(\T_h)}^4.
\end{aligned}\]
This proves the case $\alpha = \frac12$.

For \eqref{eq:j_41-3}, we first fix $\alpha < \frac12$. Using $\displaystyle k_{12} = \frac12(k_{123} + k_{124} - k_{34})$, one sees that 
\[\frac{|k_{12}|^{\alpha}}{|k_{123}k_{124}k_{34}|} \lesssim \frac{1}{|k_{123}k_{124}k_{34}|^{1-\alpha}}.\]
An analogous argument to \eqref{eq:j_4-1} guarantees
\[\left(\sum_{M \le |k| \le \frac{\pi}{h}}|\eqref{eq:j_41-3}|^2\right)^{\frac12} \lesssim \|\mathcal{V}_h^{\pm}(t')\|_{H^{2\alpha}(\T_h)}^4.\]
Now we consider the case $\alpha = \frac12$. Note that $|k_{12}|\le \min\left(\max(|k_1|,|k_2|), \max(|k_{123}|, |k_{124}|, |k_{34}|)\right)$. Considering all cases of $k_{max}$, it follows that
\[\frac{k_{max}^{\frac12}|k_{12}k_{1}k_{2}|^{\frac12}}{|k_{123}k_{124}k_{34}||k_1k_2k_3k_4|} \lesssim \frac{1}{|k_{123}k_{124}k_4|}+\frac{1}{|k_{123}k_{34}k_4|}+\frac{1}{|k_{124}k_{34}k_3|}.\]
Then, \eqref{eq:j_41-3} with each multiplier on the right-hand side is estimated as follows: 
\[\begin{aligned}
&~{}\left(\sum_{M \le |k| \le \frac{\pi}{h}}|\eqref{eq:j_41-3}|^2\right)^{\frac12}\\
\lesssim&~{}\Bigg(\sum_{M \le |k| \le \frac{\pi}{h}} \Bigg(\sum_{\substack{0 < |k_1|,|k_2|,|k_3| < \frac{\pi}{h} \\ k_{123}^*(k-k_3) \neq 0}}\frac{|k_1|^{2}|\widehat{ \mathcal{V}_h^{\pm}}(t',k_1)|^2}{|k_{123}^*(k-k_3)|^2}\Bigg)\\
&~{}\times \Bigg(\sum_{\substack{0 < |k_1|,|k_2|,|k_3| < \frac{\pi}{h} \\ k_1+k_2+k_3 \neq 0}}\frac{|k_2|^{2}|\widehat{ \mathcal{V}_h^{\pm}}(t',k_2)|^2|k_3|^{2}|\widehat{ \mathcal{V}_h^{\pm}}(t',k_3)|^2|k_{123}^*|^{2}|\widehat{ \mathcal{V}_h^{\pm}}(t',k_{123}^*)|^2}{|k_1+k_2+k_3|^2}\Bigg)\Bigg)^{\frac12}\\
\lesssim&~{}\|\mathcal{V}_h^{\pm}(t')\|_{H^{1}(\T_h)}^4,
\end{aligned}\]
\[\begin{aligned}
&~{}\left(\sum_{M \le |k| \le \frac{\pi}{h}}|\eqref{eq:j_41-3}|^2\right)^{\frac12}\\
\lesssim&~{}\Bigg(\sum_{M \le |k| \le \frac{\pi}{h}} \Bigg(\sum_{\substack{0 < |k_1|,|k_2|,|k_3| < \frac{\pi}{h} \\ k_{123}^*(k-k_1-k_2) \neq 0}}\frac{|k_2|^{2}|\widehat{ \mathcal{V}_h^{\pm}}(t',k_2)|^2}{|k_{123}^*(k-k_1-k_2)|^2}\Bigg)\\
&~{}\times \Bigg(\sum_{\substack{0 < |k_1|,|k_2|,|k_3| < \frac{\pi}{h} \\ (k_1+k_2+k_3) \neq 0}}\frac{|k_1|^{2}|\widehat{ \mathcal{V}_h^{\pm}}(t',k_1)|^2|k_3|^{2}|\widehat{ \mathcal{V}_h^{\pm}}(t',k_3)|^2|k_{123}^*|^{2}|\widehat{ \mathcal{V}_h^{\pm}}(t',k_{123}^*)|^2}{|k_1+k_2+k_3|^2}\Bigg)\Bigg)^{\frac12}\\
\lesssim&~{}\|\mathcal{V}_h^{\pm}(t')\|_{H^{1}(\T_h)}^4,
\end{aligned}\]
and
\begin{equation}\label{eq:j_4-2}
\begin{aligned}
&~{}\left(\sum_{M \le |k| \le \frac{\pi}{h}}|\eqref{eq:j_41-3}|^2\right)^{\frac12}\\
\lesssim&~{}\Bigg(\sum_{M \le |k| \le \frac{\pi}{h}} \Bigg(\sum_{\substack{0 < |k_1|,|k_2|,|k_3| < \frac{\pi}{h} \\ k-k_1-k_2 \neq 0}}\frac{|k_1|^{2}|\widehat{ \mathcal{V}_h^{\pm}}(t',k_1)|^2|k_3|^{2}|\widehat{ \mathcal{V}_h^{\pm}}(t',k_3)|^2}{|k-k_1-k_2|^2}\Bigg)\\
&~{}\times \Bigg(\sum_{\substack{0 < |k_1|,|k_2|,|k_3| < \frac{\pi}{h} \\ k-k_3 \neq 0}}\frac{|k_2|^{2}|\widehat{ \mathcal{V}_h^{\pm}}(t',k_2)|^2|k_{123}^*|^{2}|\widehat{ \mathcal{V}_h^{\pm}}(t',k_{123}^*)|^2}{|(k-k_3)k_3|^2}\Bigg)\Bigg)^{\frac12}\\
\lesssim&~{}\|\mathcal{V}_h^{\pm}(t')\|_{H^{1}(\T_h)}^4.
\end{aligned}
\end{equation}

For \eqref{eq:j_41-4}, we first fix $\alpha < \frac12$. It follows from $\displaystyle k_{12} = k_{124} - k_{34} + k_{3}$ that 
\[\frac{|k_{12}|^{\alpha}}{|k_3k_{124}k_{34}|} \lesssim \frac{1}{|k_3k_{124}k_{34}|^{1-\alpha}}.\]
Similarly as \eqref{eq:j_4-2}, we obtain
\[\left(\sum_{M \le |k| \le \frac{\pi}{h}}|\eqref{eq:j_41-4}|^2\right)^{\frac12} \lesssim \|\mathcal{V}_h^{\pm}(t')\|_{H^{2\alpha}(\T_h)}^4.\]
On the other hand, note that $|k_{12}|\le \min\left(\max(|k_1|,|k_2|), \max(|k_{124}|, |k_{34}|, |k_3|)\right)$. By considering all possible cases of $k_{max}$, we find
\[\frac{k_{max}^{\frac12}|k_{12}k_{1}k_{2}|^{\frac12}}{|k_3k_{124}k_{34}||k_1k_2k_3k_4|} \lesssim \frac{1}{|k_{124}k_2|^{\frac12}|k_3k_4|} + \frac{1}{|k_3k_{124}k_{34}|} + \frac{1}{|k_3k_{124}k_4|}.\]
Note that the bound $\displaystyle \frac{1}{|k_{124}k_2|^{\frac12}|k_3k_4|}$ appears only when $\max(|k_1|,|k_2|) \sim |k| \gg \min(|k_1|,|k_2|), |k_3|, |k_4|$. Let us define a set by
\[\widetilde{\mathcal B}(k) = \mathcal B(k) \cap \{(k_1,k_2,k_3,k_4) \in ((\T_h)^*)^4 : \max(|k_1|,|k_2|) \sim |k| \gg \min(|k_1|,|k_2|), |k_3|, |k_4|\},\]
and a function $g_{h,\sigma}$ by $\widehat{g}_{h,\sigma}(t',k) = |k|^{-\sigma}|k||\widehat{ \mathcal{V}_h^{\pm}}(t',k)|$ for $\sigma \in \R$. Note that $|k_1| \sim |k_{124}|$ on $\widetilde{\mathcal B}(k)$. Then, by Lemma \ref{lem:Bernstein inequality}, we have
\[\begin{aligned}
&~{}\left(\sum_{M \le |k| \le \frac{\pi}{h}}\left|\sum_{(k_1,k_2,k_3,k_4) \in \widetilde{\mathcal B}(k)}\frac{k_{max}^{\frac12}|k_{12}k_{1}k_{2}|^{\frac12}}{|k_{3}k_{124}k_{34}|}\widehat{ \mathcal{V}_h^{\pm}}(t',k_1)\widehat{ \mathcal{V}_h^{\pm}}(t',k_2)\widehat{ \mathcal{V}_h^{\pm}}(t',k_3)\widehat{ \mathcal{V}_h^{\pm}}(t',k_4)\right|^2\right)^{\frac12}\\
\lesssim&~{}\|g_{h,\frac12}g_{h,\frac12}g_{h,1}g_{h,1}\|_{L^2(\T_h)}\\
\lesssim&~{}\|g_{h,\frac12}\|_{L^4(\T_h)}\|g_{h,\frac12}\|_{L^4(\T_h)}\|g_{h,1}\|_{L^\infty(\T_h)}\|g_{h,1}\|_{L^{\infty}(\T_h)}\\
\lesssim&~{}\|g_{h,\frac12}\|_{H^{\frac12}(\T_h)}\|g_{h,\frac12}\|_{H^{\frac12}(\T_h)}\|g_{h,1}\|_{H^1(\T_h)}\|g_{h,1}\|_{H^1(\T_h)}\\
\lesssim&~{}\|\mathcal{V}_h^{\pm}(t')\|_{H^{1}(\T_h)}^4.
\end{aligned}\]
With the second bound, \eqref{eq:j_41-4} is dealt with analogously to \eqref{eq:j_4-2}. Lastly,  \eqref{eq:j_41-4} with the third bound is estimated as follows: 
\[\begin{aligned}
&~{}\left(\sum_{M \le |k| \le \frac{\pi}{h}}\left|\sum_{(k_1,k_2,k_3,k_4) \in \widetilde{\mathcal B}(k) \setminus \mathcal B(k)}\frac{k_{max}^{\frac12}|k_{12}k_{1}k_{2}|^{\frac12}}{|k_{3}k_{124}k_{34}|}\widehat{ \mathcal{V}_h^{\pm}}(t',k_1)\widehat{ \mathcal{V}_h^{\pm}}(t',k_2)\widehat{ \mathcal{V}_h^{\pm}}(t',k_3)\widehat{ \mathcal{V}_h^{\pm}}(t',k_4)\right|^2\right)^{\frac12}\\
\lesssim&~{}\Bigg(\sum_{M \le |k| \le \frac{\pi}{h}} \Bigg(\sum_{\substack{0 < |k_1|,|k_2|,|k_3| < \frac{\pi}{h} \\ k_{123}^* \neq 0}}\frac{|k_2|^{2}|\widehat{ \mathcal{V}_h^{\pm}}(t',k_2)|^2}{|k_{123}^*k_3|^2}\Bigg)\\
&~{}\times \Bigg(\sum_{\substack{0 < |k_1|,|k_2|,|k_3| < \frac{\pi}{h} \\ k-k_3 \neq 0}}\frac{|k_1|^{2}|\widehat{ \mathcal{V}_h^{\pm}}(t',k_1)|^2|k_3|^{2}|\widehat{ \mathcal{V}_h^{\pm}}(t',k_3)|^2|k_{123}^*|^{2}|\widehat{ \mathcal{V}_h^{\pm}}(t',k_{123}^*)|^2}{|k-k_3|^2}\Bigg)\Bigg)^{\frac12}\\
\lesssim&~{}\|\mathcal{V}_h^{\pm}(t')\|_{H^{1}(\T_h)}^4.
\end{aligned}\]
All these prove the case $\alpha = \frac12$.

By collecting all, we conclude that
\[\|P_{ M \le \cdot \le \frac{\pi}{h}} J_{41}(t')\|_{L^2(\T)} \lesssim h^{\alpha}(1+|t'|)^{\alpha}\|\mathcal{V}_h^{\pm}(t')\|_{H^{2\alpha}(\T_h)}^4,\]
for any $0 \le \alpha \le \frac12$. For $J_{42}$ and $J_{43}$, similarly as \eqref{eq:I_2} and \eqref{eq:I_3}, respectively, but using \eqref{ineq:D4}, we obtain
\[\begin{aligned}
\|P_{ M \le \cdot \le \frac{\pi}{h}} J_{42}(t')\|_{L^2(\T)} \lesssim&~{} \left(\sum_{|k| \le \frac{\pi}{h}}\left|\widehat{\mathcal{L}_{h}\mathcal V_{h}^{\pm}} - \widehat{\mathcal{V}_h^{\pm}} \right|^2\right)^{\frac12} \\
&~{}\times \Bigg(\|\mathcal{L}_h\mathcal{V}_h^{\pm}(t')\|_{L^2(\T)}^3+\|\mathcal{L}_h\mathcal{V}_h^{\pm}(t')\|_{L^2(\T)}^2\|\mathcal{V}_h^{\pm}(t')\|_{L^2(\T_h)}\\
&~{}\hspace{2em}+\|\mathcal{L}_h\mathcal{V}_h^{\pm}(t')\|_{L^2(\T)}\|\mathcal{V}_h^{\pm}(t')\|_{L^2(\T_h)}^2+\|\mathcal{V}_h^{\pm}(t')\|_{L^2(\T_h)}^3\Bigg)\\
\lesssim&~{} h^\alpha\|\mathcal{V}_h^{\pm}(t')\|_{H^{\alpha}(\T_h)} \left(\|\mathcal{L}_h\mathcal{V}_h^{\pm}(t')\|_{L^2(\T)}+\|\mathcal{V}_h^{\pm}(t')\|_{L^2(\T_h)}\right)^3,
\end{aligned}\]
for any $0 \le \alpha \le 2$, and
\[\begin{aligned}
  \|P_{ M \le \cdot \le \frac{\pi}{h}} J_{43}(t')\|_{L^2(\T)}\lesssim&~{} \|P_{> \frac{\pi}{h}}\mathcal{L}_{h}\mathcal V_{h}^{\pm}(t')\|_{L^2(\T)} \|\mathcal{L}_h\mathcal{V}_h^{\pm}(t')\|_{L^2(\T)}^3\\
\lesssim&~{}h^{\alpha}\|\mathcal{L}_h \mathcal{V}_h^{\pm}(t')\|_{H^{\alpha}(\T)}\|\mathcal{L}_h \mathcal{V}_h^{\pm}(t')\|_{L^2(\T)}^2,
\end{aligned}\]
for any $\alpha \ge 0$. Collecting all and taking $2\alpha = s$ in addition to Lemma \ref{Lem:discretization linearization inequality}, we obtain 
\begin{equation}\label{eq:J_4 term}
\begin{aligned}
&~{}\|P_{ M \le \cdot \le \frac{\pi}{h}} J_4(t')\|_{L^2(\T)} \\
\lesssim&~{} \int_0^t \Bigg(\| (\mathcal L_h\mathcal{V}_h^{\pm}-\mathcal{W}^{\pm})(t') \|_{L^2(\T)}\left(\|\mathcal{V}_h^{\pm}(t')\|_{H^s(\T_h)} + \|\mathcal{W}^{\pm}(t')\|_{H^s(\T)}\right)^3\\
&~{}\hspace{2em}+h^{\frac{s}{2}}(1+|t'|)^{\frac{s}{2}}\|\mathcal{V}_h^{\pm}(t')\|_{H^s(\T_h)}^4+h^{\frac{s}{2}}\|\mathcal{V}_h^{\pm}(t')\|_{H^s(\T_h)}^4\Bigg)\;dt'.
\end{aligned}
\end{equation}
Collecting \eqref{eq:J_1 term}, \eqref{eq:J_2 term}, \eqref{eq:J_3 term}, and \eqref{eq:J_4 term}, one concludes \eqref{lowfreq:esti}.
\end{proof}

\begin{proof}[Proof of Proposition \ref{prop:from decoupled to kdv}]
For given $R > 0$, let $u_{h,0}^{\pm}\in H^s(\T_h)$ denote the initial data satisfying
\[\sup_{h \in (0,1]}\big\|\big( u_{h,0}^{+},u_{h,0}^{-}\big)\big\|_{\mathbb H^s(\T_h)} \le R,\]
and let $T > 0$ be a common lifespan of solutions to \eqref{decoupled FPU'} and \eqref{kdv integral form}\footnote{One can take the minimum $T$ appearing in Corollaries~\ref{Cor:Uniform bounds} and \ref{cor:uniform bound for KdV}}. Then, by Corollaries~\ref{Cor:Uniform bounds} and \ref{cor:uniform bound for KdV} in addition to Lemma \ref{Lem:discretization linearization inequality}, the solutions to \eqref{decoupled FPU'} and \eqref{kdv integral form} are uniformly (in $h$) bounded in terms of the initial data on their lifespan, i.e.,
  \[\sup_{t\in [-T,T]}\|v_h^{\pm}(t)\|_{H^s(\T_h)}, \;  \sup_{t\in [-T,T]}\|w^{\pm}(t)\|_{H^s(\T)} \ls R,\]
as well as
\begin{equation}\label{uniform bounds of sols-1}
  \sup_{t\in [-T,T]}\|\mathcal{V}^{\pm}_h(t)\|_{H^s(\T_h)}, \;  \sup_{t\in [-T,T]}\|\mathcal{W}^{\pm}(t)\|_{H^s(\T)} \ls R,
\end{equation}
where $\mathcal{V}^{\pm}_h(t)$ and $\mathcal{W}^{\pm}(t)$ are defined as in \eqref{eq:renormalization} and \eqref{eq:renormalization of w}, respectively. Note that
\[\mathcal L_h v_h^{\pm} (t) - w^{\pm}(t) = \left(\mathcal L_hS_h^{\pm}(t)\mathcal{V}^{\pm}_h(t) - S^{\pm}(t)\mathcal L_h\mathcal{V}^{\pm}_h(t)\right) + \left(S^{\pm}(t)\mathcal L_h\mathcal{V}^{\pm}_h(t) - S^{\pm}(t)\mathcal{W}^{\pm}(t)\right).\]
By Proposition \ref{prop:commutator of interpolation and propagator} and \eqref{uniform bounds of sols-1}, we know
\[\|\left(\mathcal L_hS_h^{\pm}(t)\mathcal{V}^{\pm}_h - S^{\pm}(t)\mathcal L_h\mathcal{V}^{\pm}_h\right)(t)\|_{L^2(\T)} \lesssim_R h^{\frac{2s}{5}}.\]
Thus, it suffices to show that there exists $0 < h_0 = h_0(R) <1$ sufficiently small such that 
\begin{equation}\label{eq:proof-2}
\|\mathcal L_h\mathcal{V}^{\pm}_h(t) - \mathcal{W}^{\pm}(t)\|_{C_t([-T,T];L^2(\T))} \lesssim_R h^{\frac{2s}{5}}
\end{equation}
for all $0 < h \le h_0$. Here, time $T >0$ could be smaller than the one appearing in \eqref{uniform bounds of sols-1}. Fix $M > 0$, sufficiently large and depending only on $R$, such that
\[\frac{C}{M^{\frac12}} \left(1+R\right)^2 \le \frac14,\]
where $C>0$ is a universal constant appeared in Propositions \ref{prop:exterior estimates}, \ref{prop:differnece estimates-1}, and \ref{prop:differnece estimates-2}, and \eqref{uniform bounds of sols-1}, which is independent not only on $h$, but also on $R$. For this $M$, take $h_0$ satisfying $h_0M < \pi$. For $h \le h_0$, by Propositions \ref{prop:exterior estimates}, \ref{prop:differnece estimates-1}, and \ref{prop:differnece estimates-2} in addition to \eqref{uniform bounds of sols-1}, we obtain
\[\begin{aligned}
&~{}\|\mathcal L_h\mathcal{V}^{\pm}_h(t) - \mathcal{W}^{\pm}(t)\|_{L^2(\T)}\\
\le&~{}\| P_{\le M}(\mathcal L_h\mathcal{V}^{\pm}_h - \mathcal{W}^{\pm})(t)\|_{L^2(\T)}+ \| P_{M \le \cdot \le \frac{\pi}{h}}(\mathcal L_h\mathcal{V}^{\pm}_h - \mathcal{W}^{\pm})(t)\|_{L^2(\T)}  + \| P_{\ge \frac{\pi}{h}}(\mathcal L_h\mathcal{V}^{\pm}_h - \mathcal{W}^{\pm})(t)\|_{L^2(\T)}\\
\le&~{} Ch^{\frac{s}{2}}(1+T)^{1+\frac{s}{2}}M^{\frac{3}{2}+\frac{s}{2}}\left(1+R\right)R^2\\
&~{}+\frac{C}{M^{\frac12}} \|( \mathcal{L}_h\mathcal{V}_h^{\pm}-\mathcal{W}^{\pm})(t')\|_{C_t([-T,T];L^2(\T))}\left(1+R\right)^2\\
&~{}+ CT \|( \mathcal{L}_h\mathcal{V}_h^{\pm}-\mathcal{W}^{\pm})(t')\|_{C_t([-T,T];L^2(\T))}\left(M^{\frac{3}{2}}R + \left(1+ R \right)^3\right),
\end{aligned}\]
for any $0 \le t \le T$. 

For fixed $M>0$, we can take $T > 0$ sufficiently small such that
\[CT \left(M^{\frac{3}{2}}R + \left(1+R \right)^3\right) \le \frac14.\]
Consequently, we obtain
\[\|\mathcal L_h\mathcal{V}^{\pm}_h(t) - \mathcal{W}^{\pm}(t)\|_{L^2(\T)} \le 2Ch^{\frac{s}{2}}(1+T)^{1+\frac{s}{2}}M^{\frac{3}{2}+\frac{s}{2}}\left(1+R\right)R^2,\]
which guarantees \eqref{eq:proof-2}.
\end{proof}


\begin{thebibliography}{10}

  \bibitem{Babin2011}
  A. Babin, A. Ilyin, and E. Titi, \emph{On the regularization mechanism for the periodic Korteweg-de Vries equation}, Comm. Pure Appl. Math. \textbf{64} (2011), no.~5, 591--648. \MR{2789490}
  
\bibitem{BCMM2015} D. Bambusi, A. Carati, A. Maiocchi, and A. Maspero, \emph{Some analytic results on the FPU paradox}, Fields Inst. Commun., 75 Fields Institute for Research in Mathematical Sciences, Toronto, ON, 2015, 235–254. \MR{3445504}

\bibitem{BKP2015}
  D. Bambusi, T. Kappeler, and T. Paul,\emph{From Toda to KdV}, Nonlinearity 28 (2015), no. 7, 2461--2496. \MR{3366652}

  \bibitem{BP-2006}
  D. Bambusi and A. Ponno, \emph{On metastability in FPU}, Comm. Math. Phys. \textbf{264} (2006), no.~2, 539--561. \MR{2215616}
  
\bibitem{BI2005} G. P. Berman F. M. Izrailev, \emph{The Fermi-Pasta-Ulam problem: fifty years of progress}, Chaos 15 (2005), no. 1, 015104, 18 pp. \MR{2133455}

  \bibitem{B-1993Sch}
  J. Bourgain, \emph{Fourier transform restriction phenomena for certain lattice subsets and applications to nonlinear evolution equations. I. Schr\"{o}dinger equations}, Geom. Funct. Anal. \textbf{3} (1993), no.~2, 107--156. \MR{1209299}

  \bibitem{B-1993KdV}
  J. Bourgain, \emph{Fourier transform restriction phenomena for certain lattice subsets and applications to nonlinear evolution equations. II. The KdV equation}, Geom. Funct. Anal. \textbf{3} (1993), no.~3, 209--262.  \MR{1215780} 
 
\bibitem{BGT2003} N. Burq, P. G\'erard, and N. Tzvetkov, \emph{The Cauchy problem for the nonlinear Schr\"odinger equation on a compact manifold}, J. Nonlinear Math. Phys. 10 (2003), 12--27. \MR{2063542}

\bibitem{CH2018} F. Chen and M. Herrmann, \emph{KdV-like solitary waves in two-dimensional FPU-lattices}, Discrete Contin. Dyn. Syst. 38 (2018), no. 5, 2305--2332. \MR{3809037}

\bibitem{CKSTT-2003} J. Colliander, M. Keel, G. Staffilani, H. Takaoka, and T. Tao, \emph{Sharp global well-posedness for KdV and modified KdV on $\R$ and $\T$}, J. Amer. Math. Soc., 16 (2003), 705--749. \MR{1969209}

  \bibitem{ET2016}
  M. Erdo\u{g}an and N. Tzirakis, Dispersive partial differential equations, London Mathematical Society Student Texts, vol.~86,
    Cambridge University Press, Cambridge, 2016, Wellposedness and applications.     \MR{3559154}


  
  \bibitem{FPUT-1955}
  E. Fermi, P. Pasta, S. Ulam, and M. Tsingou, Studies of the nonlinear problems, \emph{Tech. report, Los Alamos Scientific Lab.}, N. Mex., 1955.
  
  \bibitem{FP-1999}
  G. Friesecke and R. Pego, \emph{Solitary waves on FPU lattices. I. Qualitative properties, renormalization and continuum limit}, Nonlinearity  \textbf{12} (1999), no.~6, 1601--1627. \MR{1726667}
  
  
  \bibitem{G-2007}
  G. Gallavotti, The Fermi-Pasta-Ulam problem: a status report, vol.728, Springer, 2007.
  
\bibitem{HKY2021} Y. Hong, C. Kwak, and C. Yang, \emph{On the Korteweg–de Vries Limit for the Fermi–Pasta–Ulam System}, Arch. Rational Mech. Anal. 240 (2021) 1091--1145. \MR{4244827}


  \bibitem{HKNY2021}
  Y. Hong, C. Kwak, S. Nakamura, and C. Yang, \emph{Finite difference scheme for two-dimensional periodic nonlinear Schr\"{o}dinger
    equations}, J. Evol. Equ. \textbf{21} (2021), no.~1, 391--418. \MR{4238211}
  \bibitem{HY-2019DCDS} Y. Hong and C. Yang, \emph{Uniform Strichartz estimates on the lattice}, Discrete Contin. Dyn. Syst. 39 (2019), no.6, 3239--3264. \MR{3959428}
  \bibitem{HY-2019SIAM}
  Y. Hong and C. Yang, \emph{Strong convergence for discrete nonlinear Schr\"{o}dinger equations in the continuum limit}, SIAM J. Math.
    Anal. \textbf{51} (2019), no.~2, 1297--1320. \MR{3939333}
  
\bibitem{Ignat2007} L. Ignat, \emph{On the numerical approximations of the periodic Schr\"odinger equation}, preprint, arXiv:1910.05517 [math.AP]
\bibitem{IZ2005} L. Ignat and E. Zuazua, \emph{Dispersive properties of a viscous numerical scheme for the Schr\"odinger
equation}, C. R. Math. Acad. Sci. Paris 340 (2005), no. 7, 529--534. \MR{2135236}

  \bibitem{IZ-2009}
  L. Ignat and E. Zuazua, \emph{Numerical dispersive schemes for the nonlinear Schr\"{o}dinger equation}, SIAM J. Numer. Anal. \textbf{47} (2009), no.~2, 1366--1390. \MR{2485456}

\bibitem{Iooss2000} G. Iooss, \emph{Travelling waves in the Fermi-Pasta-Ulam lattice}, Nonlinearity 13 (2000), no. 3, 849--866. \MR{1759004}

\bibitem{KT-2006} T. Kappeler and P. Topalov, \emph{Global wellposedness of KdV in $H^{-1}(\T,\R)$}, Duke Math. J., 135 (2006), 327--360. \MR{2267286}
  
  \bibitem{KPV-1996}
  C. E. Kenig, G. Ponce, and L. Vega, \emph{A bilinear estimate with applications to the KdV equation}, J. Amer. Math. Soc. \textbf{9} (1996), no.~2, 573--603. \MR{1329387}
  
\bibitem{LT2014} D. Li and B. Tang, \emph{Quantum solitons in the Fermi-Pasta-Ulam model}, Internat. J. Modern Phys. B 28 (2014), no. 11, 1450075, 10 pp. \MR{3189889}

  \bibitem{M-2013}
  T. Mizumachi, \emph{Asymptotic stability of {$N$}-solitary waves of the FPU lattices}, Arch. Ration. Mech. Anal. \textbf{207} (2013), no.~2, 393--457. \MR{3005321}

\bibitem{Oh-2012} T. Oh, \emph{Periodic L4-Strichartz estimate for KdV}, unpublished note, 

https://www.maths.ed.ac.uk/~toh/Files/KdVL4Strichartz.pdf

  \bibitem{PB-2005}
  A. Ponno and D. Bambusi, \emph{Korteweg-de Vries equation and energy sharing in Fermi-Pasta-Ulam}, Chaos \textbf{15} (2005), no.~1, 015107, 5.     \MR{2133458}

\bibitem{Rink2001} B. Rink, \emph{Symmetry and resonance in periodic FPU chains}, Comm. Math. Phys. 218 (2001), no. 3, 665--685. \MR{1831098}

\bibitem{Russell1844} J. Russell, Report on Waves, \emph{Rept. Fourteenth Meeting of the British Association for the Advancement of Science} J. Murray, London, pp. 311–390, 1844.
  
  \bibitem{SW-2000}
  G. Schneider and C. E. Wayne, \emph{Counter-propagating waves on fluid surfaces and the continuum limit of the Fermi-Pasta-Ulam model},
    International {C}onference on {D}ifferential {E}quations, {V}ol. 1, 2 ({B}erlin, 1999), World Sci. Publ., River Edge, NJ, 2000, pp.~390--404.   \MR{1870156}
  
  \bibitem{T-2001}
  T. Tao, \emph{Multilinear weighted convolution of {$L^2$}-functions, and applications to nonlinear dispersive equations}, Amer. J. Math. \textbf{123}  (2001), no.~5, 839--908. \MR{1854113}
  
  \bibitem{T-2006}
  T. Tao, Nonlinear dispersive equations, \emph{CBMS Regional Conference Series in Mathematics}, vol. 106, Published for the Conference Board of the Mathematical Sciences, Washington, DC; by the American Mathematical Society, Providence, RI, 2006, Local and global analysis. \MR{2233925}

\bibitem{Vega1992} L. Vega, \emph{Restriction Theorems and the Schr¨ odinger Multiplier on the Torus}, in Partial differential equations with minimal smoothness and applications (Chicago, IL,1990), IMA Vol. Math. Appl,Vol. 42,Springer,New York,1992, 199--211. \MR{1155865}

\bibitem{YR2024} H. Yan and M. Robnik, \emph{Chaos and quantization of the three-particle generic Fermi-Pasta-Ulam-Tsingou model. II. Phenomenology of quantum eigenstates}, Phys. Rev. E 109 (2024), no. 5, Paper No. 054211, 17 pp. \MR{4770223}
  
  \bibitem{ZK-1965}
  N. Zabusky and M. Kruskal, \emph{Interaction of ``solitons" in a collisionless plasma and the recurrence of initial states}, Physical review letters \textbf{15} (1965), no.~6, 240.
  
\bibitem{Zygmund1974} A. Zygmund, \emph{On Fourier coefficients and transforms of functions of two variables}, Studia Math. 50 (1974), 189--201. \MR{0387950}
  \end{thebibliography}
\end{document}